\setlist[1]{leftmargin=*}
\setlist[enumerate,1]{label=(\alph*)}
\setlist[enumerate,2]{label=(\roman*), ref=(\alph{enumi}.\roman*), leftmargin=.5em}
\newlist{parlist}{enumerate}{1}
\setlist[parlist]{leftmargin=0cm, itemindent=2\parindent, label=\textbf{\small{\arabic*.}}, ref=\textbf{\small{(\arabic*)}}, itemsep=0.3em, topsep=0.4em}
\newcounter{foo} %dummy counter for enumerated lists
\newlist{enumthm}{enumerate}{1} 
\setlist[enumthm]{align=left, listparindent=\parindent, parsep=\parskip, font=\textit, 
	leftmargin=0pt, labelwidth=0pt, itemindent=.4em,labelsep=.4em, topsep=.6em, itemsep=.4em
}
\setlist[enumthm,1]{label=(\alph*), ref=(\alph*)}
\newcommand\litem[1]{\item{\emph{#1}.\enspace}}
\tikzset{
	add/.style args={#1 and #2}{
		to path={%
			($(\tikztostart)!-#1!(\tikztotarget)$)--($(\tikztotarget)!-#2!(\tikztostart)$)%
			\tikztonodes},add/.default={.2 and .2}}
}
\renewcommand\subsection{\@startsection{subsection}{2}%
	\z@{-.5\linespacing\@plus-.7\linespacing}{.5\linespacing}%
	{\bfseries\itshape}}
\renewcommand\subsubsection{\@startsection{subsubsection}{3}%
	\z@{.5\linespacing\@plus.7\linespacing}{-.5\linespacing}%
	{\normalfont\bfseries}}
\renewcommand\paragraph{\@startsection{paragraph}{4}%
	\z@{.5\linespacing\@plus.7\linespacing}{-.5\linespacing}%
	{\normalfont\bfseries}}
\makeatletter \renewenvironment{proof}[1][\proofname]{
	\par\pushQED{\qed}\normalfont
	\topsep6\p@\@plus6\p@\relax
	\trivlist\item[\hskip\labelsep\bfseries#1\@addpunct{.}]
	\ignorespaces}{
	\popQED\endtrivlist\@endpefalse} \makeatother
\theoremstyle{plain}
\newtheorem{theo}{Theorem}[section]
\newtheorem*{theo*}{Theorem}
\theoremstyle{definition}
\newtheorem{definition}[theo]{Definition}
\newtheorem{lema}[theo]{Lemma}
\newtheorem{prop}[theo]{Proposition}
\newtheorem{example}[theo]{Example}
\newtheorem{notation}[theo]{Notation}
\newtheorem{remark}[theo]{Remark}
\newtheorem{setting}[theo]{Setting}
\newtheorem{cor}[theo]{Corollary}
\newcommand{\C}{\mathbb{C}} 
\newcommand{\D}{\mathbb{D}}
\newcommand{\Q}{\mathbb{Q}} 
\newcommand{\R}{\mathbb{R}} \newcommand{\RR}{\mathbb{R}}
\newcommand{\Z}{\mathbb{Z}}
\newcommand{\cA}{\mathcal{A}}
\newcommand{\cC}{\mathcal{C}}
\newcommand{\cE}{\mathcal{E}}
\newcommand{\cI}{\mathcal{I}}
\newcommand{\cJ}{\mathcal{J}}
\newcommand{\cL}{\mathcal{L}}
\newcommand{\cM}{\mathcal{M}}
\newcommand{\cO}{\mathcal{O}}
\newcommand{\cP}{\mathcal{P}}
\newcommand{\cQ}{\mathcal{Q}}
\newcommand{\cR}{\mathcal{R}}
\newcommand{\cS}{\mathcal{S}}
\newcommand{\cT}{\mathcal{T}}
\newcommand{\cV}{\mathcal{V}}
\newcommand{\cW}{\mathcal{W}}
\newcommand{\cZ}{\mathcal{Z}}
\renewcommand{\epsilon}{\varepsilon}
\DeclareFontFamily{U}{mathx}{}
\DeclareFontShape{U}{mathx}{m}{n}{<-> mathx10}{}
\DeclareSymbolFont{mathx}{U}{mathx}{m}{n}
\DeclareMathAccent{\widehat}{0}{mathx}{"70}
\DeclareMathAccent{\widecheck}{0}{mathx}{"71}
\renewcommand{\tilde}{\widetilde}
\renewcommand{\check}{\widecheck}
\renewcommand{\hat}{\widehat}
\renewcommand{\bar}{\overline}
\newcommand{\Sk}{\operatorname{Sk}}
\newcommand{\sdot}{\, \cdot \, } %free entry
\renewcommand{\leq}{\leqslant}
\renewcommand{\geq}{\geqslant}
\renewcommand{\to}{\longrightarrow}
\newcommand{\map}{\dashrightarrow}
\newcommand{\codim}{\operatorname{codim}}
\newcommand{\Exc}{\operatorname{Exc}}
\newcommand{\Int}{\operatorname{Int}} %interior
\newcommand{\redd}{_{\mathrm{red}}} %reduced structure
\newcommand{\sspan}{\operatorname{span}}
\renewcommand{\Im}{\operatorname{Im}}
\renewcommand{\Re}{\operatorname{Re}}
\newcommand{\Ram}{\operatorname{Ram}}
\newcommand{\Crit}{\operatorname{Crit}}
\newcommand{\std}{\mathrm{std}}
\newcommand{\new}{\mathrm{new}}
\newcommand{\rest}{\vartheta}
\renewcommand{\d}{\partial}
\newcommand{\dout}{\partial_{\textnormal{out}}}
\newcommand{\de}{\coloneqq} %:=
\newcommand{\vol}{\textnormal{vol}}
\newcommand{\gen}{\textnormal{gen}}
\newcommand{\sm}{\textnormal{sm}}
\renewcommand{\ll}{\mathscr{l}}
\newcommand{\qq}{\mathscr{q}}
\newcommand{\CY}{\textnormal{CY}}
\begin{document}
\title[Fibrations by Lagrangian tori]{Fibrations by Lagrangian tori \\ for maximal Calabi--Yau degenerations and beyond}
\author{Javier Fern\'andez de Bobadilla}
\address{Javier Fern\'andez de Bobadilla: %\newline
	(1) IKERBASQUE, Basque Foundation for Science, Euskadi Plaza, 5, 48009 Bilbao, Basque Country, Spain; %\newline
	(2) BCAM,  Basque Center for Applied Mathematics, Mazarredo 14, 48009 Bilbao, Basque Country, Spain;  %\newline
	(3) Academic Colaborator at UPV/EHU.} 
\email{jbobadilla@bcamath.org}

\thanks{J.F.B.\ was supported by the Basque Government through the BERC 2022-2025 program, by the Ministry of Science and Innovation: BCAM Severo Ochoa accreditation CEX2021-001142-S/MICIN/AEI/10.13039/501100011033, and by the Spanish Ministry of Science, Innovation and Universities, project reference PID2020-114750GB-C33. J.F.B.\ and T.P.\ were supported by the  %Erd\"os 
semester \enquote{Singularity Theory and Low Dimensional Topology} at R\'enyi Institute, Budapest.
}
\subjclass[2020]{Primary: 14D06; Secondary: 14J32, 32Q25, 53C23, 53D12}
\keywords{Lagrangian fibration, Calabi--Yau manifold, SYZ conjecture, Mirror symmetry}%, essential skeleton}

\author{Tomasz Pe{\l}ka}	
\address{Tomasz Pe{\l}ka: %\newline
	(1) University of Warsaw, Faculty of Mathematics, Informatics and Mechanics, Banacha 2, 02-097 Warsaw, Poland; %\newline
	(2) Jagiellonian University, Faculty of Mathematics and Computer Science, Łojasiewicza 6, 30-348 Kraków, Poland
}
\email{tomasz.pelka@uj.edu.pl}

\begin{abstract}
	We introduce a general technique to construct Lagrangian torus fibrations in degenerations of K\"ahler manifolds. We show that such torus fibrations naturally occur at the boundary of the \emph{A'Campo space}. 
	This space extends a degeneration over a punctured disc to a hybrid space involving its real oriented blowup and the dual complex of the central fiber; equipped with an exact modification of a given K\"ahler form which becomes fiberwise symplectic at the boundary. Therefore, it allows to move Lagrangian tori from the \enquote{radius zero} fibers at the boundary to the nearby fibers of the original degeneration using the symplectic connection. As a consequence, for \emph{any} maximal Calabi--Yau degeneration we prove that a generic region of each nearby fiber admits a Lagrangian torus fibration with asymptotic properties expected from the (conjectural) Strominger--Yau--Zaslow fibration.
\end{abstract}

\maketitle
\setcounter{tocdepth}{1}
\tableofcontents

\section{Introduction}\label{sec:intro}

The Strominger--Yau--Zaslow conjecture \cite{SYZ} interprets mirror symmetry for Calabi--Yau manifolds in terms of conjectural fibrations by special Lagrangian tori: for a mirror pair, these fibrations are expected to become dual when one takes large volume limit on one side, and large complex structure  limit on the other, see \cite[Conjecture 1.5]{BigBook}. The latter limit gives rise to a maximal Calabi--Yau degeneration. In this setting, explicit conjectures concerning existence and geometric properties of special Lagrangian torus fibrations were formulated by Kontsevich and Soibelman in \cite[Conjectures 1, 2]{KS_Conjecture}. In this article, we prove their asymptotic versions. In order to state our main results, we briefly recall the relevant definitions; for more details see Section \ref{sec:prelim} or e.g.,  \cite{BigBook,Gross_survey,Li_survey}.
\smallskip

A \emph{maximal Calabi--Yau degeneration} is a complex projective morphism $f^\circ:X^\circ\to\D^*$, where $\D^{*}$ is a punctured disc, such that the fibers $X_z\de (f^\circ)^{-1}(z)$ are Calabi--Yau $n$-folds, and the monodromy has Jordan blocks of maximal size $n+1$. Applying the semistable reduction theorem \cite{KKMSD_semistable-reduction} after a finite base change we can extend $f^{\circ}$ to a model $f\colon X\to\D$ such that $X$ is smooth, the central fiber $D=\sum_{i} D_i$ is a reduced snc divisor, and $f|_{X\setminus D}=f^{\circ}$. Since each fiber of $f^{\circ}$ is Calabi--Yau, there exists a holomorphic section $\Omega\in\Gamma(X,K_{X/\D})$ with zeros only at the central fiber. Multiplying $\Omega$ by a holomorphic function on the base $\D^{*}$, we can assume that the divisor of $\Omega$ is $\sum_{i}a_iD_i$, with $\mathrm{min}_{i} a_{i}=0$. We say that a component $D_i$ is {\em essential} if $a_i=0$. The \emph{essential skeleton} $\Delta_{\cS}$ of $f^{\circ}$ is a subcomplex of the dual complex of $D$ spanned by vertices corresponding to essential divisors.

Let $t\de -1/\log|f|$. It is known, cf.\ \cite[\textsection 3]{KS_Conjecture} or Section \ref{sec:weak_MA}, that
\begin{equation}\label{eq:omega_new}
	 \mathrm{vol}^\Omega_{\new}\de t^n(\tfrac{\imath}{2})^n(-1)^{\frac{n(n-1)}{2}}\Omega\wedge\overline{\Omega}
\end{equation} 
 restricts to a volume form on each $X_z$ such that the limit of the volume of $X_z$ is finite as $z\rightarrow 0$. Let $\omega_X$ be any K\"ahler form in $X$. Yau's proof of the Calabi conjecture \cite{Yau_Calabi-conjecture} implies that each $X_z$~admits a Ricci-flat K\"ahler metric $\omega^{\CY}_{X_{z}}$ in the same cohomology class as $\omega_{X}|_{X_{z}}$, so
there exists a family of smooth functions $\phi_z^{\CY}\colon X_z\to\RR$ such that $\omega_{X_z}^{\CY}= \omega_X|_{X_z}+dd^c\phi_z^{\CY}$. We refer to $\omega^{\CY}_{X_{z}}$ as \emph{the Calabi--Yau metric}, and call $\phi_{z}^{\CY}$ the \emph{Calabi--Yau potentials.} Ricci flatness of $\omega^{\CY}_{X_z}$ is equivalent to the existence of a smooth function $c\colon \D^*\to\RR_{>0}$ such that 
\begin{equation}
	\label{eq:ricciflatintro}
	\tfrac{1}{n!}(\omega_{X_z}^{\CY})^n=c(z)\cdot \vol^\Omega_{\new}|_{X_z}.
\end{equation}
The rescaling in formula \eqref{eq:omega_new} ensures that $\lim_{z\rightarrow 0} c(z)$ exists and is  nonzero.
\smallskip

Motivated by Mirror Symmetry, Strominger, Yau and Zaslow \cite{SYZ} predicted that Calabi--Yau $3$-folds should admit $\omega_{X_z}^{\CY}$-Lagrangian fibrations by tori which are special with respect to $\Omega$, cf.\ \cite{Gross_survey}. A Lagrangian submanifold $L$ is \emph{special of phase $\varpi\in\mathbb{S}^1$} if the imaginary part $\Im(\varpi \Omega)|_{L}$ vanishes. It was realized by several authors, see e.g., \cite{Gross-Wilson_K3,KS_Conjecture} that such fibrations are an emergent feature of a maximal Calabi--Yau degeneration, i.e., they should appear on $X_z$ for $|z|$ sufficiently small. More precise versions of the problem (taking into account inevitable singularities of some tori) were formulated e.g., in \cite{Gross_Siebert-program,Joyce_examples,Li_NA-MA}, with strong evidence provided by various constructions in particular contexts, such as \cite{Ruan_quintic-1,Ruan,CM,GTZ,RZ,EM,MazzonSchneider-toric}. We do not attempt to give a complete list here, but rather refer the reader to surveys \cite{Auroux_survey, Chan_survey,Li_survey} or \cite[\textsection 7]{BigBook}. 

This article is mostly inspired by the formulation due to Kontsevich and Soibelman \cite[\textsection 3]{KS_Conjecture}. In short, the main aspects of their prediction are as follows; for details see Conjectures 1 and 2 loc.\ cit.

\begin{parlist}
	\litem{Gromov--Hausdorff limit}\label{item:intro-GH} On each fiber $X_z$, consider the metric induced by the Calabi--Yau form $\omega_{X_z}^{\CY}$, rescaled so that the diameter of $X_{z}$ is independent of $z$. Then the Gromov--Hausdorff limit of $X_{z}$  as $z\rightarrow 0$ is a metric space $\Delta$ containing a smooth affine $n$-manifold $\Delta^{\circ}$ whose complement in $\Delta$ has Hausdorff codimension at most $2$. Moreover, the Riemannian metric on $\Delta^{\circ}$ has a potential satisfying the real Monge-Amp\`ere equation, i.e., it is locally given by a matrix $\operatorname{Hess}(K)$ for some function $K$ of the affine coordinates; such that $\det\operatorname{Hess}(K)$ is constant.
	\litem{Special Lagrangian fibration}\label{item:intro-special} There is $\delta>0$ such that for any $z\in\D_{\delta}^*$, there is a subset  $X^{\sm}_z\subseteq X_z$ and a fibration $X^{\sm}_z\to \Delta^\circ$ by special Lagrangian tori. Moreover, the pairs  $(X_z,X_z\setminus X^{\sm}_z)$ equipped with the rescaled Calabi--Yau metric as in \ref{item:intro-GH} converge in the Gromov--Hausdorff sense to $(\Delta,\Delta\setminus\Delta^\circ)$.
\end{parlist}

It is expected that as $z\rightarrow 0$, the volume of the region $X_z^{\sm}$  asymptotically fills the whole volume of $X_z$, see \cite[\textsection 2.5]{Li_survey}. Here, the volume is computed with respect to $\mathrm{vol}^\Omega_{\new}|_{X_z}$ or equivalently, by Ricci-flatness condition \eqref{eq:ricciflatintro}, with respect to $\frac{1}{n!}(\omega^{\CY}_{X_z})^n$.

Moreover the Gromov--Hausdorff limit $(\Delta,\Delta^{\circ})$ should be tightly related with $(\Delta_{\cS},\Delta_{\cS}^{\circ})$, where $\Delta_{\cS}$ is the essential skeleton of $f^{\circ}$, and $\Delta_{\cS}^{\circ}$ is the complement of its codimension $2$ faces. When $f^{\circ}$ is the Fermat family, in \cite{Li_Fermat} Li constructed special Lagrangian fibrations over codimension $0$ faces of $\Delta_{\cS}$; for a wider class of hypersurfaces this was done in \cite{HJMMC_NAMA}. In general,  Nicaise, Xu and Yu \cite{NXY} constructed a non-archimedean analogue of a torus fibration over $\Delta_{\cS}^{\circ}$. An approach to compare the archimedean and non-archimedean fibrations was proposed by Li in \cite{Li_NA-MA}, see \cite[\textsection 5.6]{Li_survey}.  
\smallskip

In this article we introduce a general technique to produce fibrations by Lagrangian tori. We summarize it in Theorem \ref{theo:general}. In the setting of maximal Calabi--Yau degenerations, this result specializes to Theorem \ref{theo:CY} below, and  with little additional arguments given in Section~\ref{sec:CY}, it implies the subsequent Theorem \ref{theo:CY_af}.  The conjunction of Theorems~\ref{theo:CY} and~\ref{theo:CY_af} provides an asymptotic version of the Kontsevich and Soibelman prediction in the {\em general} case of maximal Calabi-Yau degenerations. 
\smallskip

The base of our Lagrangian torus fibration is an \emph{expanded} version $E_{\cS}$ of $\Delta_{\cS}^{\circ}$, constructed in Section \ref{sec:expanded_construction} as follows.  By \cite{NX_skeleton}, the essential skeleton $\Delta_{\cS}$ is a closed $n$-dimensional  pseudomanifold, see \textsection 4.1.2 loc.\ cit.\ for definition. As a consequence,  $\Delta_\cS^\circ$ is a topological manifold, and any $(n-1)$-dimensional face $F^{n-1}$ of $\Delta_{\cS}$ is the intersection of exactly two $n$-dimensional faces $F^n_0$, $F^n_1$ of $\Delta_{\cS}$. Now $E_{\cS}$ is obtained by replacing any $(n-1)$-dimensional face $F^{n-1}$ by $F^{n-1}\times [0,1]$ and identifying $F^{n-1}\times\{i\}$ with the subset $F^{n-1}\subseteq F^n_i$ for each $i\in \{0,1\}$; see Figures \ref{fig:Fermat_ES} and \ref{fig:K3-E} for an example. Such $E_{\cS}$ is a smooth manifold with boundary and corners, whose interior is homeomorphic to $\Delta_{\cS}^{\circ}$. 

We denote by $\C_{\log}\de [0,\infty)\times \mathbb{S}^{1}$ the real oriented blowup of $\C$ at the origin, and by $\D_{\delta,\log}\subseteq \C_{\log}$ the preimage of $\D_{\delta}$ in $\C_{\log}$. 

\begin{theo}\label{theo:CY}
	Let $f^{\circ}\colon X^{\circ}\to \C^{*}$ be a maximal Calabi--Yau degeneration admitting a semi-stable model. There is a model $f\colon X\to \D_{\delta}$ of $f^{\circ}$, 
	such that the following holds. Let $\omega_X$ be any K\"ahler form on $X$. There exists a family of smooth functions $\phi_q\colon X^{\circ}\to\RR$, smoothly depending on a parameter $q\in [0,1]$, such that the family of $2$-forms $\omega_{q}\de \omega_X+dd^c\phi_q$ satisfies the following properties.
	\begin{enumthm}
		\litem{K\" ahler potential}\label{item:fiberwise_Kahler} For every $z\in \D_{\delta}^{*}$ and every $q\in [0,1]$, the restriction $\omega_{q}|_{X_{z}}$ is K\"ahler. 
		\litem{Lagrangian fibration}\label{item:CY_Lagrangian} There is a smooth family of maps $\ll_{z,q}\colon X_{z,q}^{\sm}\to E_{\cS}$, parametrized by $q\in (0,1]$, where each $X_{z,q}^{\sm}$ is a submanifold of $X_{z}$ of codimension zero, with boundary and corners, each $\ll_{z,q}$ is a Lagrangian torus fibration with respect to the form $\omega_{q}$, and $E_{\cS}$ is the expanded version of $\Delta_{\cS}^{\circ}$, constructed in Section \ref{sec:expanded_construction}. 
		\litem{Gromov--Hausdorff limit}\label{item:CY_GH} Choose a smooth path $(z,q)\colon [0,\delta)\to \D_{\delta,\log} \times [0,1]$ such that $|z(0)|=q(0)=0$ and $|z(h)|,q(h)> 0$ for $h>0$, see Figure \ref{fig:any_path}. For each $h>0$ let $X_{h}$ be the fiber $X_{z(h)}$ equipped with a K\"ahler metric given by $\omega_{q(h)}$, rescaled so that its diameter is independent of $h$. Let $X_{h}^{\sm}\de X^{\sm}_{z(h),q(h)}$ be the region introduced in \ref{item:CY_Lagrangian}. Then the Gromov-Hausdorff limit $\lim_{h\rightarrow 0}(X_{h},X_{h}\setminus X^{\sm}_{h})$ is the essential skeleton $(\Delta_{\cS},\Delta_\cS\setminus\Delta_\cS^\circ)$, equipped with a positive multiple of the euclidean metric.
	\end{enumthm}
\end{theo}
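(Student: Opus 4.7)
The plan is to derive this theorem from the general construction \ref{theo:general}, applied to a carefully chosen semistable model of $f^{\circ}$ equipped with the volume form $\Omega$ normalized so that $\min_{i}a_{i}=0$. First I would pass to the A'Campo space $X_{\mathrm{AC}}\to \D_{\delta,\log}$ of $f\colon X\to\D_{\delta}$, and invoke \ref{theo:general} to produce a family of exact modifications $\omega_{q}=\omega_{X}+dd^{c}\phi_{q}$ of the K\"ahler form $\omega_{X}$, smoothly depending on $q\in[0,1]$, which restrict to K\"ahler forms on every fiber $X_{z}$, giving \ref{item:fiberwise_Kahler}, and which become fiberwise symplectic along the boundary of $X_{\mathrm{AC}}$. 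The general theorem delivers, for each $q>0$, a Lagrangian torus fibration over an open region of the dual complex; what remains to extract \ref{item:CY_Lagrangian} is to identify this region with the expanded essential skeleton $E_{\cS}$ and to compare bases as $q$ varies smoothly.

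The identification uses the specific volume form $\vol^{\Omega}_{\new}=t^{n}(\tfrac{\imath}{2})^{n}(-1)^{n(n-1)/2}\Omega\wedge\bar\Omega$. Because $\mathrm{div}(\Omega)=\sum_{i}a_{i}D_{i}$ with $\min_{i}a_{i}=0$, a local computation in adapted log coordinates near a stratum $D_{I}=\bigcap_{i\in I}D_{i}$ shows that the weight $t^{n}|f|^{-2}$ absorbs the singularity of $\Omega\wedge\bar\Omega$ exactly when all $a_{i}$ vanish for $i\in I$. Consequently, the faces of the dual complex carrying nontrivial limiting volume are exactly those spanned by essential divisors, i.e.\ the faces of $\Delta_{\cS}$. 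Since by \cite{NX_skeleton} the essential skeleton is a closed pseudomanifold, the torus fibration produced by \ref{theo:general} over interiors of top-dimensional faces of $\Delta_{\cS}$ glues, across the shared $(n-1)$-faces, into a fibration with base $E_{\cS}$; the expansion replaces each shared $(n-1)$-face by an interval precisely to record the ambiguity arising from the two adjacent top faces, which is the range of the collar parameter along that stratum. The boundary and corner structure of $X_{z,q}^{\sm}$ matches the one of $E_{\cS}$ by construction.

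For \ref{item:CY_GH} I would choose the rescaling $\omega_{q}/\operatorname{diam}^{2}$ and track the induced fiberwise metric along a smooth path $(z(h),q(h))$ hitting the corner $(|z|,q)=(0,0)$ transversally. Along the boundary of the A'Campo space, the fiberwise symplectic form $\omega_{0}|_{X_{z}}$ is, by \ref{theo:general}, supported (up to rescaling) on the model neighborhood of $\Delta_{\cS}$ coming from the action-angle coordinates on the tori: each torus fiber collapses to a single point of $E_{\cS}$, with its complement becoming $\Delta_{\cS}\setminus\Delta_{\cS}^{\circ}$. The resulting metric on $E_{\cS}$ is the integral affine metric coming from the log period coordinates, which agrees with the Euclidean simplicial metric on the faces of $\Delta_{\cS}$ up to a positive constant depending only on $\Omega$. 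Gromov--Hausdorff continuity of the rescaled metrics in the parameter $(z,q)$, together with compactness of $X$ and the uniform control of $\omega_{q}$ from \ref{theo:general}, then give the stated limit $\lim_{h\to 0}(X_{h},X_{h}\setminus X_{h}^{\sm})=(\Delta_{\cS},\Delta_{\cS}\setminus\Delta_{\cS}^{\circ})$.

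The main obstacle I anticipate is \ref{item:CY_GH}: while (a) and (b) are clean consequences of \ref{theo:general}, the Gromov--Hausdorff statement requires uniform (in $q,z$) bilipschitz control of the projection $X_{z}\to E_{\cS}$ along every admissible approach to the corner, including the genuinely hybrid directions where $|z|$ and $q$ tend to $0$ simultaneously but at different rates. Securing this likely demands a uniform diameter bound for the Lagrangian tori in the fiberwise metric induced by $\omega_{q}$, a matching lower bound on the distance between distinct tori, and a collar analysis along the replaced $(n-1)$-faces so that the expansion $F^{n-1}\times[0,1]$ collapses to $F^{n-1}$ in the Hausdorff limit. Once these estimates are in place, the Euclidean structure on the limit follows from the fact that $\omega_{q}$ and $\vol^{\Omega}_{\new}$ are asymptotically compatible along the boundary of the A'Campo space, as built into the construction of \ref{theo:general}.
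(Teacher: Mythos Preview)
Your overall strategy---deduce Theorem~\ref{theo:CY} from Theorem~\ref{theo:general}---is exactly what the paper does, and parts \ref{item:fiberwise_Kahler} and \ref{item:CY_GH} are literally Theorem~\ref{theo:general}\ref{item:thm_Kahler},\ref{item:GH}. In particular, your long discussion of \ref{item:CY_GH} and your worry that it is ``the main obstacle'' are misplaced: the Gromov--Hausdorff convergence is already packaged inside Theorem~\ref{theo:general}\ref{item:GH} (proved there via Proposition~\ref{prop:GH} and Lemma~\ref{lem:g0}), so no additional bilipschitz or collar estimates are needed here. The identification $\Delta_{\cS}\setminus\Delta_{\cS}^{\circ}=\Delta_{\cS}^{\geq 2}$ is definitional.

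The genuine gap in your argument is in part \ref{item:CY_Lagrangian}. Theorem~\ref{theo:general}\ref{item:Lagrangians} only yields an \emph{ivy-like} Lagrangian torus fibration over $E_{\cS}$, whereas Theorem~\ref{theo:CY}\ref{item:CY_Lagrangian} asserts a Lagrangian torus fibration in the usual sense, i.e.\ with no singular fibers. Your volume-form reasoning does not address this: the issue is not which faces carry limiting volume, but whether the Morse functions $h_I$ on the submaximal strata $X_I^{\circ}$ have critical points (which produce the ramification locus of the ivy). The paper resolves this by first replacing the given semistable model with the model of Proposition~\ref{prop:model}, whose key feature \ref{item:sk_P1} is that every submaximal essential stratum satisfies $(X_I,X_I^{\circ})\cong(\P^1,\C^*)$. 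On $\C^*$ one has the canonical Morse function $|z|$ with no critical points (Remark~\ref{rem:canonical_choice}), so $\Ram(E_{\cS})=\dout(E_{\cS})=\emptyset$ and the fibration is honest (Proposition~\ref{prop:Lagranian-at-radius-zero}\ref{item:ll-P1}, Proposition~\ref{prop:Lagranian-at-positive-radius}). Saying ``a carefully chosen semistable model'' is not enough: the model of Proposition~\ref{prop:model} is in general \emph{not} semistable, and its construction requires running the MMP to reach a good dlt model before resolving.
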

The euclidean metric on $\Delta_{\cS}$ is restricted from the dual complex of $D$, viewed as a subset of some euclidean space $\R^{N}$. Clearly, it has a potential satisfying the real Monge-Amp\`ere equation for the natural affine structure of the maximal faces, as required by \cite[Conjecture 1(c)]{KS_Conjecture}, see \ref{item:intro-GH} above.
\smallskip

The base $E_{\cS}$ in Theorem \ref{theo:CY}\ref{item:CY_Lagrangian} 
contains, as an open subset, the union of relative interiors of $n$-dimensional faces $\Delta_{\cS}$. We denote this open subset by $\Delta_{\cS}^{\gen}$. Let $X^{\gen}_{z,q}\subseteq X_{z,q}^{\sm}$ be the preimage of $\Delta_{\cS}^{\gen}$ through the Lagrangian torus fibration $\ll_{z,q}$ from Theorem \ref{theo:CY}\ref{item:CY_Lagrangian}. We refer to $X^{\gen}_{z,q}$ as the \emph{generic region} of the fiber $X_z$. The next result asserts that the generic region is large, and the family $\omega_{q}$ approximates there the conjectural behavior of the Ricci-flat forms.

\begin{theo}\label{theo:CY_af}
	We keep the notation and assumptions from Theorem \ref{theo:CY}. Let $\Omega\in \Gamma(X,K_{X/\D})$ be a fiberwise holomorphic volume form, chosen so that it does not vanish along $X_0$, and let $\vol^{\Omega}_{\new}$ be the induced fiberwise volume form \eqref{eq:omega_new}. Fix a smooth path as in Theorem \ref{theo:CY}\ref{item:CY_GH}. For $h>0$ let $\ll_{h}\colon X_{h}^{\sm}\to E_{\cS}$ be the Lagrangian fibration introduced in Theorem \ref{theo:CY}\ref{item:CY_Lagrangian}, and let $X_{h}^{\gen}\de \ll_{h}^{-1}(\Delta_{\cS}^{\gen})$ be the preimage of the relative interiors of the maximal faces. 
	Then as $h\rightarrow 0$, we have the following.
		\begin{enumthm}
			\litem{Volume filling}\label{item:CY_Lagrangian_volume} The volume of $X_{h}^{\gen}$ with respect to $\mathrm{vol}^\Omega_{\new}$ converges to the total volume of $X_h$.
			\litem{Asymptotic Ricci-flatness} \label{item:assflat} Let $c_{h}\colon X_h^{\gen}\to \R_{>0}$ be a smooth family of functions satisfying $\frac{1}{n!} (\omega_{q(h)})^{n}=c_{h}\cdot \vol^{\Omega}_{\new}$. Then $c_{h}$ converges to a positive constant.
			\litem{Lagrangian tori are asymptotically special}\label{item:CY_asymptotic specialty} Assume that the path $\gamma$ is  tame, see Definition \ref{def:special_path}: this condition holds e.g., if $|z(h)|=h$ for all $h\geq 0$, see Figure \ref{fig:special_path}. Then there is a phase $\varpi\in \mathbb{S}^1$ such that for every $b\in \Delta_{\cS}^{\gen}$, the fibers $L_{h}\de \ll_{h}^{-1}(b)\subseteq X_{h}^{\gen}$ of the Lagrangian torus fibration $\ll_{h}$ from Theorem \ref{theo:CY}\ref{item:CY_Lagrangian} 
			satisfy
			\begin{equation*}
				\lim_{h\rightarrow 0}\  \frac{\Im (\varpi \Omega)|_{L_{h}}}{\Re (\varpi \Omega)|_{L_{h}}}=0.
			\end{equation*}
			In particular, the Lagrangian tori $L_h\subseteq X_{h}^{\gen}$ asymptotically minimize volume in their homology classes, see Corollary \ref{cor:volume_minimizing} for a precise statement.
			\litem{Maslov class} \label{item:CY_Maslov} All the fibers of the Lagrangian torus fibration $\ll_{h}$ have trivial Maslov class.
		\end{enumthm}	
\end{theo}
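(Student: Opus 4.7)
The unifying idea is that on $X_h^{\gen}=\ll_h^{-1}(\Delta_{\cS}^{\gen})$ each nearby fiber lies in the A'Campo picture over the interior of a top-dimensional face of $\Delta_{\cS}$, where $D$ has a transverse $(n{+}1)$-fold crossing of essential divisors. After shrinking $\delta$, one obtains local holomorphic coordinates $(z_0,\ldots,z_n)$ with $z_0\cdots z_n=f$ in which
\[
\Omega = g(z)\,\frac{dz_0}{z_0}\wedge\cdots\wedge\frac{dz_n}{z_n},
\]
where $g$ is holomorphic and nonvanishing along the stratum. By Theorem~\ref{theo:CY} and the A'Campo space underlying it, in such coordinates $\omega_q$ becomes, up to terms vanishing with $q$ and $|z|$, a standard toric symplectic form, while $\ll_{z,q}$ realizes the corresponding moment-map projection whose fibers are the tori $L=\{|z_i|=r_i\}_{i=0}^n$ subject to $\prod_i r_i=|z|$. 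All four items will follow from calculations in this model, glued along $\Delta_{\cS}^{\gen}$.

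For \ref{item:CY_Lagrangian_volume} I would invoke the asymptotic analysis of $\mathrm{vol}^{\Omega}_{\new}|_{X_z}$ that already motivates the essential skeleton (cf.\ \cite[\sec 3]{KS_Conjecture} and Section~\ref{sec:weak_MA}): the rescaling by $t^n$ is designed so that the pushforward along the A'Campo retraction $X^{\circ}\to\Delta_{\cS}$ converges weakly to a measure of Lebesgue type on the top-dimensional faces of $\Delta_{\cS}$, vanishing on their boundary. Since $\Delta_{\cS}\setminus\Delta_{\cS}^{\gen}$ has zero measure for this limit, the mass on $X_h\setminus X_h^{\gen}$ tends to zero.

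For \ref{item:assflat} and \ref{item:CY_asymptotic specialty} I would work in the toric chart. Using $dz_i/z_i=d\log|z_i|+\imath\,d\arg z_i$ together with $t=-1/\log|f|$, one checks that both $\frac{1}{n!}\omega_{q(h)}^n$ and $\mathrm{vol}^{\Omega}_{\new}$ factor as the $t^n$-rescaled Lebesgue measure in the radial variables times the Haar measure on the $T^n$-fibers, with ratio converging to a finite positive constant; this gives \ref{item:assflat}. Restricting $\Omega$ to $L$ kills the radial differentials and yields
\[
\Omega|_{L}=\imath^{\,n+1}\,g(z)\,d\arg z_0\wedge\cdots\wedge d\arg z_n\big|_{L},
\]
whose phase is $\arg(\imath^{n+1}g(z))$ and admits a definite limit as $h\to 0$. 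Choosing $\varpi$ to cancel this phase gives \ref{item:CY_asymptotic specialty}; the tameness hypothesis on $\gamma$ is what ensures that $\arg f(z(h))$, and hence (through the constraint $\sum_i\arg z_i=\arg f$) the resulting phase, admit limits that assemble into a single $\varpi\in\S^1$ independent of $b\in\Delta_{\cS}^{\gen}$.

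For \ref{item:CY_Maslov}, recall that the Maslov class of a Lagrangian $L\subseteq(X_z,\Omega|_{X_z})$ is represented by $[d\theta_L/2\pi]\in H^1(L,\Z)$, where $e^{\imath\theta_L}=\Omega|_L/|\Omega|_L|$. By \ref{item:CY_asymptotic specialty} the image of $\theta_{L_h}$ in $\S^1$ contracts to a point as $h\to 0$, so for $h$ small $\theta_{L_h}$ lifts to an $\R$-valued function on $L_h$ and the Maslov class vanishes; being a locally constant integer-valued invariant, it is then zero throughout the smooth family of fibers of $\ll_h$. The principal obstacle I anticipate is not in the local model calculations themselves but in their coherence: the limiting constants in \ref{item:assflat} and the phases in \ref{item:CY_asymptotic specialty} coming from different toric charts must agree on overlaps, so that both statements hold globally on $\Delta_{\cS}^{\gen}$ with a single $\varpi$. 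This matching is dictated by the specific normalization of $\Omega$ and of $\omega_q$ built into Theorem~\ref{theo:CY}, and is precisely the point where the tameness assumption on the path is essential.
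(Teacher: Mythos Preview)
Your sketches for \ref{item:CY_Lagrangian_volume}, \ref{item:assflat} and \ref{item:CY_Maslov} are essentially the paper's approach: the rescaled volume form $\vol^{\Omega}_{\new}$ extends smoothly to the radius-zero boundary of the A'Campo space and is supported there on the generic region (Lemma~\ref{lem:volume-extends}), which gives \ref{item:CY_Lagrangian_volume}; a direct computation of $(\omega_0^\epsilon)^n$ on that region gives \ref{item:assflat}; and \ref{item:CY_Maslov} follows from \ref{item:CY_asymptotic specialty} by local constancy and connectedness of $E_{\cS}$. One correction: the global consistency of the limiting constant in \ref{item:assflat} and of the phase in \ref{item:CY_asymptotic specialty} across different maximal faces does not come from tameness or from the normalization of $\omega_q$; it comes from a Poincar\'e residue argument along the submaximal strata $X_{I'}\cong\P^1$ (Lemma~\ref{lem:volume-extends}\ref{item:c_0}), which pins down $c_0$ up to sign.

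There is, however, a genuine gap in your argument for \ref{item:CY_asymptotic specialty}. You write that $\ll_{z,q}$ realizes the moment-map projection and that its fibers are the tori $\{|z_i|=r_i\}$; this is false at positive radius. By construction, $L_h=\Psi_h^{q(h)}(L_0)$ is the image of the radius-zero torus $L_0$ under the symplectic parallel transport of $\omega_{q(h)}^\epsilon=\pi^*\omega_X+\epsilon(q(h)\omega^\sharp+(1-q(h))\omega^\flat)$, and since $\pi^*\omega_X$ and $\omega^\sharp$ are present this flow does \emph{not} preserve $|z_i|$. Consequently ``restricting $\Omega$ to $L$ kills the radial differentials'' fails: $(\Psi_h^{q(h)})^*dw_j|_{L_0}$ is not zero. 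In the factorization $\Omega_{\new}=\pm c\bigwedge(dw_j-t^2\imath\, d\theta_j)$ of Lemma~\ref{lem:volume-extends}\ref{item:vol_formulas}, the angular part is of order $h^2$, so for the phase to converge you must show that $(\Psi_h^{q(h)})^*dw_j|_{L_0}=o(h^2)$. This is the content of Lemmas~\ref{lem:piecewise_Lie}--\ref{lem:Omega-factor}, and it is precisely here that tameness is used: it means $q'(0)=0$ in the normal parametrization, which kills the first-order variation of the flow in the $q$-direction and upgrades the a priori bound $O(h^2)$ to $O(h^3)$. Your explanation that tameness controls $\arg f(z(h))$ is incorrect: $\theta(h)$ already has a limit by mere continuity of an admissible path, and the phase in \eqref{eq:asymptotic_specialty} does not depend on $\theta$ at all.
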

	The \emph{Maslov class} of a Lagrangian torus $L$ is the element of $H^{1}(L,\Z)$ induced by the \emph{phase map} $\varpi\colon L\to \mathbb{S}^1$, defined so that $\varpi\cdot \Omega|_{L}$ is a volume form on $L$, see \cite[\textsection 3.6.1.2]{BigBook}. Triviality of the Maslov class is important to define the grading in Lagrangian Floer homology, see \cite[\textsection 8.3.3]{BigBook}.
	
	The tameness assumption in Theorem \ref{theo:CY_af}\ref{item:CY_asymptotic specialty} is a mild technical condition implying that the path is not very tangent to the $q$-axis, see Figure \ref{fig:special_path}.
\begin{figure}[ht]
	\begin{subfigure}{0.47\textwidth}
		\centering
		\begin{tikzpicture}[scale=0.5]
			\path[use as bounding box] (-1.6,0) rectangle (8,3);
			\draw[->] (0,0) -- (0,3);
			\node[below left] at (0,3) {\small{$q(h)$}};
			\draw[->] (0,0) -- (7,0);
			\node[above] at (7,0) {\small{$|z(h)|$}};
			\draw[thick] (0,0) to[out=90,in=180] (2,1.5) to[out=0,in=180] (3.5,1.2) to[out=0,in=-120] (5,2.5);
		\end{tikzpicture}
		\caption{Path in Theorems \ref{theo:CY}\ref{item:CY_GH} and \ref{theo:CY_af}\ref{item:CY_Lagrangian_volume},\ref{item:assflat}.}
		\label{fig:any_path}
	\end{subfigure}
	\begin{subfigure}{0.47\textwidth}
		\centering
		\begin{tikzpicture}[scale=0.5]
			\path[use as bounding box] (-1.6,0) rectangle (8,3);
			\draw[->] (0,0) -- (0,3);
			\node[below left] at (0,3) {\small{$q(h)$}};
			\draw[->] (0,0) -- (7,0);
			\node[above] at (7,0) {\small{$|z(h)|$}};
			\draw[thick] (0,0) to[out=60,in=180] (2,1) to[out=0,in=-120] (5,2.5);;
		\end{tikzpicture}
		\caption{Tame path in Theorem \ref{theo:CY_af}\ref{item:CY_asymptotic specialty}.}
		\label{fig:special_path}
	\end{subfigure}
	\caption{Paths in Theorems \ref{theo:CY}\ref{item:CY_GH} and \ref{theo:CY_af}.}
	\label{fig:paths}
\end{figure}

We now have three distinguished cohomologous K\"ahler forms on the fiber $X_h$: the Ricci-flat form $\omega^{\CY}_{X_{h}}$, our form $\omega_{q(h)}$, and the form $\omega_{X}$ restricted from a chosen model, e.g., the Fubini--Study form restricted from some ambient projective space. Our form $\omega_{q(h)}$ can be regarded as an intermediate one between $\omega_{X}$ and $\omega^{\CY}_{X_{h}}$. In fact, formula \eqref{eq:omega-s} defines $\omega_{q(h)}$ as an arbitrarily small deformation of $\omega_{X}$, so for any $\epsilon\in (0,1]$, the form $(1-\epsilon)\cdot \omega_{X}+\epsilon\cdot \omega_{q(h)}$ satisfies the assertions of Theorems \ref{theo:CY} and \ref{theo:CY_af}. In particular, the Gromov--Hausdorff limit of the corresponding rescaled metrics has the properties conjectured for $\omega^{\CY}_{X_{h}}$, sharply different from the ones of $\omega_{X}$ (for which the Gromov--Hausdorff limit is simply the central fiber $X_0$). Moreover, as $\epsilon$ increases, the deformation $(1-\epsilon)\cdot \omega_{X}+\epsilon\cdot \omega_{q(h)}$ redistributes the volume from the non-generic region, where $\omega_{X}$ dominates, to the generic region, where $\omega_{X_h}^{\CY}$ concentrates the whole volume. Nonetheless, the volume of the non-generic region with respect to $\omega_{q(h)}$ does not converge to zero, see  Remark \ref{rem:MA_fails}. Hence from the viewpoint of the Gromov--Hausdorff limit our forms $\omega_{q(h)}$ resemble $\omega_{X_h}^{\CY}$, but the associated volume forms $\omega_{q(h)}^{n}$ do not approximate $(\omega_{X_h}^{\CY})^{n}$.

\smallskip

The main novelty of this article is a general technique that allows to construct fibrations by Lagrangian tori in K\"ahler degenerations, where fibers do not need to be Calabi--Yau nor compact. Theorems \ref{theo:CY} and \ref{theo:CY_af} above are obtained by combining this technique with special geometric properties of maximal Calabi--Yau degenerations. Now, we briefly describe the construction.
\smallskip

Let $f\colon X\to \C$ be a holomorphic map whose fibers have dimension $n$, and the unique singular fiber $D\de f^{-1}(0)$ is simple normal crossings, with irreducible components $D_1,\dots, D_{N}$. Let $\omega_{X}$ be a Kähler form on $X$. Choose a set of indices $\cS\in \{1,...,N\}$ corresponding to components of $D$ which we call \emph{essential}. In the maximal Calabi--Yau degeneration case we take $\cS$ to be the set of indices of the components of minimal discrepancy, but the construction works for any choice of $\cS$. Let $\Delta_D$ be the dual complex of $D$ and let $\Delta_{\cS}$ be the essential skeleton, defined as the subcomplex of $\Delta_{D}$ spanned by vertices corresponding to essential components.
\smallskip

The key tool in our construction is the A'Campo space $A$ associated to $f$, which fits into a diagram 
\begin{equation}\label{eq:AX-diagram-intro}
	\begin{tikzcd}
		A
		\ar[r]
		\ar[d, "f_{A}"]
		\ar[rd, "\pi"]
		& X 
		\ar[d, "f"]  
		\\
		\C_{\log}
		\ar[r] & \C
	\end{tikzcd}
\end{equation}
where $\C_{\log}=[0,\infty)\times \mathbb{S}^1$ is the real oriented blowup of $\C$ at the origin. The space $A$ is a \enquote{hybrid} construction: it compactifies $f^{\circ}\colon X^\circ\to \C^*$ with the \enquote{radius-zero} fibration combining the Kato--Nakayama space \cite{Kato_Nakayama} of the log structure $(X,D)$ with the hybrid space \cite{BJ_measures} for the model $f$, or, in other words, combining polar coordinates of the real oriented blowup of $X$ at $D$ with the tropical coordinates of the dual complex $\Delta_{D}$ of the snc divisor $D$. By construction, any \enquote{radius-zero} fiber $A_{\theta}$ for $\theta\in\partial \C_{\log}$ splits naturally as $A_{\theta}=\bigsqcup_{I}A_{I,\theta}^\circ$, where the disjoint union is indexed by all faces $\Delta_{I}$ of $\Delta_{D}$. Each of the pieces $A_{I,\theta}^\circ$ admits a concrete geometric description~(see Lemma~\ref{lem:product}), in particular:
\begin{enumerate}[(i)]
 \item\label{item:intromaximal} if $\dim \Delta_I=n$ (so  $\Delta_{I}$ is a \emph{maximal face}) then we have  $A_{I,\theta}^\circ=\Delta_I\times (\mathbb{S}^1)^n$,
 \item\label{item:introsubmaximal} if $\dim \Delta_I=n-1$ then we have $A_{I,\theta}^\circ=\Delta_I\times (X_{I}^{\circ})_{\log,\theta}$, where $(X_{I}^{\circ})_{\log,\theta}\to X_{I}^{\circ}$ is an $(\mathbb{S}^1)^{n-1}$-bundle over a compact Riemann surface with as many punctures as maximal faces contain $\Delta_I$. 
\end{enumerate}
As a topological fibration, $f_{A}\colon A\to \C_{\log}$ was constructed by A'Campo \cite{A'Campo}. In \cite{FdBP_Zariski} we have upgraded this construction so that the maps in diagram \eqref{eq:AX-diagram-intro} are smooth, and $A$ is endowed with an exact modification $\omega_A$ of $\pi^{*}\omega_{X}$, symplectic on each fiber of $f_A$. This was used to compare the usual symplectic monodromy with the convenient radius-zero model via the symplectic connection. In Section \ref{sec:A} we recall the construction of $A$ from \cite{FdBP_Zariski}, and slightly modify it so that it admits, for any $\epsilon>0$ small enough, a family of fiberwise symplectic forms
\begin{equation}\label{eq:omega-s-intro}
	\omega_{q}^{\epsilon}=\pi^{*}\omega_{X}+\epsilon \cdot ( q\cdot \omega^{\sharp}+(1-q)\cdot \omega^{\flat}),\quad q\in [0,1],
\end{equation}
which at positive radius are fiberwise equal to $\omega_{q}^{\epsilon}=\pi^{*}\omega_{X}+dd^{c}\phi_{q}^{\epsilon}$ for some potential $\phi_{q}^{\epsilon}$, so they are fiberwise K\"ahler, as claimed in Theorem \ref{theo:CY}\ref{item:fiberwise_Kahler}. This additional property (compatibility with the complex structure) was not proved for $\omega_{A}$ in \cite{FdBP_Zariski}.

The proof of the remaining parts of Theorems \ref{theo:CY}, \ref{theo:CY_af} follows the general philosophy of~\cite{FdBP_Zariski}: we perform the main constructions at radius zero, where the structure of $A$ and the symplectic form is very well known and special, 
and then pull them to positive radius by the symplectic connection. We now highlight the main ideas and point to the sections of the paper where they are implemented.
\smallskip

For any maximal face $\Delta_I$ of $\Delta_{\cS}$, close to the interior of $A_{I,\theta}^\circ\subseteq \d A$ the form $\omega_{0}^{\epsilon}$ is non-degenerate, see Proposition \ref{prop:omega}\ref{item:symplectic_zero}, and as can be seen from the definition \eqref{eq:omega_flat} of $\omega^{\flat}$, the induced metric  $\omega_{0}^{\epsilon}$ resembles asymptotically the semi-flat one described in \cite{SYZ}, cf.\ \cite[p.\ 4]{Li_survey}. This is the reason for asymptotic Ricci-flatness claimed in Theorem~\ref{theo:CY_af}\ref{item:assflat}. We prove it in Proposition \ref{prop:MA_weak}. 

Moreover, tori arising in the product structure~\ref{item:intromaximal} become Lagrangian with respect to  $\omega_q^\epsilon$ for any $q\in [0,1]$, see Proposition \ref{prop:Lagranian-at-radius-zero}. Pulling them to positive radius by the symplectic connection of $\omega_q^\epsilon$ yields a Lagrangian torus fibration as in Theorem \ref{theo:CY}\ref{item:CY_Lagrangian}, see Proposition~\ref{prop:Lagranian-at-positive-radius}. In the maximal Calabi--Yau degeneration case, as $q$ approaches $0$ faster than the radius (see Figure \ref{fig:special_path}) we prove in Proposition~\ref{prop:special} that the Lagrangian tori become asymptotically special, as claimed in Theorem~\ref{theo:CY_af}\ref{item:CY_asymptotic specialty}. As a consequence, they have trivial Maslov class, as claimed in Theorem \ref{theo:CY_af}\ref{item:CY_Maslov}.

For any face $\Delta_I$ of dimension $n-1$, see case \ref{item:introsubmaximal} above, we take a proper Morse function $h_I\colon X_{I}^{\circ}\to (0,\infty)$ and prove that for each $c\in (0,\infty)$, the preimage of $h_{I}^{-1}(c)$ in $(X_{I}^{\circ})_{\log}$ is isotropic for $\omega_{q}^{\epsilon}$ for any $q\in [0,1]$. Thus we get a Lagrangian torus fibration with singularities, whose discriminant locus is the union of simplices $\Delta_{I}\times \Crit(h_{I})$, where $\Crit(h_{I})$ is the set of critical points of $h_{I}$.

Define $A_\theta^{\sm}=\bigsqcup_{I}A_{I,\theta}^\circ$, where the union is indexed by all faces $\Delta_{I}$ of $\Delta_{\cS}$ of dimension $n$ and $n-1$. An appropriate choice of Morse functions $h_I$ guarantees that the Lagrangian fibrations defined above in each of the pieces
$A_{I,\theta}^\circ$ glue to a Lagrangian fibration 
$\ll\colon A_\theta^{\sm}\to E_{\cS}$ 
with singularities. Here $E_{\cS}$ is the expanded skeleton, endowed with a structure of an \emph{ivy-like manifold}, see Section \ref{sec:ivy}: it is a smooth manifold with some singularities corresponding to critical points of $h_{I}$. 

In the maximal Calabi--Yau degeneration case we can take a model with  $X_{I}^{\circ}\cong \C^{*}$, see Proposition \ref{prop:model}\ref{item:sk_P1}, so we have a canonical choice of Morse functions $h_{I}$ with no critical points; namely the absolute value of a complex coordinate. With this choice, the map $\ll$ becomes a Lagrangian fibration without singularities, over a smooth manifold $E_{\cS}$, which agrees with the one described at the beginning of the introduction. Pulling it back by the symplectic connection associated with $\omega_{q}^{\epsilon}$ for every $q\in (0,1]$, one gets Theorem~\ref{theo:CY}\ref{item:CY_Lagrangian}, see Proposition \ref{prop:Lagranian-at-positive-radius}. Moreover, just like in the semi-flat model, after a suitable rescaling we see that the above Lagrangian tori collapse as $z\rightarrow 0$, both with respect to the metric, and to the Calabi--Yau volume. This implies the Gromov--Hausdorff convergence and volume filling, claimed in Theorem \ref{theo:CY}\ref{item:CY_GH} and \ref{theo:CY_af}\ref{item:CY_Lagrangian_volume}. We prove these results in Propositions \ref{prop:GH} and \ref{prop:volume}, respectively.

The main results of the article (Theorems \ref{theo:CY}, \ref{theo:CY_af} and \ref{theo:general}) are formulated in an \enquote{existential} way, so their conclusions hold regardless of the choices made within our constructions. The relevance of these choices is emphasized throughout the article, see in particular Remarks \ref{rem:new_set}, \ref{rem:canonical_choice}, and \ref{rem:canonical_choice-positive-radius}. In the short Section~\ref{sec:choices} at the end of the paper a synoptic explanation of all choices is provided.

\paragraph{Acknowledgments} This work grew during visits of the first author to IMPAN, Warsaw, the second author to BCAM, Bilbao, and both authors to R\'enyi Institute, Budapest and CIRM, Marseille. We are grateful to each of these institutions for hospitality and excellent working conditions. We would also like to thank Duco van Straten and Valentino Tosatti  for inspiring conversations, and the referee for valuable comments which improved the quality of the article.

\section{Preliminaries on the essential skeleton}\label{sec:prelim}

In this section, we set up the notation for the remaining part of the article, and, in case of maximal Calabi--Yau degenerations, choose an appropriate model for which our Lagrangian tori have no singularities. The latter is achieved in Proposition \ref{prop:model} by applying the Minimal Model Program.
\smallskip

We work in the following setting. 
Let $(X,\omega_{X})$ be a K\"ahler manifold of dimension $n+1$, and let 
\begin{equation*}
	f\colon X\to \C	
\end{equation*}
be a holomorphic function with unique critical value $0\in \C$, such that the fiber $D\de f^{-1}(0)$ is snc. Let $D_{1},\dots, D_{N}$ be the irreducible components of $D$, and write
\begin{equation}\label{eq:multiplicites}
	D=\sum_{i=1}^{N}m_{i}D_{i}.
\end{equation}
Moreover, we choose a subset $\cS\subseteq \{1,\dots, N\}$. In practice, we either take $\cS$ to be $\{1,\dots, N\}$, or, in the setting of Theorem \ref{theo:CY}, the set of indices of essential components of $D$, see formula  \eqref{eq:essential}.
\smallskip

For $z\in \C^{*}$ we write  $X_z=f^{-1}(z)$. Since we are interested only in fibers $X_{z}$ for $|z|$ small enough, we often replace $X$ by the preimage $f^{-1}(\D_{\delta})$ for some $\delta>0$ such that $f|_{X\setminus D}\colon X\setminus D\to \D_{\delta}^{*}$ is a submersion; and keep shrinking $\delta>0$ whenever needed.

Throughout the article, the adjective \emph{fiberwise} always refers to the fibers of $f|_{X\setminus D}$, or its extension $f_{A}$ to the A'Campo space, introduced in Section \ref{sec:A}.  We write $\imath=\sqrt{-1}$.

\subsection{Dual complex of an snc divisor}\label{sec:dual_complex}

Let $D=\sum_{i=1}^{N}m_i D_i$ be an snc divisor as above. For $I\subseteq \{1,\dots, N\}$ we put $X_{I}=\bigcap_{i\in I}D_{i}$, $X_{I}^{\circ}=X_{I}\setminus \bigcup_{j\not\in I}D_{j}$. This way, $X_{I}=\bar{X}_{I}^{\circ}$ and $X=\bigsqcup_{I} X_{I}^{\circ}$ is a stratification of $X$. 

Let $\Delta=\{(w_{1},\dots,w_{N})\in \R^{N}: \sum_{i}w_{i}=1\}$ be the standard $(N-1)$-dimensional simplex. For a subset $I\subseteq \{1,\dots,N\}$ we put $\Delta_{I}=\{(w_1,\dots,w_{N})\in \Delta: w_{i}=0 \mbox{ for } i\neq I\}$. The \emph{dual complex} of $D$ is $\Delta_{D}\de\bigcup_{\{I: X_{I}\neq \emptyset\}} \Delta_{I}\subseteq \Delta$. We equip it with a metric induced by the standard Riemannian metric  $\sum_{i}(dw_{i})^{2}$ on $\R^{N}$. For a given subset $\cS\subseteq \{1,\dots, N\}$, we put $\Delta_{\cS}=\Delta_{D}\cap (\bigcup_{I\subseteq \cS}\Delta_{I})$.

Since the divisor $D$ is snc, for every face $\Delta_{I}$ of $\Delta_{D}$ we have $\#I=\codim_{X} X_{I}=n+1-\dim X_{I}\leq n+1$. We say that the face $\Delta_{I}$ is \emph{maximal} if $\#I=n+1$ (so $X_{I}$ is a point), and \emph{submaximal} if $\#I=n$ (so $X_{I}$ is a curve). We are mostly interested in the case when $\Delta_{D}$ has at least one maximal face.
\smallskip

Each face $\Delta_{I}$, viewed as a subset of the affine space $\{\sum_{i}w_{i}=1, w_{i}=0 \mbox{ for } i\not\in I\}\subseteq \R^{N}$ is a manifold with boundary and corners, see \cite[Appendice]{corners} for precise definitions. By Proposition 3.1 loc.\ cit, we can and do think of a manifold with boundary and corners as a subset of a usual smooth manifold, mapped by each local chart to an open subset of $\R^{p}\times \R^{m-p}_{\geq 0}$ for some $0\leq p \leq m$. We use the notions of smooth functions, tangent space etc.\ inherited from the ambient smooth manifold.

\subsection{Maximal Calabi--Yau degenerations and their models}\label{sec:CY-intro}

A smooth projective complex manifold $Y$ is \emph{Calabi--Yau} if its canonical divisor $K_{Y}$ is trivial, i.e., there is a nowhere vanishing holomorphic $n$-form $\Omega\in \Omega^{n,0}(Y)$, for $n=\dim Y$.  Yau's solution to the Calabi conjecture \cite{Yau_Calabi-conjecture} asserts that any K\"ahler class in $H^{1,1}(Y)\cap H^{2}(Y,\R)$ admits a unique representative whose associated metric is Ricci flat, see \cite[\textsection 5]{Joyce_yellow}. We refer to this representative as \emph{the Calabi--Yau form}, and denote it by $\omega_{\CY}$. Ricci-flatness of the metric $g_{\CY}\de \omega_{\CY}(\sdot,J\sdot)$ is equivalent to the fact that there exists a holomorphic volume form $\Omega$  satisfying the \emph{complex Monge-Amp\`ere equation}
\begin{equation}\label{eq:complex_MA}
		\tfrac{1}{n!}\cdot \omega_{\CY}^{n}=\vol^{\Omega},\quad\mbox{where}\quad \vol^{\Omega}\de \left(\tfrac{\imath}{2}\right)^{n}\cdot (-1)^{\frac{1}{2}n(n-1)}\cdot  \Omega \wedge \bar{\Omega},
\end{equation} 
cf.\ \cite[formula 11]{Joyce_yellow} or \cite[Proposition 6.25]{BigBook}.
\smallskip

Let $f^{\circ}\colon X^{\circ}\to \C^{*}$ be a projective morphism whose fibers $X_{z}\de (f^{\circ})^{-1}(z)$ are Calabi--Yau manifolds of dimension $n$. We say that $f$ is a \emph{Calabi--Yau degeneration} if $f^{\circ}$ admits an extension to a projective  \emph{model} $f\colon X\to \C$. It is \emph{maximal} if the monodromy action on $H^n(X_z,\Q)$ has a Jordan block of maximal size $n+1$, 
cf.\ \cite[\textsection 3.1]{KS_Conjecture}. 
\smallskip

Resolving singularities, we see that every $f^{\circ}$ admits a model $f\colon X\to \C$ which is \emph{snc}, i.e., such that $X$ is a smooth K\"ahler manifold, and the central fiber $f^{-1}(0)$ is an snc divisor. Moreover, every $f^{\circ}$ admits, after a finite base change, an snc model which is \emph{semistable}, i.e., its central fiber is reduced \cite[Chapter II]{KKMSD_semistable-reduction}. We note that after a finite base change, a maximal Calabi--Yau degeneration is still maximal. Indeed, the general fiber remains the same, while the monodromy transformation is replaced by its iterate, so its Jordan blocks remain of the same size.
\smallskip

We now introduce the \emph{essential skeleton} $\Delta_{\cS}$ of $f^{\circ}$, which agrees with the one used in \cite{MN_weight-functions,NX_skeleton}, see also \cite[\textsection 2.1]{NXY}. Let $f\colon X\to \C$ be an snc model of $f^{\circ}$. We write its central fiber as $D=\sum_{i=1}^{N} m_{i}D_{i}$, see formula \eqref{eq:multiplicites}, and use notation from Section \ref{sec:dual_complex}. By adjunction, for every $z\neq 0$ the smooth fiber $X_{z}$ satisfies $K_{X}|_{X_{z}}=K_{X_{z}}=0$, so the canonical bundle of $X$ admits a meromorphic section $\Theta$ with no zeros or poles off $D$. We choose one such $\Theta$, and let $\nu_{i}\in \Z$ be an integer such that $\Theta$ has a zero of order $\nu_{i}-1$ along $D_{i}$. Thus we have a linear equivalence
\begin{equation}\label{eq:discrepancies}
	K_{X}+D\redd\sim \sum_{i=1}^{N}\nu_{i}D_{i}.
\end{equation}
Note that the coefficients $\nu_{i}$ depend on the choice of the section $\Theta$. If we choose another section, say $\Theta'$, then $\Theta'=uf^{k}\cdot \Theta$ for some holomorphic unit $u\in \cO_{X}^{*}$ and integer $k\in \Z$, so the corresponding coefficients are $\nu_{i}'=\nu_{i}+km_{i}$. It follows that the set
\begin{equation}\label{eq:essential}
	\cS=\left\{ i:\frac{\nu_i}{m_i}=\min_{j}\frac{\nu_j}{m_j} \right\}
\end{equation}
does not depend on $\Theta$. We say that a component $D_{i}$ of $D$ is \emph{essential} if $i\in \cS$; a face $\Delta_{I}$ of the dual complex $\Delta_{D}$ is \emph{essential} if $I\subseteq \cS$. The \emph{essential skeleton} $\Delta_{\cS}\subseteq \Delta_{D}$ is the union of all essential faces.

In \cite[\textsection 4.7]{MN_weight-functions}, the essential skeleton $\Delta_{\cS}$ (or rather its homeomorphic image in the Berkovich space) is called the \emph{Kontsevich--Soibelman skeleton} $\Sk(X^{\circ},\Omega)$, where 
\begin{equation}\label{eq:Omega}
	\Omega\de \frac{\Theta}{d \log f}=\frac{f\cdot\Theta}{df}
\end{equation}
is a chosen section of the vertical canonical bundle. As explained in \textsection 4.7.3 loc.\ cit, the quotient $\frac{\nu_i}{m_i}$ equals $\textnormal{wt}_{\Omega}(x)+1$, where $x$ is the divisorial valuation of $\C(X^{\circ})$ corresponding to $D_i$, and $\textnormal{wt}_{\Omega}$ is the \emph{weight function} introduced in \textsection 4.3.5 loc.\ cit. The essential skeleton is defined in 4.10 loc.\ cit.\ as $\Sk(X^{\circ})\de \bigcup_{\Omega} \Sk(X^{\circ},\Omega)$, so since $K_{X}$ is Calabi--Yau, this definition agrees with $\Sk(X^{\circ},\Omega)$, see \textsection 4.10.2 loc.\ cit. Hence up to a homeomorphism, our essential skeleton $\Delta_{\cS}$ agrees with the one in loc.\ cit. 

Theorem 4.7.5 and Corollary 3.2.4 loc.\ cit.\ show that the homeomorphism type of $\Delta_{\cS}$, and its piecewise affine structure inherited from $\Delta_{D}$ (see \textsection 3.2 loc.\ cit), do not depend on the snc model $X$.  By \cite[Theorem 4.1.10]{NX_skeleton}, the Calabi--Yau degeneration $f^{\circ}$ is maximal if and only if $\dim \Delta_{\cS}=n$. 

We remark that by \cite[Proposition 4.3.4]{MN_weight-functions} the property of being essential depends only on the corresponding valuation of $\C(X^{\circ})$, and not on a particular snc model. More precisely, if a valuation of $\C(X^{\circ})$ corresponds to a component $E$ (resp.\ $E'$) of the central fiber of some snc model $f$ (resp.\ $f'$), then $E$ is essential if and only if $E'$ is.

\smallskip

The following proposition is known to experts, cf.\ e.g., \cite[p.\ 969]{NXY}. For the reader's convenience, we include its proof.
\begin{prop}\label{prop:model}
	Let $f^{\circ}\colon X^{\circ}\to \C^{*}$ be a Calabi--Yau degeneration admitting a semistable snc model. Then $f^{\circ}$ admits a (not necessarily semistable) snc model $f\colon X\to \C$ with the following properties.
	\begin{enumerate}
		\item \label{item:sk_reduced} Every essential component $D_{i}$ of the central fiber $D=f^{-1}(0)$ has multiplicity one in $D$.
		\item\label{item:sk_nu-i} The canonical bundle $K_{X}$ admits a section $\Theta$ such that, denoting as above by $\nu_{i}-1$ the order of $\Theta$ along a component $D_i$ of $D$, we have $\nu_{i}\geq 0$, with equality if and only if $D_{i}$ is essential.
	\item \label{item:sk_P1_or_ell} Let $\Delta_{I}$ be a submaximal, essential face of the dual complex $\Delta_{D}$. Then every maximal face of $\Delta_{D}$ containing $\Delta_{I}$ is essential, too, and the following hold.
	\begin{enumerate}
			\item\label{item:sk_P1} If $f^{\circ}$ is a maximal degeneration then $(X_{I},X_{I}^{\circ})\cong (\mathbb{P}^1,\C^{*})$.
			\item\label{item:sk_ell} If $f^{\circ}$ is not a maximal degeneration then $X_{I}=X_{I}^{\circ}$ is an elliptic curve.
	\end{enumerate} 
	\end{enumerate}
\end{prop}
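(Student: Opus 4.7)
The plan is to apply the relative Minimal Model Program to the given semistable snc model and read off the desired properties from the output. Starting from the semistable snc model $f'\colon X'\to\C$ (for which $D'\redd=D'$), I would construct a relative $\Q$-factorial dlt minimal model over $\C$ for the pair $(X',D')$, relying on termination results built on BCHM and Fujino. After an extra log resolution if needed to restore snc, this yields a model $f\colon X\to\C$ in which the essential components of $D$ are precisely the strict transforms of the essential components of $D'$ and hence retain multiplicity $m_i=1$, while by \cite{NX_skeleton} the essential skeleton $\Delta_{\cS}$ is a closed pseudomanifold of dimension $n$ if $f^\circ$ is maximal and of smaller dimension otherwise.

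Part (a) is then immediate. For (b), take any meromorphic section $\Theta_0$ of $K_X$ supported on $D$ with exponents $\nu_{0,i}$; the common value $c\de\nu_{0,i}$ for $i\in\cS$ is the integer $\min_j\nu_{0,j}/m_j$, since $m_i=1$ on $\cS$. The rescaled section $\Theta\de f^{-c}\Theta_0$ has exponents $\nu_i=\nu_{0,i}-cm_i$, yielding $\nu_i=0$ for $i\in\cS$ and $\nu_i/m_i>0$, hence $\nu_i\geq 1$, for $i\notin\cS$.

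Part (c) rests on the \emph{adjacency claim}: for every submaximal essential face $\Delta_I$, no component $D_j$ with $j\notin\cS$ meets $X_I$; equivalently, every maximal face of $\Delta_D$ incident to $\Delta_I$ lies in $\cS$. Granted this, iterated adjunction along $X_I=\bigcap_{i\in I}D_i$, together with the coefficients from (b), gives
\begin{equation*}
K_{X_I}\sim\Bigl(K_X+\sum_{i\in I}D_i\Bigr)\big|_{X_I}=-\sum_{j\in\cS\setminus I}(D_j\cap X_I),
\end{equation*}
since coefficients for $j\in I$ cancel ($\nu_j-1+1=0$), those for $j\in\cS\setminus I$ equal $-1$, and those for $j\notin\cS$ vanish by the claim. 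Each nonempty $D_j\cap X_I$ with $j\in\cS\setminus I$ is a single reduced point $X_{I\cup\{j\}}$. If $f^\circ$ is maximal, the closed $n$-pseudomanifold $\Delta_{\cS}$ contains $\Delta_I$ in exactly two maximal faces, so $\deg K_{X_I}=-2$, forcing $X_I\cong\P^1$ and $X_I^\circ\cong\C^*$. If $f^\circ$ is not maximal, no maximal essential face exists, the sum is empty, $X_I$ meets no other component of $D$, so $X_I=X_I^\circ$ has trivial canonical bundle and is an elliptic curve.

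The main obstacle is the adjacency claim: extracting from the MMP output the precise separation between essential and non-essential strata, in particular the absence of any non-essential maximal face of $\Delta_D$ incident to $\Delta_I$. This is the technically delicate point, which I would settle by combining structural properties of minimal dlt models for Calabi--Yau degenerations with the behaviour of $\Delta_{\cS}$ under the MMP, as developed in \cite{NX_skeleton,NXY}.
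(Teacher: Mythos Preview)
Your strategy is essentially the one the paper uses: run the relative MMP on the semistable model to obtain a good dlt minimal model $\bar f\colon\bar X\to\C$, take a log resolution $\pi\colon X\to\bar X$ that is an isomorphism over the snc locus, identify the essential components of $D$ with the proper transforms of the components of $\bar D$ (hence reduced), and finish (c) by adjunction on the curve $X_I$. Your derivations of (a), (b), and the adjunction step in (c) match the paper's, modulo one imprecision: the essential components are the strict transforms from the dlt model $\bar D$, not from the original semistable fiber $D'$, since the MMP may contract some components of $D'$.

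Where the paper goes further is exactly your ``adjacency claim''. Rather than deferring to \cite{NX_skeleton,NXY} in the abstract, the paper supplies the mechanism. First, it proves that $K_{\bar X}$ is linearly trivial (not merely numerically trivial over the base): writing $K_{\bar X}\sim\sum_i\bar a_i\bar D_i$, a short argument using nefness of $K_{\bar X}$ and connectedness of $\bar D$ forces all $\bar a_i$ to coincide, so $K_{\bar X}\sim\bar a_1\bar D\sim 0$. This makes $K_{\bar X}+\bar D$ Cartier, which is the hypothesis needed to invoke \cite[Theorem~4.5]{NXY}: every one-dimensional stratum of $\bar D$ lies in the snc locus of $(\bar X,\bar D)$. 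Since $\pi$ is an isomorphism over that locus, $\pi$ is an isomorphism near $X_I$, so every $D_j$ meeting $X_I$ is a proper transform from $\bar D$, hence essential. That is precisely your adjacency claim, and once it is in hand your adjunction argument and the pseudomanifold dichotomy from \cite{NX_skeleton} finish (c) exactly as you wrote.
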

\begin{proof}
	Let $\tilde{f}\colon \tilde{X}\to \C$ be a semi-stable snc model of $f^{\circ}$, with central fiber $\tilde{D}$. By \cite[Theorem 1.1]{Fujino_semi-stable}, there is a birational map of normal, $\Q$-factorial varieties $\tilde{X}\map \bar{X}$ over $\C$, such that $K_{\bar{X}}\sim 0$ and the central fiber $\bar{D}$ of the induced morphism $\bar{X}\to \C$ is reduced. Moreover, by construction, $(\bar{X},\bar{D})$ is dlt, see the proof of loc.\ cit. Note that $(\bar{X},\bar{D})$ is a good dlt model of $f^{\circ}$, see \cite[Theorem 2.3.6(1)]{NX_skeleton}.	

Since $K_{\bar{X}}$ and $\bar{D}$ are principal Cartier divisors, we get that  
$K_{\bar{X}}+\bar{D}$ is Cartier. Since 
$\bar{X}$ is $\Q$-factorial, 
all components of $\bar{D}$ are $\Q$-Cartier. Hence \cite[Theorem 4.5]{NXY} implies that 
\begin{equation}\label{eq:NXY}
	\mbox{each one-dimensional stratum of }\bar{D}\mbox{ is contained in the snc locus of }\bar{X},
\end{equation}
where we consider the standard stratification of $\bar{D}$ by intersections of components, see \textsection 1.10 loc.\ cit.

Let $\pi\colon (X,D)\to (\bar{X},\bar{D})$ be a log resolution, i.e., a proper morphism which is an isomorphism over the locus where $\bar{X}$ is smooth and $\bar{D}$ is snc; such that $X$ is smooth and $D\de \pi^{*}\bar{D}$ is snc. Such a resolution exists, see \cite[Theorem 10.45(2)]{Kollar_singularities_of_MMP} and references there. Write $D=\sum_{i=1}^{N}m_{i}D_{i}$, and split $\{1,\dots,N\}=\cE\sqcup \cP$, where $\sum_{i\in \cE} D_{i}$ is the exceptional divisor of $\pi$, and $\sum_{i\in \cP} D_{i}$ is the strict transform of $\bar{D}$. Note that $m_i=1$ for all $i\in \cP$.

Recall that $K_{\bar{X}}+\bar{D}$ is a Cartier divisor. Hence $K_{X}+D\redd-\pi^{*}(K_{\bar{X}}+\bar{D})$ is a Cartier divisor, too. Its push-forward on $\bar{X}$ is trivial since $\pi_{*}D\redd=\bar{D}$. Thus for some integers $\nu_{i}$ we have a linear equivalence 
	\begin{equation*}
		K_{X}+D\redd\sim \pi^{*}(K_{\bar{X}}+\bar{D})+\sum_{i\in \cE} \nu_i D_{i}\sim \sum_{i=1}^{N} \nu_{i} D_{i},
	\end{equation*}
	where we put $\nu_{i}=0$ for $i\in \cP$, and use the linear equivalences $K_{\bar{X}}\sim \bar{D}\sim 0$. We now choose a section $\Theta$ of $K_{X}$ with zeros of order $\nu_{i}-1$ along each $D_{i}$, see formula \eqref{eq:discrepancies}.
	
	Since the pair $(\bar{X},\bar{D})$ is dlt and the resolution $\pi$ is an isomorphism over its snc locus, we have $\nu_{i}>0$ for all $i\in \cE$, see \cite[\textsection 2.3.1]{NX_skeleton}. Thus  $\cP=\{i:\nu_{i}=0\}=\{i:\frac{\nu_i}{m_i}=\min_{j} \frac{\nu_{j}}{m_j}\}$. We conclude that $\cP$ is the set \eqref{eq:essential}, i.e., $i\in \cP$ if and only if a component $D_{i}$ of $D$ is essential, cf.\ \cite[Theorem 3.3.3]{NX_skeleton}. Now conditions \ref{item:sk_reduced}, \ref{item:sk_nu-i} follow from the fact that $(m_i,\nu_i)=(1,0)$ for all $i\in \cP$.
	
	Let now $\Delta_{I}$ be a submaximal face of the dual complex of $D$, so $\dim X_{I}=1$. Assume that $\Delta_{I}$ is essential, so $I\subseteq \cP$. By definition of $\cP$, the morphism $\pi$ is an isomorphism at a general point of $X_{I}$, so $\dim\pi(X_{I})=\dim X_{I}=1$. Now condition \eqref{eq:NXY} implies that $\pi(X_{I})$ is contained in the snc locus of $\bar{D}$, so $\pi$ is an isomorphism near $X_{I}$. It follows that for every component $D_{j}$ of $D$ meeting $X_{I}$, we have $j\in \cP$, hence $D_j$ is essential. This proves the first part of \ref{item:sk_P1_or_ell} and shows that $(K_{X}+D)|_{X_{I}}\sim 0$.
	
	Write $B_{I}=(\sum_{i\not \in I} m_{i}D_{i})|_{X_{I}}$, so $B_{I}$ is a divisor on $X_{I}$ supported on $X_{I}\setminus X_{I}^{\circ}$. Since each component $D_{i}$ meeting $X_{I}$ has $i\in \cP$, so $m_i=1$, we see that $B_{I}$ is reduced. Applying Lemma \ref{lem:adj_ind} below to $J=\emptyset$, we get $K_{X_{I}}+B_{I}\sim (K_{X}+D)|_{X_{I}}\sim 0$. It follows that either $B_{I}=0$ and $X_{I}=X_{I}^{\circ}$ is an elliptic curve; or $B_{I}\neq 0$ and $(X_{I},X_{I}^{\circ})\cong (\mathbb{P}^1,\C^{*})$. Since by \cite[Theorem 4.2.4(3)]{NX_skeleton}, the essential skeleton $\Delta_{\cS}$ is a pseudomanifold (with boundary) we have $\dim\Delta_{\cS}=n-1$ in the first case and $\dim\Delta_{\cS}=n$ in the second case, see condition (1) of the definition of pseudomanifold given in \textsection 4.1.2 loc.\ cit. By Theorem 4.1.10 loc.\ cit., the second case happens if and only if $f^{\circ}$ is a maximal degeneration. 
\end{proof}

In the above proof, we used the following elementary lemma.

\begin{lema}\label{lem:adj_ind}
	Let $D$ be an snc divisor on a smooth projective variety $X$. Write $D=\sum_{i=1}^{N} m_{i}D_{i}$, and for $I\subseteq \{1,\dots, N\}$ put $X_{I}=\bigcap_{i\in I} D_{i}$, $X_{\emptyset}=X$,  $B_{I}=\sum_{i\not\in I}m_{i}D_{i}|_{X_{I}}$. Fix two subsets $J\subseteq I\subseteq \{1,\dots, N\}$, and assume that $m_{i}=1$ for all $i\in I$. Then $K_{X_I}+B_I\sim (K_{X_{J}}+B_{J})|_{X_{I}}$.
\end{lema}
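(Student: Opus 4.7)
The plan is to prove the statement by induction on the cardinality of $I\setminus J$. The base case $I=J$ is trivial, since $B_{J}=B_{I}$ and the restriction to $X_{I}=X_{J}$ is the identity. For the inductive step, pick any $k\in I\setminus J$ and set $I'\de I\setminus\{k\}$. Since $J\subseteq I'\subseteq I$ and $|I'\setminus J|<|I\setminus J|$, the inductive hypothesis gives
\begin{equation*}
	K_{X_{I'}}+B_{I'}=(K_{X_{J}}+B_{J})|_{X_{I'}}.
\end{equation*}
Restricting this identity further to $X_{I}\subseteq X_{I'}$, it suffices to show the one-step adjunction formula $K_{X_{I}}+B_{I}=(K_{X_{I'}}+B_{I'})|_{X_{I}}$.

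For this one-step claim, I would use that $D$ is snc, so the intersection $D_{k}\cap X_{I'}$ is a smooth divisor in $X_{I'}$, and in fact equals $X_{I}$. Standard adjunction for a smooth divisor then yields
\begin{equation*}
	K_{X_{I}}=\bigl(K_{X_{I'}}+D_{k}|_{X_{I'}}\bigr)\bigl|_{X_{I}}=K_{X_{I'}}|_{X_{I}}+\cO(D_{k})|_{X_{I}},
\end{equation*}
where $\cO(D_{k})|_{X_{I}}$ has to be interpreted as a line bundle (the conormal bundle of $X_{I}$ in $X_{I'}$), since $X_{I}\subseteq D_{k}$.

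On the other hand, splitting off the $i=k$ term and using the hypothesis $m_{k}=1$, I get
\begin{equation*}
	B_{I'}|_{X_{I}}=\sum_{i\notin I'}m_{i}\cO(D_{i})|_{X_{I}}=\cO(D_{k})|_{X_{I}}+\sum_{i\notin I}m_{i}\cO(D_{i})|_{X_{I}}=\cO(D_{k})|_{X_{I}}+B_{I}.
\end{equation*}
Adding the two displays, the $\cO(D_{k})|_{X_{I}}$ contributions cancel, leaving exactly $(K_{X_{I'}}+B_{I'})|_{X_{I}}=K_{X_{I}}+B_{I}$, which closes the induction.

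The only subtle point is bookkeeping in the one-step identity: even though $X_{I}\subseteq D_{k}$, the term $\cO(D_{k})|_{X_{I}}$ is nontrivial and matches the contribution coming from adjunction, which is why the hypothesis $m_{k}=1$ is essential. No real obstacle is expected; the argument is a routine iteration of the adjunction formula along the strata of an snc divisor, and the snc hypothesis on $D$ (guaranteeing transverse, smooth intersections at every step) together with $m_{i}=1$ for $i\in I$ (giving the clean cancellation above) are precisely what makes the induction go through.
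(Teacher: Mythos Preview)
Your proposal is correct and follows essentially the same approach as the paper: both argue by induction on $\#(I\setminus J)$, reduce to the one-step case where the larger set differs from the smaller by a single index, apply standard adjunction for the smooth divisor $D_{k}\cap X_{I'}=X_{I}$, and use the hypothesis $m_{k}=1$ to cancel the $D_{k}|_{X_{I}}$ term against the difference $B_{I'}|_{X_{I}}-B_{I}$. The paper's write-up is slightly terser (it simply says ``we can assume that $I=J\sqcup\{i\}$''), but the content is identical.
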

\begin{proof}
	By induction on $\#I\setminus J$, we can assume that $I=J\sqcup\{i\}$. By adjunction, we have $K_{X_{I}}=(K_{X_{J}}+D_{i}|_{X_{J}})|_{X_{I}}=K_{X_{J}}|_{X_{I}}+D_{i}|_{X_{I}}$. Moreover, $B_{I}=\sum_{j\not\in I} m_jD_j|_{X_{I}}=\sum_{j\not\in J} m_{j}D_{j}|_{X_{I}}-m_{i}D_{i}|_{X_{I}}=B_{J}|_{X_{I}}-D_{i}|_{X_{I}}$ since $m_{i}=1$ by assumption. Adding the two equalities we get $K_{X_{I}}+B_{I}=(K_{X_{J}}+B_{J})|_{X_{I}}$.
\end{proof}

\begin{example}[Hesse pencil]\label{ex:Fermat}
	Consider a family of elliptic curves $X=\{(x_1^3+x_2^3+x_3^3)\cdot z=x_1x_2x_3\}\subseteq \mathbb{P}^{2}\times \C$, and let $f\colon X\to \C$ be the projection to the $z$-coordinate. The restriction $f^{\circ}$ of $f$ over $\C^{*}$ is a maximal Calabi--Yau degeneration, and $f$ is its model satisfying all conditions of Proposition \ref{prop:model}. Indeed, the central fiber $D=f^{-1}(0)$ is just a union of three non-concurrent lines $D_1,D_2,D_3\subseteq \mathbb{P}^2$; and $K_{X}=0$, so each $D_i$ is essential. We conclude that the essential skeleton is a triangle.
	
	Fix $p\in D$ and let $\pi\colon X'\to X$ be a blowup at $p$, so $f'\de f\circ \pi$ is another model of $f^{\circ}$. Put $D'=(f')^{-1}(0)$, $D_i'=\pi^{-1}_{*}D_i$ and $D_{0}'=\Exc\pi$, so $K_{X'}=D_{0}'$. Let $m_{i}'$, $\nu_{i}'$ be as in formulas \eqref{eq:multiplicites}, \eqref{eq:discrepancies}, so $\nu_{i}'/m_{i}'=1$ for $i\in \{1,2,3\}$ and $\nu_{0}'=2$. 
	
	Assume that $p$ is a smooth point of $D$. Then $m_{0}'=1$, so $\nu_{0}'/m_{0}'=2>1$, i.e., $D_{0}'$ is not essential. Hence the model $f'$ satisfies conditions \ref{prop:model}\ref{item:sk_reduced} and \ref{item:sk_nu-i}, but does not satisfy \ref{prop:model}\ref{item:sk_P1_or_ell}. Note that the component $D_{i}'$ meeting $D_0'$ satisfies  $(D_{i}',(D_{i}'\setminus (D'-D_{i}'))\cong (\mathbb{P}^{1},\C^{**})$, so \ref{prop:model}\ref{item:sk_P1} fails, too. The essential skeleton $\Delta_{\cS}'$ computed using this model is still a triangle, in agreement with \cite[Theorem 4.7.5]{MN_weight-functions}.
	
	Assume now that $p$ is a singular point of $D$. Then $m_{0}'=2$, so $\nu_{0}'/m_{0}'=1$, i.e., all components of $D'$ are essential. This model satisfies  conditions \ref{prop:model}\ref{item:sk_nu-i} and \ref{item:sk_P1_or_ell}, but does not satisfy \ref{prop:model}\ref{item:sk_reduced}. Now $\Delta_{\cS}'$ is a square obtained by subdividing the triangle $\Delta_{\cS}$, again in agreement with \cite[Theorem 4.7.5]{MN_weight-functions}.
\end{example}

\begin{example}[Families of Calabi--Yau hypersurfaces in $\mathbb{P}^{n+1}$]
	\label{ex:general-model}
	Fix $n\geq 2$. Consider a family of hypersurfaces $\bar{X}=\{P(x_0,\dots,x_{n+1})\cdot
	z=x_0\cdot \ldots \cdot x_{n+1}\}\subseteq \mathbb{P}^{n+1}\times \C$, where $P$ is a general
	homogeneous polynomial of degree $n+2$, and let $\bar{f}\colon X\to \C$ be the
	projection to the $z$-coordinate. We have $K_{\bar{X}}\sim 0$ by adjunction. The restriction $f^{\circ}$ of
	$\bar{f}$ over $\C^{*}$ is a maximal Calabi--Yau degeneration. For $n=2$ it is a classical K3 degeneration studied e.g.\ in \cite{Gross-Wilson_K3,CG-K3}, see Figure \ref{fig:K3-simplex} below. Case $n=3$ is thoroughly described in \cite[\textsection 3]{MazzonSchneider-toric}. For general $n$, such families were studied e.g.\ in \cite{MazzonSchneider-toric,Pille-Schneider,HJMMC_NAMA}, see Example \ref{ex:general-fibration} for a comparison of  some results of loc.\ cit.\ with our constructions.
	
	The central fiber $\bar{D}=\bar{f}^{-1}(0)$ is the sum of coordinate
	hyperplanes $\bar{D}_0,...,\bar{D}_{n+1}$. Nonetheless, $\bar{f}$ is not an snc model of $f^{\circ}$, because the total space $\bar{X}$ is singular at
	the intersection of the zero locus of $P$ and any two coordinate hyperplanes (it is not even a good dlt model, as $\bar{D}_{i}$ are not $\Q$-Cartier). For instance, in case $n=2$, $\bar{X}$ has 24 singular points, $6$ in each submaximal stratum of $\bar{D}$, see Figure \ref{fig:K3-simplex}.
	
	To obtain an snc model, fix a permutation $\sigma$ of the set $\{0,...,n+1\}$, and blow up $\bar{X}$ successively at the strict transforms of the Weil divisors $\bar{D}_{\sigma(0)},...,\bar{D}_{\sigma(n)}$. The resulting morphism $\pi\colon \bar{X}\to X$ is crepant, so $K_{X}\sim 0$. The composition $f=\bar{f}\circ\pi \colon X\to \C$ is an snc model of $f^{\circ}$, whose central fiber $D$ is the strict transform of $\bar{D}$, in particular, it is reduced. Since $K_{X}\sim 0$, all components of $D$ are essential. We conclude that $f$ is a model as in Proposition \ref{prop:model}, and the essential skeleton $\Delta_{\cS}$ equals the dual complex of $D$, which is an $n$-dimensional simplex. For each $i\in \cS=\{0,\dots,n+1\}$, the maximal face $\Delta_{\cS\setminus \{i\}}$ of $\Delta_{\cS}$ corresponds to the unique common point of all components of $D-D_i$. Each submaximal face $\Delta_{\cS\setminus \{i,j\}}$ corresponds to the intersection of all components of $D-D_i-D_j$, which is a projective line meeting $D_i$ and $D_j$ in two different points, as required by Proposition \ref{prop:model}\ref{item:sk_P1}.
	
	For different permutations $\sigma$ we get different models, connected by flops.
\begin{figure}[ht]
	\begin{tikzpicture}[scale=0.9]
		\begin{scope}
			\coordinate (A) at (0,0);
			\coordinate (B) at (3,0);
			\coordinate (C) at (2,2.5);
			\coordinate (D) at (4,1);
			\draw[add = 0.1 and 0.1] (A) to (B);
			\draw[add = 0.1 and 0.1] (A) to (C);
			\draw[add = 0.1 and 0.1, densely dashed] (A) to (D);
			\draw[add = 0.1 and 0.1] (B) to (C);
			\draw[add = 0.1 and 0.1] (B) to (D);
			\draw[add = 0.1 and 0.1] (C) to (D);
			\filldraw ($(A)!0.2!(B)$) circle (0.06);
			\filldraw ($(A)!0.4!(B)$) circle (0.06);
			\filldraw ($(A)!0.6!(B)$) circle (0.06);
			\filldraw ($(A)!0.8!(B)$) circle (0.06);
			\filldraw ($(A)!0.2!(C)$) circle (0.06);
			\filldraw ($(A)!0.4!(C)$) circle (0.06);
			\filldraw ($(A)!0.6!(C)$) circle (0.06);
			\filldraw ($(A)!0.8!(C)$) circle (0.06);
			\filldraw ($(B)!0.2!(C)$) circle (0.06);
			\filldraw ($(B)!0.4!(C)$) circle (0.06);
			\filldraw ($(B)!0.6!(C)$) circle (0.06);
			\filldraw ($(B)!0.8!(C)$) circle (0.06);
			\filldraw ($(B)!0.2!(D)$) circle (0.06);
			\filldraw ($(B)!0.4!(D)$) circle (0.06);
			\filldraw ($(B)!0.6!(D)$) circle (0.06);
			\filldraw ($(B)!0.8!(D)$) circle (0.06);
			\filldraw ($(C)!0.2!(D)$) circle (0.06);
			\filldraw ($(C)!0.4!(D)$) circle (0.06);
			\filldraw ($(C)!0.6!(D)$) circle (0.06);
			\filldraw ($(C)!0.8!(D)$) circle (0.06);
			\filldraw ($(A)!0.2!(D)$) circle (0.06);
			\filldraw ($(A)!0.4!(D)$) circle (0.06);
			\filldraw ($(A)!0.6!(D)$) circle (0.06);
			\filldraw ($(A)!0.8!(D)$) circle (0.06);
			\draw[<-] (4.6,1.5) -- (7,1.5);
			\node at (5.8,1.7) {\small{$\pi$}};
			\node at (5.8,1.3) {\scriptsize{small resolution}};
			\node at (3.8,0.2) {\small{$\bar{D}$}};
		\end{scope}
		\begin{scope}[shift={(7,0)}]
			\coordinate (A) at (0,0);
			\coordinate (B) at (3,0);
			\coordinate (C) at (2,2.5);
			\coordinate (D) at (4,1);
			\draw[add = 0.1 and 0.1] (A) to (B);
			\draw[add = 0.1 and 0.1] (A) to (C);
			\draw[add = 0.1 and 0.1, densely dashed] (A) to (D);
			\draw[add = 0.1 and 0.1] (B) to (C);
			\draw[add = 0.1 and 0.1] (B) to (D);
			\draw[add = 0.1 and 0.1] (C) to (D);
			\draw[thick, add=0 and -0.9] ($(A)!0.2!(B)$) to (C); 
			\draw[thick, add=0 and -0.9] ($(A)!0.4!(B)$) to (C);
			\draw[thick, add=0 and -0.9] ($(A)!0.6!(B)$) to (C);
			\draw[thick, add=0 and -0.9] ($(A)!0.8!(B)$) to (C);
			\draw[thick, add=0 and -0.9] ($(A)!0.2!(C)$) to (B); 
			\draw[thick, add=0 and -0.9] ($(A)!0.4!(C)$) to (B);
			\draw[thick, add=0 and -0.9] ($(A)!0.6!(C)$) to (B);
			\draw[thick, add=0 and -0.9] ($(A)!0.8!(C)$) to (B);
			\draw[thick, add=0 and -0.9] ($(B)!0.2!(C)$) to (A); 
			\draw[thick, add=0 and -0.9] ($(B)!0.4!(C)$) to (A);
			\draw[thick, add=0 and -0.9] ($(B)!0.6!(C)$) to (A);
			\draw[thick, add=0 and -0.9] ($(B)!0.8!(C)$) to (A);
			\draw[thick, add=0 and -0.9] ($(B)!0.2!(D)$) to (C); 
			\draw[thick, add=0 and -0.9] ($(B)!0.4!(D)$) to (C);
			\draw[thick, add=0 and -0.9] ($(B)!0.6!(D)$) to (C);
			\draw[thick, add=0 and -0.9] ($(B)!0.8!(D)$) to (C);
			\draw[thick, add=0 and -0.9] ($(C)!0.2!(D)$) to (B); 
			\draw[thick, add=0 and -0.9] ($(C)!0.4!(D)$) to (B);
			\draw[thick, add=0 and -0.9] ($(C)!0.6!(D)$) to (B);
			\draw[thick, add=0 and -0.9] ($(C)!0.8!(D)$) to (B);
			\draw[thick, add=0 and -0.85,densely dashed] ($(A)!0.2!(D)$) to (C); 
			\draw[thick, add=0 and -0.85,densely dashed] ($(A)!0.4!(D)$) to (C);
			\draw[thick, add=0 and -0.85,densely dashed] ($(A)!0.55!(D)$) to (C);
			\draw[thick, add=0 and -0.85,densely dashed] ($(A)!0.8!(D)$) to (C);
			\node at (3.6,0.2) {\small{$D$}};
		\end{scope}
	\begin{scope}[shift={(13,-0.2)}]
		\coordinate (A) at (1.5,0.5);
		\coordinate (B) at (0,2);
		\coordinate (C) at (3,2);
		\coordinate (D) at (1.5,3);
		\draw (A) to (B);
		\draw (A) to (C);
		\draw[densely dashed] (A) to (D);
		\draw (B) to (C);
		\draw (B) to (D);
		\draw (C) to (D);
		\node at (1.5,0.2) {\small{$\Delta_{\bar{D}}=\Delta_{D}=\Delta_{\cS}$}};
	\end{scope}	
	\end{tikzpicture}
	\caption{Example \ref{ex:general-model}, case $n=2$: a maximal K3 degeneration.}
	\label{fig:K3-simplex}
\end{figure}
\end{example}

\section{The A'Campo space and its \texorpdfstring{$\cC^{\infty}$}{smooth} structures}\label{sec:A}

Let $f\colon X\to \C$ be as in the beginning of Section \ref{sec:prelim}, i.e.,  $f$ is a holomorphic function whose unique singular fiber $D\de f^{-1}(0)$ is snc; and  $f|_{X\setminus D}\colon X\setminus D\to \D_{\delta}^{*}$ is a submersion. In \cite{A'Campo}, A'Campo introduced a topological space $A$ which fits into diagram~(\ref{eq:AX-diagram-intro}), hence allows to extend the monodromy of $f$ to \enquote{radius zero}, i.e., to the monodromy of the restriction $f|_{\d A}\colon \d A \to \d \C_{\log}$. This radius-zero monodromy has simpler dynamical properties than the natural one at positive radius. In \cite{FdBP_Zariski}, we upgraded the construction of $A$ to the $\cC^{\infty}$ category, and endowed it with a fiberwise symplectic form $\omega_{A}^{\epsilon}$ such that the radius-zero monodromy is induced by its associated symplectic connection. Our aim is to further improve this construction so that $\omega_{A}^{\epsilon}$ becomes fiberwise \emph{K\"ahler} at positive radius. 

\subsection{Definition of the A'Campo space}

In this section, we recall the definition of the topological A'Campo space $A$ from \cite{A'Campo}. In order to set up the notation needed to introduce the fiberwise symplectic structure, it is convenient for us to follow the construction of $A$ given in \cite{FdBP_Zariski}. 
 
Recall that $\sum_{i=1}^{N}m_iD_i$ is the irreducible decomposition of the central fiber $D=f^{-1}(0)$. 

We begin with the notion of an \emph{adapted chart} \cite[Definition 3.1]{FdBP_Zariski}. 
For a holomorphic chart $(z_{i_1},\dots,z_{i_n})\colon U_{X}\to \C^{n+1}$ of $X$, we define its \emph{associated index set} as $S=\{i\in \{1,\dots, N\}:D_i\cap U_{X}\neq \emptyset\}$. We say that this chart is \emph{adapted to $f$} if for every $i\in \{1,\dots, N\}$ we have $U_X\cap D_i=\{z_i=0\}$ and $f|_{U_X}=\prod_{i\in S}z_{i}^{m_i}$ in case $S\neq \emptyset$. We also require $U_{X}$ to be small enough so that $\log |z_{i}|<1$ for all $i\in S$, and each coordinate $z_i$ extends to a continuous function on the compact closure $\bar{U}_X$.

For any $i\in S$ we have the following smooth functions on $U_{X}\setminus D$, see \cite[\textsection 3.1.2]{FdBP_Zariski}
\begin{equation}\label{eq:basic functions_r}
		r_{i}=|z_i|,\quad \theta_i=\frac{z_i}{r_i},\quad s_i=\log r_i,\quad t_i=\frac{-1}{m_i s_i}.
\end{equation}
We call $r_i$ and $\theta_i$ the \emph{radial} and \emph{angular} coordinates of $U_{X}$. We also define global functions on $X\setminus D$
\begin{equation}\label{eq:basic functions_t}
	t=\frac{-1}{\log|f|},\quad g=\eta(t),\quad \theta=\frac{f}{|f|},
\end{equation}
where $\eta\colon [0,1]\to[0,1]$ is given by $\eta(\tau)=(1-\log \tau)^{-1}$. The key property of this auxiliary function is that its inverse is a smooth function whose all derivatives vanish at the origin. Note that the functions $t$ and $g$ are just re-scaled versions of $|f|$, in particular we have $t,g\rightarrow 0$ as $f\rightarrow 0$. We remark that $t$ is a \enquote{natural} rescaling of $|f|$ in the setting of maximal Calabi-Yau degenerations, cf.\ formula \eqref{eq:omega_new}. 
The further rescaling $g$ is needed to define a smooth atlas on $A$, but is not as crucial.

We view $\theta_{i}$ and $\theta$ as maps to the circle $\mathbb{S}^{1}$, identified with the additive group $\R/(2\pi\Z)$. With this identification, we get $\theta_{i}=\Im\log z_i$, $\theta=\Im\log f$. 
Eventually, for each $i\in S$ we put
\begin{equation}\label{eq:basic_functions_v}
		w_{i}=\frac{t}{t_i},\quad u_{i}=\eta(w_i),\quad v_{i}=t_i-u_i \quad\mbox{and}\quad \sigma_{i}= t_{i}^{2}+t_{i}u_{i}^{2}.
\end{equation}
The functions $r_i$, $w_i$ and $v_i$ should be thought of as \enquote{natural}, \enquote{tropical} and \enquote{hybrid} coordinates, respectively. Note that $w_{i}\in [0,1]$, $w_{i}$ converges to $0$ or $1$ as we approach $D\setminus D_i$ and $D_i\setminus (D-D_i)$, respectively. As we will see below, the A'Campo space is designed to resolve the indeterminacy of $w_i$ as we approach $D_{i}\cap (D-D_i)$. The same holds for $u_i$, which is a rescaling of $w_i$. For more intuition, we refer the reader to 
\cite[Figures 2 and 3]{FdBP_Zariski}. The last function $\sigma_{i}$ plays but an auxiliary role.

In order to define the A'Campo space as a $\cC^{1}$-manifold, it is convenient to choose an atlas $\{U_{X}^{p}\}_p$ consisting of adapted charts, a subordinate partition of unity $\{\tau^{p}\}_p$, and define
\begin{equation*}
	\bar{u}_{i}=\sum_{p}\tau^{p}u_{i}^{p},\quad \bar{v}_i=\sum_{p}\tau^{p}v_{i}^{p},
\end{equation*}
where for every $p$ such that $U_{X}^{p}\cap D_{i}\neq \emptyset$, the functions $u_{i}^{p},v_{i}^{p}$ are the ones defined for $U_{X}^{p}$ by formulas \eqref{eq:basic_functions_v}; and for $p$ such that $U_{X}^{p}\cap D_{i}=\emptyset$, we put $u_{i}^{p}=v_{i}^{p}=0$.

We define a topological space $\Gamma$ as the closure of the graph of $(\bar{u}_{1},\dots, \bar{u}_{N})\colon X\setminus D\to \R^{N}$, and the A'Campo space $A$ as the fiber product over $X$ of $\Gamma$ and the Kato--Nakayama space $X_{\log}$ of $(X,D)$, see diagram \eqref{eq:AX-diagram}. This way, $A$ is a topological space equipped with a continuous map $\pi\colon A\to X$ which is a homeomorphism over $X\setminus D$. Moreover, for any adapted chart $U_{X}\subseteq X$, all basic functions \eqref{eq:basic functions_r}--\eqref{eq:basic_functions_v} extend to continuous functions on the preimage $\pi^{-1}(U_X)$ \cite[Lemma 3.6(a)]{FdBP_Zariski}. Crucially for us, the value of each $w_{i}$ on $\pi^{-1}(U_X)\cap \d A$, where $\d A=\pi^{-1}(D)$ does not depend on the choice of the adapted chart $U_X$, see Lemma 3.6(d),(e) loc.\ cit.
	\begin{equation}\label{eq:AX-diagram}
		\begin{tikzcd}
			A
			\ar[r] 
			\ar[d, 
			] 
			\ar[dd, bend right=90, looseness=1, "{(g,\theta)\sim f_A}"'] 
			\ar[dr, phantom, "\ulcorner", very near start]
			\ar[dr, "\pi", start anchor = south east, shorten={5mm}]
			& \Gamma\de \overline{\mbox{\small{graph}}(\mu)} 
			\ar[d, 
			] 
			\ar[r, hook]
			& X\times  \R^{N} 
			\ar[d] 
			\\
			X_{\log}
			\ar[r]
			\ar[d, "f_{\log}"] 
			& X 
			\ar[r, dashed, "\mu", "{(\bar{u}_1,\ldots,\bar{u}_{N})}"'] 
			\ar[d, "f"] 
			& \R^{N} 
			\\
			\C_{\log}
			\ar[r] & \C &
		\end{tikzcd}
	\end{equation}	
The stratification $X=\bigsqcup_{I}X_{I}^{\circ}$ lifts to a decomposition $A=\bigsqcup_{I} A_{I}^{\circ}$, where $A_{I}^{\circ}\de \pi^{-1}(X_{I}^{\circ})$. We will see in Lemma \ref{lem:product} that each piece $A_{I}^{\circ}$ for $I\neq \emptyset$ decomposes as a product of $(X_{I}^{\circ})_{\log}$, i.e., a torus bundle over $X_{I}^{\circ}$, and a simplex $\Delta_{I}$, smoothly \enquote{rounded} by the function $\eta$. An example of the A'Campo space, together with the above decomposition, is shown in Figure \ref{fig:ACampo} below.

\begin{figure}[htbp]
	\begin{tikzcd}[column sep=4em]
		\begin{tikzpicture}
			\draw[thick, fill=black!8] (-1.4,0) -- (0,-1.4) -- (0.6,0.6) -- (-1.4,0);
			\draw [->,gray] (0,0)-- (0,-2); \node at (-0.3,-1.9) {\small{$w_2$}};
			\draw [->,gray] (0,0) -- (-2,0); \node at (-1.8,-0.3) {\small{$w_1$}};
			\draw [->,gray] (0,0) -- (1.2,1.2); \node at (0.8,1.2) {\small{$w_3$}};
			\filldraw (-1.4,0) circle (0.08);
			\node at (-1.4,0.2) {\small{$\Delta_{1}$}};	
			\filldraw (0,-1.4) circle (0.08);
			\node at (0.3,-1.5) {\small{$\Delta_{2}$}};	
			\filldraw (0.6,0.6) circle (0.08);
			\node at (0.4,0.8) {\small{$\Delta_{3}$}};
			\node at (0.8,-0.4) {\small{$\Delta_{2,3}$}};
			\node at (-1,-0.9) {\small{$\Delta_{1,2}$}};
			\node at (-0.4,0.5) {\small{$\Delta_{1,3}$}};
			\node at (-0.2,-0.4) {\small{$\Delta_{1,2,3}$}};
			\node at (0.8,-2) {$\R^3$};
		\end{tikzpicture}		
		&		
		\begin{tikzpicture}[scale=0.85]
			\path[use as bounding box] (-2.5,-2.5) rectangle (3.6,3.6);
			\path [fill=black!8] (0.4,0.4) to [out=180,in=45] (-1,0) to [out=270,in=180] (0,-1) to [out=45,in=270] (0.4,0.4);
			\node at (-0.25,-0.25) {\small{$A_{\! 1,2,3}^{\circ}$}};
			\path [fill=black!4] (0.4,0.4) to [out=180,in=45] (-1,0) -- (-1,2.4) to [out=45,in=180] (.4,2.8) -- (0.4,0.4);
			\node at (-.3,1.8) {\small{$A_{1,3}^{\circ}$}};
			\path [fill=black!4] (-1,0) to [out=270,in=180] (0,-1) -- (-1.5,-2.5) to [out=180,in=270] (-2.5,-1.5) -- (-1,0);
			\node at (-1.3,-1.4) {\small{$A_{1,2}^{\circ}$}};
			\path [fill=black!4] (0.4,0.4) to [out=270,in=45] (0,-1) -- (2.4,-1) to [out=45,in=270] (2.8,0.4) -- (-0.4,0.4);
			\node at (1.9,-0.5) {\small{$A_{2,3}^{\circ}$}};
			\path [fill=black!1] (0.4,0.4) --(0.4,2.8) -- (2.8,2.8) -- (2.8,0.4) -- (0.4,0.4);
			\node at (1.9,1.8) {\small{$A_{3}^{\circ}$}};
			\path [fill=black!1] (-1,0) -- (-1,2.4) -- (-2.5,0.9) -- (-2.5,-1.5) -- (-1,0);
			\node at (-1.9,0.5) {\small{$A_{1}^{\circ}$}};
			\path [fill=black!1] (0,-1) -- (2.4,-1) -- (0.9,-2.5) -- (-1.5,-2.5) -- (0,-1);
			\node at (0.7,-1.9) {\small{$A_{2}^{\circ}$}};	
			\draw[->,gray] (-2,0) -- (3.6,0);
		%	\node [left] at (-1,0) {\small{$-1$}};
			\node [below] at (3.4,0) {\small{$v_1$}};
			\draw[->,gray] (0,-2) -- (0,3.6);
		%	\node [below] at (0,-1) {\small{$-1$}};
			\node [left] at (0,3.4) {\small{$v_2$}};
			\draw[->,gray] (1,1) -- (-2.8,-2.8);
		%	\node [above right] at (.4,.4) {\small{$-1$}};
			\node [above] at (-2.7,-2.7) {\small{$v_3$}};

			\draw [thick] (-1,2.4) -- (-1,0) to [out=270,in=180]	(0,-1) -- (2.4,-1);
			\draw [black!20] (-1.6,1.8) -- (-1.6,-0.6) to [out=270,in=180] (-0.6,-1.6) -- (1.8,-1.6);
			\draw [black!20] (-2.2,1.2) -- (-2.2,-1.2) to [out=270,in=180] (-1.2,-2.2) -- (1.2,-2.2);
			\draw [thick] (2.8,0.4) -- (.4,.4) to [out=180,in=45] (-1,0) -- (-2.5,-1.5);
			\draw [black!20] (2.8,1.4) -- (.5,1.4) to [out=180,in=45] (-1,1) -- (-2.5,-0.5);
			\draw [black!20] (2.8,2.4) -- (.5,2.4) to [out=180,in=45] (-1,2) -- (-2.5,0.5);
			\draw [thick] (0.4,2.8) -- (.4,.4) to [out=270,in=45] (0,-1) -- (-1.5,-2.5);
			\draw [black!20] (1.4,2.8) -- (1.4,.5) to [out=270,in=45] (1,-1) -- (-0.5,-2.5);
			\draw [black!20] (2.4,2.8) -- (2.4,.5) to [out=270,in=45] (2,-1) -- (0.5,-2.5);
			\node[right] at (1.1,-2.2) {\small{$\times\ (\mathbb{S}^{1})^{3}$}};
			\node[right] at (1.1,-2.6) {\small{(angular directions $\theta_i$)}};	
		\end{tikzpicture}
		\ar[r, "\pi"] 
		\ar[d, "f_{A}"'] 
		\ar[l, "{(w_1,w_2,w_3)}"']
		& 
		\begin{tikzpicture}[scale=0.9]
			\path[fill=black!1] (-1,2.4) -- (-1,0) -- (1.4,0) -- (1.4,2.4) -- (-1,2.4);
			\path[fill=black!1] (-1,2.4) -- (-1,0) -- (-2.4,-1.4) -- (-2.4,1) -- (-1,2.4);
			\path[fill=black!1](-1,0) -- (-2.4,-1.4) -- (0,-1.4) -- (1.4,0) -- (-1,0);
			\draw [thick] (-1,2.4) -- (-1,0) -- (1.4,0);
			\draw [black!20] (-1.6,1.8) -- (-1.6,-0.6) -- (0.8,-0.6);
			\draw [black!20] (-2.2,1.2) -- (-2.2,-1.2) -- (0.2,-1.2);
			\draw [thick] (-1,0) -- (-2.4,-1.4);
			\draw [black!20] (1.4,1) -- (-1,1) -- (-2.4,-0.5);
			\draw [black!20] (1.4,2) -- (-1,2) -- (-2.4,0.5);
			\draw [black!20] (0,2.4) -- (0,0) -- (-1.5,-1.4);
			\draw [black!20] (1,2.4) -- (1,0) -- (-0.5,-1.4);
			\filldraw (-1,0) circle (0.08);
			\node at (-1.8,0.6) {\small{$X_{1}^{\circ}$}};
			\node at (-0.4,-1) {\small{$X_{2}^{\circ}$}};
			\node at (0.5,1.4) {\small{$X_{3}^{\circ}$}};
			\node at (1,0.2) {\small{$X_{2,3}^{\circ}$}};
			\node at (-1.8,-1.4) {\small{$X_{1,2}^{\circ}$}};
			\node at (-0.6,2) {\small{$X_{1,3}^{\circ}$}};
			\node at (-0.5,0.2) {\small{$X_{1,2,3}^{\circ}$}};
			\node at (1.2,-1.6) {\small{$\C^{3}$}};
		\end{tikzpicture}
		\ar[d,"f"',"z_{1}\cdot z_{2}\cdot z_{3}"]
		\\
		&
		\begin{tikzpicture}[scale=0.8]
			\draw[fill=black!10] (0,0) circle (0.6);
			\draw[thick, fill=white] (0,0) circle (0.1);
			\node at (1,-0.4) {\small{$\D_{\log }$}};	
			\node at (-1,-0.4) {\small{$\phantom{\D_{\log }}$}};
		\end{tikzpicture}	
		\ar[r,"{(r,\theta)\ \mapsto\  r\cdot e^{2\pi i\, \theta}}"] 
		&
		\begin{tikzpicture}[scale=0.8]
			\draw[fill=black!10] (0,0) circle (0.6);
			\draw[thick] (0.05,0.05) -- (-0.05,-0.05);
			\draw[thick] (-0.05,0.05) -- (0.05,-0.05);
			\node at (0.9,-0.4) {\small{$\D$}};	
			\node at (-0.9,-0.4) {\small{$\phantom{\D}$}};
		\end{tikzpicture}
	\end{tikzcd}
\caption{The A'Campo space for $f=z_{1}\cdot z_{2}\cdot z_{3}$, cf.\ \cite[Figures 2 and 3]{FdBP_Zariski}.}
\label{fig:ACampo}
\end{figure}

We note that, since the values of all $u_i$ for different overlapping charts agree on $\d A\subseteq X_{\log}\times \R^{N}$, the latter subset, together with the above decomposition, does not depend on the choice of $(\{U_{X}^{p}\}_{p},\{\tau^{p}\}_{p})$. In turn, the subset $A\setminus \d A$ can be naturally identified with $X\setminus D$. This way, if $A$ and $A'$ denote the A'Campo spaces obtained via different choices of $(\{U_{X}^{p}\}_{p},\{\tau^{p}\}_{p})$, we get a natural homeomorphism $\Phi\colon A\to A'$ over $X$ which is the identity on $\d A$, see \cite[Proposition 3.7(e)]{FdBP_Zariski}. For a topological construction of $A$ which is independent of any choice we refer to the original work of A'Campo \cite{A'Campo}.

The $\cC^{1}$-atlas on $A$ is introduced as follows. For an adapted chart $U_X$ with associated index set $S$, we cover its preimage by open sets $U_{i}=\{w_{i}>\frac{1}{n+2}\}$, $i\in S$, which will be used as domains of smooth charts on $A$. To make the notation more compact, we put $\rest\de ((\theta_{i})_{i\in S},(z_{j})_{j\in \{i_1,\dots,i_n\}\setminus S})$. For $k\in \{1,\dots, n+1\}$ we write $Q_{k,n+1}=[0,\infty)\times \R^{k-1}\times (\mathbb{S}^1)^{k}\times \C^{n+1-k}$. Now, we define the  $\cC^{1}$-charts as 
\begin{equation}\label{eq:AC1-chart}
	\psi_{i}=(g,({v}_{j})_{j\in S\setminus \{i\}};\rest)\colon 
	U_{i}\to Q_{k,n+1}.
\end{equation}
Results of \cite[\textsection 3.3]{FdBP_Zariski} show that these charts induce a $\cC^{1}$-structure on the A'Campo space, which is independent of the choice of $\{(U_{X}^{p},\tau^{p})\}_p$, i.e., the homeomorphism $\Phi$ defined above is $\cC^1$. The $\cC^{\infty}$-charts are defined in \textsection 3.4 loc.\ cit.\ by a similar formula
\begin{equation}\label{eq:AC-chart}
	\bar{\psi}_{i}=(g,({\bar{v}}_{j})_{j\in S\setminus \{i\}};\rest)\colon 
	U_{i}\to Q_{k,n+1}.
\end{equation}
These charts upgrade the above $\cC^{1}$-structure to a $\cC^{\infty}$ one, which depends on the choice of the adapted atlas $(\{U_X^{p}\}_p,\{\tau^{p}\}_p)$, i.e., the $\cC^1$-diffeomorphism $\Phi$ is not $\cC^2$ in general, see \cite[Remark 3.22]{FdBP_Zariski}.

\subsection{Distance functions}\label{sec:distance_functions}

Our next goal is to endow the A'Campo space $A$ with another $\cC^{\infty}$-structure, which is $\cC^1$-compatible with the above one, but is more technically convenient to introduce a fiberwise symplectic form \emph{which is fiberwise K\"ahler at positive radius.} To this end, we  slightly modify the above construction, replacing the global functions $\bar{u}_i$, $\bar{v}_i$ by $\hat{u}_i$, $\hat{v}_i$, defined as follows. 
First, we fix global \emph{distance functions} $\hat{r}_i$, i.e., a collection of smooth functions satisfying the conclusion of Lemma \ref{lem:rihat}. Next, we build $\hat{u}_i$, $\hat{v}_i$ out of $\hat{r}_i$ in the same way as the local functions  $u_i$, $v_i$ are defined from the radial coordinates $r_i$.

We work in a fixed open set $W_{X}\subseteq X$ such that the restriction $f|_{\bar{W}_{X}}$ is proper, has connected fibers; and $f|_{\bar{W}_X\setminus D}\colon \bar{W}_{X}\setminus D\to \D_{\delta}^{*}$ is a submersion. If $f$ is an snc model of a Calabi--Yau degeneration, which is the case of most interest for us, we simply take $W_{X}=X$. For another example, let $X$ be an embedded resolution of an isolated hypersurface singularity, and take for $W_{X}$ the preimage of some bounded neighborhood of the origin. We can and do assume that $\delta<\exp(-\max_{i}m_{i})$. 

\begin{lema}\label{lem:rihat}
	After shrinking $\delta>0$ if necessary, there exist, for $i=1,\dots,N$, an open neighborhood $R_{i}$ of $D_i$ in $X$ and a smooth function $\hat{r}_i\colon X\to [0,\frac{1}{e}]$ such that the following hold.
	\begin{enumerate} 
		\item \label{item:r_covering} Every fiber $f^{-1}(z)\cap \bar{W}_{X}$ is contained in $\bigcup_{i=1}^{N}R_{i}$ and meets $R_{i}^{\circ}\de R_{i}\setminus \bigcup_{j\neq i} \bar{R}_{j}$ for $i\in \{1,\dots, N\}$.		
		\item\label{item:r_comparison} For every point $x\in D_{i}$ and every adapted chart $U_{X}$ containing $x$, there is a neighborhood $V_X$ of $x$ in $U_X$ and a  smooth function $\lambda\colon \bar{V}_X\to (0,\infty)$ such that the radial coordinate $r_{i}$ of $U_X$ satisfies $\hat{r}_{i}|_{V_X}=\lambda r_{i}|_{V_X}$.
		\item \label{item:r_maximal} Let $\Delta_{I}$ be a maximal face of $\Delta_{D}$. There is an adapted chart $U_X$ around the point $X_{I}$ with radial coordinates $r_i$ such that on $U_X$ we have $r_i=\hat{r}_i$ if $i\in I$ and $\hat{r}_i=\frac{1}{e}$ otherwise.
		\item\label{item:r_generic} For every adapted chart $U_X\subseteq R_{i}^{\circ}$ meeting $D_i$, its radial coordinate $r_{i}$ is equal to $\hat{r}_i$.	
		\item\label{item:r_Ri} We have $\hat{r}_{i}<\frac{1}{e}$ on $R_{i}$ and $\hat{r}_{i}=\frac{1}{e}$ on $X\setminus R_{i}$. 
		\item\label{item:r_small} We have $\hat{r}_i^{m_i}\geq |f|$ and $\hat{r}_i^{m_i}=|f|$ on $R_{i}^{\circ}\cap f^{-1}(\D_{\delta})$. 	
	\end{enumerate}
\end{lema}
\begin{proof}
	Fix an atlas of adapted charts $\{U_{X}^{p}\}_{p}$, and denote by $S^{p}=\{i:U_X^p\cap D_i\neq \emptyset\}$ the associated index set of $U_{X}^{p}$. Put $R_{i}=\bigcup_{\{p\ :\ i\in S^{p}\}} U_{X}^{p}$. 
Refining the chosen atlas 
if needed, we can assume that 
\begin{enumerate}[(i)]
	\item\label{item:atlas_ri} For every $p$ and every $i\in S^{p}$ the radial coordinate $r_{i}^{p}$ of $U_{X}^{p}$ satisfies $r_{i}^{p}<\frac{1}{e}$.
	\item\label{item:atlas_maximal} For every maximal face $\Delta_{I}$ of $\Delta_{D}$, the point $X_{I}$ lies in exactly one chart $U_{X}^{p}$.
	\item\label{item:atlas_generic_1} For every $i\in \{1,\dots, N\}$ the open set $R_{i}^{\circ}\de R_{i}\setminus \bigcup_{j\neq i} \bar{R}_{j}$ is non-empty. 
\end{enumerate}
Since the set $\bar{W}_{X}\cap D$ is compact, shrinking $\delta>0$ if needed we can further assume that property \ref{item:r_covering} holds.  
Eventually, we choose a subordinate partition of unity $\{\tau^{p}\}_{p}$, put $r_{i}^{p}=\frac{1}{e}$ for $i\not\in S^{p}$, and define
\begin{equation*}
	\hat{r}_{i}=\sum_{p}\tau^{p}r_{i}^{p}\colon X\to [0,\tfrac{1}{e}].
\end{equation*}
We now prove that the functions $\hat{r}_i$ satisfy the required properties \ref{item:r_comparison}--\ref{item:r_small} above.

	\ref{item:r_comparison} Put $P_{x}=\{p:U_{X}^{p}\cap \bar{U}_{X}\neq \emptyset\}$, so $\hat{r}_{i}|_{\bar{U}_{X}}=\sum_{p\in P_{x}}\tau^{p}r_{i}^{p}$. Since $x\in D_i$, shrinking the chart $U_{X}$ around $x$ if necessary, we can assume that every chart $U_{X}^{p}$ meeting $\bar{U}_X$ meets $D_i$, too, i.e., $P_{x}\subseteq \{p:i\in S^{p}\}$. Fix $p\in P_{x}$ and put $V^{p}=U_{X}^{p}\cap U_{X}$. Recall that, by definition of the adapted charts, both $r_i$ and $r_i^{p}$ extend continuously to the compact closure $\bar{V}^{p}$. Put $\lambda^{p}\de \frac{r_{i}^{p}}{r_i}\colon \bar{V}^{p} \to (0,\infty)$. The function $\lambda\de \sum_{p\in P_x}\tau^{p}\lambda^{p}=\frac{\hat{r}_i}{r_i}$ is defined on the whole compact closure $\bar{U}_{X}$ and satisfies the required equality $\hat{r}_i=\lambda r_i$. Moreover,since $\tau^{p}\geq 0$ and $\lambda^p>0$, each summand $\tau^{p}\lambda^{p}$ is non-negative, and since $\sum_{p}\tau^{p}=1$, at each point of $\bar{U}_X$ at least one of them does not vanish. Hence $\lambda>0$ on $\bar{U}_X$, as needed.

	\ref{item:r_maximal} By property \ref{item:atlas_maximal} the point $X_{I}$ lies in $U_{X}^{p}$ for a unique $p$. Now, take $U_X=U_{X}^{p}\setminus \bigcup_{q\neq p}\bar{U}_{X}^{q}$.
	
	\ref{item:r_generic} Because $U_X$ is an adapted chart with associated index $\{i\}$, we have $r_{i}=|f|^{1/m_i}$. Since $U_{X}\subseteq R_{i}^{\circ}$, the same holds for all charts $U_{X}^{p}$ meeting $U_{X}$, so $\hat{r}_{i}=|f|^{1/m_i}$, too. Thus $\hat{r}_i=r_i$, as claimed.

	\ref{item:r_Ri} By property \ref{item:atlas_ri}, on $R_i$ we have $\tau^{p}r_{i}^{p}<\tau^{p}e^{-1}\leq e^{-1}$ if $U_{X}^{p}$ meets $D_i$ and $\tau^{p}r_{i}^{p}=\tau^{p}e^{-1}\leq e^{-1}$ otherwise. Since there is at least one chart $U_{X}^{p}$ meeting $D_i$, we get $\hat{r}_i|_{R_i}<e^{-1}$. On the other hand, on $X\setminus R_i$ we have $\tau^{p}r_{i}^{p}=\tau^{p}e^{-1}$ for all $p$, so $\hat{r}_i|_{X\setminus R_i}=e^{-1}$, as claimed.

	\ref{item:r_small} On each adapted chart $U_{X}^{p}$ we have $|f|=\prod_{j\in S^{p}} (r_{j}^{p})^{m_i}\leq (r_{i}^{p})^{m_i}$ for any $i\in S^{p}$, since  $r_i^{p}<e^{-1}<1$ by property \ref{item:atlas_ri}. In turn, if $i\not\in S^{p}$ then on $U_{X}^{p}$ we have $(r_{i}^{p})^{m_i}=e^{-m_i}>\delta>|f|$ by our assumption about $\delta$. Thus for every $i$, we have $\tau^{p}r_{i}^{p}\geq \tau^{p}|f|^{1/m_{i}}$ on $U_{X}^{p}$. Away from $U_{X}^{p}$ both sides of this inequality are zero, so it holds on the entire $X$. Now $\hat{r}_{i}=\sum_{p}\tau^{p}r_{i}^{p}\geq \sum_{p}\tau^{p}|f|^{1/m_i}=|f|^{1/m_i}$, as claimed. The last assertion follows from part \ref{item:r_generic}.
\end{proof}

\begin{remark}
The distance functions $\hat{r}_i$ can also be constructed as follows. Fix a hermitian metric on the normal bundle $N_{D_i}$ to $D_{i}$ in $X$; and identify a tubular neighborhood $R_{i}$ of $D_{i}$ with the total space of the disc bundle in $N_{D_{i}}$. Next, define $\hat{r}_{i}|_{R_i}$ as the distance function in each fiber, composed with a smooth bijection $[0,1]\to [0,\frac{1}{e}]$ whose all derivatives at $1$ vanish. Then extend $\hat{r}_{i}$ to $X$ by $\hat{r}_i|_{X\setminus R_i}=\frac{1}{e}$.

Nonetheless, this alternative construction should not be regarded as more canonical than the one in Lemma \ref{lem:rihat}. Indeed, choosing a hermitian metric on $N_{D_i}$ boils down to choosing a partition of unity.
\end{remark}

\subsection{A new smooth structure}\label{sec:A_smooth}

Fix any collection of functions $\hat{r}_1,\dots,\hat{r}_N\colon X\to [0,\frac{1}{e}]$ satisfying the conclusion of Lemma \ref{lem:rihat}. We repeat the definitions  of basic functions \eqref{eq:basic functions_r} and \eqref{eq:basic_functions_v} using 
$\hat{r}_i$ instead of the local 
$r_{i}$. That is, we put 
\begin{equation}\label{eq:basic_functions_hat} 
		\hat{s}_i=\log \hat{r}_i,\quad \hat{t}_i=\frac{-1}{m_i \hat{s}_i},\quad	\hat{w}_{i}=\frac{t}{\hat{t}_i},\quad \hat{u}_{i}=\eta(\hat{w}_i),\quad \hat{v}_{i}=\hat{t}_i-\hat{u}_i,\quad  \hat{\sigma}_i=\hat{t}_{i}^2+\hat{t}_i\hat{u}_i^2, 
		\hat{\alpha}_i=-d^{c}\hat{s}_i.
\end{equation}
 Note that Lemma \ref{lem:rihat}\ref{item:r_small} implies that $\hat{w}_{i}\in [0,1]$.  We use the convention $d^{c}=\imath (\d - \bar{\d})$, so $d\theta_{i}=-d^{c}s_i$. Therefore, the $1$-form $\hat{\alpha}_i$ is an analogue of the angular form $\alpha_i$ from  \cite[\textsection 4.1]{FdBP_Zariski}.

To write the new smooth charts, we replace $\bar{v}_i$ in formula \eqref{eq:AC-chart} by $\hat{v}_i$. That is, we define
\begin{equation}\label{eq:AC-chart_hat}
	\hat{\psi}_{i}=(g,(\hat{v}_{j})_{j\in S\setminus \{i\}};\rest)\colon 
	U_{i}\to Q_{k,n+1}.
\end{equation}

With these definitions, we have an analogue of \cite[Proposition 3.33]{FdBP_Zariski}.

\begin{prop}\label{prop:AX_hat_smooth}
	The collection $(\hat{\psi}_{i})$ for all adapted charts $U_{X}$ and all $i$ in the associated index set of $U_X$, is a $\cC^{\infty}$ atlas on the A'Campo space $A$. This atlas is compatible with the $\cC^{1}$-atlas given by charts \eqref{eq:AC1-chart}. The space $A$ endowed with this $\cC^{\infty}$ structure has the following properties.
	\begin{enumerate}
	\item\label{item:AX-pismooth} The map $\pi\colon A\to X$ is smooth. Its restriction $\pi|_{A\setminus \d A}\colon A\setminus \d A\to X\setminus D$ is a diffeomorphism.
	\item\label{item:AX-gsmooth} The map $(g,\theta)\colon A\to \C_{\log}$ is a smooth submersion. In particular, $(t,\theta)$ and $f_A$ are smooth. 
	\item \label{item:AX-vbar-smooth}For every $i\in \{1,\dots, N\}$ the functions $\hat{v}_i,\hat{w}_{i}$ and the $1$-form $\hat{\alpha}_i$ are smooth on $A$. 
\end{enumerate}	
\end{prop}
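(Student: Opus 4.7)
The plan is to proceed in parallel with the proof of~\cite[Proposition~3.33]{FdBP_Zariski}, which established the analogous statement for the atlas $\{\bar\psi_i\}$ built from a partition of unity. The key new input is Lemma~\ref{lem:rihat}\ref{item:r_comparison}: on every adapted chart $U_X$ with associated index set $S$, and every $j\in S$, there is a strictly positive $\cC^\infty$ function $\lambda$ on $\bar U_X$ such that $\hat r_j=\lambda\, r_j$. Taking logarithms and reciprocals yields the fundamental comparison
\begin{equation*}
\frac{1}{\hat t_j} \;=\; \frac{1}{t_j} - m_j\log\lambda,\qquad \hat w_j \;=\; w_j - m_j\, t\log\lambda,
\end{equation*}
which exhibits $\hat t_j-t_j$ and $\hat w_j-w_j$ as smooth quantities on $\pi^{-1}(U_X)$ vanishing on the radius-zero boundary $\partial A$. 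Because the flattening function $\eta$ of~\eqref{eq:basic functions_t} has all derivatives vanishing at $0$, it follows that $\hat u_j-u_j=\eta(\hat w_j)-\eta(w_j)$, and hence also $\hat v_j-v_j$, is smooth on $\pi^{-1}(U_X)$ and flat along $\partial A$.

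From this local comparison I would conclude that $\hat\psi_i\circ\psi_i^{-1}$ is a $\cC^\infty$ diffeomorphism between open subsets of $Q_{k,n+1}$, so that $\{\hat\psi_i\}$ is compatible with the $\cC^1$-atlas~\eqref{eq:AC1-chart} and in particular defines a $\cC^\infty$ structure. Transition maps between two hat-charts $\hat\psi_i,\hat\psi_{i'}$ supported in the \emph{same} adapted neighborhood reduce to permutations of the chart variables, exactly as in the proof of~\cite[Proposition~3.33]{FdBP_Zariski}. Transitions between hat-charts from \emph{different} adapted neighborhoods are in fact simpler than the corresponding computation for $\{\bar\psi_i\}$: the functions $\hat v_j$ are intrinsically global (no partition of unity is involved), so only the local coordinates $\vartheta$ must be updated, which is smooth by construction.

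Properties~\ref{item:AX-pismooth}, \ref{item:AX-gsmooth}, \ref{item:AX-vbar-smooth} then follow as in loc.\ cit. For~\ref{item:AX-pismooth}, the angular coordinates $\theta_j$ and the complex coordinates $z_k$ ($k\notin S$) are part of $\hat\psi_i$, while each $r_k$ with $k\in S\setminus\{i\}$ is determined smoothly by $\hat v_k$ and $g$; the missing $r_i$ is recovered from the relation $1/\hat t_i = 1/t -\sum_{k\in S\setminus\{i\}} 1/\hat t_k - \sum_{k\in S} m_k\log\lambda$ coming from $\log|f|=\sum_k m_k s_k$ together with Step~1, and then $r_j=\hat r_j/\lambda$ yields the local coordinates. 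For~\ref{item:AX-gsmooth}, $g$ is literally a chart coordinate of $\hat\psi_i$ and $\theta=\sum_{j\in S} m_j\theta_j\pmod{2\pi}$ whenever $S\neq\emptyset$, so $(g,\theta)$ is a smooth submersion and hence so is $f_A$. For~\ref{item:AX-vbar-smooth}, $\hat v_j$ is a chart coordinate, and the formulas of Step~1 imply smoothness of $\hat w_j$ and $\hat u_j$; for the angular form I would use the decomposition
\begin{equation*}
\hat\alpha_j=-d^c\hat s_j=-d^c(s_j+\log\lambda)=d\theta_j-d^c\log\lambda,
\end{equation*}
in which $d\theta_j$ is a smooth $1$-form on $A$ (extending from the Kato--Nakayama factor) and $d^c\log\lambda$ is the pullback of a smooth form on $X$; outside a neighborhood of $D_j$ the function $\hat r_j\equiv 1/e$ is constant and $\hat\alpha_j=0$, which matches the above local expression.

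The main technical obstacle throughout is the degeneration of the radial variables $r_j,t_j,t,w_j$ at $\partial A$; the flatness of $\eta$ at $0$ is exactly what upgrades the estimate $\hat v_j-v_j=O(t)$ to genuine smoothness across the boundary. It is essential here that $\lambda$ enters Step~1 only through $\log\lambda$ multiplied by either $t$ or $t_j$, so no new singularity is introduced by the globalization. This is the same delicate point that already required care in~\cite{FdBP_Zariski}, and the present formulation of the distance functions in Lemma~\ref{lem:rihat} is precisely tailored to preserve it.
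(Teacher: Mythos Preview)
Your overall strategy matches the paper's: reduce to \cite[Proposition~3.33]{FdBP_Zariski} via the comparison $\hat r_j=\lambda r_j$ from Lemma~\ref{lem:rihat}\ref{item:r_comparison}. Your displayed identities for $1/\hat t_j$ and $\hat w_j$ are correct and amount to Lemma~\ref{lem:comparison}\ref{item:t-comparison},\ref{item:w-comparison}, and your formula $\hat\alpha_j=d\theta_j-d^c\log\lambda$ is exactly Lemma~\ref{lem:comparison}\ref{item:alpha-comparison}. So the raw ingredients are right.

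However, two steps in your sketch are not correct as stated. First, the transitions $\hat\psi_{i'}\circ\hat\psi_i^{-1}$ between charts over the \emph{same} adapted neighbourhood do \emph{not} reduce to permutations: one must express $\hat v_i$ smoothly in terms of $(g,(\hat v_j)_{j\neq i},\vartheta)$, and there is no algebraic relation like $\sum_j\hat v_j=\mathrm{const}$ that does this for you (indeed $\sum_j\hat w_j\neq 1$ in general, since $\hat s_j\neq s_j$). Second, your argument that $\hat\psi_i\circ\psi_i^{-1}$ is $\cC^\infty$ needs $\hat v_j-v_j$ to be smooth \emph{in the chart coordinates} $(g,(v_k)_{k\neq i},\vartheta)$, and ``flatness of $\eta$ at $0$'' is not by itself a proof of this: one must express $t_j$, $u_j$, and the transition function $a=-m_j\log\lambda$ (which depends on the $X$-coordinates, which you are simultaneously trying to recover) smoothly in those chart variables. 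Your formula for $1/\hat t_i$ in the proof of \ref{item:AX-pismooth} has the same circularity, since the $\lambda_k$ depend on all the $z_j$.

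This is precisely the technical core that the algebra machinery of \cite{FdBP_Zariski} (the algebras $\cA_i$, $\cI_i$, $\cJ$, $\cP$ and the matrix ring $\cM$) was built to handle. The paper's proof makes this explicit: it replaces the set $\cT_i$ in the definition of $\cA_i$ by the singleton $\{a\}$, uses Lemma~\ref{lem:comparison}\ref{item:v-comparison} to show $\hat v_i-v_i\in\cA_i$, forms the matrix $\Psi$ with $\hat v_i$ in place of $\bar v_i$, and checks $\Psi\in\cM$ so that the smoothness results of \cite[\S3.4]{FdBP_Zariski} apply verbatim. Your sketch should invoke this machinery rather than bypass it.
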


The proof of Proposition \ref{prop:AX_hat_smooth}, which we outline below, follows the same steps as the proof of \cite[Proposition 3.33]{FdBP_Zariski}. The key point is that we can compare the functions $\hat{t}_i,\hat{w}_i,\hat{v}_i$ with local functions $t_i,w_i,v_i$ in exactly the same way as we do in loc.\ cit. More precisely, we have the following formulas.

\begin{lema}\label{lem:comparison}
	Let $U_{X}$ be an adapted chart meeting the divisor $D_i$. For a subset $I$ of the associated index set of $U_X$ put $U_{X,I}^{\circ}=U_{X}\cap X_{I}^{\circ}$. Moreover, put $a\de -m_i \log\lambda\in \cC^{\infty}(U_X)$, where $\lambda$ is the function from Lemma \ref{lem:rihat}\ref{item:r_comparison}. Then the functions introduced in \eqref{eq:basic functions_r}, \eqref{eq:basic_functions_v} and \eqref{eq:basic_functions_hat} satisfy the following identities.
	\begin{enumerate}
		\item \label{item:t-comparison} On $U_{X}$, we have $t_i-\hat{t}_{i}=\frac{at_i^2}{1-at_{i}}$, see \cite[Lemma 3.9(c) and formula (35)]{FdBP_Zariski}
		\item \label{item:w-comparison} On $U_{X}\setminus D$, we have $\hat{w}_{i}-w_{i}=at$, see \cite[Lemma 3.9(c)]{FdBP_Zariski}.
		\item \label{item:u-comparison} On $U_{X}\setminus D$, we have $\hat{u}_{i}-u_i=\frac{u_{i}^{2}\log(1+a t_{i})}{1-u_{i}\log(1+a t_i)}$, see \cite[formula (34)]{FdBP_Zariski}
		\item \label{item:v-comparison} On $U_{X}\setminus D$, we have $v_{i}-\hat{v}_{i}=\frac{a t_{i}^{2}}{1+a t_{i}}+\frac{u_{i}^{2}\log(1+a t_{i})}{1-u_{i}\log(1+a t_i)}$, see \cite[Lemma 3.26(c)]{FdBP_Zariski}.
		\item \label{item:v-sigma-comparison} There is a bounded function $b\in \cC^{\infty}(U_{X,I}^{\circ})$ such that on $U_{X,I}^{\circ}$ we have $v_i-\hat{v}_i=\sigma_{i}b$, where $\sigma_{i}=t_{i}^{2}+t_iu_i^2$, see \cite[Lemmas 3.26(d) and 3.35(a)]{FdBP_Zariski}. 
		\item \label{item:dv-comparison} Fix $\epsilon\in (0,1)$. There are bounded functions $c,q\in \cC^{\infty}(U_{X,I}^{\circ})$, and a bounded 
		$1$-form $\gamma\in \Omega^{1}(U_{X,I}^{\circ})$ such that   
			$d(v_i-\hat{v}_{i})\
			= t_{i}\cdot c\, dv_{i}+t_i^{\epsilon}\cdot q\, dg+\sigma_{i}\cdot \gamma$ on $U_{X,I}^{\circ}$, 
		see \cite[Lemmas 3.26(e), 3.35(b)]{FdBP_Zariski}.
		\item \label{item:dvhat-fiberwise} On $U_{X,I}^{\circ}$ we have a fiberwise equality 
			$d\hat{v}_i=\sigma_{i}m_{i}(1+ct_i)\, ds_i+\sigma_i\gamma$ for some bounded $c\in \cC^{\infty}(U_{X,I}^{\circ})$, $\gamma\in \Omega^{1}(U_{X,I}^{\circ})$.
		\item \label{item:alpha-comparison} There is a smooth $1$-form $\beta\in \Omega^{1}(U_{X})$ such that on $U_X\setminus D$ we have $\hat{\alpha}_i=d\theta_{i}+\beta$. In particular, the $2$-form $d\hat{\alpha}_i$ extends to a smooth form on $X$, see \cite[Lemma 4.5(a),(d)]{FdBP_Zariski}.
	\end{enumerate}
\end{lema}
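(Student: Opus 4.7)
The proof is essentially a catalog of identities derived from one master relation: by Lemma~\ref{lem:rihat}\ref{item:r_comparison}, on a small neighborhood of any point of $D_i$ we have $\hat{r}_i=\lambda r_i$ for a smooth positive $\lambda$. Taking logarithms and recalling the definition $a\de -m_i\log\lambda\in\cC^{\infty}(U_X)$ gives the single clean identity
\[
m_i\hat{s}_i \;=\; m_is_i-a,
\qquad\text{equivalently}\qquad
\hat{t}_i \;=\; \frac{t_i}{1-at_i},
\]
obtained by inverting the relation between $t_i$ and $m_is_i$. I would establish this first and use it as the engine driving all of \ref{item:t-comparison}--\ref{item:v-sigma-comparison}.

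Next I would dispatch \ref{item:t-comparison}--\ref{item:v-comparison} as purely algebraic consequences. Part~\ref{item:t-comparison} is immediate from the boxed identity. For~\ref{item:w-comparison}, write $\hat{w}_i-w_i=t(t_i-\hat{t}_i)/(t_i\hat{t}_i)$, substitute~\ref{item:t-comparison} and simplify, using $w_i=t/t_i$, to get $at$. For~\ref{item:u-comparison}, apply $\eta(\tau)=(1-\log\tau)^{-1}$ to $\hat{w}_i=w_i(1+at_i/w_i)$; since $at/w_i=at_i$, the logarithm splits as $\log\hat{w}_i=\log w_i+\log(1+at_i)$, giving $1/\hat{u}_i=1/u_i-\log(1+at_i)$, from which the stated formula for $\hat{u}_i-u_i$ follows by a direct rearrangement. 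Part~\ref{item:v-comparison} is then just the sum $(t_i-\hat{t}_i)+(\hat{u}_i-u_i)$ from~\ref{item:t-comparison} and~\ref{item:u-comparison}.

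The central technical step is~\ref{item:v-sigma-comparison}, from which~\ref{item:dv-comparison} and~\ref{item:dvhat-fiberwise} follow. To prove~\ref{item:v-sigma-comparison} I would factor the right-hand side of~\ref{item:v-comparison} through $\sigma_i=t_i(t_i+u_i^2)$. The first summand satisfies $at_i^2/(1+at_i)=t_i^2\cdot a/(1+at_i)$, and since $t_i^2/\sigma_i=t_i/(t_i+u_i^2)\leq 1$, this piece contributes a bounded multiple of $\sigma_i$. For the second summand, Taylor-expand $\log(1+at_i)=at_i(1+O(t_i))$ to obtain $u_i^2\log(1+at_i)/(1-u_i\log(1+at_i))=at_iu_i^2\cdot(\text{bounded})$, and $t_iu_i^2/\sigma_i=u_i^2/(t_i+u_i^2)\leq 1$ again. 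This reproduces \cite[Lemmas 3.26(d), 3.35(a)]{FdBP_Zariski} in the present notation. For~\ref{item:dv-comparison} and~\ref{item:dvhat-fiberwise}, I differentiate~\ref{item:v-comparison} and repeat the factoring analysis: $dt_i$ and $du_i$ expand in terms of $ds_i$, $dt$ and $da$, and it is a matter of bookkeeping to split each term into the three pieces (multiple of $dv_i$, multiple of $dg$, and a $\sigma_i$-small form). The $t_i^{\epsilon}$ weight on $dg$ in~\ref{item:dv-comparison} is the tight estimate already worked out in \cite[Lemma 3.26(e)]{FdBP_Zariski}, and the same bookkeeping applied to $d\hat{v}_i=d(v_i)-d(v_i-\hat{v}_i)$ yields~\ref{item:dvhat-fiberwise}, where the fiberwise restriction kills the $dg$-component.

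Finally,~\ref{item:alpha-comparison} is the shortest: $\hat{\alpha}_i=-d^c\hat{s}_i=-d^c s_i-d^c\log\lambda=d\theta_i+\beta$ with $\beta\de -d^c\log\lambda\in\Omega^{1}(U_X)$ smooth, and so $d\hat{\alpha}_i=d\beta$ extends smoothly across $D$. The main obstacle in the whole proof is not the algebra of~\ref{item:t-comparison}--\ref{item:u-comparison} but the careful uniform bookkeeping in~\ref{item:v-sigma-comparison}--\ref{item:dvhat-fiberwise}: the auxiliary function $a$ and its derivatives must be tracked as bounded smooth quantities on the compact-closure adapted chart $U_X$, and the denominators $1-at_i$, $1-u_i\log(1+at_i)$ must be shown to stay bounded away from zero on $U_{X,I}^{\circ}$ so that dividing by them preserves smoothness and boundedness. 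This boundedness relies on $t_i,u_i\to 0$ near $D_i$ and on $a$ being globally bounded on $\bar{U}_X$, both of which are given by the construction of the adapted atlas in Section~\ref{sec:distance_functions}.
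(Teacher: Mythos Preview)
Your approach is correct and matches the paper's, which simply observes that the cited computations from \cite{FdBP_Zariski} (comparing two adapted charts via $r_i'=\lambda r_i$) transfer verbatim to the present setting via $\hat r_i=\lambda r_i$ from Lemma~\ref{lem:rihat}\ref{item:r_comparison}, and then derives~\ref{item:dvhat-fiberwise} from~\ref{item:dv-comparison} using the fiberwise identities $dg=0$ and $dv_i=\sigma_i m_i\,ds_i$. One minor slip: your displayed identity should read $\hat t_i=t_i/(1+at_i)$ (check the sign of $a=-m_i\log\lambda$), consistent with the denominator in~\ref{item:v-comparison}.
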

\begin{proof}
	To get \ref{item:t-comparison}--\ref{item:dv-comparison} and \ref{item:alpha-comparison}, we repeat the computation in the quoted part of \cite{FdBP_Zariski}, with the following modification. In loc.\ cit, we took another adapted chart $U_{X}'$, and on the intersection $U_{X}\cap U_{X}'$, we compared $t_i$, $w_i$ etc.\ with the corresponding functions $t_{i}'$, $w_i'$ etc.\ defined for $U_{X}'$. Each computation was based on the existence of smooth function $\lambda\colon \bar{U}_X\cap \bar{U}_X'\to (0,\infty)$ satisfying an equality $r_i'=\lambda r_i$. Now, we work on $U_{X}$ instead of $U_{X}\cap U_{X}'$, and use the analogous equality $\hat{r}_i=\lambda r_i$ given by Lemma \ref{lem:rihat}\ref{item:r_comparison}. Note that \ref{item:v-sigma-comparison} and \ref{item:dv-comparison} are also analogues of \cite[Lemma 3.35]{FdBP_Zariski}, with $\bar{v}_i$ replaced by $\hat{v}_i$. 
	
	For \ref{item:dvhat-fiberwise}, we use a fiberwise inequality $dg=0$. As in \cite[p.\ 218]{FdBP_Zariski}, from definitions \eqref{eq:basic_functions_v} we obtain a fiberwise equality $dv_i=\sigma_{i}m_i\, ds_i$. Substituting those two equalities to part \ref{item:dv-comparison} proves \ref{item:dvhat-fiberwise}.
\end{proof}

\begin{proof}[Proof of Proposition \ref{prop:AX_hat_smooth}]
	We adapt \cite[Proposition 3.33]{FdBP_Zariski} to the current setting, replacing the functions $\bar{v}_i$ with $\hat{v}_i$. First, we claim that the coordinates~\eqref{eq:AC-chart_hat} are $\cC^{1}$-compatible with coordinates~\eqref{eq:AC1-chart}, on the same domain. This follows from Lemma \ref{lem:comparison}\ref{item:v-sigma-comparison} comparing $\hat{v}_i$ and $v_i$, by the same argument as in the proof of \cite[Lemma 3.30]{FdBP_Zariski}. Since by \cite[Lemma 3.12]{FdBP_Zariski} the transition maps between charts \eqref{eq:AC1-chart} are $\cC^{1}$-diffeomorphisms, the same holds for the charts~\eqref{eq:AC-chart_hat}. To get a $\cC^{\infty}$ atlas, we need to show that these transition maps are smooth. Arguing as in \cite[p.\ 211]{FdBP_Zariski}, we see that it is enough to prove that all functions $\hat{v}_1,\dots,\hat{v}_N$, and the pullbacks of all smooth functions on $U_X$, are smooth in a chart \eqref{eq:AC-chart_hat}. This is the most technical part, where we compare 
		charts \eqref{eq:AC-chart_hat} and \eqref{eq:AC1-chart}, and the corresponding partial derivatives, using Lemma \ref{lem:comparison}. To do it in an efficient way, we introduce certain algebras of continuous functions on $\pi^{-1}(U_X)$ and use their formal properties, as follows.
	
	Fix an adapted chart $U_{X}$ with associate index set $S$, say, $S=\{1,\dots,k\}$. Let $U_{1}\subseteq \pi^{-1}(U_X)$ be a domain of the charts \eqref{eq:AC1-chart} and \eqref{eq:AC-chart_hat}. Fix $i\in S\setminus \{1\}$. Let $a=-m_{i}\log\frac{\hat{r}_i}{r_i}\in \cC^{\infty}(U_X)$ be the \enquote{transition} function from Lemma \ref{lem:comparison}. We define an algebra $\cA_{i}$ as in formula (40) loc.\ cit, with the set $\cT_{i}$ replaced by singleton $\{a\}$. This algebra keeps the properties listed in Lemma 3.43 loc.\ cit. In particular, it is closed under the operators $\d_{1},\dots,\d_k$, i.e., the first $k$ partial derivatives with respect to the coordinates \eqref{eq:AC1-chart}. Indeed, the only property of the set $\cT_{i}$ used in the proof is the fact that it consists of functions which are smooth with respect to the natural smooth structure on $U_X$, and small enough so that the denominators in Lemma \ref{lem:comparison}\ref{item:v-comparison} are bounded away from zero and infinity, which clearly holds for $\{a\}$. Moreover, we have $v_i-\hat{v}_i\in \cA_i$. Indeed, Lemma  \ref{lem:comparison}\ref{item:v-comparison} shows that it is enough to prove that $\cA_i$ contains the functions $a$, $(1+at_{i})^{-1}$, $\log(1+at_{i})$, $(1-u_{i}\log(1+at_i))^{-1}$, $t_i$ and $u_i$. The first four belong to $\cA_i$ by definition. We have $t_{i}\in \cA_i$ since $t_i$ belongs to both algebras $\cW_i$ and $\cR_i$ defined in \cite[formula (38)]{FdBP_Zariski}. Similarly, $u_i\in \cW_i$ by definition, and $u_i\in \cR_i$ by \cite[Lemma 3.41(a)]{FdBP_Zariski}.
	
	The modified definition of $\cA_{i}$ leads to the new definition of the ideal $\cI_{i}\subseteq \cA_{i}$ such that $\d_{j}\cA_i\subseteq \cI_j\cA_i$ for $j\neq i$ (by the proof of \cite[Lemma 3.43(f)]{FdBP_Zariski}), of the algebras $\cA$, $\cJ$, $\cP$, and of the matrix ring $\cM$. We introduce a matrix $\Psi$ in the same way as in formula (41) loc.\ cit, but with $\bar{v}_i$ replaced by $\hat{v}_i$. That is, we define $\Psi$ as the identity minus a $k\times k$ matrix whose first row is $[\d_{j}g]_{1\leq j\leq k}=[1,0,\dots,0]$, and the $i$-th row for $i=2,\dots,k$ is $[\d_{j}\hat{v}_i]_{1\leq j \leq k}$. Hence the first row of $\Psi$ is zero, and the $i$-th row is $[\d_{j}(v_{i}-\hat{v}_i)]_{1\leq j\leq k}$, where $v_{i}-\hat{v}_i\in \cA_i$. Since $\d_{i}\cA_i\subseteq \cA_i$,  $\d_{j}\cA_i\subseteq \cI_j\cA_i$ for $i\neq j$, and $1\in \cP$ by definition of $\cP$, we infer that all entries of $\Psi$ belong to $\cA_j\cdot \cP$, and the $(i,j)$-th entry for $i\neq j$, $2\leq i,j\leq k$ belongs to $\cI_j\cdot \cA_i\cdot \cP$. Hence $\Psi\in \cM$, i.e., the analogue of \cite[Lemma 3.47]{FdBP_Zariski} holds.
	
	As in \cite[p.\ 209]{FdBP_Zariski}, for a matrix $M\in \cM$ we define differential operators $\d_{j}^{M}$ by $[\d_{1}^{M},\dots,\d^{M}_k]^{\top}=M^{\top}\cdot [\d_{1},\dots,\d_{k}]^{\top}$. The analogue of \cite[Lemma 3.48(a)]{FdBP_Zariski} shows that the algebra $\cA_{i}\cdot \cW_{i}'\cdot \cP$ is closed under $\d_{j}^{M}$ for any $M\in \cM$. Here $\cW_{i}'$ is the smallest algebra containing $w_i$ and closed under $\d_{1},\dots,\d_{k}$, see \cite[p.\ 205]{FdBP_Zariski}. Since $1\in \cA_{i}\cap \cP$, we have $w_{i}\in \cA_{i}\cdot \cW_{i}'\cdot \cP$, so all higher order derivatives $\d^{M_1}_{j_1}\dots\d^{M_r}_{j_r} w_{i}$ exist and lie in $\cA_{i}\cdot \cW_{i}'\cdot \cP$, too, in particular, they are continuous.
	
	Let $\hat{\d}_{1},\dots,\hat{\d}_k$ be the first $k$ partial derivatives with respect to the chart \eqref{eq:AC-chart_hat}, i.e, $(g,\hat{v}_2,\dots,\hat{v}_k,\vartheta)$. Denote by $\cC^{\infty}(U_1)$ the algebra of functions which are smooth with respect to this chart. The beginning of the proof of \cite[Lemma 3.49]{FdBP_Zariski} shows that $\hat{\d}_1,\dots,\hat{\d}_{k}$ are uniform limits of the operators of the form $\d_{j}^{M}$ for $M\in \cM$. Thus all partial derivatives $\hat{\d}_{j_1}\dots\hat{\d}_{j_r} w_{i}$ are continuous, i.e., $w_{i}\in \cC^{\infty}(U_1)$.

	It follows that $w_{1}=1-\sum_{i=2}^{k}w_{i}\in \cC^{\infty}(U_1)$, too. Similarly, using \cite[Lemma 3.48(b)]{FdBP_Zariski} we conclude that the pullbacks of all functions which are smooth in the natural coordinates of $U_{X}$ are in $\cC^{\infty}(U_1)$. Moreover, $t=\exp(1-g^{-1})\in \cC^{\infty}(U_1)$, because $g$ is a coordinate of \eqref{eq:AC-chart_hat}. By Lemma \ref{lem:comparison}\ref{item:w-comparison}, for $j\in \{1,\dots,k\}$ we have $\hat{w}_{j}=w_{j}+at$ with $a\in \cC^{\infty}(U_X)$, so $\hat{w}_j\in \cC^{\infty}(U_1)$. Since on the domain $U_{1}$ of \eqref{eq:AC-chart_hat} we have $\hat{w}_1>0$, formula \eqref{eq:basic_functions_hat} gives $\hat{v}_{1}=\hat{t}_{1}-\hat{u}_1=t\hat{w}_{1}^{-1}-\eta(\hat{w}_1)\in \cC^{\infty}(U_1)$. Since $\hat{v}_2,\dots,\hat{v}_k$ are coordinates of \eqref{eq:AC-chart_hat}, we get $\hat{v}_j\in \cC^{\infty}(U_1)$ for all $j\in \{1,\dots,k\}$. For $j>k$ the function $\hat{r}_j$ is positive and smooth on $U_X$, so formulas \eqref{eq:basic_functions_hat} imply that $\hat{v}_j,\hat{w}_j\in \cC^{\infty}(U_1)$.
	
	We conclude that the charts \eqref{eq:AC-chart_hat} endow $A$ with a smooth structure such that the  functions $g$, $t$ and $\hat{v}_j$, $\hat{w}_j$ for all $j\in \{1,\dots, N\}$ are smooth, and so is the map $\pi\colon A\to X$. The angular function $\theta$ is locally a linear combination $\sum_{i} m_{i}\theta_{i}$ of the angular coordinates of \eqref{eq:AC-chart_hat}, so it is smooth on $A$. Eventually, by Lemma \ref{lem:comparison}\ref{item:alpha-comparison} the form $\hat{\alpha}_{j}$ is locally a sum of a coordinate form $d\theta_{j}$ and a pullback of a smooth form by a smooth map $\pi$, hence it is smooth, too, as needed.
\end{proof}

\subsection{The rounded dual complex}\label{sec:roundeddual}

The {\em rounded dual complex} $\tilde{\Delta}_{D}$ is the homeomorphic image of the dual complex $\Delta_{D}$ by the map $(-\eta,\dots,-\eta)$, see \cite[\textsection 3.2.2 and Figure 1]{FdBP_Zariski}. For a face $\Delta_{I}$ of the dual complex $\Delta_{D}$, we define its \emph{rounded} version $\tilde{\Delta}_{I}$ as the image of $\Delta_{I}$ by the same map. The set $\tilde{\Delta}_{I}$ is a manifold with boundary and corners, diffeomorphic to a standard simplex $\Delta_{I}$. Since the spaces $\tilde{\Delta}_{D}$, $\tilde{\Delta}_{I}$ are diffeomorphic to $\Delta_{D}$, $\Delta_{I}$ one can interchangeably think about each of them. Due to the nature of our smooth coordinate systems it is more convenient for us to formulate statements using the rounded versions. 

Recall that the stratification $X=\bigsqcup_{I}X_{I}^{\circ}$ lifts to a decomposition $A=\bigsqcup_{I} A_{I}^{\circ}$ given by $A_{I}^{\circ}=\pi^{-1}(X_{I}^{\circ})$. Lemma \ref{lem:product} below gives a handy description of each piece $A_{I}^{\circ}$ as a product of a real-oriented blowup $(X_{I}^{\circ})_{\log}$ of $X_{I}^{\circ}$, see below, and of the rounded face $\tilde{\Delta}_{I}$, with coordinates $\hat{v}_i=-\hat{u}_i=-\eta(\hat{w}_i)$, which are \enquote{rounded} barycentric coordinates $\hat{w}_i$; cf.\ \cite[Lemma 3.10(c) or Proposition 3.33(d)]{FdBP_Zariski}. This decomposition is depicted in the central part of Figure \ref{fig:ACampo}. In this picture, the angular coordinates of $(X_{I}^{\circ})_{\log}$ are not shown, so, for instance, the region labeled by $A_{1,2,3}^{\circ}$ is in fact the rounded face $\tilde{\Delta}_{1,2,3}$.
	
Let $\tau\colon X_{\log}\to X$ be the natural map from the Kato--Nakayama space, see diagram \eqref{eq:AX-diagram}. For $I\subseteq \{1,\dots, N\}$ put $(X_{I}^{\circ})_{\log}=\tau^{-1}(X_{I}^{\circ})$. The restriction of $\tau$ to $(X_{\log})_{I}^{\circ}$ is a smooth  $(\mathbb{S}^{1})^{\#I}$-bundle over $X_{I}^{\circ}$. Denoting by $\pi_{\log}$ the projection $A\to X_{\log}$, we have $\pi_{\log}^{-1}(X_{I}^{\circ})_{\log}=A_{I}^{\circ}$. 

\begin{lema}
	\label{lem:product}
	Let $I$ be a nonempty subset of $\{1,\dots,N\}$. Then on $A_{I}^{\circ}$ we have $\hat{u}_i=-\hat{v}_i\geq 0$ if $i\in I$ and $\hat{u}_i=0$ if $i\not \in I$. The map
	\begin{equation}\label{eq:product}
		(\pi_{\log},(\hat{v}_{i})_{i\in I})\colon A_{I}^{\circ}\to (X_{I}^{\circ})_{\log}\times \tilde{\Delta}_{I}
	\end{equation}
	is a diffeomorphism  of manifolds with boundary and corners. In particular, $\Int_{\d A} A_{I}^{\circ}$ is the subset of $\d A$ given by inequalities $\hat{u}_i> 0$ for $i\in I$ and $\hat{u}_i=0$ for $i\not\in I$.
\end{lema}
\begin{proof}
	On $A_{I}^{\circ}$ we have $\hat{t}_{i}=0$ for $i\in I$, so $\hat{v}_{i}=-\hat{u}_{i}=-u_{i}=-\eta(w_i)$ for $i\in I$, where $u_i$, $w_{i}$ are functions introduced in \eqref{eq:basic_functions_v}, for any adapted chart with $i$ in its associated index set. Hence the map \eqref{eq:product} indeed takes values in $(X_{I}^{\circ})_{\log}\times \tilde{\Delta}_{I}$. It is surjective by  \cite[Lemma 3.10(c)]{FdBP_Zariski}. It is injective because a point in the A'Campo space is uniquely determined by its image in $X_{\log}$ and the values of all functions $u_{i}$: since on $A_{I}^{\circ}$ we have $u_{i}=0$ for $i\not\in I$, these data are determined by the image of the map \eqref{eq:product}, as needed. To see that \eqref{eq:product} is a local diffeomorphism, fix a point $x\in A_{I}^{\circ}$ and a chart \eqref{eq:AC-chart_hat} around $x$, with coordinates $(\hat{v}_{j})_{j\in I\setminus \{i\}}$. At the image of $x$ in $\tilde{\Delta}_{I}$, we can choose a smooth chart with coordinates $-\eta(w_j)$ for $j\in I\setminus \{i\}$. In these coordinates, the map \eqref{eq:product} is the identity, as needed.
\end{proof}

\begin{remark}[Choices within the construction of the A'Campo space $A$]\label{rem:new_set}
	The A'Campo space $A$, viewed as a $\cC^{\infty}$ manifold together with smooth maps in diagram~\eqref{eq:AX-diagram-intro} and the collection of smooth functions $\hat{v}_i$, $\hat{w}_i$ and $1$-forms $\hat{\alpha}_i$, depends only on the initial function $f\colon X\to \C$ and on the choice of the distance functions $\hat{r}_i$, i.e., functions satisfying the conclusion of Lemma \ref{lem:rihat}. Moreover, the $\cC^{1}$-compatibility in Proposition \ref{prop:AX_hat_smooth} guarantees that different choices of distance functions yield A'Campo spaces which are equivalent by a $\cC^1$-diffeomorphism which is the identity at the boundary.

	We note that by the Ehresmann theorem, the space $A$ is $\cC^{\infty}$-diffeomorphic to $f^{-1}(\d \D_{\delta})\times [0,\delta)$, which depends only on $f$. However, \cite[Example 3.15]{FdBP_Zariski} shows that once we endow $A$ with a $\cC^{\infty}$ atlas \eqref{eq:AC-chart_hat} defined using another collection of distance functions $\hat{r}_i$, then the functions $\hat{v}_i$ are $\cC^1$, but no longer $\cC^2$ in general.  Since we use those functions to define the form $\omega_{q}^{\epsilon}$ and the subsequent Lagrangian fibration, see formulas  \eqref{eq:omega-s-intro}, \eqref{eq:omega_sharp} and \eqref{eq:submaximalquotient}, we fix one collection of distance functions $\hat{r}_i$, and use the  functions $\hat{v}_i$ which are smooth in the $\cC^{\infty}$ structure on $A$ defined by $\{\hat{r}_i\}_{i}$. 
\end{remark}

\section{Fiberwise K\"ahler forms}\label{sec:form}

Let $f\colon X\to \C$ be as in Section \ref{sec:prelim}, and let $A$ be the A'Campo space of $f$, equipped with the smooth structure given by charts \eqref{eq:AC-chart_hat}. In this section, we introduce exact forms $\omega^{\sharp}$ and $\omega^{\flat}$ appearing in formula \eqref{eq:omega-s-intro}, and prove that the resulting forms $\omega_{q}^{\epsilon}$ are indeed fiberwise symplectic, see Proposition \ref{prop:omega}.

The form $\omega^{\sharp}$ is an analogue of the form $\omega_{E}$ introduced in formula (44) of \cite{FdBP_Zariski}. It is designed so that for any sufficiently small $\epsilon>0$, the form  $\pi^{*}\omega_{X}+\epsilon \omega^{\sharp}$ extends to a fiberwise symplectic form on $A$, and in particular yields a well-behaved monodromy at radius zero. The additional feature of $\omega^{\sharp}$ with respect to $\omega_{E}$ from loc.\ cit.\ is its fiberwise $J$-compatibility, which we prove in Lemma \ref{lem:form-ACampo}. In turn, the form $\omega^{\flat}$ is designed so that the induced metric degenerates at radius zero to the euclidean one on $\Delta_{\cS}$,  as required by Theorem \ref{theo:CY}\ref{item:CY_GH}. It is inspired by the semi-flat models surveyed in \cite[\textsection 2.1]{Li_survey}. 

We identify $A\setminus \d A$ with $X\setminus D$ via the diffeomorphism $\pi|_{A\setminus \d A}\colon A\setminus \d A\to X\setminus D$, and denote by $J$ the induced complex structure. We fix an open set $W_{X}$ as in Section \ref{sec:distance_functions}, i.e., such that $f|_{\bar{W}_{X}}\colon \bar{W}_X\to \D_{\delta}$ is proper, has connected fibers, and restricts to a submersion over $\D_{\delta}^{*}$. We put $W=\pi^{-1}(W_X)$ and write  $W_{z}$ for the fiber $f_{A}^{-1}(z)\cap W$. We also identify $\D_{\delta}^{*}$ with its preimage in $\D_{\delta,\log}$.

\subsection{The \enquote{A'Campo form} $\omega^{\sharp}$}

Recall that by Proposition \ref{prop:AX_hat_smooth}, for each $i\in \{1,\dots, N\}$ we have a smooth function $\hat{v}_i\in \cC^{\infty}(A)$ and a smooth angular $1$-form $\hat{\alpha}_i\in \Omega^{1}(A)$, introduced in formula \eqref{eq:basic_functions_hat}. We now define smooth forms on $A$: 
\begin{equation}\label{eq:omega_sharp}
	\lambda^{\sharp}=\sum_{i=1}^{N}\hat{v}_i\cdot \hat{\alpha}_i,\quad
	\omega^{\sharp}=d\lambda^{\sharp}.
\end{equation}

\begin{lema}\label{lem:form-ACampo}
	There is a smooth function $\phi\colon W\setminus \d A\to \R$ satisfying a fiberwise equality $\lambda^{\sharp}=d^{c}\phi$. In particular, $\omega^{\sharp}$ is fiberwise $J$-compatible, i.e., we have an equality $\omega^{\sharp}(J\sdot,J\sdot)=\omega^{\sharp}(\sdot,\sdot)$ on the tangent space to the fiber $W_{z}$ for all $z\in \D_{\delta}^{*}$.
\end{lema}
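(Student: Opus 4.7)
The approach rests on the observation that $t=-1/\log|f|$ is a function of $|f|$ alone, hence constant along each fiber $X_z$. Consequently $dt|_{X_z}=0$, and since $df|_{X_z}=0$ by holomorphy of $f$, also $d^c t|_{X_z}=0$. This reduces the fiberwise problem to a one-variable calculation: for fixed $t$, the function
\[
\hat v_i=\hat t_i-\hat u_i=-\tfrac{1}{m_i\hat s_i}-\eta(-m_i t\hat s_i)
\]
is a smooth function of the single variable $\hat s_i$, while $\hat\alpha_i=-d^c\hat s_i$. Hence each summand $\hat v_i\hat\alpha_i$ in $\lambda^\sharp$ should be the fiberwise $d^c$ of a primitive of $-\hat v_i$ in $\hat s_i$.

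Explicitly, let $E(\tau)\de\int_0^\tau\eta(u)\,du$, which is smooth on $[0,1]$ since $\eta$ is. Define
\[
G_i(s,t)\de\frac{1}{m_i}\log(-s)-\frac{1}{m_i t}\,E(-m_i t\,s),
\]
so that $\partial_s G_i=\frac{1}{m_i s}+\eta(-m_i t s)$, which after substitution $s=\hat s_i$ equals $-\hat v_i$. Set
\[
\phi\de\sum_{i=1}^N G_i(\hat s_i,t)\colon W\setminus\d A\to\R.
\]
This $\phi$ is smooth on $W\setminus\d A$: on that set $\hat s_i\leq-1$ (since $\hat r_i\leq e^{-1}$, so $\log(-\hat s_i)$ is smooth), $t>0$, and $\hat w_i=-m_i t\hat s_i\in[0,1]$ by Lemma~\ref{lem:rihat}\ref{item:r_small}, which is the domain on which $\eta$ and $E$ are smooth.

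By $\R$-linearity of $d^c$ and the chain rule for compositions with smooth real-valued functions,
\[
d^c\phi=\sum_i\bigl(\partial_{\hat s_i}G_i\bigr)\,d^c\hat s_i+\Bigl(\sum_i\partial_t G_i\Bigr)d^c t.
\]
Restricting to a fiber $X_z$ annihilates the second summand, while $\hat\alpha_i=-d^c\hat s_i$ identifies the first with $\lambda^\sharp|_{X_z}$. Therefore $\omega^\sharp|_{X_z}=d(\lambda^\sharp|_{X_z})=dd^c(\phi|_{X_z})$ is a real $(1,1)$-form on the complex manifold $X_z$, hence $J$-compatible, as needed.

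\textbf{Main obstacle.} There is no substantial conceptual hurdle: once one recognizes that the fiberwise problem is one-dimensional in each $\hat s_i$ (with $t$ as a parameter), writing down the antiderivative is mechanical. The only technical point is the smoothness of $\phi$ on $W\setminus\d A$, which reduces to checking that $-\hat s_i$ stays bounded away from $0$, that $t>0$, and that $\hat w_i\in[0,1]$ (where $E$ is smooth) --- all consequences of the geometric setup in Lemma~\ref{lem:rihat}. The $J$-compatibility then follows automatically, since $dd^c$ of any real function is a $(1,1)$-form.
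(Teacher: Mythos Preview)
Your proof is correct and follows essentially the same approach as the paper: both construct a fiberwise primitive $\phi_i$ with $\partial_{\hat s_i}\phi_i=-\hat v_i$ (treating $t$ as a parameter), then use $\hat\alpha_i=-d^c\hat s_i$ together with the vanishing of $d^c t$ on fibers. Your closed-form antiderivative $G_i(s,t)=\tfrac{1}{m_i}\log(-s)-\tfrac{1}{m_i t}E(-m_i t s)$ is slightly more explicit than the paper's definite integral $\check\phi_i(\check s,\check t)=\int_{\check s}^{-1}\check v_i(s,\check t)\,ds$, which lets you sidestep both the paper's separate treatment of the one-component case and the verification that the endpoint $\hat s_i=-1$ is actually attained on each fiber.
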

\begin{proof}
	Clearly, it is enough to prove the first assertion. If $D$ has only one component $D_1$ then $\hat{s}_1=-m_{1}t^{-1}$ is fiberwise constant, hence $\lambda^{\sharp}=-\hat{v}_1\cdot d^{c}\hat{s}_1$ is fiberwise zero. Therefore, we can assume that $D$ has at least two components. Fix $i\in \{1,\dots, N\}$.
	
	By definition \eqref{eq:basic_functions_hat}, the function $\hat{v}_i\colon W_{X}\setminus D\to \R$ is equal to the composition $\check{v}_i\circ (\hat{s}_i,t)$, where 
	\begin{equation*}
		\check{v}_i\colon T\to \R\quad \mbox{is defined as}\quad 
		\check{v}_i(\check{s},\check{t})=-(m_i\check{s})^{-1}-\eta(-m_{i}\cdot \check{s}\check{t}),
	\end{equation*}
	and $T\subseteq \R^2$ is the image of the map $(\hat{s}_i,t)\colon W_{X}\setminus D\to \R^2$. 
	
	We claim that $T$ is a union of some horizontal segments (or half-lines), ending on the vertical line $\check{s}=-1$. By Lemma \ref{lem:rihat}\ref{item:r_Ri} we have $\hat{s}_{i}\leq -1$, and the equality $\hat{s}_i=-1$ holds on $X\setminus R_{i}$, where $R_i$ is the open set chosen in Lemma \ref{lem:rihat}. Since $D$ has at least two components, the set $X\setminus D_i$ contains a nonempty set $R_{j}^{\circ}$ for some $j\neq i$, which meets each fiber by Lemma \ref{lem:rihat}\ref{item:r_covering}. Thus the equality $\hat{s}_i=-1$ is attained somewhere on each fiber, and the claim follows since each fiber is connected.
	
	Therefore, we can define a smooth  function $\check{\phi}_i\colon T\to \R$ by the formula $\check{\phi}_{i}(\check{s},\check{t})=\int_{\check{s}}^{-1} \check{v}_i(s,\check{t})\, ds$. Fundamental theorem of calculus gives $\frac{\d}{\d \check{s}}\check{\phi}_{i}=-\check{v}_i$. We define a smooth function  $\phi_{i}\colon W_X\setminus D\to \R$ as $\phi_{i}\de \check{\phi}_{i}\circ (\hat{s}_i,t)$. Since $\frac{\d}{\d \check{s}}\check{\phi}_{i}=-\check{v}_i$, we have a fiberwise equality $d\phi_{i}=-\check{v}_i\circ (\hat{s}_i,t)\, d\hat{s}_i=-\hat{v}_i\, d\hat{s}_i$, so $d^{c}\phi_{i}=-\hat{v}_i\, d^{c}\hat{s}_i=\hat{v}_i\hat{\alpha}_i$ by definition \eqref{eq:basic_functions_hat} of $\hat{\alpha}_i$. Eventually, we put $\phi=\sum_{i=1}^{N}\phi_{i}$, so we have a fiberwise equality $d^{c}\phi=\sum_{i=1}^{N}\hat{v}_i\hat{\alpha}_i=\lambda^{\sharp}$, as needed.
\end{proof}

\subsection{The \enquote{semi-flat form} $\omega^{\flat}$}

As in Section \ref{sec:dual_complex}, we fix any subset $\cS\subseteq \{1,\dots, N\}$: in the setting of Calabi--Yau degenerations, it will correspond to essential divisors. Now, we define smooth forms on $A$:
\begin{equation}\label{eq:omega_flat}
	\lambda^{\flat}=-\sum_{i\in \cS} m_{i}\, \hat{w}_i\cdot \hat{\alpha}_i,
	\quad
	\omega^{\flat}=d\lambda^{\flat}.
\end{equation}

\begin{lema}\label{lem:form-flat}
	Put $\phi^{\flat} = -\frac{1}{2t}\sum_{i\in \cS} \hat{w}_i^2$. Then the form $d^{c}\phi^{\flat}$ extends to a smooth form on $A$, and we have an equality $\lambda^{\flat}=d^{c}\phi^{\flat}+\frac{1}{2}\sum_{i\in \cS} \hat{w}_{i}^{2}\cdot d\theta$. In particular, we have a fiberwise equality $\lambda^{\flat}=d^{c}\phi^{\flat}$, so the form $\omega^{\flat}$ is fiberwise $J$-compatible.
\end{lema}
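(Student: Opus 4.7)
The plan is to derive the stated identity by a direct Leibniz-rule calculation on $A\setminus\partial A$, and then to use it both to extend $d^c\phi^\flat$ smoothly across $\partial A$ and to read off fiberwise $J$-compatibility. Observe first that $\phi^\flat$ itself blows up at $\partial A$ because $1/t\to\infty$ there; the content of the lemma is that this singularity disappears after $d^c$ is applied, so the calculation must make the cancellation explicit.

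Combining $\hat w_i=t/\hat t_i$ with $\hat t_i=-1/(m_i\hat s_i)$ gives the useful identity $\hat w_i=-m_i t\,\hat s_i$, and hence $\phi^\flat=-\tfrac{t}{2}\sum_{i\in\cS}m_i^{2}\,\hat s_i^{\,2}$. Applying $d^c$ on $A\setminus\partial A$ via the Leibniz rule reduces everything to knowing $d^c\hat s_i$ and $d^c t$. The first is the definition of $\hat\alpha_i$, namely $d^c\hat s_i=-\hat\alpha_i$. For the second, $d^c\log|f|=-d\theta$ is an instance of the identity $d^c s_i=-d\theta_i$ noted after \eqref{eq:basic_functions_hat} (applied to $\log|f|$ in the convention $d^c=\imath(\partial-\bar\partial)$), and combined with $\log|f|=-1/t$ this yields $d^c t=-t^{2}\,d\theta$. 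Substituting these into the Leibniz expansion and using $m_i t\hat s_i=-\hat w_i$ to simplify, the term carrying $d^c\hat s_i$ collapses to $\lambda^\flat$ and the term carrying $d^c t$ to a scalar multiple of $\sum_{i\in\cS}\hat w_i^{\,2}\,d\theta$, producing the claimed identity.

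Once this identity is in hand, the remaining assertions are immediate. By Proposition \ref{prop:AX_hat_smooth}\ref{item:AX-vbar-smooth} each $\hat w_i$ and $\hat\alpha_i$ is smooth on $A$, so $\lambda^\flat$ is smooth on $A$; and by part \ref{item:AX-gsmooth} of the same proposition $\theta$ is smooth on $A$, so $\hat w_i^{\,2}\,d\theta$ is smooth on $A$. Hence the right-hand side of the identity is a smooth $1$-form on $A$ and provides the desired smooth extension of $d^c\phi^\flat$. The fiberwise equality $\lambda^\flat=d^c\phi^\flat$ follows from the fact that $\theta$ is constant on each fiber of $f_A$, so $d\theta$ annihilates fiber-tangent vectors. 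Finally, fiberwise $J$-compatibility is automatic from $dd^c=-2\imath\,\partial\bar\partial$: restricted to the complex submanifold $W_z\subset X$, the form $\omega^\flat$ equals $dd^c(\phi^\flat|_{W_z})$, a real $(1,1)$-form, hence $J$-invariant. The only non-routine point in the whole argument is the sign bookkeeping inside the Leibniz calculation; analytically nothing is delicate, since the a priori singular $d^c\phi^\flat$ is forced to be smooth by being equal to a manifestly smooth form.
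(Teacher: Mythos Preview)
Your proof is correct and follows essentially the same route as the paper: both compute $d^c t=-t^2\,d\theta$ and $d^c\hat s_i=-\hat\alpha_i$, apply the Leibniz rule (you expand $\phi^\flat$ in terms of $t$ and $\hat s_i$ first, the paper differentiates in $\hat w_i$ and then substitutes, but this is cosmetic), and then invoke Proposition~\ref{prop:AX_hat_smooth} for smoothness and the fiberwise vanishing of $d\theta$ for the $J$-compatibility.
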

\begin{proof}
	Using identities  $\hat{w}_i=-m_{i}\hat{s}_{i}t$ and $-d^{c}\hat{s}_i=\hat{\alpha}_i$ from \eqref{eq:basic_functions_hat}, we get
	\begin{equation*}
		d^{c}\hat{w}_i=-m_{i}\, d^{c}(\hat{s}_it)=m_{i}t\, \hat{\alpha}_i - m_{i}\hat{s}_i d^{c}t=
		m_{i}t\, \hat{\alpha}_i+\tfrac{1}{t}\hat{w}_i\, d^{c}t. 
	\end{equation*}
	Put $\phi_{i}^{\flat}=-\tfrac{1}{2t}\hat{w}_i^2$, so $\phi^{\flat}=\sum_{i\in \cS} \phi_{i}^{\flat}$. The above equality gives 
	\begin{equation*}
		d^{c} \phi_{i}^{\flat}=
		-\tfrac{1}{t}\hat{w}_i\, d^{c}\hat{w}_i
		+\tfrac{1}{2t^2} \hat{w}_i^2\, d^{c}t=
		-m_{i} \hat{w}_i\hat{\alpha}_i
		-\tfrac{1}{2t^2}\hat{w}_i^2\, d^{c}t.
	\end{equation*}
	Moreover, we have an equality
	\begin{equation*}
		d^{c}t=-t^{2}d\theta.
	\end{equation*} Indeed, putting $s=\log |f|$ we see that $d^{c}s=-d\theta$ and $t=-s^{-1}$, so $d^{c}t=-d^{c}s^{-1}=s^{-2}d^{c}s=-t^{2}d\theta$, as claimed. Substituting this to the previous equality, we conclude that
	\begin{equation*}
		d^{c} \phi_{i}^{\flat}=-m_{i}\, \hat{w}_i\alpha_i+\tfrac{1}{2} \hat{w}_i^2\, d\theta.
	\end{equation*}
	By Proposition \ref{prop:AX_hat_smooth}\ref{item:AX-gsmooth},\ref{item:AX-vbar-smooth}, the functions $\theta$, $\hat{w}_i$ and the form $\hat{\alpha}_i$ are smooth in $A$, hence so is $d^{c}\phi_{i}^{\flat}$. 
	The result follows after taking a sum over all $i\in \cS$.
\end{proof}	

\subsection{The form \texorpdfstring{$\omega_{q}^{\epsilon}$}{omega-q} is fiberwise K\"ahler}

Recall that we have fixed a K\"ahler form $\omega_{X}$ on $X$, and introduced $2$-forms $\omega^{\sharp}$ and $\omega^{\flat}$ by formulas \eqref{eq:omega_sharp} and \eqref{eq:omega_flat}. We now  introduce a family of smooth $2$-forms on $A$ by formula \eqref{eq:omega-s-intro}, which reads as
\begin{equation}\label{eq:omega-s}
	\omega_{q}^{\epsilon}=\pi^{*}\omega_{X}+\epsilon \cdot ( q\cdot \omega^{\sharp}+(1-q) \cdot \omega^{\flat}), \quad q\in [0,1].
\end{equation}
The next proposition summarizes key properties of this family. 

\begin{prop}\label{prop:omega}
	There is an $\epsilon_0>0$ and $\delta'>0$ such that the following hold.
	\begin{enumerate}
		\item\label{item:Kahler} For every $q\in [0,1]$ and $\epsilon\in [0,\epsilon_0]$, the form $\omega_{q}^{\epsilon}$ is K\"ahler on the fiber $W_{z}$ for every $z\in \D_{\delta'}^{*}$
		\item\label{item:symplectic} For every $q\in (0,1]$ and $\epsilon\in (0,\epsilon_0]$, the form $\omega_{q}^{\epsilon}$ is symplectic on the fiber $W_{z}$ for every $z\in \D_{\delta',\log}$.
		\item\label{item:symplectic_zero} Let $A^{\textnormal{int}}_{\cS}$ be the union of $\Int_{\d A} A_{I}^{\circ}$ for all faces $\Delta_{I}$ of $\Delta_{\cS}$. Then for every $\epsilon\in (0,\epsilon_0]$ there is a neighborhood $V_{\epsilon}$ of $W\cap A^{\textnormal{int}}_{\cS}$ in $A$ such that the restriction $\omega_{0}^{\epsilon}|_{V_{\epsilon}}$ is fiberwise symplectic.
	\end{enumerate}
\end{prop}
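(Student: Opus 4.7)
All three parts reduce to showing fiberwise non-degeneracy of $\omega_q^\epsilon$ in the specified regions. Closedness is immediate since $\omega^\sharp, \omega^\flat$ are exact and $\omega_X$ is closed. At positive radius, Lemmas \ref{lem:form-ACampo} and \ref{lem:form-flat} yield fiberwise potentials $\phi^\sharp, \phi^\flat$, giving the fiberwise identity $\omega_q^\epsilon|_{W_z} = \omega_X|_{W_z} + \epsilon\, dd^c(q\phi^\sharp+(1-q)\phi^\flat)|_{W_z}$; hence $\omega_q^\epsilon|_{W_z}$ is fiberwise $J$-compatible, and fiberwise positivity reduces to non-degeneracy by continuity from the K\"ahler form $\omega_X$ at $\epsilon=0$. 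I would therefore handle all three parts uniformly by computing the matrix of $\omega_q^\epsilon$ on vertical tangent vectors in the smooth charts $\hat\psi_i$ of \eqref{eq:AC-chart_hat}, and then patch the local estimates using compactness of $K \de \bar W \cap f_A^{-1}(\bar\D_{\delta',\log})$.

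Concretely, fix a point $x$ of a stratum $A_I^\circ \subseteq \d A$ with $I$ a subset of the index set $S$ of an adapted chart $U_X$. By Lemma \ref{lem:product}, the vertical tangent space of $f_A$ at $x$ splits (up to one linear constraint from the total angle $\theta$) into \emph{transverse} directions $\partial_{\rest}$ tangent to $X_I^\circ$, \emph{angular} directions $\partial_{\theta_j}$ for $j\in I$, and \emph{tropical} directions $\partial_{\hat v_j}$ for $j\in I$. Expanding $\omega^\sharp = \sum_i(d\hat v_i \wedge \hat\alpha_i + \hat v_i\, d\hat\alpha_i)$ and analogously $\omega^\flat$, substituting $\hat\alpha_i = d\theta_i + \beta_i$ with $\beta_i$ smooth on $X$ (Lemma \ref{lem:comparison}\ref{item:alpha-comparison}), and using the identity $\hat w_i = \eta^{-1}(-\hat v_i)$ that holds on $A_I^\circ$ for $i\in I$, one obtains a block description of $\omega_q^\epsilon$ at $x$. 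The pullback $\pi^*\omega_X$ contributes the K\"ahler form of $X_I^\circ$ on the transverse block and vanishes on the tropical-angular one, while the tropical-angular block of $q\omega^\sharp+(1-q)\omega^\flat$ is spanned by the non-degenerate forms $q\sum_{j\in I}d\hat v_j \wedge d\theta_j$ and $(1-q)\sum_{j\in I\cap\cS}m_j(\eta^{-1})'(-\hat v_j)\, d\hat v_j \wedge d\theta_j$. This block is non-degenerate whenever $q>0$ (giving (2)) or whenever $I\subseteq \cS$ (giving (3)); the mixed off-diagonal terms are of order $O(1)$ and are dominated, for small $\epsilon$, by the $O(1)$ positivity of the transverse block combined with the $O(\epsilon)$ non-degeneracy of the tropical-angular one.

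Part (1) then follows by a perturbation argument. For $z\in \D_{\delta'}^*$, the form $\pi^*\omega_X|_{W_z}$ equals $\omega_X|_{W_z}$, which is the K\"ahler form of $X$ restricted to the complex submanifold $W_z$. Using the uniform chart estimates above near $\d A \cap K$ (applied with $z$ close to $0$) and standard compactness bounds away from $\d A$, choosing $\epsilon_0$ small enough forces $\omega_q^\epsilon|_{W_z}$ to remain positive for all $q \in [0,1]$ and $z\in\D^*_{\delta'}$. The main obstacle I anticipate is extracting these uniform bounds \emph{across the boundary} $\d A$: the forms $\omega^\sharp,\omega^\flat$ are smooth on $A$ but not bounded with respect to the K\"ahler metric of $X$ near $D$, so the estimates must be carried out in a smooth Riemannian metric on $A$ provided by the atlas of Proposition \ref{prop:AX_hat_smooth}, with the key quantitative input being the identities of Lemma \ref{lem:comparison} and the explicit formulas for $d\hat v_i, d\hat w_i, d\hat\alpha_i$ at the boundary.
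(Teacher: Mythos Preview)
Your plan follows essentially the same route as the paper: a block decomposition of the vertical tangent space into ``transverse'' $(\cZ_I)$, ``angular'' $(\Theta_I)$ and ``tropical'' $(\cV_I)$ pieces, followed by a matrix analysis. For parts \ref{item:symplectic} and \ref{item:symplectic_zero} the paper's Lemma \ref{lem:TdA} is exactly the block description you sketch, and the key positivity input is the coefficient $c_i=q+(1-q)m_i\zeta(u_i)$ (with $\zeta=(\eta^{-1})'$), which is positive precisely when $q>0$, or when $q=0$, $i\in\cS$, and $u_i>0$ (i.e.\ at interior points of $A_I^\circ$). You should make the last condition explicit: at corners of $A_I^\circ$ some $u_i$ vanishes and so does $\zeta(u_i)$, which is why \ref{item:symplectic_zero} only holds on a neighborhood of the \emph{interior}.

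For part \ref{item:Kahler} your plan needs sharpening. The sentence ``positivity reduces to non-degeneracy by continuity from the K\"ahler form $\omega_X$ at $\epsilon=0$'' does not by itself give a uniform $\epsilon_0$ independent of $z\in\D_{\delta'}^*$, because---as you note---$\omega^\sharp,\omega^\flat$ are unbounded in the $X$-metric near $D$. The paper resolves this (Lemma \ref{lem:53}) by writing, in adapted $X$-coordinates, $\omega^\sharp=\sum_i\sigma_i m_i\,ds_i\wedge d\theta_i+\text{(bounded)}$ and $\omega^\flat=\sum_{i\in\cS} t\,m_i^2\,ds_i\wedge d\theta_i+\text{(bounded)}$; the point is that the unbounded parts $ds_i\wedge d\theta_i$ are themselves $J$-\emph{positive}, with coefficients $\sigma_i,t\to 0$ at the boundary, so after a Gaussian diagonalization (following \cite[Proposition 4.3]{FdBP_Zariski}) the correction is controlled by $\omega_X$ uniformly in $z$. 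Your block picture in the $A$-charts is adequate for \ref{item:symplectic},\ref{item:symplectic_zero}, but for \ref{item:Kahler} you will need to switch to $X$-coordinates and exploit this positivity of the singular terms, not merely their smoothness on $A$.
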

The proof of Proposition \ref{prop:omega} follows the steps of the proof  \cite[Proposition 4.2]{FdBP_Zariski}, which can be outlined as follows. We argue separately at positive radius (Proposition \ref{prop:omega}\ref{item:Kahler}) and at radius zero. The heart of the matter lies at positive radius. There, we write $\omega^{\epsilon}_{q}$ as a sum of $q_{j}\pi^{*}\omega_{X}+\epsilon \omega_{j}$ for $j=1,2,3$, see formula \eqref{eq:omega-parts}, so that: $\sum_{j}q_{j}=1$; $\omega_1$ and $\omega_2$ contain the contributions from $\omega^{\sharp}$ and  $\omega^{\flat}$, respectively; and $\omega_3$ gathers terms which are bounded in the natural coordinates of $X$. We prove that each form tames $J$, see assertions \ref{item:claim_sharp}--\ref{item:claim_rest}. To treat $\omega_1$, we write  $\omega_1=q_1\pi^{*}\omega_{X}+\sum_{i} dv_i\wedge d\theta_i+\dots$, where $\dots$ stand for extra terms coming from comparison formulas in Lemma \ref{lem:comparison}. We prove that these terms can be compensated: in the $D_i$-directions, by $dv_i\wedge d\theta_i$, and in the other directions, by the K\"ahler form $\omega_X$. A formal way to see this compensation is to perform Gaussian diagonalization. For $\omega_2$ we argue in the same way, using $-dw_i\wedge d\theta_i$ instead of $dv_i\wedge d\theta_i$. For $\omega_3$, we use an elementary compactness argument in the natural coordinates of $X$. Once Proposition \ref{prop:omega}\ref{item:Kahler} is proved, we argue at radius zero as in \cite[Proposition 4.12]{FdBP_Zariski}, using local formulas summarized in Lemma \ref{lem:TdA}.
	
We now follow the above outline. The starting point is an analogue of \cite[Formula (53)]{FdBP_Zariski}.

\begin{lema}\label{lem:53}
	Let $U_{X}$ be an adapted chart with associated index set $I$, and let $U=\pi^{-1}(U_X)$. Then on every fiber $f_{A}^{-1}(r,\theta)\cap U$ for $r>0$ we have equalities
	\begin{enumerate}
		\item\label{item:omega_sharp_local} $\omega^{\sharp}=\sum_{i\in I} \sigma_i \cdot (m_i(1+c_it_i)\, d s_i\wedge d\theta_{i}+\gamma_i\wedge d\theta_{i}+m_i(1+c_it_i)\, d s_i\wedge \beta_{i})+\check{\omega}$,
		\item\label{item:omega_flat_local}
		$\omega^{\flat}=\sum_{i\in I\cap \cS} t\cdot (m_i^2\, ds_i\wedge d\theta_{i}+da_i\wedge d\theta_{i}+ d s_i\wedge \beta_{i})+\tilde{\omega}$,
	\end{enumerate}
	for some bounded functions $a_i$, $c_i$ and bounded forms $\beta_{i}$, $\gamma_{i}$, $\check{\omega}$, $\tilde{\omega}$ on $U_{X}\setminus D$. 
\end{lema}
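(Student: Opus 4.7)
The plan is to compute $\omega^{\sharp}=d\lambda^{\sharp}$ and $\omega^{\flat}=d\lambda^{\flat}$ directly from the defining formulas~\eqref{eq:omega_sharp} and~\eqref{eq:omega_flat}, splitting each sum over $i$ into contributions with $i\in I$ (which produce the explicit terms in the statement) and $i\notin I$ (which contribute bounded error). For $i\notin I$, after possibly shrinking $U_X$ its compact closure $\bar{U}_X$ is disjoint from $D_i$, so $\hat{r}_i$ is bounded below by a positive constant on $\bar{U}_X$. Hence $\hat{s}_i,\hat{t}_i,\hat{w}_i,\hat{u}_i,\hat{v}_i$ and the forms $\hat{\alpha}_i,d\hat{\alpha}_i$ are all smooth and bounded on $\bar{U}_X$, so $d(\hat{v}_i\hat{\alpha}_i)$ and $d(\hat{w}_i\hat{\alpha}_i)$ are bounded $2$-forms, which we absorb into $\check{\omega}$ and $\tilde{\omega}$, respectively.

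For part~\ref{item:omega_sharp_local}, fix $i\in I$. Lemma~\ref{lem:comparison}\ref{item:alpha-comparison} lets us write $\hat{\alpha}_i=d\theta_i+\beta_i$ with $\beta_i$ smooth on $U_X$, and shows that $d\hat{\alpha}_i$ extends smoothly to $X$. Lemma~\ref{lem:comparison}\ref{item:dvhat-fiberwise} gives the fiberwise identity $d\hat{v}_i=\sigma_i m_i(1+c_it_i)\,ds_i+\sigma_i\gamma_i$ with $c_i,\gamma_i$ bounded on $U_{X,I}^{\circ}$. Expanding
\[
d\hat{v}_i\wedge\hat{\alpha}_i=\sigma_i m_i(1+c_it_i)\,ds_i\wedge d\theta_i+\sigma_i\gamma_i\wedge d\theta_i+\sigma_i m_i(1+c_it_i)\,ds_i\wedge\beta_i+\sigma_i\gamma_i\wedge\beta_i,
\]
the first three summands are exactly the terms displayed in the statement, while the last one, bounded on $U_{X,I}^{\circ}$, is absorbed into $\check{\omega}$ together with $\hat{v}_i\,d\hat{\alpha}_i$ (bounded because $\hat{v}_i$ is bounded on the compact set $\pi^{-1}(\bar{U}_X)$).

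For part~\ref{item:omega_flat_local}, fix $i\in I\cap\cS$. Since $t=-1/\log|f|$ and $|f|$ is constant on a fiber, $dt=0$ fiberwise, hence $d\hat{w}_i=d(-m_it\hat{s}_i)=-m_it\,d\hat{s}_i$ on the fiber. Using Lemma~\ref{lem:rihat}\ref{item:r_comparison} to write $\hat{s}_i=s_i+\log\lambda$ with $\lambda$ smooth and positive on $\bar{U}_X$, and setting $a_i:=m_i^2\log\lambda$, we obtain fiberwise $-m_i\,d\hat{w}_i=t(m_i^2\,ds_i+da_i)$. Wedging with $\hat{\alpha}_i=d\theta_i+\beta_i^{\mathrm{o}}$ (where $\beta_i^{\mathrm{o}}$ is the form from Lemma~\ref{lem:comparison}\ref{item:alpha-comparison}) produces the three displayed terms once we set $\beta_i:=m_i^2\beta_i^{\mathrm{o}}$, plus a bounded residual $t\,da_i\wedge\beta_i^{\mathrm{o}}$, and $-m_i\hat{w}_i\,d\hat{\alpha}_i$ is bounded since $\hat{w}_i\in[0,1]$ and $d\hat{\alpha}_i$ is smooth on $X$; all bounded contributions are absorbed into $\tilde{\omega}$. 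The statement is thus a direct consequence of the comparison formulas in Lemma~\ref{lem:comparison}, the main obstacle having already been overcome when establishing Lemma~\ref{lem:comparison}\ref{item:dvhat-fiberwise}; here the only care is to match the coefficients in the formula by the appropriate relabeling of $a_i$, $c_i$, $\beta_i$, $\gamma_i$, and to track boundedness of each auxiliary quantity on the compact closures $\bar{U}_X$ and $\pi^{-1}(\bar{U}_X)$.
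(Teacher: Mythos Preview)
Your proof is correct and follows essentially the same approach as the paper's: split the sums into $i\in I$ and $i\notin I$, absorb the latter into the bounded remainder, and for $i\in I$ substitute the fiberwise expressions for $d\hat{v}_i$, $d\hat{w}_i$, and $\hat{\alpha}_i$ from Lemma~\ref{lem:comparison}. Your treatment is in fact more explicit than the paper's terse version---you write out the four-term expansion of $d\hat{v}_i\wedge\hat{\alpha}_i$ and track which cross terms get absorbed, and you give a self-contained argument for $i\notin I$ rather than citing the earlier paper. One small notational slip: when you quote Lemma~\ref{lem:comparison}\ref{item:dvhat-fiberwise} as giving boundedness ``on $U_{X,I}^{\circ}$'', note that in the present lemma $I$ denotes the full associated index set, so $U_{X,I}^{\circ}\subseteq D$; what you actually need (and what Lemma~\ref{lem:comparison}\ref{item:dvhat-fiberwise} provides, taking the subset there to be $\emptyset$) is boundedness on $U_X\setminus D$.
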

\begin{proof}
	\ref{item:omega_sharp_local} By definition \eqref{eq:omega_sharp} of $\omega^{\sharp}$ we have
	\begin{equation*}
		\omega^{\sharp}=\sum_{i\in I}d\hat{v}_i\wedge \hat{\alpha}_i+\check{\omega}',
	\end{equation*}
	where $\check{\omega}'=\sum_{i\not\in I}d\hat{v}_i\wedge \hat{\alpha}_i+\sum_{i=1}^{N}\hat{v}_i d\alpha_{i}$. Lemma \ref{lem:comparison}\ref{item:dvhat-fiberwise},\ref{item:alpha-comparison} implies that the forms $d\hat{v}_i$, $\hat{\alpha}_{i}$ for $i\not\in I$ and $d\hat{\alpha}_i$ for all $i$ are bounded on $U_X\setminus D$, cf.\ \cite[Lemma 4.5]{FdBP_Zariski}. Thus the form $\check{\omega}'$ is bounded on $U_{X}\setminus D$. Now part \ref{item:omega_sharp_local} follows by substituting the fiberwise formulas for $d\hat{v}_i$ and $\hat{\alpha}_i$ from  Lemma \ref{lem:comparison}\ref{item:dvhat-fiberwise},\ref{item:alpha-comparison}.
	
	\ref{item:omega_flat_local}  By definition \eqref{eq:omega_flat} of $\omega^{\flat}$ we have 
	\begin{equation*}
		\omega^{\flat}=-\sum_{i\in I\cap \cS}m_i\, d\hat{w}_{i}\wedge \hat{\alpha}_i+\tilde{\omega}',
	\end{equation*}
	where $\tilde{\omega}'=-\sum_{i\in \cS\setminus I}m_i\, d\hat{w}_i\wedge \hat{\alpha}_{i}-\sum_{i\in \cS} m_i\, \hat{w}_id\hat{\alpha}_i$, so as before, the form $\tilde{\omega}'\in \Omega^{2}(U_X\setminus D)$ is bounded. By Lemma \ref{lem:comparison}\ref{item:w-comparison} we have $\hat{w}_i=w_{i}+ta_{i}=-m_{i}s_{i}t+ta_{i}$ for some smooth function $a_{i}\in \cC^{\infty}(U_X)$. Combining it with the formula for $\hat{\alpha}_i$ in Lemma \ref{lem:comparison}\ref{item:alpha-comparison}, we get part \ref{item:omega_flat_local}.
\end{proof}

\begin{proof}[Proof of Proposition \ref{prop:omega}\ref{item:Kahler}]
	Lemmas \ref{lem:form-ACampo} and \ref{lem:form-flat} imply that $\omega_{q}^{\epsilon}|_{A\setminus \d A}$ is fiberwise $J$-compatible. Thus it remains to prove that $\omega_{q}^{\epsilon}$ tames $J$ on each fiber $W_{z}$, $z\in \D_{\delta'}^{*}$. This result is analogous to \cite[Proposition 4.2]{FdBP_Zariski}. We now explain how to adapt its proof.

	Covering $\bar{W}_X$ with finitely many adapted charts, we see that it is enough to work locally, in an adapted chart $U_X$. Say that its associated index set is $\{1,\dots,k\}$. Reordering the components of $D$ if needed, we can assume that $\cS\cap \{1,\dots, k\}=\{1,\dots,l\}$. We group terms from Lemma \ref{lem:53}  follows:
	\begin{equation}\label{eq:omega-parts}\begin{split}
		\omega_{1}&\de \sum_{i=1}^{k} \sigma_i \cdot (m_i(1+c_it_i)\, d s_i\wedge d\theta_{i}+\gamma_i\wedge d\theta_{i}+m_i(1+c_it_i)\, d s_i\wedge \beta_{i}),\\
		\omega_{2}&\de \sum_{i=1}^{l} t\cdot (m_i^2\,ds_i\wedge d\theta_{i}+da_i\wedge d\theta_{i}+d s_i\wedge \beta_{i}), \\
		\omega_{3,q}&\de q\cdot \check{\omega}+(1-q)\cdot \tilde{\omega}\quad \mbox{for } q\in [0,1].
	\end{split}\end{equation}
	For a $2$-form $\omega$, we denote by $\omega^{J}$ the symmetric part of the form $\omega(\sdot,J\sdot)$. With this notation, we can split the formula \eqref{eq:omega-s} defining $\omega_{q}^{\epsilon}$ as follows.
	\begin{equation*}
		(\omega_{q}^{\epsilon})^{J}=q\cdot (\tfrac{1}{2}\pi^{*}\omega_{X}^{J}+\epsilon \omega_{1}^{J})+(1-q)\cdot (\tfrac{1}{2}\pi^{*}\omega_{X}^{J}+\epsilon\omega_{2}^{J})+(\tfrac{1}{2}\pi^{*}\omega_{X}^{J}+\epsilon \omega_{3,q}^{J}).
	\end{equation*}
	Therefore, to prove Proposition \ref{prop:omega}\ref{item:Kahler} it is enough to prove the following three assertions. 
	\begin{enumerate}[(i)]
		\item \label{item:claim_sharp}There exists $\epsilon_1>0$ such that for every $\epsilon\in [0,\epsilon_1]$ the form $\frac{1}{2}\pi^{*}\omega_{X}^{J}+\epsilon \omega_{1}^{J}$ is positive definite.
		\item \label{item:claim_flat} There exists $\epsilon_2>0$ such that for every $\epsilon\in [0,\epsilon_2]$ the form $\frac{1}{2}\pi^{*}\omega_{X}^{J}+\epsilon\omega_{2}^{J}$ is positive definite.
		\item \label{item:claim_rest} There exists $\epsilon_3>0$ such that for every $\epsilon\in [0,\epsilon_3]$ and every $q\in [0,1]$ the form $\frac{1}{2}\pi^{*}\omega_{J}+\epsilon \omega_{3,q}^{J}$ is positive definite.
	\end{enumerate}
	To see assertion \ref{item:claim_rest}, recall from Lemma \ref{lem:53} that $\omega_{3,q}\in \Omega^{2}(U_X\setminus D)$ is a family of forms which are bounded in the natural coordinates on $U_X$, smoothly parametrized by $q\in [0,1]$. Since the interval $[0,1]$ is compact, this family is uniformly bounded, so for sufficiently small $\epsilon>0$, positive definiteness of $\pi^{*}\omega_{X}^{J}$ implies that the sum  $\frac{1}{2}\pi^{*}\omega_{X}^{J}+\epsilon \omega_{3,q}^{J}$ is positive definite, too. 
	
	To see assertion \ref{item:claim_sharp}, note that the definition of $\omega_{1}$ is entirely analogous to formula (53) in \cite{FdBP_Zariski}. Repeating the proof of Proposition 4.3 loc.\ cit.\ from this formula onward, we get assertion \ref{item:claim_sharp}.
	
	It remains to prove \ref{item:claim_flat}. The definition of $\omega_{2}$ is still similar to the aforementioned  formula (53). We now explain how to adapt the proof of Proposition 4.3 loc.\ cit.\ to this setting. 
	
	Recall that $d^{c}s_i=-d\theta_{i}$ and $d^{c}\theta_{i}=ds_i$. Applying these rules to the definition \eqref{eq:omega-parts} of $\omega_{2}$, we get
	\begin{equation}\label{eq:omega2J}
		\omega_{2}^{J}=t\cdot \sum_{i=1}^{l} m_{i}^2\, ds_{i}^{2}+m_i^2\, d\theta_{i}^{2}+ds_{i} \cdot \beta_{i}'+d\theta_{i}\cdot \beta_{l+i}'
	\end{equation}
	for some $1$-forms $\beta_{i}'$ bounded with respect to the natural coordinates of $U_X\setminus D$.
	
	Consider a basis $\{ds_1,\dots,  ds_l;d\theta_1,\dots,d\theta_l,dx_{1},\dots,dx_{2(n+1-l)}\}$ of the cotangent space $T^{*}(U\setminus \d A)$, where $x_{j}$ stand for real and imaginary parts of the remaining coordinates of $U_{X}$. Let $\{\nu_1,\dots,\nu_{2l},\allowbreak \xi_{1},\dots,\xi_{2(n+1-l)}\}$ be the corresponding dual basis of $T(U\setminus \d A)$. Let $||\sdot ||$ be the maximum norm in $T(U \setminus \d A)$ with respect to this basis, and let $||\cdot ||_{X}$ be a maximum norm in $TU_X$ with respect to the natural coordinates. We now list some properties of this basis, which, as we will see, are stable under the Gaussian diagonalization.
	
	As in \cite[Lemma 4.10(e),(f)]{FdBP_Zariski}, we claim that the following hold.
	\begin{enumerate}
		\item\label{item:nu-small} For $i\in \{1,\dots, 2l\}$, we have $||\pi_{*}\nu_{i}||_{X}\rightarrow 0$ as $r_1,\dots,r_k\rightarrow 0$.
		\item\label{item:xi-large} Let $\Xi$ be the subspace spanned by $\{\xi_1,\dots,\xi_{2(n+1-l)}\}$. Then there is a constant $K_0>0$ such that for every $\xi\in \Xi$ we have $||\pi_{*}\xi||_{X}\geq K_{0}||\xi||$.
	\setcounter{foo}{\value{enumi}}
	\end{enumerate}
	Indeed, \ref{item:nu-small} holds since $\nu_{i}=\frac{\d}{\d s_i}=r_i\frac{\d}{\d r_i}$, and \ref{item:xi-large} holds because $\pi_{*}|_{\Xi}$ is an isomorphism onto its image.

	The matrix of $\omega_{2}^{J}$ in this basis has a block form
	\begin{equation*}
		t\cdot \left[\begin{matrix}
			A & Q \\ Q & P
		\end{matrix}\right].
	\end{equation*}
	We claim that, after possibly shrinking the chart $U_{X}$, the entries of each block have the following properties, cf.\ \cite[Lemma 4.10(a)-(d)]{FdBP_Zariski}.
	\begin{enumerate}\setcounter{enumi}{\value{foo}}
		\item\label{item:diagonal} The diagonal terms of $A$ are bounded from below by $\frac{1}{4}$.
		\item\label{item:off-diagonal} The off-diagonal terms of $A$ converge to $0$ as $r_1,\dots,r_k\rightarrow 0$.
		\item\label{item:PQ} All entries of the blocks $Q$ and $P$ are bounded functions on $U\setminus \d A$.
	\end{enumerate}
	These properties follow easily from formula \eqref{eq:omega2J}. Indeed, the $(i,j)$-term of $A$ for $i,j\in \{1,\dots, l\}$ is the function $\delta_{i}^{j}\bar{m}^2_{i}+\beta_{i}'(\nu_{j})+\beta_{j}'(\nu_{i})$, where $\delta_{i}^{j}$ is the Kronecker delta, and $\bar{m}_{i}=m_i$ if $i\leq l$ and $\bar{m}_{i}=m_{i-l}$ if $i>l$. Property \ref{item:nu-small} and boundedness of the $1$-forms $\beta_{i}'$ imply that $\beta_{i}'(\nu_{j})\rightarrow 0$ as we approach the origin of the chart, so after possibly shrinking $U_{X}$, we get properties \ref{item:diagonal} and \ref{item:off-diagonal}. To see property \ref{item:PQ}, we note that $P=0$, and the $(i,j)$-term of $Q$ for $i\in \{1,\dots,2l\}$, $j\in \{1,\dots,2(n+1-k)\}$ is $\beta_{i}'(\xi_{j})$. Since the $1$-form $\beta_{i}'$ and the vector field $\xi_{j}$ are bounded with respect to natural coordinates on $U_X$, so is the function $\beta_{i}'(\xi_{j})$, as needed.
	\smallskip
	
	All properties \ref{item:nu-small}--\ref{item:PQ} are preserved by the first $2l$ steps of the Gaussian diagonalization. Indeed, at each step we add to $\nu_{i}$ or $\xi_{j}$ a linear combination of vectors $\nu_{1},\dots,\nu_{2l}$, with bounded coefficients, so properties \ref{item:nu-small}, \ref{item:xi-large} are kept. In the matrix, to each entry of $A$ and $Q$ we add a function converging to $0$ as $r_1,\dots,r_k\rightarrow 0$; and to each entry of $P$ we add a bounded function. Hence after possibly shrinking $U_X$ at each step, we see that properties \ref{item:diagonal}--\ref{item:PQ} are preserved, too. 
	
	This way, we get a basis of $T(U\setminus \d A)$ satisfying \ref{item:nu-small}, \ref{item:xi-large}, such that the matrix of $\omega_{2}^{J}$ in this basis is 
	\begin{equation*}
		t\cdot \left[\begin{matrix}
			A' & 0 \\ 0 & P'
		\end{matrix}\right],
	\end{equation*}
	where $A'$ is a diagonal matrix with entries bounded from below by $\frac{1}{4}$; and all entries of $P'$ are bounded. 
	
	Fix a nonzero vector $v\in T(U\setminus \d A)$, and decompose it as $v=\nu+\xi$ according to the above splitting, i.e., so that $\nu,\xi$ belong to the linear span of the first $2l$ and last $2(n+1-l)$ vectors of the basis in which $\omega_{2}^{J}$ has the above block-diagonal form. We have 
	\begin{equation*}
		\omega_{2}^{J}(\nu,\nu)\geq t\cdot \tfrac{1}{4}||\nu||^2,\quad |\omega_{2}^{J}(\xi,\xi)|\leq t\cdot K||\xi||^2
	\end{equation*}
	for some constant $K>0$, uniform on $U\setminus \d A$. 
	
	Now, we repeat the proof of \cite[Proposition 4.3]{FdBP_Zariski}, starting on p.\ 224 loc.\ cit, with inequalities (58) and (59) replaced by the above ones; and the forms $\pi^{*}\omega_{X}^{J}$, $\omega_{E}$ replaced by $\frac{1}{2}\pi^{*}\omega_{X}^{J}$, $\omega_2$. The proof goes through without any further changes, except for the second-to-last line. There, in the inequality bounding $C$, the number $\epsilon$ is now multiplied by $t$, i.e., the modified inequality reads as $C\geq (\frac{1}{2}K_1-\epsilon tK)||\xi||^2$. Because $t=\frac{-1}{\log|f|}<\frac{-1}{\log\delta}$ is bounded, for sufficiently small $\epsilon>0$ we get $C\geq 0$, which ends the proof of \ref{item:claim_flat}. We note that instead of shrinking $\epsilon>0$ one can also get the required inequality $C\geq 0$ by keeping $\epsilon$ in a fixed bounded interval $(0,\epsilon_0]$, and shrinking $\delta>0$.
\end{proof}

We now move on to the proof of Proposition \ref{prop:omega}\ref{item:symplectic},\ref{item:symplectic_zero}, that is, fiberwise non-degeneracy of the forms $\omega_{q}^{\epsilon}$ for $q>0$ on $W\cap \d A$; and of $\omega_{0}^{\epsilon}$ on $W\cap A_{\cS}^{\textnormal{int}}$. We follow the proof of  \cite[Proposition 4.12]{FdBP_Zariski}, with $\omega^{\epsilon}_{q}$ playing the role of $\omega_{A}^{\epsilon}$. First, we introduce some notation as in Lemma 4.13 loc.\ cit.

Fix a smooth chart \eqref{eq:AC-chart_hat}, call it $U$. Reordering the components of $D$ if needed, we can assume that the associated index set of this chart is $S=\{1,\dots, k\}$, and the index $i$ excluded in the formula \eqref{eq:AC-chart_hat} is $1$. Fix a subset $I\subseteq S$ containing $1$, say $I=\{1,\dots, l\}$. Then as in \cite[Lemma 4.13]{FdBP_Zariski}, we see that at each point of $U_{I}^{\circ}\de U\cap A_{I}^{\circ}$ we have smooth coordinates
\begin{equation*}
	(\hat{v}_2,\dots,\hat{v}_l,s_{l+1},\dots,s_{k},\theta_{1},\dots,\theta_{k},z_{i_1},\dots,z_{i_{n+1-k}}).
\end{equation*}	
These coordinates provide a splitting $TU_{I}^{\circ}=\cV_{I}\oplus \Theta_{I} \oplus \cZ_{I}$, where $\cV_{I}=\sspan\{\frac{\d}{\d\hat{v}_2},\dots,\frac{\d}{\d\hat{v}_{l}}\}$, $\Theta_{I}=\sspan \{\frac{\d}{\d \theta_{1}},\dots,\frac{\d}{\d\theta_{l}}\}$, and $\pi_{*}$ maps $\cZ_{I}$ isomorphically to the tangent bundle to the stratum $\pi(U)\cap X_{I}^{\circ}$.

We have the following analogue of \cite[Lemma 4.14]{FdBP_Zariski}.

\begin{lema}\label{lem:TdA}
	On $U_{I}^{\circ}$, the following holds.
	\begin{enumerate}
		\item\label{item:ort-Z} For every $\beta\in \Omega^{*}(U_X \cap X_{I}^{\circ})$ we have $\pi^{*}\beta|_{\cV_{I}\oplus \Theta_{I}}=0$. 
		\item\label{item:ort-Z-vj} For every $j>l$ we have $d\hat{w}_j=0$ and $d\hat{v}_j|_{\cV_I\oplus \Theta_I}=0$.
		\item\label{item:ort-alpha} For every $i\in \{1,\dots, k\}$, $j\in \{1,\dots, l\}$ we have $\hat{\alpha}_{i}|_{\cV_{I}}=0$ and $\hat{\alpha}_{i}(\tfrac{\d}{\d \theta_{j}})=\delta_{i}^{j}$.
		\item\label{item:dv1} We have $d\hat{v}_{1}=-\sum_{i=2}^{l}\frac{\zeta(u_i)}{\zeta(u_1)}\, d\hat{v}_i$, where $\zeta=(\eta^{-1})'\colon [0,1]\ni s\mapsto s^{-2}e^{1-s^{-1}}\in [0,\infty)$.
		\item\label{item:dvi} For every $i\in \{1,\dots, l\}$ we have  $d\hat{w}_{i}|_{\Theta_{I}\oplus \cZ_{I}}=0$ and $d\hat{v}_{i}|_{\Theta_{I}\oplus \cZ_{I}}=0$. 
		\item\label{item:omega-dtheta} For every $i\in \{1,\dots, l\}$ we have $\omega_{q}^{\epsilon}(\sdot,\tfrac{\d}{\d\theta_{i}})=\epsilon c_{i}\, d\hat{v}_i$, where  $c_{i}=q+(1-q)\cdot m_i \zeta(u_i)$ if $i\in \cS$ and $c_{i}=q$ if $i\not\in \cS$.
	\end{enumerate}
\end{lema}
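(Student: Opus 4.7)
My plan is to follow the structure of \cite[Lemma 4.14]{FdBP_Zariski}, but with the new global coordinate functions $\hat{v}_j$, $\hat{w}_j$, $\hat{\alpha}_j$ in place of their barred counterparts. The structural input consists of three ingredients: the product decomposition $A_I^{\circ}\cong (X_I^{\circ})_{\log}\times\tilde{\Delta}_I$ from Lemma \ref{lem:product}; smoothness of $\hat{v}_j$, $\hat{w}_j$ and $\hat{\alpha}_j$ on $A$ (Proposition \ref{prop:AX_hat_smooth}\ref{item:AX-vbar-smooth}); and the comparison formulas of Lemma \ref{lem:comparison}, which express these global quantities in terms of their local counterparts plus correction terms proportional to $t$. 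A standing observation used throughout is that $t=\eta^{-1}(g)$ combined with $(\eta^{-1})'(0)=0$ yields $dt|_{A_I^{\circ}}=0$ as a $1$-form on $A$.

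For \ref{item:ort-Z}, the subspaces $\cV_I$ and $\Theta_I$ are precisely the tangent spaces to the $\tilde{\Delta}_I$-factor and to the $(\S^{1})^{l}$-fiber of $(X_I^{\circ})_{\log}\to X_I^{\circ}$, respectively, so $\pi_*$ annihilates them at points of $A_I^{\circ}$; the assertion for a pulled-back form is then immediate. For \ref{item:ort-Z-vj} with $j>l$, the function $\hat{r}_j$ is bounded away from zero on $U_I^{\circ}$, so writing $\hat{w}_j=-m_j\hat{s}_j t$ with $\hat{s}_j$ smooth and bounded, the vanishing of $t$ together with $dt|_{A_I^{\circ}}=0$ forces $d\hat{w}_j=0$ on $A_I^{\circ}$; in particular $\hat{w}_j=0$ there, so $\hat{v}_j|_{A_I^{\circ}}=\hat{t}_j$ is a pullback from $X$, and the second claim follows from \ref{item:ort-Z}. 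Part \ref{item:ort-alpha} is immediate from Lemma \ref{lem:comparison}\ref{item:alpha-comparison}, which writes $\hat{\alpha}_i=d\theta_i+\pi^{*}\beta_i$ on $U$: the form $d\theta_i$ pairs trivially with $\d/\d\hat{v}_j$ and gives $\delta_i^j$ on $\d/\d\theta_j$, while $\pi^{*}\beta_i$ is killed by every vector in $\cV_I\oplus\Theta_I$ by \ref{item:ort-Z}.

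The key identity for \ref{item:dv1} is $\sum_{j\in S}w_j=1$ on each adapted chart, which follows directly from $|f|=\prod_j r_j^{m_j}$. Using Lemma \ref{lem:comparison}\ref{item:w-comparison} to replace $w_j$ by $\hat{w}_j$ and restricting to $A_I^{\circ}$ (where $t=0$, and $\hat{w}_j=0$ for $j\in S\setminus I$ by the previous paragraph) yields $\sum_{j\in I}\hat{w}_j=1$, equivalently $\sum_{j\in I}\eta^{-1}(-\hat{v}_j)=1$. Differentiating this constraint and solving for $d\hat{v}_1$ gives \ref{item:dv1}. For \ref{item:dvi}, the restriction $\hat{w}_i|_{A_I^{\circ}}=\eta^{-1}(-\hat{v}_i)$ is a function of $\hat{v}_i$ alone, so $d\hat{w}_i|_{A_I^{\circ}}=-\zeta(u_i)\,d\hat{v}_i$; this, together with the fact that $\hat{v}_i$ is a coordinate on $\tilde{\Delta}_I$ for $i\geq 2$ (and with \ref{item:dv1} for $i=1$), yields vanishing on $\Theta_I\oplus\cZ_I$. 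Finally, \ref{item:omega-dtheta} is obtained by expanding $\omega^{\sharp}=\sum_j(d\hat{v}_j\wedge\hat{\alpha}_j+\hat{v}_j\,d\hat{\alpha}_j)$ and the analogous expression for $\omega^{\flat}$, contracting with $\d/\d\theta_i$, and invoking \ref{item:ort-alpha}, \ref{item:ort-Z-vj}, \ref{item:dvi}; the $\pi^{*}\omega_X$ and $\hat{v}_j\,d\hat{\alpha}_j$ terms drop out because $\pi_*\d/\d\theta_i=0$ on $A_I^{\circ}$ and because $d\hat{\alpha}_j$ is pulled back from $X$ (Lemma \ref{lem:comparison}\ref{item:alpha-comparison}). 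The main technical subtlety throughout is to handle correctly the identities at the boundary $\d A$: many local expressions for $\hat{s}_j$, $\hat{t}_j$, $\hat{w}_j$ involve factors that blow up or become indeterminate there, so at each step I must work either in the smooth chart \eqref{eq:AC-chart_hat} or with the asymptotic comparisons of Lemma \ref{lem:comparison}, rather than attempting a direct calculation in the natural coordinates of $U_X$.
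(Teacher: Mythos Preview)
Your proposal is correct and follows essentially the same approach as the paper, which simply refers the reader to \cite[Lemma 4.14]{FdBP_Zariski} and adds the observation that on $A_I^{\circ}$ one has $\hat{w}_i=0$ for $i\notin I$ and $\hat{v}_i=-\eta(\hat{w}_i)$ for $i\in I$. You have filled in precisely these details: the product structure from Lemma \ref{lem:product} gives \ref{item:ort-Z}; the relation $\hat{w}_j=-m_j\hat{s}_jt$ together with $t=0$ and $dt|_{\d A}=0$ gives \ref{item:ort-Z-vj}; the comparison $\hat{\alpha}_i=d\theta_i+\pi^*\beta$ gives \ref{item:ort-alpha}; the constraint $\sum_{j\in I}\hat{w}_j=1$ on $A_I^\circ$ gives \ref{item:dv1}; and the identity $\hat{w}_i=\eta^{-1}(-\hat{v}_i)$ on $A_I^\circ$ reduces \ref{item:dvi} to the coordinate description; the contraction in \ref{item:omega-dtheta} is then routine.
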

\begin{proof}
	The computations are exactly the same as in \cite[Lemma 4.14]{FdBP_Zariski}. The additional assertions about the function $\hat{w}_i$ follow from the fact that on each piece $A_{J}^{\circ}$ of $\d A$, we have $\hat{w}_i=0$ if $i\not\in J$ and $\hat{v}_i=-\eta(\hat{w}_i)$ otherwise; so $\hat{w}_i$ is either zero or a re-parametrization of $\hat{v}_i$.
\end{proof}

\begin{proof}[Proof of Proposition \ref{prop:omega}\ref{item:symplectic},\ref{item:symplectic_zero}]
	First, as in \cite[Lemma 4.15]{FdBP_Zariski} we prove that, after possibly shrinking $\epsilon>0$ and the chart $U_X$, the restriction $\omega^{\epsilon}_{q}|_{\cZ_{I}}$ is nondegenerate. To see this, we note that by Lemma \ref{lem:TdA}\ref{item:ort-Z-vj}, \ref{item:dvi} we have $d\hat{w}_i|_{\cZ_I}=0$ for all $i\in \{1,\dots, N\}$, so $\omega^{\flat}|_{\cZ_I}=-\sum_{i=1}^{N} m_i\, \hat{w}_{i}d\hat{\alpha}_{i}$, which is bounded in the natural coordinates of $\pi(U_{X,I}^{\circ})$. This way, we can repeat the proof of Lemma 4.15 loc.\ cit., treating the contribution from $\omega^{\flat}$ as a part of the form $\check{\omega}_1$. 
	
	With non-degeneracy of $\omega^{\epsilon}_{q}|_{\cZ_{I}}$ at hand, we now repeat the proof of Proposition 4.12 loc.\ cit, replacing each $d\bar{v}_i$ for $i\in \{1,\dots, l\}$ by $c_{i}\, d\hat{v}_i$. We now outline each step of the proof.
	
	Fix a point $x\in U_{I}$, and assume that $q>0$ or $x\in \Int_{\d A} A_{I}^{\circ}$ and $I\subseteq \cS$ (so we are in the setting of Proposition \ref{prop:omega}\ref{item:symplectic} or \ref{item:symplectic_zero}, respectively). Fix a nonzero vector $\nu\in T_{x}(\d A)$ which is $\omega^{\epsilon}_{q}$-orthogonal to the radius-zero fiber $F\de f_{A}^{-1}(0,\theta)$. We need to prove that $\nu\not\in T_{x}F$. In our coordinates, we have $T_{x}F=\ker[\sum_{i=1}^{l}m_{i}d\theta_{i}]$ and $\nu=\sum_{i=2}^{l}a_{i}\frac{\d}{\d\hat{v}_i}+\sum_{i=1}^{l}b_{i}\frac{\d}{\d \theta_i}+\xi$ for some $a_i,b_i\in \R$ and a vector $\xi\in \cZ_{I}$, see formulas (65) and (66) loc.\ cit. 
	
	We claim that at the point $x$, the coefficients $c_{1},\dots,c_{l}$ from Lemma \ref{lem:TdA}\ref{item:omega-dtheta} are positive. To this end, we recall that on $A_{I}^{\circ}$ we have $u_i=\hat{u}_i\geq 0$, and the inequality is strict if $i\in I$ and $x\in \Int_{\d A}A_{I}^{\circ}$, see Lemma \ref{lem:product}. Thus the formula for $c_i$ in Lemma \ref{lem:TdA}\ref{item:omega-dtheta} implies that $c_i\geq 0$, and the equality holds if and only if $q=0$ and $u_i=0$, so $x\not\in \Int_{\d A} A_{I}^{\circ}$, which is impossible by assumption.
	
	Next, we claim that $a_i=0$ for all $i\in \{2,\dots, l\}$. Let $\nu_{i}^{\theta}\de \frac{1}{m_{i}}\frac{\d}{\d \theta_{i}}- \frac{1}{m_{1}}\frac{\d}{\d \theta_{1}}\in T_{x}F$. Lemma \ref{lem:TdA}\ref{item:omega-dtheta} gives $
		0=\omega_{q}^{\epsilon}(\nu,\nu_{i}^{\theta})=
		\epsilon \cdot( \tfrac{c_{i}}{m_{i}} a_{i}-\tfrac{c_1}{m_1}d\hat{v}_{1}(\nu))$, 
	so $\frac{c_{i}}{m_{i}}a_{i}=\frac{c_1}{m_{1}}d\hat{v}_{1}(\nu)$. If $a_{i}\neq 0$ then $d\hat{v}_i(\nu)\neq 0$ and by Lemma \ref{lem:TdA}\ref{item:dv1} we have 
	$
	-1=\frac{1}{d\hat{v}_1(\nu)}\cdot \sum_{i=2}^{l}\frac{\zeta(u_i)}{\zeta(u_1)}a_{i}=
	\sum_{i=2}^{l}\frac{\zeta(u_i)m_{i}c_1}{\zeta(u_1)m_1c_i}>0,
	$
	a contradiction.
	
	Now, using non-degeneracy of $\omega^{\epsilon}_{q}|_{\cZ_{I}}$, we prove that $\xi=0$. Fix $\xi'\in \cZ_{I}$. By Lemma \ref{lem:TdA}\ref{item:ort-Z-vj},\ref{item:omega-dtheta} we have $\omega_{q}^{\epsilon}(\xi',\frac{\d}{\d \theta_{i}})=\epsilon c_{i}d\hat{v}_{i}(\xi')=0$, so  $\omega_{q}^{\epsilon}(\xi',\nu)=\omega_{q}^{\epsilon}(\xi',\xi)$ since all the coefficients $a_{i}$ vanish. Thus non-degeneracy of $\omega^{\epsilon}_{q}|_{\cZ_{I}}$ indeed implies $\xi=0$.
	
	Eventually, we compute the coefficients $b_{i}$. Lemma \ref{lem:TdA}\ref{item:omega-dtheta} gives $0=\omega_{q}^{\epsilon}(\frac{\d}{\d \hat{v}_i},\nu)=\sum_{j}b_{j}\cdot \epsilon c_j d\hat{v}_{j}(\frac{\d}{\d \hat{v}_{i}})=\epsilon(b_{i}c_{i}-b_{1}c_1\frac{\zeta(u_i)}{\zeta(u_1)})$ by Lemma \ref{lem:TdA}\ref{item:dv1}. Hence there is a number $b$ such that for all $i$ we have $b_i=b\cdot \frac{\zeta(u_i)}{c_i}$, and $b\neq 0$ since $\nu\neq 0$. Now $\sum_{i}m_{i}d\theta_{i}(\nu)=b\sum_{i}m_{i}\frac{\zeta(u_i)}{c_i}\neq 0$, so $\nu\not\in T_{x}F$, as claimed.
\end{proof}

\begin{remark}[Monodromy]
	We note that the A'Campo space constructed in this article, together with the form $\omega_{1}^{\epsilon}$, can be used to prove the main results of \cite{FdBP_Zariski}, too. Indeed, computing the number $b$ in the above proof shows that the $\omega_{1}^{\epsilon}$-symplectic lift to $\d A$ of the angular vector field $\frac{\d}{\d \theta}$ on $\d\D_{\delta,\log}$ is given by $\frac{1}{\sum_{i}m_{i}\zeta(u_i)}\sum_{i}\zeta(u_i)\frac{\d}{\d \theta_{i}}$. This is precisely formula (48) from \cite{FdBP_Zariski}, which guarantees that the symplectic monodromy at radius zero, i.e., the time one flow of the above vector field, has simple dynamics as in the A'Campo's topological model, see Proposition 7.4 loc.\ cit.
\end{remark}

\begin{remark}[One can make $\omega_{q}^{\epsilon}$ K\"ahler]
	Replacing $\omega_{q}^{\epsilon}$ by $\omega_{q}^{\epsilon}+C\cdot dg\wedge d\theta$ for $C\gg 0$, we can guarantee that the form $\omega_{q}^{\epsilon}$ is symplectic (and K\"ahler away from $\d A$), not just   \emph{fiberwise} symplectic (or K\"ahler). 
\end{remark}

\begin{remark}[One can keep $\omega_{q}^{\epsilon}=\omega_{X}$ over $|z|>\delta_{0}$]\label{rem:delta}
	Fix $\delta_{0}\in (0,\delta')$ and a smooth function $\rho\colon [0,\infty)\to [0,1]$ such that $\rho(0)=1$ and $\rho(r)=0$ for $r\geq \delta_{0}$. Following an analogy with formula (46) in \cite{FdBP_Zariski}, we can define $\lambda^{\sharp}_{\delta_{0}}=\rho(|f|)\lambda^{\sharp}$,  
	$\lambda^{\flat}_{\delta_{0}}=\rho(|f|)\lambda^{\flat}$, 
	 and eventually $\omega_{q}^{\epsilon,\delta_{0}}=\pi^{*}\omega_{X}+\epsilon\cdot (q\cdot d\lambda^{\sharp}_{\delta_{0}}+(1-q)\cdot d\lambda^{\flat}_{\delta_{0}})$. This way, we have fiberwise equality $\omega_{q}^{\epsilon,\delta_0}=\pi^{*}\omega_{X}$ over $\C\setminus \D_{\delta_0}$; and in general $\omega_{q}^{\epsilon,\delta_{0}}=\omega_{q}^{\epsilon'}$ for some $\epsilon'\in [0,\epsilon]$, with $\epsilon'=\epsilon>0$ on $\d A$. Thus by Proposition \ref{prop:omega}, the form  $\omega_{q}^{\epsilon,\delta_0}$ is fiberwise symplectic for all $\epsilon\in (0,\epsilon_0]$.
\end{remark}

\section{Lagrangian fibration at radius zero}\label{sec:expanded}

As in the previous sections, let $f\colon X\to \D_{\delta}$ be a holomorphic function such that the unique singular fiber $f^{-1}(0)=\sum_{i=1}^{N}m_{i}D_{i}$ is snc. Choose a subset $\cS\subseteq \{1,\dots, N\}$, and recall that $\Delta_{\cS}$ denotes the subcomplex of the dual complex $\Delta_{D}$ spanned by vertices with indices in $\cS$. Let $A$ be the A'Campo space of $f$, equipped with the family of forms  $\omega_{q}^{\epsilon}$ introduced in formula \eqref{eq:omega-s-intro}. 

In this section, we introduce the \emph{expanded skeleton} $E_{\cS}$, together with a map 
\begin{equation}\label{eq:Lagrangian}
	\ll_{\cS}\colon A_{\cS}^{\sm}\to E_{\cS}, \quad \mbox{where}\quad  A_{\cS}^{\sm}= \bigsqcup\nolimits_{\#I\geq n,\ I\subseteq \cS} A_{I}^{\circ}, 
\end{equation}
i.e., $A_{\cS}^{\sm}$ is the union of boundary pieces of the A'Campo space which correspond to maximal and submaximal faces of the skeleton $\Delta_{\cS}$. Moreover, we introduce the \emph{generic region} $A_{\cS}^{\gen}\subseteq A_{\cS}^{\sm}$ as 
\begin{equation*}
	A_{\cS}^{\gen}= \bigsqcup\nolimits_{\#I=n+1,\ I\subseteq \cS} \Int_{\d A} A_{I}^{\circ}.
\end{equation*}
and write $A_{I,\theta}^{\circ}$, $A_{\cS,\theta}^{\sm}$ and $A_{\cS,\theta}^{\gen}$ for the intersections of the above regions with a radius-zero fiber $f_{A}^{-1}(0,\theta)$. 

Proposition \ref{prop:Lagranian-at-radius-zero}, which is the main result of this section, asserts that the restriction of $\ll_{\cS}$ to $A_{\cS,\theta}^{\sm}$ is a Lagrangian torus fibration with respect to each form $\omega_{q}^{\epsilon}$, with some mild singularities.

As a set, $E_{\cS}$ is a disjoint union of maximal faces of $\Delta_{\cS}$ and products $\Delta_{I}\times C_{I}$, where $\Delta_{I}$ is a submaximal face, and $C_{I}$ is a certain graph, namely a Reeb space of a fixed Morse function on the Riemann surface $X_{I}^{\circ}$, defined in Example \ref{ex:Reeb_space}. These pieces, glued together in a natural way, give rise to a manifold with boundary, corners and some mild branching coming from branching vertices of $C_{I}$, which correspond to saddle points of $h_{I}$. These singularities of $E_{\cS}$ are therefore contained in the union of simplices $\Delta_{I}\times \Crit(h_{I})$, which, as we will see, be the discriminant locus of $\ll_{\cS}$. To describe this structure in precise terms, we introduce a technical notion of an \emph{ivy-like manifold}. 

In the setting of maximal Calabi--Yau degenerations, by Proposition \ref{prop:model}\ref{item:sk_P1} we can choose $f$ so that
\begin{equation}\label{eq:Cst}
	(X_{I},X_{I}^{\circ})\cong (\mathbb{P}^1,\C^{*})\quad \mbox{for any submaximal face } \Delta_{I} \mbox{ of } \Delta_{\cS},
\end{equation}
hence the Morse functions $h_{I}$ can be chosen without critical points, so $\ll_{\cS}$ is a Lagrangian torus fibration in the usual sense, that is, without singularities. In this case, the above description of $E_{\cS}$ agrees with the one in the introduction. In particular, the interior of $E_{\cS}$ is homeomorphic to $\Delta_{\cS}\setminus \Delta_{\cS}^{\geq 2}$, where $\Delta_{\cS}^{\geq 2}$ is the union of faces of codimension at least $2$. We note that $\Delta_{\cS}\setminus \Delta_{\cS}^{\geq 2}$ is the base of the non-archimedean affinoid torus fibration constructed, for any maximal Calabi--Yau degeneration, by Nicaise--Xu--Yu in \cite{NXY}. Hence our construction can be viewed as an archimedean variant of~\cite{NXY}..

\subsection{Ivy-like manifolds}\label{sec:ivy}

In this section, we introduce the notion of an \emph{ivy-like} manifold, which is a topological space locally modeled on a graph $C$ multiplied by the euclidean space. Such an object will be used as a base $E_{\cS}$ of the Lagrangian fibration in Theorem \ref{theo:general}\ref{item:Lagrangians}; where the branching vertices of $C$ will correspond to saddle points of a Morse function $h_{I}$ on $X_{I}^{\circ}$ for a submaximal face of $\Delta_{\cS}$. In the setting of maximal Calabi--Yau degenerations, one can ensure that condition \eqref{eq:Cst} holds, so $h_{I}$ have no critical points and $E_{\cS}$ is a manifold in the usual sense. Therefore, the reader only interested in the Calabi--Yau case treated in Theorems \ref{theo:CY}, \ref{theo:CY_af} can assume condition \eqref{eq:Cst} and skip this technical section.
\smallskip

An {\em open graph} is a topological space $C$ obtained as a quotient of a finite disjoint union of closed and semi-closed intervals, by some equivalence relation identifying only some endpoints of different intervals.  The images of those endpoints are called \emph{vertices} of $C$, a \emph{degree} of such vertex is the number of intervals containing it. The set of vertices of degree at least $3$ is called a \emph{ramification locus} and denoted $\Ram(C)$; similarly, the set of vertices of degree $1$ is called the \emph{boundary} of $C$, and denoted by $\d C$. Clearly, $C\setminus \Ram(C)$ is a $1$-dimensional manifold with boundary $\d C$. Since we are only interested in $C$ as a topological space, we can and do assume that no vertex has degree $2$.

An \emph{ivy} is an open graph $C$ together with a proper continuous map $p\colon C\to (0,\infty)$ whose restriction to each interval constituting $C$ is a homeomorphism onto its image. Note that the restriction of $p$ to $C\setminus (\Ram(C)\sqcup \d C)$ is a local homeomorphism. We choose a smooth structure on $C\setminus \Ram(C)$ such that the restriction $p|_{C\setminus \Ram(C)}$ becomes a local diffeomorphism onto its image.
	
\begin{example}[Stein factorization of a Morse function, see Figure \ref{fig:Reeb}]\label{ex:Reeb_space}
	The above definition of an ivy is motivated by the following classical construction, explained in more detail in \cite[\textsection 2]{Saeki_Reeb-spaces} and references therein. Let $M$ be a Riemann surface, and let $h\colon M\to \R$ be a proper Morse function. The \emph{Reeb space} of $h$ is the quotient space $C\de M/_{\sim}$, where for two points $x,y\in M$ we put $x\sim y$ if $x$ and $y$ lie in the same connected component of a fiber of $h$. The quotient map $\bar{h}\colon M\to C$ is called a \emph{Stein factorization} of $h$. Writing $h=p\circ \bar{h}$, it is easy to see that $(C,p)$ is an ivy, and that the restriction 
	\begin{equation*}
	\bar{h}\colon M\setminus \bar{h}^{-1}(\Ram(C))\to C\setminus \Ram(C)
	\end{equation*}
	is smooth. 
	For $c\not\in \d C\cup \Ram(C)$, the fiber $M_{c}\de \bar{h}^{-1}(c)$ is a connected submanifold of dimension $1$, i.e., an embedded circle. For $c\in \d C$ the fiber $M_{c}$ is a single point, either a maximum or a minimum of $h$. Eventually, for $c\in \Ram(C)$ the fiber $M_{c}$ is a connected union of circles, meeting at saddle points of $h$ where, since $h$ is Morse, $M_{c}$ is locally of the form $\{xy=0\}$. Choosing \enquote{top} and \enquote{bottom} branch of each such singularity, we conclude that $M_{c}$ is an immersed circle (but the immersion is not canonical).
\begin{figure}[ht]
	\begin{tikzpicture}
	\begin{scope}	
		\draw[densely dashed] (-0.5,0) to[out=10,in=170] (0.5,0);		
		\draw[densely dashed] (-0.5,0) to[out=-10,in=-170] (0.5,0);
		\draw (-0.5,0) to[out=90,in=-90] (-1.5,2.5) to[out=90,in=180] (-1,3.5) to[out=0,in=180] (0,2.5) to[out=0,in=-90] (0.5,4);
		\draw[densely dashed] (0.5,4) to[out=10,in=170] (1.5,4);		
		\draw[densely dashed] (0.5,4) to[out=-10,in=-170] (1.5,4);
		\draw (1.5,4) -- (1.5,2.5) to[out=-90,in=90] (0.5,0);
		\draw (0.05,2.1) to[out=-120,in=120] (0.05,0.9);
		\draw (0,2) to[out=-60,in=60] (0,1);
		\filldraw (-1,3.5) circle (0.06);
		\draw[densely dotted] (0.5,3.5) to[out=10,in=170] (1.5,3.5);
		\draw[densely dotted] (0.5,3.5) to[out=-10,in=-170] (1.5,3.5);
		\draw[densely dotted] (-1.5,2.5) to[out=10,in=170] (0,2.5);
		\draw[densely dotted] (-1.5,2.5) to[out=-10,in=-170] (0,2.5);
		\draw[densely dotted] (0,2.5) to[out=10,in=170] (1.5,2.5);
		\draw[densely dotted] (0,2.5) to[out=-10,in=-170] (1.5,2.5);
		\draw[densely dotted] (-1.4,2) to[out=10,in=170] (0,2);
		\draw[densely dotted] (-1.4,2) to[out=-10,in=-170] (0,2);
		\draw[densely dotted] (0,2) to[out=10,in=170] (1.4,2);
		\draw[densely dotted] (0,2) to[out=-10,in=-170] (1.4,2);
		\draw[densely dotted] (-0.8,1) to[out=10,in=170] (0,1);
		\draw[densely dotted] (-0.8,1) to[out=-10,in=-170] (0,1);
		\draw[densely dotted] (0,1) to[out=10,in=170] (0.8,1);
		\draw[densely dotted] (0,1) to[out=-10,in=-170] (0.8,1);
		\node at (0.8,0.3) {\small{$M$}};
		\draw[->] (2.5,2) -- (4,2);
		\node at (3.25,2.25) {\small{$\bar{h}$}};
	\end{scope}
	\begin{scope}[shift={(6,0)}]
		\draw (0,0) -- (0,1);
		\filldraw (0,1) circle (0.06);
		\draw (0,1) to[out=150,in=-150] (0,2);
		\draw (0,1) to[out=30,in=-30] (0,2);
		\filldraw (0,2) circle (0.06);
		\draw (0,2) -- (0,2.5);
		\filldraw (0,2.5) circle (0.06);
		\draw (0,2.5) -- (-0.5,3.5);
		\filldraw (-0.5,3.5) circle (0.06);
		\node[above] at (-0.5,3.5) {\small{$\d C$}};
		\draw (0,2.5) -- (0.5,4);
		\node at (0.3,0.3) {\small{$C$}};
		\draw[gray, ->] (-1.7,0.3) to[out=0,in=180] (-0.2,2.5);
		\draw[gray, ->] (-1.7,0.3) to[out=0,in=180] (-0.2,2);
		\draw[gray, ->] (-1.7,0.3) to[out=0,in=180] (-0.2,1);
		\node[left] at (-1.6,0.3) {\small{$\Ram(C)$}};
		\draw[->] (1,2) -- (2.5,2);
		\node at (1.75,2.2) {\small{$p$}};
	\end{scope}
	\begin{scope}[shift={(9.5,0)}]
		\draw (0,0) -- (0,4);
		\filldraw (0,1) circle (0.06);
		\filldraw (0,2) circle (0.06);
		\filldraw (0,2.5) circle (0.06);
		\filldraw (0,3.5) circle (0.06);
		\node at (0.3,0.3) {\small{$\R$}};
		\draw[gray, ->] (1.8,0.3) to[out=180,in=0] (0.2,2.5);
		\draw[gray, ->] (1.8,0.3) to[out=180,in=0] (0.2,2);
		\draw[gray, ->] (1.8,0.3) to[out=180,in=0] (0.2,1);
		\draw[gray, ->] (1.8,0.3) to[out=180,in=0] (0.2,3.5);
		\node[right] at (1.85,0.5) {\small{critical}};
		\node[right] at (1.85,0.2) {\small{values of $h$}};
	\end{scope}
	\end{tikzpicture}
	\caption{Stein factorization of a Morse function $h$ through its Reeb space $C$.}
	\label{fig:Reeb}
\end{figure}
\end{example}

We now return to abstract definitions. An \emph{ivy-like chart} (\emph{with boundary and corners}) is a homeomorphism $\varphi\colon V\to U\times C$ from a topological space $V$ to a product $U\times C$ of a smooth manifold $U$ (with boundary and corners) and an ivy $p\colon C\to (0,\infty)$. The \emph{ramification locus} of such a chart is $\Ram(V)\de \varphi^{-1}(U\times \Ram(C))$, and \emph{outer boundary} is $\dout V\de \varphi^{-1}(U\times \d C)$. Note that $V\setminus \Ram(V)$ inherits a structure of a smooth manifold with boundary and corners from $U\times (C\setminus\Ram(C))$. 

We say that a continuous, surjective map $\psi\colon N\to V$ from a smooth manifold (with boundary and corners) to an ivy-like chart $V$ is an \emph{ivy-like submersion} if it locally comes from a Stein factorization, i.e., the following holds. There is an open covering $N=\bigcup_{i} N_{i}$ and for each $i$, there is a smooth manifold $N_{i}'$ (with boundary and corners), a Riemann surface $M_{i}$, and a diffeomorphism $\tau_{i}\colon N_{i}'\times M_i\to N_{i}$ so that the composition $\varphi|_{\psi(N_i)}\circ \psi|_{N_{i}}\circ \tau_{i}$ is of the form $(\varphi',\bar{h}_{i})$, where $\varphi'\colon N_{i}'\to U$ is a submersion onto its image, and $\bar{h}_{i}$ is the Stein factorization of some Morse function $h_{i}\colon M_i\to \R$, see Example \ref{ex:Reeb_space}. Clearly, an ivy-like submersion restricts to a usual submersion over $V\setminus \Ram(V)$.
\smallskip

It is convenient to call a \emph{usual chart} a homeomorphism $\varphi \colon V\to U$ for some smooth manifold $U$ (with boundary and corners). In this case we put $\Ram(V)=\dout V=\emptyset$.

An \emph{ivy-like manifold} (with boundary and corners) is a Hausdorff topological space $E$ together with an open covering $E=\bigcup_{i}V_{i}$, such that each $V_{i}$ is a domain of a usual or ivy-like chart $\varphi_{i}$; for every $i\neq j$ we have $\Ram(V_i)\cap \Ram(V_j)=\emptyset$; and the transition maps $\varphi_{j}|_{V_{i}\cap V_{j}}\circ \varphi_{i}^{-1}|_{\varphi_{i}(V_i\cap V_j)}$ are smooth, as maps between usual manifolds (with boundary and corners). We define its ramification locus $\Ram(E)$ and outer boundary $\dout E$ as the union of, respectively, all ramification loci and outer boundaries of the charts. A collection $\{(V_i,\varphi_{i})\}$ is called an \emph{ivy-like atlas}. Note that the subspace $E\setminus \Ram (E)$ is a manifold (with boundary and corners) in the usual sense; and $\dout E\subseteq \d E$. 

A continuous, surjective map $\psi\colon N\to E$ from a smooth manifold $N$ with boundary and corners to an ivy-like manifold $E$ is an \emph{ivy-like submersion} if for every $i$, the composition $\varphi_{i}\circ \psi|_{\psi^{-1}(V_i)}$ is a (usual, or ivy-like) submersion. In particular, $\psi$ restricts to a usual submersion over $E\setminus \Ram(E)$.

\begin{definition}[Ivy-like Lagrangian torus fibrations]\label{def:Lagranian_fibration}
	Let $(Y,\omega)$ be a symplectic manifold of dimension $2n$, and let $\iota\colon L\to Y$ be an embedding (or an immersion). We say that its image $\iota(L)$ is an (\emph{immersed}) \emph{isotropic submanifold} if $\iota^{*}\omega=0$. It is (\emph{immersed}) \emph{Lagrangian} if additionally $\dim L=n$.
	
	Let $E$ be an ivy-like manifold (with boundary and corners). We say that a map $\ll\colon Y\to E$ is an \emph{ivy-like Lagrangian torus fibration} if it is an ivy-like submersion and for every $p\in E$, and every connected component $T$ of the fiber $\ll^{-1}(p)$ the following hold.
	\begin{enumerate}
		\item If $p\in E\setminus (\Ram E\cup \dout E)$ then $T$ is Lagrangian torus $(\mathbb{S}^1)^{n}$.
		\item If $p\in \Ram E$ then $T$ is an immersed Lagrangian torus $(\mathbb{S}^1)^{n}$.
		\item if $p\in \dout E$ then $T$ is an isotropic torus $(\mathbb{S}^{1})^{n-1}$.
	\end{enumerate}
	We say that $\ll$ is a \emph{Lagrangian torus fibration} if $\Ram E\cup \dout E=\emptyset$. In this case, $E$ is a manifold (with boundary and corners), the map $\ll$ is a usual submersion and all fibers of $\ll$ are Lagrangian tori.
\end{definition}

\subsection{Construction of the expanded skeleton}\label{sec:expanded_construction}

We return to the setting of Section \ref{sec:form}, that is, we let $X$ be a K\"ahler manifold of dimension $n+1$; let  $f\colon X\to \C$ be a holomorphic function whose unique singular fiber $f^{-1}(0)=\sum_{i=1}^{N}m_{i}D_{i}$ is snc; and  let $A$ be the associated A'Campo space with fiberwise symplectic forms $\omega_{q}^{\epsilon}$ introduced in formula \eqref{eq:omega-s-intro}.

Our goal is to define the expanded skeleton $E_{\cS}$ together with a map $\ll_{\cS}\colon A^{\sm}_{\cS}\to E_{\cS}$ as in  \eqref{eq:Lagrangian}, and prove that the latter is an ivy-like Lagrangian torus fibration. We first do it piecewise, that is, we define a space $E_{I}^{\circ}$ and a map $\ll_{I}^{\circ}\colon A_{I}^{\circ}\to E_{I}^{\circ}$ for every maximal or submaximal face $\Delta_{I}$ of $\Delta_{\cS}$.
\smallskip

Let $\Delta_{J}$ be a maximal face of $\Delta_{\cS}$, so the stratum $X_{J}$ is a point. Fix an adapted chart around $X_{J}$ such that its preimage $U_{J}$ is covered by smooth charts \eqref{eq:AC-chart_hat}. On $U_{J}$ we have a free, smooth $(\mathbb{S}^{1})^{n+1}$-action, given in each of those charts by a translation in $\theta_{i}$-directions. Denote the quotient map by 
\begin{equation}\label{eq:thickquotient}
	\qq_{J}\colon U_{J}\to Q_{J},\quad\mbox{and define}\quad \ll_{J}^{\circ}\de \qq_{J}|_{A_{J}^{\circ}}\colon A_{J}^{\circ}\to E_{J}^{\circ},
\end{equation}
where $E_{J}^{\circ}=\qq_{J}(A_{J}^{\circ})$. Clearly, $Q_{J}$ is a smooth manifold, with coordinates $(\hat{v}_{i})_{i\in J}$. By Lemma~\ref{lem:product}, $E_{J}^{\circ}$ is a codimension zero submanifold with boundary and corners of $Q_{J}$, which in the coordinates $(\hat{v}_{i})_{i\in J}$ gets identified with rounded simplex $\tilde{\Delta}_{J}$. In particular, $E_{J}^{\circ}$ is diffeomorphic to the maximal face $\Delta_{J}$.
\smallskip

Let now $\Delta_{I}$ be a submaximal face of $\Delta_{\cS}$, so the corresponding stratum $X_{I}$ is a Riemann surface. We choose a proper Morse function $h_{I}\colon X_{I}^{\circ}\to (0,\infty)$ satisfying the following condition: for every point $p\in X_{I}\setminus X_{I}^{\circ}$ there is a neighborhood $U_{p}$ of $p$ in $X_{I}$, and a holomorphic coordinate $z\in \cO_{X_{I}}(U_p)$ at $p$ such that on $U_{p}\setminus \{p\}\subseteq X_{I}^{\circ}$ we have $h_{I}=|z|$ or $|z|^{-1}$. Clearly, such a Morse function $h_{I}$ exists. 
Furthermore, if $(X_{I},X_{I}^{\circ})\cong (\mathbb{P}^1,\C^{*})$, then we can and do choose $h_{I}=|z_{I}|$, where $z_{I}$ is a holomorphic coordinate on $X_{I}^{\circ}$. In general, let $h_{I}=p_{I}\circ \bar{h}_{I}$ be the Stein factorization of $h_{I}$, 
so $p_{I}\colon C_{I}\to (0,\infty)$ is its Reeb space, see Example \ref{ex:Reeb_space}. We define an ivy-like chart $E_{I}^{\circ}\de \tilde{\Delta}_{I}\times C_{I}$, and an ivy-like submersion 
\begin{equation}\label{eq:submaximalquotient}
	\ll_{I}^{\circ}\de ((\hat{v}_{i})_{i\in I},\bar{h}_{I}\circ \pi)\colon A_{I}^{\circ}\to E_{I}^{\circ}.
\end{equation}
If $(X_{I},X_{I}^{\circ})\cong (\mathbb{P}^1,\C^{*})$ then $C_{I}=\R$, $\bar{h}_{I}=h_I$ and $\ll_{I}^{\circ}$ is a  submersion in the usual sense.

\begin{lema}\label{lem:glue}
	The maps $\ll_{I}^{\circ}$ defined in \eqref{eq:thickquotient} and \eqref{eq:submaximalquotient} glue to an ivy-like submersion $\ll_{\cS}\colon A_{\cS}^{\sm}\to E_{\cS}$. Furthermore, if condition \eqref{eq:Cst} holds then $\ll_{\cS}$ is a submersion in the usual sense.
\end{lema}
\begin{proof}
	Fix a maximal face $\Delta_{J}$ of $\Delta_{\cS}$ and a submaximal face $\Delta_{I}$ contained in $\Delta_{J}$. We explain how to glue $\ll_{J}^{\circ}$ with $\ll_{I}^{\circ}$. The result will follow by repeating this construction for all such inclusions $\Delta_{I}\subsetneq \Delta_{J}$.
	
	Say that $J=\{1,\dots, n+1\}$ and $I=J\setminus \{1\}$. By Lemma \ref{lem:rihat}\ref{item:r_maximal} there is an adapted chart $U_{X}$ around the point $X_{J}$ whose coordinates $(z_1,\dots, z_{n+1})$ satisfy $|z_i|=\hat{r}_i$, so for each $i\in J$ we have $\hat{v}_i=v_i$. Moreover, shrinking the chart $U_{X}$ if needed we can assume that the Morse function $h_{I}$ equals $|\tilde{z}|^{\pm 1}$ for some holomorphic coordinate $\tilde{z}$ of $X_{I}$ at the point  $X_{J}$. Write $\tilde{z}=\lambda_{0}\cdot \lambda(z_1)\cdot z_1$, where $\lambda_0\in \C^{*}$, and $\lambda$ is a holomorphic function on a neighborhood of $0\in \C$ such that $\lambda(0)=1$. Put $z=\lambda_{0}^{-1}\cdot \tilde{z}$, so $z$ is another holomorphic coordinate of $X_{I}$ at $X_{J}$, and we have $z=\lambda(z_1)\cdot z_1$.
	
	Put $U=\pi^{-1}(U_X)$ and $U_{I}=A_{I}\cap U\subseteq \d A$, where $A_{I}=\pi^{-1}(X_{I})$. The holomorphic coordinate $z$ pulls back to a smooth function on $U_{I}$, which we denote by the same letter. Since $|z|=|\lambda_{0}^{-1}|\cdot h_{I}^{\pm 1}$, the fibers of $|z|$ and $h_{I}$ give the same foliations of $U\cap A_{I}^{\circ}$.
	
	Recall from formula \eqref{eq:AC-chart_hat} that $U\cap \d A$ is covered by charts $V_{i}\de \{w_i>\frac{1}{n+2}\}\cap \d A$, with coordinates $((v_{j})_{j\neq i},\vartheta)$. Since on $A_{I}^{\circ}$ we have $w_{1}=0$, the set $\bigcup_{i\neq 1} V_i$ is an open neighborhood of $\bar{A}_{I}^{\circ}\cap U$ in $U_{I}$.
	
	Fix $i\in I$, say $i=n+1$, and put $V\de V_{n+1}$. The smooth coordinates on $V$ are $(v_1,\dots, v_n;\vartheta)$. We define a function $p\colon V\to \R$ piecewise, as $p=v_1$ on $A_{J}^{\circ}$ and $p=\frac{-1}{m_1\log |z|}$ on $A_{I}^{\circ}$. 	We claim that, after shrinking $U_{X}$ around $X_{J}$ if necessary, 
	\begin{equation}\label{eq:claim_gluing}
		\mbox{the map } (p,v_2,\dots,v_{n},\vartheta) \mbox{ is a smooth coordinate system on }V. 
	\end{equation}
	Once \eqref{eq:claim_gluing} is shown, we conclude as follows. The foliation of $V$ given by the projection to the first $n$ coordinates agrees with the foliations of $V\cap A_{J}^{\circ}$ and $V\cap A_{I}^{\circ}$ given by $\ll_{J}^{\circ}$ and $\ll_{I}^{\circ}$, respectively. Hence these maps glue to a submersion $V\to E_{V}$, onto some smooth manifold $E_{V}$. Now, we define a smooth manifold $E_{I,J}$ as $\bigsqcup_{i\neq 1} E_{V_i}/_{\sim}$, where $\sim$ identifies two points whenever they are images of the same point in $\bigcup_{i\neq 1} V_{i}$. This way, we get a submersion $\bigcup_{i\neq 1} V_{i}\to E_{I,J}$, which restricts to $\ll_{I}^{\circ}$ and $\ll_{J}^{\circ}$. Eventually, we glue to $E_{I,J}$ a usual chart $\ll_{J}^{\circ}(\Int_{\d A}A_{J}^{\circ})\subseteq E_{J}^{\circ}$ and a $E_{I}^{\circ}$, which is a usual chart if  $(X_I,X_{I}^{\circ})\cong (\mathbb{P}^1,\C^{*})$, and an ivy-like chart in general.
	\smallskip
	
	It remains to prove claim \eqref{eq:claim_gluing}. Clearly, the function $p$ is smooth on the interior of $A_{J}^{\circ}$, where it equals the coordinate $v_1$; and on $A_{I}^{\circ}$, where it is a pullback of a smooth function from $X_{I}^{\circ}$. It remains to prove that all derivatives of $v_{1}-p$ approach $0$ as we approach $A_{J}^{\circ}$ from $A_{I}^{\circ}$, i.e., as $v_1\searrow 0$.  
	
	Recall that $z=\lambda(z_1)\cdot z_1$ for some non-vanishing, holomorphic function $\lambda$ such that $\lambda(0)=1$. Moreover, on $A_{I}^{\circ}$ we have $v_1=t_1=\frac{-1}{m_1\log|z_1|}$, so  $p=\frac{-1}{m_1\log|z|}=\frac{-1}{m_1\log|\lambda(z_1)|+m_1\log|z_1|}=\frac{v_{1}}{1-v_{1}m_1\log|\lambda(z_1)|}$. Thus $p-v_1=\frac{v_{1}^2m_1\log |\lambda(z_1)|}{1-v_1m_1 \log|\lambda(z_1)|}$. By Leibniz rule, it is enough to show that  $\log|\lambda(z_1)|$ and all its derivatives approach $0$ as $v_{1}\searrow 0$. By assumption we have  $\log|\lambda(0)|=\log 1=0$, so by chain rule it remains to show that all derivatives of $z_1$ vanish as $v_1\searrow 0$. Since $v_{1}=\frac{-1}{m_1\log|z_1|}$ on $A_{I}^{\circ}$, we have $z_1=\exp(-m_{1}^{-1}v_{1}^{-1}+2\pi \imath\cdot \theta_{1})$, where $\theta_{1}$ is a smooth coordinate on $U$. Thus $dz_{1}=\exp(-m_{1}^{-1}v_{1}^{-1})\cdot \exp(2\pi\imath \cdot \theta_1)\cdot (m_{1}^{-1}v_{1}^{-2}\, dv_{1}+2\pi\imath\, d\theta_{1})$. The result follows since all derivatives of the one-variable function $v_1\mapsto \exp(-m_{1}^{-1}v_{1}^{-1})v_{1}^{-2}$ vanish at $0$. 
\end{proof}

\begin{remark}[Choices within the construction of the radius-zero Lagrangian fibration $\ll_{\cS}$]
	\label{rem:canonical_choice}
	The restriction of $\ll_{\cS}$ to the generic region $A_{\cS}^{\gen}$, viewed as a fibration over the disjoint union of all maximal faces of $\Delta_{\cS}$, is canonically defined by formula \eqref{eq:thickquotient}. Its extension $\ll_{\cS}\colon A_{\cS}^{\sm}\to E_{\cS}$ to the submaximal locus depends on the Morse functions $h_{I}\colon X_{I}^{\circ}\to \R$ chosen for each submaximal face $\Delta_{I}$ of $\Delta_{\cS}$.
		
	Nonetheless, if condition \eqref{eq:Cst} holds, then we have chosen the Morse functions in a canonical way, as the absolute value of a holomorphic coordinate on $X_{I}^{\circ}\cong \C^{*}$. This choice is unique up to a multiplication by a nonzero complex number or taking the inverse, so the foliation given by each $h_{I}$, and in consequence the map $\ll_{\cS}$, do not depend on this choice.  Thus we get a  \emph{canonical} smooth submersion $\ll_{\cS}\colon A_{\cS}^{\sm}\to E_{\cS}$, in the usual, not just ivy-like,  sense. Recall that condition \eqref{eq:Cst} holds e.g. in the setting of Theorem~\ref{theo:CY}, when $f$ is snc model of a maximal Calabi--Yau degeneration chosen in Proposition \ref{prop:model}.
\end{remark}

\subsection{Lagrangian fibration at radius zero}\label{sec:Lagrangains-at-radius-zero}

We now prove that the map $\ll_{\cS}\colon A_{\cS}^{\sm}\to E_{\cS}$ constructed in Section~\ref{sec:expanded_construction} restricts to an ivy-like Lagrangian torus fibration on  $A_{\cS,\theta}^{\sm}$ for each $\theta\in \mathbb{S}^1$. We begin by describing the fibers of $\ll_{\cS}$. Recall that $\Ram(E_{\cS})$ and $\dout E_{\cS}$ denote the  singularities and boundary components of $E_{\cS}$ which correspond to the critical points of the chosen Morse functions $h_{I}$, so they are empty if condition \eqref{eq:Cst} holds. 

\begin{lema}\label{lem:ll-fibers}
	Fix a point $b\in E_{\cS}$ and let $L_{b}=\ll_{\cS}^{-1}(b)$. Then the following hold.
	\begin{enumerate}
		\item\label{item:fiber-immersed} The fiber $L_{b}$ is the image of an immersion $\iota_{b}\colon (\mathbb{S}^{1})^{k}\to A$,  where $k=n+1$ if $b\not\in \dout E_{\cS}$ and $k=n$ otherwise.	If $b\not\in \Ram(E_{\cS})$ then the immersion $\iota_{b}$ is an embedding.
		\item\label{item:fiber-isotropic} For every $q\in [0,1]$ and every $\epsilon\geq 0$ we have $\iota_{b}^{*}\omega_{q}^{\epsilon}=0$.
		\item\label{item:fiber-restriction} Fix $\theta\in \mathbb{S}^{1}$ and let $F_{\theta}=f_{A}^{-1}(0,\theta)$ be the radius zero fiber. Then the restriction of $\iota_{b}$ to the preimage of $L_{b}\cap F_{\theta}$ is an immersion $\iota_{b}^{\theta}\colon T\to F_{\theta}$, where $T\subseteq (\mathbb{S}^{1})^{k}$ is a disjoint union of $\gcd\{m_{i}: i\in I\}$ tori $(\mathbb{S}^{1})^{k-1}$. If $b\not\in \Ram(E_{\cS})$ then $\iota_{b}^{\theta}$ is an embedding.
	\end{enumerate} 
\end{lema}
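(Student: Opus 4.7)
My plan is to describe $L_b$ explicitly using the product structure of Lemma~\ref{lem:product} and then verify each assertion. Let $\Delta_I$ be the face of $\Delta_\cS$ whose image in $E_\cS$ contains the point dual to $b$; write $b=(v,c)$ with $v\in\tilde\Delta_I$ and (if $\#I=n$) $c\in C_I$.

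For part~\ref{item:fiber-immersed}, the diffeomorphism $A_I^\circ\cong(X_I^\circ)_{\log}\times\tilde\Delta_I$ from Lemma~\ref{lem:product} identifies $L_b$ with $\tau^{-1}(M)\times\{v\}$, where $\tau\colon(X_I^\circ)_{\log}\to X_I^\circ$ is an $(\S^1)^{\#I}$-bundle and $M\subseteq X_I^\circ$ is either the single point $X_I$ (if $\#I=n+1$) or the Reeb fiber $M_c=\bar h_I^{-1}(c)$ (if $\#I=n$). By Example~\ref{ex:Reeb_space}, $M_c$ is an embedded circle when $c\notin\Ram C_I\cup\d C_I$, an immersed circle when $c\in\Ram C_I$, and a point when $c\in\d C_I$. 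Pulling back the bundle $\tau$ to $M$ gives the claimed immersion, which is an embedding exactly when $M$ is embedded, i.e.\ when $b\notin\Ram E$.

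For part~\ref{item:fiber-isotropic}, I will split $\omega_q^\epsilon=\pi^*\omega_X+\epsilon q\omega^\sharp+\epsilon(1-q)\omega^\flat$ and show each summand restricts to zero on $L_b$. The key observation is that $\pi(L_b)\subseteq X_I$ has real dimension at most one, so the pullback to $L_b$ of any $2$-form from $X$ vanishes. This immediately handles $\pi^*\omega_X$. For $\omega^\sharp=\sum_i d(\hat v_i\hat\alpha_i)$ I distinguish two cases: if $i\in I$, then $\hat v_i$ is constant on $L_b$ by construction of $\ll_I^\circ$, and by Lemma~\ref{lem:comparison}\ref{item:alpha-comparison} one writes $\hat\alpha_i=d\theta_i+\pi^*\beta$ in an adapted chart meeting $D_i$, so $d\hat\alpha_i=\pi^*d\beta$ restricts to zero on $L_b$; if $i\notin I$, both $\hat v_i=\hat t_i$ and $\hat\alpha_i=-d^c\hat s_i$ are pullbacks of smooth objects from $X$ (as $A_I^\circ$ does not meet $D_i$), and the same dimension argument applies to $d(\hat v_i\hat\alpha_i)=\pi^*d(\hat v_i\hat\alpha_i)$. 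The analogous argument for $\omega^\flat=-\sum_{i\in\cS}m_i\,d(\hat w_i\hat\alpha_i)$ uses additionally that $\hat w_i$ is determined by $\hat v_i$ for $i\in I$ (hence constant on $L_b$), and that $\hat w_i\equiv 0$ on $A_I^\circ$ for $i\notin I$.

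For part~\ref{item:fiber-restriction}, in an adapted chart with associated index set containing $I$ the global angular coordinate satisfies $\theta=\arg f=\sum_{i\in I}m_i\theta_i\pmod{2\pi\ZZ}$, so $F_\theta\cap L_b$ is cut from the angular torus factor $(\S^1)^{\#I}$ by the single equation $\sum_{i\in I}m_i\theta_i\equiv\theta$, and the $M$-factor is unaffected. The group homomorphism $(\S^1)^{\#I}\to\S^1$, $(\theta_i)\mapsto\sum_i m_i\theta_i$ is surjective; a short lattice computation shows its kernel has identity component a subtorus of codimension one and component group $\ZZ/\gcd\{m_i:i\in I\}\ZZ$, so the preimage of $\theta$ is a disjoint union of $\gcd\{m_i:i\in I\}$ codimension-one subtori. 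Combining with $M$ gives the claimed count of tori $(\S^1)^{k-1}$, with embeddedness inherited from part~\ref{item:fiber-immersed}. The main obstacle is the isotropy computation~\ref{item:fiber-isotropic}, where one must carefully isolate which summand of $\omega^\sharp,\omega^\flat$ involves a \enquote{vertical} $1$-form $d\theta_i$ and which is a pullback from $X_I$; the unifying principle throughout is the low dimension of $\pi(L_b)$.
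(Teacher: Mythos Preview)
Your argument is correct. Parts \ref{item:fiber-immersed} and \ref{item:fiber-restriction} follow the paper's approach almost verbatim; the only detail you leave implicit is why the pulled-back $(\S^1)^{\#I}$-bundle over the (immersed) circle $M_c$ is trivial, which the paper spells out by splitting it into oriented circle bundles.

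Part \ref{item:fiber-isotropic} is where your route genuinely differs. The paper invokes Lemma~\ref{lem:TdA}\ref{item:omega-dtheta}, which computes $\omega_q^\epsilon(\,\cdot\,,\tfrac{\partial}{\partial\theta_i})=\epsilon c_i\,d\hat v_i$ on the stratum $A_I^\circ$, and then observes that the angular vectors $\tfrac{\partial}{\partial\theta_i}$ span a subspace of $T_pL_b$ of codimension at most one, so an alternating $2$-form vanishing on that subspace must vanish on all of $T_pL_b$. You instead dismantle $\omega_q^\epsilon$ into its three summands and kill each one directly, the unifying mechanism being that $\pi(L_b)\subseteq X_I^\circ$ has real dimension at most one, so any $2$-form pulled back from $X$ restricts to zero. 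Your treatment of the $i\notin I$ terms is correct: on $A_I^\circ$ one has $\hat u_i=0$ and $\hat r_i>0$, so both $\hat v_i=\hat t_i$ and $\hat\alpha_i=-d^c\hat s_i$ are honest pullbacks from a neighbourhood of $X_I^\circ$ in $X$. The paper's approach is shorter because Lemma~\ref{lem:TdA} is already available (and needed anyway for Proposition~\ref{prop:omega}\ref{item:symplectic}); your approach is more self-contained and makes the geometric reason for isotropy---the collapse of $\pi$ on the stratum---more transparent.
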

\begin{proof}
	We have $b\in E_{I}^{\circ}$ for some $I$ such that $\Delta_{I}$ is a maximal or submaximal face of $\Delta_{\cS}$. 
	
	Assume that the face $\Delta_{I}$ is maximal. Then by definition \eqref{eq:thickquotient} of $\ll_{I}^{\circ}$, its fiber $L_{b}$ is contained in some chart \eqref{eq:AC-chart_hat} with associated index set $I$, and it is parametrized by the coordinates $(\theta_{i})_{i\in I}$. Thus $L_{b}$ is an embedded torus $(\mathbb{S}^{1})^{n+1}$, which proves  \ref{item:fiber-immersed}. The tangent space $T_{p}L_{b}$ at every point $p\in L_{b}$ is spanned by coordinate vectors $(\frac{\d}{\d \theta_{i}})_{i\in I}$; so by Lemma \ref{lem:TdA}\ref{item:dvi} we have $d\hat{v}_{i}|_{L_{b}}=0$ for every $i\in I$. Thus Lemma \ref{lem:TdA}\ref{item:omega-dtheta} gives $\omega_{q}^{\epsilon}|_{L_{b}}=0$, which proves \ref{item:fiber-isotropic}. Part \ref{item:fiber-restriction} follows from the fact that in coordinates \eqref{eq:AC-chart_hat}, the fiber $F_{\theta}$ is given by equation $\sum_{i\in I} m_{i}\theta_{i}=\theta$.

	Assume that $\Delta_{I}$ is submaximal. By definition \eqref{eq:submaximalquotient} of $\ll_{I}^{\circ}$, its fiber $L_{b}$ is contained in $A_{I}^{\circ}$, and its image through the diffeomorphism \eqref{eq:product} is the preimage in $(X_{I}^{\circ})_{\log}$ of a fiber $C_{b}$ of $\bar{h}_{I}\colon X_{I}^{\circ}\to C_{I}$. By construction, $C_{b}$ is a connected component of a fiber of the Morse function $h_{I}\colon X_{I}^{\circ}\to C_{I}$, so it is either a point (if $b\in \dout E_{I}^{\circ})$, or a closed curve with only transverse self-intersections (and no self-intersections if $b\not\in \Ram E_{I}^{\circ}$). Thus $C_{b}$ is either: an embedded circle if $b\not\in \Ram E_{I}^{\circ}\cup \dout E_{I}^{\circ}$, or an immersed circle if $b\in \Ram E_{I}^{\circ}$, or a point if $b\in \dout E_{I}^{\circ}$. The $(\mathbb{S}^{1})^{n}$-bundle $(X_{I}^{\circ})_{\log}\to X_{I}^{\circ}$ splits as a direct sum of $\mathbb{S}^{1}$-bundles, where each summand can be identified with the circle bundle in $\cO_{X}(D_{i})|_{X_{I}^{\circ}}$ for $i\in I$. The restriction of this summand to $C_{b}$ is an oriented circle bundle over an immersed circle (or a point), so it is trivial. Thus $L_{b}=(\mathbb{S}^{1})^{n}\times C_{b}$, so $L_{b}$ is an embedded $(\mathbb{S}^{1})^{n+1}$ if $b\not\in \Ram E_{I}^{\circ}\cup \dout E_{I}^{\circ}$; an immersed $(\mathbb{S}^{1})^{n+1}$ if $b\in \Ram E_{I}^{\circ}$; or an embedded $(\mathbb{S}^{1})^{n}$ if $b\in \dout E_{I}$. This proves \ref{item:fiber-immersed}. 
	
	Fix a point $p\in L_{b}$, and let $V\subseteq T_{p}A_{I}^{\circ}$ be the tangent space to a branch of $L_{b}$ at $p$. We can assume that $I=\{1,\dots,n\}$, and use the coordinates $(\hat{v}_2,\dots,\hat{v}_n,\theta_{1},\dots,\theta_{n},z)$ at $p\in A_{I}^{\circ}$ introduced right before Lemma \ref{lem:TdA}. In these coordinates, the map $\ll_{I}^{\circ}$ is given by $(\hat{v}_2,\dots,\hat{v}_n,\bar{h}_{I}(z))$. Hence the space $V$ has a basis $\{\frac{\d}{\d \theta_{1}},\dots,\frac{\d}{\d \theta_{n}},\xi\}$, where $\xi$ is a vector in the complex vector space $\cZ_{I}$  spanned by $\frac{\d}{\d z}$, such that $\pi_{*}\xi$ is tangent to the branch of $C_{b}\subseteq X_{I}^{\circ}$. 
	By Lemma  \ref{lem:TdA}\ref{item:omega-dtheta} we have $\omega_{q}^{\epsilon}(\sdot,\frac{\d}{\d \theta_{i}})=\epsilon c_{i} d\hat{v}_{i}$, which by Lemma \ref{lem:TdA}\ref{item:dvi} vanishes both on $\frac{\d}{\d \theta_{j}}$ for $j=1,\dots,n$, and on $\xi$. Thus $\omega_{q}^{\epsilon}|_{V}=0$, which proves~\ref{item:fiber-isotropic}. 
	
	Part~\ref{item:fiber-restriction} follows, as before, from the fact that $F_{\theta}=\{\sum_{i\in I} m_{i}\theta_{i}=\theta\}$.
\end{proof}

The above construction  is now summarized in the following result. Recall that for a radius-zero fiber $F_{\theta}\de f_{A}^{-1}(0,\theta)\subseteq \d A$, the region $A^{\sm}_{\cS, \theta}\subseteq F_{\theta}$ is the union of pieces $A_{I}^{\circ}\cap F_{\theta}$ such that $\#I\geq n$ and $I\subseteq \cS$, i.e.,  $\Delta_{I}$ is a maximal or submaximal face of $\Delta_{\cS}$.

\begin{prop}[Lagrangian fibration at radius zero]\label{prop:Lagranian-at-radius-zero} 
	Let $\ll^{\theta}_{\cS}\colon A^{\sm}_{\cS,\theta}\to E_{\cS}$ be the restriction to $F_{\theta}$ of the map \eqref{eq:Lagrangian} constructed in Lemma \ref{lem:glue}, for some collection of Morse functions $(h_{I})$. Fix $\epsilon\in (0,\epsilon_0]$ as in Proposition \ref{prop:omega}\ref{item:symplectic}. Then the following hold.
	\begin{enumerate}
		\item\label{item:ll-Lagrangian} For every $q\in (0,1]$, the map $\ll^{\theta}_{\cS}$ is an ivy-like Lagrangian torus fibration with respect to the symplectic form $\omega_{q}^{\epsilon}$ introduced in formula \eqref{eq:omega-s-intro}, see Definition \ref{def:Lagranian_fibration}.
		\item\label{item:ll-maximal} If $\Delta_{I}$ is a maximal face of $\Delta_{\cS}$ then the image $\ll^{\theta}_{\cS}(A_{I}^{\circ}\cap F_{\theta})$ is diffeomorphic to $\Delta_{I}$. Moreover, the restriction of $\ll^{\theta}_{\cS}$ to $\Int_{\d A} A_{I}^{\circ}\cap F_{\theta}$ is a Lagrangian torus fibration with respect to $\omega_{0}^{\epsilon}$.
		\item\label{item:ll-submaximal} If $\Delta_{I}$ is a submaximal face of $\Delta_{\cS}$ then the image $\ll_{\cS}^{\theta}(A_{I}^{\circ}\cap F_{\theta})$ is diffeomorphic to $\Delta_{I}\times C_{I}$, where $C_{I}$ is the Reeb space of $h_{I}$, see Example \ref{ex:Reeb_space}. 
		\item\label{item:ll-P1} Assume that condition \eqref{eq:Cst} holds, for instance, $f$ is an snc model of a maximal Calabi--Yau degeneration chosen in Proposition \ref{prop:model}\ref{item:sk_P1}. Then 
		$\ll_{\cS}^{\theta}$ 
		is a Lagrangian torus fibration. 
	\end{enumerate}
\end{prop}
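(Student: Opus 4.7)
The plan is to deduce the four statements from three ingredients already established: the isotropy and dimension of the fibers of $\ll_\cS$ (Lemma~\ref{lem:ll-fibers}), its ivy-like submersion property (Lemma~\ref{lem:glue}), and the fiberwise non-degeneracy of $\omega_q^\epsilon$ on radius-zero fibers --- for all $q\in(0,1]$ by Proposition~\ref{prop:omega}\ref{item:symplectic}, and on the interior of the essential stratum even with $q=0$ by Proposition~\ref{prop:omega}\ref{item:symplectic_zero}. For \ref{item:ll-Lagrangian}, I first note that $\dim_\R F_\theta=2n$ and that restricting $\ll_\cS$ to $F_\theta$ still yields an ivy-like submersion, since the coordinates $\hat v_i,\theta_i$ and the function $\bar h_I\circ\pi$ are defined on all of $A$ and do not involve $\theta$. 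For $b\in E_\cS$, Lemma~\ref{lem:ll-fibers}\ref{item:fiber-immersed},\ref{item:fiber-restriction} presents each connected component of $(\ll_\cS^\theta)^{-1}(b)$ as an immersed $(\S^1)^{k-1}$, where $k=n+1$ if $b\notin\dout E_\cS$ and $k=n$ otherwise; Lemma~\ref{lem:ll-fibers}\ref{item:fiber-isotropic} provides the isotropy. A direct dimension count then matches each clause of Definition~\ref{def:Lagranian_fibration}: embedded Lagrangian tori on $E_\cS\setminus(\Ram E_\cS\cup\dout E_\cS)$, immersed Lagrangian tori on $\Ram E_\cS$, and embedded isotropic $(\S^1)^{n-1}$'s on $\dout E_\cS$.

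Parts \ref{item:ll-maximal} and \ref{item:ll-submaximal} will then follow essentially by construction. For a maximal face $\Delta_J$, formula~\eqref{eq:thickquotient} identifies $\ll_\cS^\theta(A_J^\circ\cap F_\theta)$ with $E_J^\circ$, which via Lemma~\ref{lem:product} and the coordinates $(\hat v_i)_{i\in J}$ is identified with $\tilde\Delta_J\cong\Delta_J$. When $b$ lies in the relative interior, one has $b\notin\dout E_\cS$, and Proposition~\ref{prop:omega}\ref{item:symplectic_zero} provides a neighbourhood on which $\omega_0^\epsilon$ is fiberwise symplectic; the isotropy and dimension arguments above then apply verbatim with $q=0$, giving a genuine (non-ivy) Lagrangian torus fibration. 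For a submaximal face, formula~\eqref{eq:submaximalquotient} directly identifies $E_I^\circ=\tilde\Delta_I\times C_I\cong\Delta_I\times C_I$.

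For \ref{item:ll-P1}, Remark~\ref{rem:canonical_choice} supplies the canonical Morse function $h_I=|z_I|$ on $X_I^\circ\cong\C^*$, which is a proper submersion with connected circle fibers; hence its Reeb space is simply $C_I=(0,\infty)$ with neither ramification nor boundary vertices. Consequently every ivy-like chart of $E_\cS$ is actually a usual chart, $\Ram E_\cS\cup\dout E_\cS=\emptyset$, and part~\ref{item:ll-Lagrangian} automatically upgrades to a Lagrangian torus fibration in the standard sense of Definition~\ref{def:Lagranian_fibration}. Since most of the technical content has already been absorbed into Lemmas~\ref{lem:glue} and~\ref{lem:ll-fibers} and into Proposition~\ref{prop:omega}, I do not anticipate a substantial obstacle in writing out this proof; the one point requiring careful bookkeeping is ensuring that, at the transition between $A_J^\circ$ and $A_I^\circ$ treated in Lemma~\ref{lem:glue}, the $F_\theta$-section of the fiber decomposes into the $\gcd\{m_i:i\in I\}$ components of Lemma~\ref{lem:ll-fibers}\ref{item:fiber-restriction} in a way compatible with both the ivy-like chart structure and the circle-bundle trivialization used in the proof of Lemma~\ref{lem:ll-fibers}\ref{item:fiber-immersed}.
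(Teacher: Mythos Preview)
Your proposal is correct and follows essentially the same approach as the paper: invoke Lemma~\ref{lem:ll-fibers} for the fiber description and isotropy, Proposition~\ref{prop:omega}\ref{item:symplectic},\ref{item:symplectic_zero} for fiberwise non-degeneracy, the construction in Section~\ref{sec:expanded_construction} for the image identifications, and Remark~\ref{rem:canonical_choice} for part~\ref{item:ll-P1}. The paper's own proof is slightly terser---it observes that every fiber of $\ll_\cS$ meets $F_\theta$ (from Lemma~\ref{lem:ll-fibers}\ref{item:fiber-restriction}) to get $\ll_\cS^\theta(A_I^\circ\cap F_\theta)=E_I^\circ$ in one line---but your coordinate argument achieves the same. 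Your closing worry about compatibility at the transitions $A_J^\circ\cap A_I^\circ$ is unfounded: the ivy-like submersion structure is already globally assembled in Lemma~\ref{lem:glue}, and the fiber descriptions in Lemma~\ref{lem:ll-fibers} are purely local to each $E_I^\circ$, so no further bookkeeping is required.
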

\begin{proof}
	By Lemma \ref{lem:ll-fibers}, the map $\ll^{\theta}_{\cS}$ is an ivy-like Lagrangian torus fibration with respect to $\omega_{q}^{\epsilon}$ whenever the latter is a symplectic form. Together with Proposition \ref{prop:omega}, this proves \ref{item:ll-Lagrangian} and the second statement of \ref{item:ll-maximal}. Lemma \ref{lem:ll-fibers} implies also that each fiber of $\ll_{\cS}$ meets $F_{\theta}$, so  $\ll^{\theta}_{\cS}(A_{I}^{\circ}\cap F_{\theta})=\ll_{\cS}(A_{I}^{\circ})=E_{I}^{\circ}$, which by construction is diffeomorphic to $\Delta_{I}$ if the latter is a maximal face; and to $\Delta_{I}\times C_{I}$ otherwise. This proves \ref{item:ll-maximal} and \ref{item:ll-submaximal}. Part \ref{item:ll-P1} follows from Lemma \ref{lem:ll-fibers}, too, since under condition \eqref{eq:Cst} the sets $\Ram(E_{\cS})$ and $\dout E_{\cS}$ are empty.
\end{proof}

\begin{example}[Hesse pencil, see Figure \ref{fig:Fermat_E}]\label{ex:Fermat_E}
	Consider the Hesse pencil $\{(x_1^3+x_2^3+x_3^3)\cdot z=x_1x_2x_3\}$ from Example \ref{ex:Fermat}. Recall that the central fiber is a triangle $D_1+D_2+D_3$, where each $D_i\cong  \mathbb{P}^1$ is line in $\mathbb{P}^2$. In the A'Campo space, each radius-zero fiber $F$ is a torus, divided into six cylinders $F\cap A_{I}^{\circ}$.

	Choose $\cS=\{1,2,3\}$, so $\Delta_{\cS}=\Delta_{D}$ is a triangle in $\R^3$. 
	The expanded skeleton $E_{\cS}$ is a circle, divided in six parts: three disjoint closed segments corresponding to the intersection points $D_{i}\cap D_{j}$, and three open ones, corresponding to each line $D_{i}$.
\end{example}

\begin{figure}[ht]
	\subcaptionbox{central fiber $D\subseteq X$}[.21\textwidth]{
	\begin{tikzpicture}
		\path[use as bounding box] (-0.8,-1.5) rectangle (1.8,1.5);  
		\coordinate (A) at (0,-1); 
		\coordinate (B) at (0,1); 
		\coordinate (C) at (1.732,0); 
		\draw (A) to[out=110,in=-110] (B) to[out=-70,in=70] (A);
		\node at (-0.5,0) {\small{$D_1$}};
		\draw (C) to[out=130,in=-10] (B) to[out=-50,in=170] (C);
		\node at (1.2,0.9) {\small{$D_2$}};
		\draw (C) to[out=-130,in=10] (A) to[out=50,in=-170] (C);
		\node at (1.2,-0.9) {\small{$D_3$}};
		\draw[densely dotted] ($0.5*(B)+0.5*(C)+(0.09,0.18)$) to[out=-90,in=40] ($0.5*(B)+0.5*(C)-(0.09,0.18)$);
		\draw[densely dotted] ($0.5*(A)+0.5*(C)+(0.09,-0.18)$) to[out=140,in=-90] ($0.5*(A)+0.5*(C)-(0.09,-0.18)$);
		\draw[densely dotted] (-0.2,0) to[out=20,in=160] (0.2,0);
	\end{tikzpicture}
	}
	\subcaptionbox{radius-zero fiber $F\subseteq A$}[.28\textwidth]{
		\begin{tikzpicture}
			\path[use as bounding box] (-1,-0.5) rectangle (3,2.5);
			\path [fill=black!10] (1.626,1.313) to[out=0,in=-120] (2.073,1.537) to[out=-60,in=60] (2.073,0.463) to[out=180,in=-60] (1.626,0.678) to[out=60,in=-60] (1.626,1.313);			
			\path [fill=black!10] (1,-0.2) to[out=120,in=-120] (1,0.3) to[out=180,in=-60] (0.374,0.678) to[out=180,in=60] (-0.073,0.463) to[out=-60,in=180] (1,-0.2);
			\path [fill=black!10] (0.374,1.313) to[out=120,in=0] (-0.073,1.537) to[out=60,in=180] (1,2.2) to[out=-60,in=60] (1,1.7) to[out=180,in=60] (0.374,1.313);
			\draw (1,1) circle (1.2);
			\draw (1,1) circle (0.7);
			\draw (1,2.2) to[out=-60,in=60] (1,1.7);
			\draw (1.626,1.313) to[out=0,in=-120] (2.073,1.537);
			\draw (1.626,0.678) to[out=-60,in=180] (2.073,0.463);
			\draw (1,-0.2) to[out=120,in=-120] (1,0.3);
			\draw (0.374,0.678) to[out=180,in=60] (-0.073,0.463);
			\draw (0.374,1.313) to[out=120,in=0] (-0.073,1.537);
			\node at (-0.5,1) {\small{$A_{1}^{\circ}$}};
			\node at (2.6,1) {\small{$A_{2,3}^{\circ}$}};
			\node at (0,2.2) {\small{$A_{1,2}^{\circ}$}};
			\node at (0,-0.2) {\small{$A_{1,3}^{\circ}$}};
			\node at (1.9,-0.2) {\small{$A_{3}^{\circ}$}};
			\node at (1.9,2.2) {\small{$A_{2}^{\circ}$}};
		\end{tikzpicture}
	}
	\subcaptionbox{expanded skeleton $E_{\cS}$ \label{fig:Fermat_ES}}[.24\textwidth]{
		\begin{tikzpicture}
			\path[use as bounding box] (-0.8,-0.5) rectangle (2.8,2.5);  
			\draw (1,1) circle (1);
			\draw (1,2.15) -- (1,1.85);
			\draw (1.74,1.38) -- (2.028,1.514);
			\draw (1.74,0.62) -- (2.028,0.486);
			\draw (1,0.15) -- (1,-0.15);
			\draw (0.26,0.62) -- (-0.028,0.486);
			\draw (0.26,1.38) -- (-0.028,1.514);
			\node at (-0.3,1) {\small{$E_{1}^{\circ}$}};
			\node at (2.4,1) {\small{$E_{2,3}^{\circ}$}};
			\node at (0.2,2.1) {\small{$E_{1,2}^{\circ}$}};
			\node at (0.2,-0.1) {\small{$E_{1,3}^{\circ}$}};
			\node at (1.7,-0.1) {\small{$E_{3}^{\circ}$}};
			\node at (1.7,2.1) {\small{$E_{2}^{\circ}$}};
			\draw[very thick] (1.894,1.447) to[out=-60,in=60] (1.894,0.553);
			\draw[very thick] (1,0) to[out=180,in=-60] (0.106,0.553);
			\draw[very thick] (0.106,1.447) to[out=60,in=180] (1,2);
		\end{tikzpicture}
	}
	\subcaptionbox{essential skeleton $\Delta_{\cS}$}[.24\textwidth]{
		\begin{tikzpicture}
			\path[use as bounding box] (-2.4,-1.5) rectangle (0.8,1.5); 
			\coordinate (A) at (0,-1); 
			\coordinate (B) at (0,1); 
			\coordinate (C) at (-1.732,0); 
			\draw[very thick] (A) -- (B) -- (C) -- (A);
			\filldraw (A) circle (0.08);
			\filldraw (B) circle (0.08);
			\filldraw (C) circle (0.08);
			\node at ($(A)-(1,-0.2)$) {\small{$\Delta_{1,3}$}};
			\node at ($(B)-(1,0.1)$) {\small{$\Delta_{1,2}$}};
			\node at ($(C)+(2.2,0)$) {\small{$\Delta_{2,3}$}};
			\node at ($(A)+(0.3,-0.1)$) {\small{$\Delta_{3}$}};
			\node at ($(B)+(0.3,0.1)$) {\small{$\Delta_{2}$}};
			\node at ($(C)-(0.3,0)$) {\small{$\Delta_{1}$}};
		\end{tikzpicture}
	}
	\caption{Example \ref{ex:Fermat_E}: Hesse pencil $\{(x_1^3+x_2^3+x_3^3)\cdot z=x_1x_2x_3\}$.}
	\label{fig:Fermat_E}	
\end{figure}

\begin{example}[Families of Calabi--Yau hypersurfaces in $\mathbb{P}^{n+1}$] \label{ex:general-fibration} We continue the discussion of Example~\ref{ex:general-model}. Consider the family $\bar{X}=\{P(x_0,\dots,x_{n+1})\cdot z=x_{0}\cdot\ldots\cdot x_{n+1}\}\subseteq \mathbb{P}^{n+2}\times \C$ for a general $P\in \C[x_{0},\dots,x_{n+1}]_{n+2}$, and let $f\colon X\to \C$ be its snc model constructed by successively blowing up $\bar{X}$ in the strict transforms of the coordinate hyperplanes. As we have seen in Example \ref{ex:general-model}, the restriction $f^{\circ}\colon X^{\circ}\to \C^{*}$ of $\bar{X}$ is a maximal Calabi--Yau degeneration, whose essential skeleton is the standard $n$-simplex, and coincides with the dual complex of $D=f^{-1}(0)$.
	
	The expanded skeleton $E_{\cS}$ is the disjoint union of maximal faces $\Delta_{\cS\setminus \{i\}}$ and products $\Delta_{\cS\setminus \{i,j\}}\times [0,1]$ for $0\leq i<j\leq n+1$, divided by a relation identifying 
	 $\Delta_{\cS\setminus \{i,j\}}\times \{0\}$ and  $\Delta_{\cS\setminus \{i,j\}}\times \{1\}$ with the
	corresponding faces of $\Delta_{\cS\setminus \{i\}}$ and $\Delta_{\cS\setminus \{j\}}$, respectively. Figure \ref{fig:K3-E} shows $E_{\cS}$ for $n=2$. The shaded regions are \emph{not} part of $E_{\cS}$; they represent the moment polyhedra of the toric divisors $\bar{D}_i\subseteq \bar{X}$.
\begin{figure}[ht]
	
		\subcaptionbox{central fiber $D\subseteq X$ \label{fig:K3-D}}[.32\textwidth]{
		\begin{tikzpicture}[scale=0.9]
			\coordinate (A) at (0,0);
			\coordinate (B) at (3,0);
			\coordinate (C) at (2,2.5);
			\coordinate (D) at (4,1);
			\draw[add = 0.1 and 0.1] (A) to (B);
			\draw[add = 0.1 and 0.1] (A) to (C);
			\draw[add = 0.1 and 0.1, densely dashed] (A) to (D);
			\draw[add = 0.1 and 0.1] (B) to (C);
			\draw[add = 0.1 and 0.1] (B) to (D);
			\draw[add = 0.1 and 0.1] (C) to (D);
		\end{tikzpicture}
	}
		\subcaptionbox{essential skeleton $\Delta_{\cS}$ \label{fig:K3-Delta}}[.32\textwidth]{
		\begin{tikzpicture}[scale=0.9]
			\coordinate (A) at (1.5,0.5);
			\coordinate (B) at (0,2);
			\coordinate (C) at (3,2);
			\coordinate (D) at (1.5,3);
			\draw (A) to (B);
			\draw (A) to (C);
			\draw[densely dashed] (A) to (D);
			\draw (B) to (C);
			\draw (B) to (D);
			\draw (C) to (D);
	\end{tikzpicture}	
		}
		\subcaptionbox{expanded skeleton $E_{\cS}$ \label{fig:K3-E}}[.32\textwidth]{
		\begin{tikzpicture}[scale=0.9]
			\coordinate (A) at (1.5,0.5);
			\coordinate (B) at (0,2);
			\coordinate (C) at (3,2);
			\coordinate (D) at (1.5,3);
			\coordinate (A1) at ($(A)-(0.8,-0.2)$);
			\coordinate (B1) at ($(B)-(0.8,-0.2)$);
			\coordinate (C1) at ($(C)-(0.8,-0.2)$);
			\coordinate (D1) at ($(D)-(0.8,-0.2)$);
			\coordinate (A2) at ($(A)+(0.8,0.2)$);
			\coordinate (B2) at ($(B)+(0.8,0.2)$);
			\coordinate (C2) at ($(C)+(0.8,0.2)$);
			\coordinate (D2) at ($(D)+(0.8,0.2)$);
			\coordinate (A3) at ($(A)+(0,0.4)$);
			\coordinate (B3) at ($(B)+(0,0.4)$);
			\coordinate (C3) at ($(C)+(0,0.4)$);
			\coordinate (D3) at ($(D)+(0,0.4)$);
			\filldraw[black!10] (A) -- (A1) -- (A2) -- (A);
			\draw[densely dashed] (A1) -- (A2);
			\draw (A1) -- (A)-- (A2);
			\filldraw[fill=black!20] (B) -- (B1) -- (B3) -- (B);
			\filldraw[fill=black!20] (C) -- (C2) -- (C3) -- (C);
			\filldraw[black!10] (D3) -- (D1) -- (D2) -- (D3);
			\draw[densely dashed] (D1) -- (D2);
			\draw (D2) -- (D3)-- (D1);
			\draw (A) -- (B) -- (C) -- (A);
			\draw[densely dashed] (A1) -- (D1); 
			\draw (D1) -- (B1) -- (A1);
			\draw[densely dashed] (A2) -- (D2); 
			\draw (D2) -- (C2) -- (A2);
			\draw (B3) -- (C3) -- (D3) -- (B3);
	\end{tikzpicture}
	}
	\caption{Example \ref{ex:general-fibration}, $n=2$: expanded skeleton for a maximal K3 degeneration.}
	\label{fig:K3-expanded}
\end{figure}	
		
	Theorem \ref{theo:CY}\ref{item:CY_Lagrangian} provides, for each model of $f^{\circ}$ as above (depending on the order of blowups), a smooth Lagrangian torus  fibration $(X_{z}^{q})^{\sm}_{\cS}\to E_{\cS}$ of a certain region $(X_{z}^{q})^{\sm}_{\cS}$ of $X_{z}$ for $z\in \C^{*}$ close to the origin. This fibration is obtained in Proposition \ref{prop:Lagranian-at-positive-radius} by moving $\ll_{\cS}^{\theta}$ to the positive radius. It can be regarded as an archimedean version of the affinoid torus fibration constructed in \cite{NXY}: indeed, the base of the latter is $\Delta_{\cS}\setminus \Delta_{\cS}^{\geq 2}$, which is homeomorphic to the interior of $E_{\cS}$.
	
	Other affinoid torus fibrations for the family $f^{\circ}$ were constructed, e.g., in \cite{MazzonSchneider-toric,Pille-Schneider}. For $n=2$, archimedean Lagrangian torus fibration of the entire quartic $X_z$ was constructed symplectically in \cite{CG-K3}. For general K3 surfaces, Lagrangian fibrations can also be constructed from elliptic fibrations by a hyperk\"ahler rotation, see \cite[\textsection 1]{GW-3folds}. In all these constructions, the discriminant loci correspond to the singularities of $\bar{X}$, contained in the submaximal strata of the central fiber, see the left part of Figure \ref{fig:K3-simplex}. For instance, the singular fibers in \cite{CG-K3} are 24 pinched tori (focus-focus singularities in the terminology of Gross--Siebert) over 24 points in the submaximal faces of $\Delta_{\cS}$, six in each.
	
	Therefore, our construction, as an archimedean variant of~\cite{NXY}, potentially \emph{differs} from the above ones with respect to the location of the  discriminant. Indeed, even in case $n=2$, if we could extend our construction to a singular Lagrangian torus fibration of the entire $X_{z}$, then the discriminant would be contained in the $4$ shaded triangles in Figure \ref{fig:K3-E}, corresponding to $4$ vertices of $\Delta_{\cS}$.
\end{example}

	\begin{remark}[Affine structure]
		The Lagrangian torus fibration $\ll_{\cS}^{\theta}$ endows $E_{\cS}\setminus (\Ram(E_{\cS})\cup \dout E_{\cS})$ with an affine structure given by the action coordinates, see \cite[Theorem 2.12]{Evans-book}. In the preimage of a maximal face $\Delta_{I}$ of $\Delta_{\cS}$ we have $\pi^{*}\omega_{X}=0$ and $\hat{v}_i=v_i$, $\hat{w}_i=w_i$, $\hat{\alpha}_{i}=d\theta_i$, so formula \eqref{eq:omega-s} gives  $\omega^{\epsilon}_{q}=\epsilon \cdot \sum_{i\in I} (q\cdot dv_{i}+(1-q)dw_i)\wedge d\theta_{i}$. Taking into account the equalities $\sum_{i\in I} w_{i}=1$ and $\sum_{i\in I} m_{i}\theta_{i}=\theta$ and applying  \cite[Lemma 2.15]{Evans-book} we get affine coordinates of the form $(q\cdot v_{i}+(1-q)w_{i})_{i\in I\setminus \{j\}}$. In particular, for $q=0$ we get the standard coordinates $(w_i)_{i\in I\setminus \{j\}}$ of the simplex $\Delta_{I}$. In the preimage of $\Delta_{I}\times  C_{I}\subseteq E_{\cS}$ for a submaximal face $\Delta_{I}$, we get affine coordinate systems consisting of $n-1$ coordinates as above, and one coordinate of $X_{I}^{\circ}$, which depends on the form $\omega_{X}$.
	\end{remark}

\begin{remark}[Submaximal Calabi--Yau degenerations]\label{rem:submaximal}
	Our results are most powerful in case $\dim\Delta_{\cS}=n+1$, e.g., for \emph{maximal} Calabi--Yau degenerations. In fact, if $\dim \Delta_{\cS}<n$ then our results are vacuous. Consider the case $\dim \Delta_{\cS}=n$, so $\Delta_{\cS}$ has submaximal faces, but no maximal one. Then the expanded skeleton $E_{\cS}$ is a disjoint union of ivy-like charts $\Delta_{I}\times C_{I}$, where $\Delta_{I}$ is a submaximal face of $\Delta_{\cS}$, and $C_I$ is a Reeb space of the chosen Morse function $h_{I}\colon X_{I}^{\circ}\to [0,1]$. Since the Riemann surface $X_{I}^{\circ}$ is closed, $h_{I}$ has a maximum, so $\dout E_{\cS}\neq\emptyset$ and therefore $\ll_{\cS}^{\theta}$ has fibers of lower dimension. 
	
	Nonetheless, in the setting of Calabi--Yau degenerations this can be easily fixed. Indeed, assume that $f$ is an snc model chosen in Proposition \ref{prop:model}. Then each $X_{I}^{\circ}$ is an elliptic curve, diffeomorphic to $\mathbb{S}^{1}\times \mathbb{S}^{1}$. Replacing $\bar{h}_{I}$ by one of the projections, we get $E_{\cS}$ to be a disjoint union of products $\mathbb{S}^{1}\times \Delta_{I}$; and the same argument as in Lemma \ref{lem:ll-fibers}\ref{item:fiber-isotropic} shows that $\ll_{\cS}^{\theta}$ is a Lagrangian fibration.
\end{remark}

\section{Lagrangian fibrations at positive radius}

We keep the notation and assumptions from Section \ref{sec:expanded}. In Proposition \ref{prop:Lagranian-at-radius-zero}, we have constructed an ivy-like Lagrangian torus fibration $\ll_{\cS}$ of the part of the radius-zero fiber  corresponding to maximal and submaximal faces of $\Delta_{\cS}$. In this section, we use the symplectic connection to transport those Lagrangian tori to positive radius, and study their properties.

In order to apply  Proposition \ref{prop:omega}, we need to choose an open subset $W_{X}\subseteq X$ such that $f|_{\bar{W}_{X}}$ is proper; and the flows of the vector fields lifted by the symplectic connection do not escape $W_X$. If $f$ was proper to begin with, we simply take $W_X=X$. In general,  we take any open subset $V_{X}\subseteq X$ with compact closure, and obtain $W_{X}$ by slightly modifying $V_X$ near its  boundary. 
This modification was introduced in \cite[Setting 5.12]{FdBP_Zariski}, we summarize it in Setting \ref{setting:flow} below.  The reader only interested in the proper case may skip this part and substitute $W_X=X$ in what follows.

\begin{setting}[{see Figure \ref{fig:flow} or \cite[Figure 5]{FdBP_Zariski}}]\label{setting:flow}
Let $V_{X}$ be a domain with smooth boundary, such that $f|_{\bar{V}_{X}}$ is proper, $f^{-1}(0)$ is transverse to $\d V_{X}$, and the intersection $f^{-1}(0)\cap \d \bar{V}_{X}$ is contained in the union of the open sets $R_{i}^{\circ}$ from Lemma \ref{lem:rihat}\ref{item:r_covering}. Shrinking $\delta>0$ if needed we get that $\bigcup_{i}R_{i}^{\circ}$ contains some neighborhood $C_{X}$ of $\d \bar{V}_{X}$. Put $C\de \pi^{-1}(C_X)$. By Lemma \ref{lem:rihat}\ref{item:r_Ri},\ref{item:r_small}, on $C$ each distance function $\hat{r}_{i}$ is either equal to $\frac{1}{e}$ or $|f|^{1/m_{i}}$, hence is fiberwise constant. Thus each form $\hat{\alpha}_{i}$ is fiberwise zero there, so formulas \eqref{eq:omega_sharp} and \eqref{eq:omega_flat} imply that $\omega^{\sharp}$ and $\omega^{\flat}$ are fiberwise zero. Eventually, definition \eqref{eq:omega-s-intro} of $\omega^{\epsilon}_{q}$ gives a fiberwise equality $\omega_{q}^{\epsilon}|_{C}=\pi^{*}\omega_{X}|_{C}$ for every $q$ and $\epsilon$.

\begin{figure}[ht]
	\begin{tikzpicture}
		\path [fill=black!5] (0.2,2.3) to[out=0,in=180] (1.8,2.7) -- (1.8,-0.8) -- (0.2,-0.8) -- (0.2,2.3);
		\draw [decorate, decoration = {calligraphic brace}, very thick] (1.9,2.7) --  (1.9,-0.8);
		\node[right] at (2,0.95) {\small{$W_{X}$}};
		\path [fill=black!10] (0.2,2.3) to[out=0,in=180] (1.8,2.7) -- (1.8,1.7) to[out=180,in=0] (0.2,1.3) -- (0.2,2.3);
		\draw (0.2,2.3) to[out=0,in=180] (1.8,2.7);
		\draw (0.2,1.3) to[out=0,in=180] (1.8,1.7);
		\draw [decorate, decoration = {calligraphic brace}, very thick] (0.1,1.3) --  (0.1,2.3);
		\node[left] at (0.15,1.8) {\small{$C_{X}'$}};
		\draw[-stealth] (1,2.25)  to[out=20,in=-175]  (1.5,2.4);
		\draw[-stealth] (1,2)  to[out=20,in=-175]  (1.5,2.15);
		\draw[-stealth] (1,1.75)  to[out=20,in=-175]  (1.5,1.9);
		\draw (-0.5,3) -- (1.8,3);
		\draw (0.2,2) -- (3.2,2);
		\node at (3,1.75) {\small{$\d \bar{V}_{X}$}};
		\draw (-0.5,1) -- (1.8,1);
		\draw [decorate, decoration = {calligraphic brace}, very thick] (-0.6,1) --  (-0.6,3);
		\node[left] at (-0.6,2) {\small{$C_{X}$}};
		\draw[thick] (1,3.1) -- (1,1) to[out=-90,in=60] (0.9,0.4);
		\draw[thick] (0.9,0.6) -- (1.1,-0.2);
		\draw[thick] (1.1,0) -- (0.9,-0.8);
		\draw[->] (1,-0.9) -- (1,-1.6);
		\node[left] at (1,-1.3) {\small{$f$}};
		\draw (1,-2) ellipse (0.7 and 0.25);
		\draw[-stealth] (1,-2) -- (1.5,-2);
		\filldraw (1,-2) circle (0.04);
		\filldraw (1.7,-2) circle (0.03);
		\node at  (1.9,-1.8) {\small{$\delta$}};
	\end{tikzpicture}
	\caption{Setting \ref{setting:flow}: modifying the domain in case $f$ is not proper.}
	\label{fig:flow}
\end{figure}

Now, for $r<\delta$ let $\Phi_{r}\colon C_{X}\cap f^{-1}(0)\to X$ be the time $r$ flow of the symplectic lift of the radial vector field $\frac{\d}{\d r}$ on a fixed ray of $\C$, with respect to $\omega_{X}$: note that it is well defined since the restriction of $f^{-1}(0)$ to the collar $C_{X}$ is smooth. Shrinking $\delta>0$ if needed, we get that for some neighborhood $C'_{0}$ of $f^{-1}(0)\cap \d \bar{V}_{X}$ in $f^{-1}(0)\cap C_{X}$, the image $\Phi_{r}(C'_{0})$ is contained in $C_{X}$ for all $r<\delta$. Now, let $C'_{X}$ be the union of the images $\Phi_{r}(C'_{0})$, for all $r<\delta$ and all rays in $\C$. Put $W_{0}=f^{-1}(0)\cap (V_{X}\cup C'_{0})$ and define $W_{X}$ as the union of $C'_{X}$ and all connected components of $X\setminus C'_{X}$ which do not meet the union of images $\Phi_{r}(\d W_{0})$, see \cite[Setting 5.12]{FdBP_Zariski} for details.
\end{setting}

Having fixed $W_{X}$ and $\delta>0$ as in Setting \ref{setting:flow}, we choose $\epsilon_0>0$ and shrink $\delta>0$ further so that they satisfy Proposition \ref{prop:omega}. That is, for every $q\in (0,1]$ and $\epsilon\in (0,\epsilon_{0}]$, the forms $\omega_{q}^{\epsilon}$ are fiberwise symplectic on $f_{A}^{-1}(\D_{\delta,\log})\cap W$, where $W=\pi^{-1}(W_X)$. The choice of $W_{X}$ guarantees that the the symplectic lift of $\frac{\d}{\d r}$ with respect to each $\omega_{q}^{\epsilon}$ defines a flow $W\cap \d A\to W$ for all times $r<\delta$. Indeed, if $W\neq A$ then near $\d W$ we have fiberwise equality $\omega_{q}^{\epsilon}=\pi^{*}\omega_{X}$, so both forms define the same flow, which does not escape $W=\pi^{-1}(W_X)$ by construction of $W_X$.

\begin{notation}\label{not:flow}
	In the following, we replace $X$ and its A'Campo space $A$ by $W_{X}$ and its preimage in $A$. As usual, we identify $X\setminus D$ with its preimage $A\setminus \d A$. We fix one $\epsilon\in (0,\epsilon_0]$, and suppress it in the notation. This way, for every $q\in (0,1]$ we have a  flow 
	\begin{equation}\label{eq:flow}
		\Phi_{\bullet}^{q}\colon \d A\to A
	\end{equation}
	of the $\omega_{q}^{\epsilon}$-symplectic lift of the radial vector field on $\C_{\log}$, with respect to the submersion $(g,\theta)$. 
\end{notation}

\subsection{Moving Lagrangian tori by the symplectic connection}\label{sec:moving_Lagrangians}

Fix $z\in \D_{\delta}^{*}$, and put $\theta=\frac{z}{|z|}$, $\tau=\eta(-\frac{1}{\log|z|})$, so the flow $\Phi_{\tau}^{q}$ maps the radius-zero fiber $F_{\theta}=f_{A}^{-1}(0,\theta)$ to the smooth fiber $X_{z}=f^{-1}(z)$. We push the decomposition $F_{\theta}=\bigsqcup_{I}A_{I,\theta}$ to $X_{z}=\bigsqcup_{I}(X_{z}^{q})_{I}^{\circ}$, that is, we put $(X_{z}^{q})^{\circ}_{I}=\Phi_{\tau}^{q}(A_{I,\theta}^{\circ})$; and we define 
\begin{equation}\label{eq:regions}
	(X_{z}^{q})_{\cS}^{\sm}=\Phi_{\tau}^{q}(A_{\cS,\theta}^{\sm}),\quad
	(X_{z}^{q})_{\cS}^{\gen}=\Phi_{\tau}^{q}(A_{\cS,\theta}^{\gen}).
\end{equation}
This way, $(X_{z}^{q})_{\cS}^{\sm}$ is the union of pieces $(X_{z}^{q})^{\circ}_{I}$ for all maximal and submaximal faces $\Delta_{I}$ of the skeleton $\Delta_{\cS}$; and $(X_{z}^{q})_{\cS}^{\gen}$ is the union of $\Int_{X_z} (X_{z}^{q})_{I}^{\circ}$ for all maximal faces of $\Delta_{\cS}$. Note that $(X_{z}^{q})^{\sm}_{\cS}$ is a submanifold of $X_{z}^{q}$ of codimension $0$ with boundary and corners; and $(X_{z}^{q})^{\gen}_{\cS}$ is its open subset.  

We remark that the sets $\Int_{X_{z}}(X_{z}^{q})^{\circ}_{I}$ play a similar role as the sets $E_{I}$ introduced in \cite[\textsection 3.1]{Li_survey}. 
\smallskip

We have the following immediate consequence of Proposition \ref{prop:Lagranian-at-radius-zero}.

\begin{prop}\label{prop:Lagranian-at-positive-radius} 
	For every $q\in (0,1]$, the composition
\begin{equation}\label{eq:Lagranian-at-positive-radius} 
	\ll_{\cS}^{\theta}\circ (\Phi_{\tau}^{q})^{-1}\colon (X_{z}^{q})_{\cS}^{\sm}\to E_{\cS}
\end{equation}
is an ivy-like Lagrangian torus fibration with respect to the K\"ahler form $\omega_{q}^{\epsilon}$ on $X_{z}$. It is a Lagrangian fibration in the usual sense when restricted to $(X_{z}^{q})_{\cS}^{\gen}$; or if condition \eqref{eq:Cst} holds.
\end{prop}

\begin{remark}[Case $q=0$]\label{rem:q=0}
	By Proposition \ref{prop:omega}\ref{item:Kahler},\ref{item:symplectic_zero}, the form $\omega_{0}^{\epsilon}$ is fiberwise symplectic both at positive radius, and on $A_{\cS}^{\gen}$. Therefore, we have a flow 
	\begin{equation*}
		\Phi_{\tau}^{0}\colon A_{\cS,\theta}^{\gen}\to (X_{z}^{0})_{\cS}^{\gen}\subseteq X_{z}.
	\end{equation*}
	Moreover, by Proposition \ref{prop:Lagranian-at-radius-zero}\ref{item:ll-maximal} we have a Lagrangian torus fibration
	\begin{equation*}
		\ll_{\cS}^{\theta} \circ (\Phi_{\tau}^{0})^{-1} \colon (X_{z}^{0})_{\cS}^{\gen} \to \Delta_{\cS}^{\gen},
	\end{equation*}
	where $\Delta_{\cS}^{\gen}$ is the union of relative interiors of maximal faces of $\Delta_{\cS}$. 
\end{remark}

\begin{remark}[Choices within the definition of Lagrangian fibration at positive radius, cf.\ Remark~\ref{rem:canonical_choice}]\label{rem:canonical_choice-positive-radius}
	The fibration~\eqref{eq:Lagranian-at-positive-radius} depends on: the choice of Morse functions $h_{I}\colon X_{I}^{\circ}\to \R$ for each submaximal face $\Delta_{I}$ of $\Delta_{\cS}$, the distance functions $\hat{r}_{i}$ used to define coordinates $\hat{v}_i$, and the choices made in the definition of the symplectic connection. The choice of $h_{I}$ is canonical under condition \eqref{eq:Cst}, which can be ensured in the setting of maximal Calabi--Yau degenerations, by Proposition \ref{prop:model}\ref{item:sk_P1}.
\end{remark}

\subsection{The associated metrics}\label{sec:metrics}

Motivated by \cite[Conjectures 1 and 2]{KS_Conjecture}, we study the behavior of K\"ahler metrics 
\begin{equation*}
	g_{q}=\omega_{q}^{\epsilon}(\sdot,J\sdot)
\end{equation*}
on $X_{z}$ as $z\rightarrow 0$. For maximal Calabi--Yau degenerations, \cite[Conjecture 1]{KS_Conjecture} expects that the fiberwise Ricci-flat metric should converge, after appropriate re-scaling, to one satisfying a real Monge--Amp\'ere equation. In our setting, an example of such metric is given by $\omega^{\flat}$, so looking at formula \eqref{eq:omega-s-intro}, it is reasonable to expect such limiting behavior as $q\rightarrow 0$. This motivates considering a family of K\"ahler metrics parametrized both by $z\in \D_{\delta}^{*}$ and $q\in (0,1]$; and studying its limit as $q,|z|\rightarrow 0$. 
\smallskip

Before we state our result in Proposition \ref{prop:GH}, we need some preparations. We denote by $T_{\textnormal{vert}} A$ the vertical tangent bundle to the A'Campo space, i.e., the kernel of the map $df_{A}\colon TA\to T\C_{\log}$. 

Recall that we identify $A\setminus \d A$ with its image $X\setminus D$; in particular we have there the standard complex structure $J$ induced from $X$. We study the following sections of $\otimes^{2} T^{*}(A\setminus \d A)$:
\begin{equation*}\begin{split}
	g^{\sharp}=\omega^{\sharp}(\sdot, J\sdot),\ g^{\flat}=\omega^{\flat}(\sdot,J\sdot),\ g_{q}=\omega_{q}^{\epsilon}(\sdot,J\sdot) \\ 
	\mbox{and}\quad
	g^{\sharp}_{\new}=t\cdot g^{\sharp},\  g^{\flat}_{\new} = t\cdot g^{\flat},
	\ g_{q,\new}=t\cdot g_{q}.
\end{split}\end{equation*}
Since $\omega^{\sharp}$ and $\omega^{\flat}$ are fiberwise $J$-compatible, the above forms are fiberwise symmetric, i.e., their images in $\otimes^{2}T^{*}_{\textnormal{vert}} (A\setminus \d A)$ lie in $S^{2}T^{*}_{\textnormal{vert}} (A\setminus \d A)$. We denote these images by the same letters.

\begin{lema}\label{lem:limit-metrics}
	The following hold.
	\begin{enumerate}
		\item \label{item:fiberwise-metrics}
		On $A\setminus \d A$, we have fiberwise equalities
	\begin{equation}\label{eq:fiberwise-metrics}
		g^{\sharp} = \sum_{i=1}^{N} \left(\frac{1}{m_{i}\hat{\sigma}_{i}} (d\hat{v}_{i})^2 + \hat{\sigma}_{i}\cdot m_i\, \hat{\alpha}_{i}^{2}\right)+\check{g}
			\quad\mbox{and}\quad
				g^{\flat} = \sum_{i\in \cS} \left(\frac{1}{t} \cdot (d\hat{w}_{i})^{2}+t\cdot m_{i}^{2}\, \hat{\alpha}_{i}^{2}\right)+\tilde{g},
	\end{equation}
	where $\hat{\sigma}_i= \hat{t}_{i}^2+\hat{t}_{i}\hat{u}_{i}^2$, see formula \eqref{eq:basic_functions_hat}; and $\tilde{g},\check{g}$ are smooth, symmetric $2$-forms on $A$ which are bounded with respect to the natural coordinates of $X$.
	\item\label{item:metrics-extend} The sections $g^{\flat}_{\new}$, $g^{\sharp}_{\new}$ and $g_{q,\new}$ of $S^{2}T_{\textnormal{vert}}^{*}(A\setminus \d A)$ extend to smooth sections of $S^{2}T^{*}_{\textnormal{vert}} A$.
	\item\label{item:limit-metrics} On $\d A$, we have fiberwise equalities
	\begin{equation}\label{eq:limit-metrics}
		g^{\sharp}_{\new} = \sum_{i=1}^{N} \frac{1}{m_{i}}\, \eta'(\hat{w}_i) (d \hat{w}_i)^2
	\quad\mbox{and}\quad
		g^{\flat}_{\new} = \sum_{i\in \cS} (d\hat{w}_{i})^2,
	\end{equation}
	where $\eta'(\hat{w}_{i}) (d\hat{w}_i)^2$ extends to a smooth form on $\d A$, vanishing at the zero locus of $\hat{w}_i$.
	\item\label{item:trash-metric} After possibly shrinking $\epsilon>0$, for every $q\in [0,1]$ we have a fiberwise equality
	\begin{equation*}
		g_{q,\new}=(1-q)\cdot \epsilon \cdot \sum_{i\in \cS} (d\hat{w}_{i})^{2}+\tilde{g}_{q},
	\end{equation*}
	where $\tilde{g}_{q}$ is zero on $\d A$ and a fiberwise Riemannian metric on $A\setminus \d A$.
	\end{enumerate}
\end{lema} 
\begin{proof}
	Put $\hat{\alpha}_{i}^{c}=\hat{\alpha}_{i}\circ J$. Definitions \eqref{eq:basic_functions_hat} give fiberwise equalities 
	\begin{equation}\label{eq:dcw}
		\hat{w}_{i}=-m_{i} t \hat{s}_{i},\quad \mbox{so} \quad d^{c}\hat{w}_{i}=m_{i}t\,\hat{\alpha}_{i}\quad\mbox{and}\quad  \hat{\alpha}_{i}^{c}=-\frac{1}{m_{i}t}\, d\hat{w}_{i}.
	\end{equation}

	First, we prove all claims about $g^{\flat}$. By formula \eqref{eq:omega_flat} we have $\omega^{\flat}=-\sum_{i\in \cS} m_{i} d\hat{w}_{i}\wedge  \hat{\alpha}_{i}+\tilde{\omega}$, where $\tilde{\omega}=-\sum_{i\in \cS} m_{i}\hat{w}_{i}\, d\hat{\alpha}_{i}$. Substituting formulas \eqref{eq:dcw} we get the second equality of \eqref{eq:fiberwise-metrics}, with $\tilde{g}\de \tilde{\omega}(\sdot, J\sdot)$.  By Lemma \ref{lem:comparison}\ref{item:alpha-comparison}, each form $d\hat{\alpha}_{i}$ is a pullback of a smooth $2$-form on $X$; hence $d\hat{\alpha}_{i}(\sdot, J\sdot)$ is a smooth section of $\otimes^{2}T^{*}X$. It follows that $\tilde{g}$ extends to a smooth section of $S^{2}T^{*}A$, bounded in the natural coordinates of $X$, as claimed in \ref{item:fiberwise-metrics}. Multiplying the result by $t$, we get a fiberwise equality
	\begin{equation*}
		g^{\flat}_{\new}=\sum_{i\in \cS} \left( (d\hat{w}_i)^2+t^2\cdot m_{i}^2\, \hat{\alpha}_i^2\right)+t\cdot \tilde{g}.
	\end{equation*}
	By Proposition \ref{prop:AX_hat_smooth}\ref{item:AX-vbar-smooth}, the functions $\hat{w}_{i}$ and forms $\hat{\alpha}_{i}$ are smooth, so the right-hand side extends to a smooth section of $S^{2}T^{*}A$, as claimed in \ref{item:metrics-extend}. Substituting $t=0$ we get the second equality of \eqref{eq:limit-metrics}.
	\smallskip

	Now, we prove 
	the claims about $g^{\sharp}$. Recall from formula \eqref{eq:basic_functions_hat} that $\hat{v}_{i}=\hat{t}_{i}-\hat{u}_{i}$, where $\hat{u}_{i}=\eta(\hat{w}_{i})=(1-\log(\hat{w}_{i}))^{-1}$. We have the  following equalities, cf.\ \cite[Lemma 3.21(g)]{FdBP_Zariski}:
	\begin{equation*}
	d\hat{u}_{i}=\eta'(\hat{w}_{i})\, d\hat{w}_{i}=-(1-\log(\hat{w}_{i}))^{-2}\cdot (-\hat{w}_{i}^{-1})\, d\hat{w}_{i}=\hat{u}_i^{2}\hat{w}_{i}^{-1}\, d\hat{w}_{i}=\hat{u}_{i}^{2} \hat{t}_{i}\cdot t^{-1}\, d\hat{w}_i, 
	\end{equation*}
	 where in the last equality we used an identity $t=\hat{t}_{i}\hat{w}_{i}$. This identity gives also a fiberwise equality $d\hat{t}_{i}=d(t\hat{w}_{i}^{-1})=-t\hat{w}_{i}^{-2}\, d\hat{w}_{i}=-\hat{t}_{i}^{2}\cdot t^{-1}\, d\hat{w}_{i}$. Combining those two, we get a fiberwise equality
	\begin{equation}\label{eq:dv}
		d\hat{v}_{i}=-\hat{t}_{i}^{-2}\cdot t^{-1}\, d\hat{w}_{i}-\hat{t}_{i}\hat{u}_{i}^2\cdot t^{-1}\, d\hat{w}_{i}=
		-t^{-1}\hat{\sigma}_{i}\, d\hat{w}_{i},
	\end{equation}
	cf.\ \cite[Lemma 3.22(e)]{FdBP_Zariski}. Using formula \eqref{eq:dcw}, we get fiberwise equalities
	\begin{equation}\label{eq:dcv}
		d^{c}\hat{v}_{i}=-m_{i}\hat{\sigma}_{i}\,\hat{\alpha}_{i}\quad\mbox{and}\quad \hat{\alpha}_{i}^{c}=\frac{1}{m_{i}\hat{\sigma}_{i}}\, d\hat{v}_{i}.
	\end{equation}
	By definition \eqref{eq:omega_sharp} of $\omega^{\sharp}$, we have $\omega^{\sharp}=\sum_{i=1}^{N} d\hat{v}_{i}\wedge \hat{\alpha}_{i}+\check{\omega}$, where $\check{\omega}=\sum_{i=1}^{N} \hat{v}_{i}\, d\hat{\alpha}_{i}$. Substituting equalities \eqref{eq:dcv} we get the remaining part of \ref{item:fiberwise-metrics}, where like before $\check{g}\de \check{\omega}(\sdot, J\sdot)$ is a smooth section of $S^{2}T_{\textnormal{vert}}^{*}A$, bounded in natural coordinates of $X$. Multiplying by $t$, we get a fiberwise equality
	\begin{equation*}
		g_{\new}^{\sharp} = \sum_{i=1}^{N} \left(\frac{t}{m_i\hat{\sigma}} (d\hat{v}_{i})^2 + t\hat{\sigma}_{i}\cdot m_i\, \hat{\alpha}_{i}^{2}\right)+t\cdot \check{g}
		\overset{\tiny{\mbox{\eqref{eq:dv}}}}{=}\sum_{i=1}^{N}\left(-\frac{1}{m_{i}} d\hat{w}_i\cdot d\hat{v}_i+t\hat{\sigma}_i\cdot m_i\, \hat{\alpha}_i^2\right)+t\cdot\check{g}.
	\end{equation*}
	Recall that by Proposition \ref{prop:AX_hat_smooth}\ref{item:AX-vbar-smooth} the functions $\hat{v}_i$, $\hat{w}_i$ and the forms $\hat{\alpha}_i$ are smooth. By Lemma \ref{lem:tsigma-smooth} below, the functions $t\hat{\sigma}_i$ are smooth, too, so the right-hand side extends to a smooth, fiberwise symmetric $2$-form on $A$, as claimed in \ref{item:metrics-extend}. Substituting $t=0$, we get
	\begin{equation*}
		g^{\sharp}_{\new}|_{\d A}=-\sum_{i=1}^{N} \frac{1}{m_{i}} d\hat{w}_i\cdot d\hat{v}_i.
	\end{equation*}
	It remains to show that $-d\hat{w}_i\cdot d\hat{v}_i$ is a smooth extension of $\eta'(\hat{w}_{i}) (d\hat{w}_i)^2$ to $\d A$, vanishing on $\{\hat{w}_i=0\}$.
	
	The forms $d\hat{w}_{i}$ and $d \hat{v}_i$ are smooth on $\d A$. Clearly, $d\hat{w}_i=0$ on the zero locus of $\hat{w}_i$, so $-d\hat{w}_i\cdot d\hat{v}_i$ is a smooth form on $\d A$, vanishing on $\{\hat{w}_i=0\}$. On the remaining part of $\d A$ we have $\hat{t}_i=t\hat{w}_{i}^{-1}=0$, so $d\hat{v}_i=-d\hat{u}_i=-\eta'(\hat{w}_{i})\, d\hat{w}_i$, which proves the claim.
	\smallskip
	
	It remains to prove part \ref{item:trash-metric}. Let $g_{X}=\omega_{X}(\sdot, J\sdot)$ be the K\"ahler metric on $X$.  We have 
		\begin{equation*}
			g_{q,\new}
			=
			t\cdot \pi^{*} g_{X}+t\cdot \epsilon\cdot (q\cdot g^{\sharp}+(1-q)\cdot g^{\flat})
			=
			(1-q)\cdot\epsilon\cdot \sum_{i\in \cS} (d\hat{w}_{i})^2+
			t \cdot \tilde{g}_q',
		\end{equation*}
		where by the second formula in \eqref{eq:fiberwise-metrics} we have
		\begin{equation*}
			\tilde{g}_q'=
			\left(\tfrac{1}{2}\pi^{*}g_{X}+\epsilon \cdot q \cdot g^{\sharp}\right)+\left(\tfrac{1}{2} \pi^{*}g_{X}+\epsilon \cdot (1-q) \cdot \tilde{g}\right)+t\cdot \epsilon \cdot (1-q)\cdot \sum_{i\in \cS} m_{i}^{2}\hat\alpha_{i}^{2}.
		\end{equation*}
		The first summand above is the fiberwise Riemannian metric associated to the form $\frac{1}{2}\omega_{1}^{2q\epsilon}$ which is fiberwise K\"ahler for $\epsilon<\frac{\epsilon_0}{2}$ by Proposition \ref{prop:omega}\ref{item:Kahler}. By \ref{item:fiberwise-metrics} the symmetric form $\tilde{g}$ is bounded in the natural coordinates of $X\setminus D$, so since $g_{X}$ is a Riemannian metric on $X\setminus D$ and $f_{A}$ is proper, for sufficiently small $\epsilon\geq 0$ the second summand is a fiberwise Riemannian metric, too. Since $t<\frac{-1}{\log \delta}$, shrinking $\delta>0$ we can make $t>0$ as small as we wish, so again by properness of $f_A$ we conclude that the whole sum $\tilde{g}_q'$ is a fiberwise Riemannian metric on $A\setminus \d A$, as needed.	
\end{proof}

In the above proof, we used  Lemma \ref{lem:tsigma-smooth} which asserts that the functions $t\hat{\sigma}_i$ are smooth in the A'Campo space $A$. The proof is similar to the proof of Proposition \ref{prop:AX_hat_smooth}. 
The key idea is that the function $t$ allows to \enquote{flatten} all derivatives of $\hat{\sigma}_i$. 

\begin{lema}\label{lem:tsigma-smooth}
	The functions 
	$t\hat{\sigma}_{i}$ for $i\in \{1,\dots, N\}$ extend to smooth functions on $A$. 
\end{lema}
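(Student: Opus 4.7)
Following the strategy of Proposition \ref{prop:AX_hat_smooth}\ref{item:AX-vbar-smooth}, I would work locally in an adapted chart $U_X$ with associated index set $S\ni i$, and in a smooth chart \eqref{eq:AC-chart_hat} on its preimage $U$. The decomposition $t\hat{\sigma}_i = t\sigma_i + t(\hat{\sigma}_i - \sigma_i)$ splits the task in two, where $\sigma_i = t_i^2 + t_iu_i^2$ is the local analogue of $\hat{\sigma}_i$. For the difference term, Lemma \ref{lem:comparison}\ref{item:t-comparison},\ref{item:u-comparison} gives explicit expressions for $\hat{t}_i - t_i$ and $\hat{u}_i - u_i$ as rational functions of $t_i$, $u_i$, and $a\de -m_i\log\lambda\in\cC^{\infty}(U_X)$. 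Substituting these into the factorization $\hat{\sigma}_i - \sigma_i = (\hat{t}_i - t_i)(\hat{t}_i + t_i + u_i^2) + \hat{t}_i(\hat{u}_i - u_i)(\hat{u}_i + u_i)$ and multiplying by $t$ yields an expression handled by the matrix-ring argument of Proposition \ref{prop:AX_hat_smooth}, applied to the analogue of the algebra $\cA_i$ used there.

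The heart of the matter is showing that $t\sigma_i$ is smooth on $U$. The idea is that $\sigma_i = t_i(t_i + u_i^2)$ is bounded on $U$ (since $t_i\in (0,1/m_i]$ and $u_i\in[0,1]$) and smooth on $U\setminus\d A$, while $t = \exp(1-g^{-1})$ and all its derivatives with respect to $g$ decay faster than any polynomial in $1/g$ as $g\to 0$. In the smooth coordinates of \eqref{eq:AC-chart_hat}, any derivative of $\sigma_i$ involves repeated chain-rule applications to $\eta(w_i)$, whose first derivative $\eta'(w_i) = u_i^2/w_i$ diverges only as $1/w_i$; higher derivatives diverge at worst as inverse polynomials of $w_i$. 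Since $w_i = -m_is_it$ carries a factor of $t$, each inverse power of $w_i$ contributes at most a polynomial blowup in $1/t$, so all partial derivatives of $\sigma_i$ grow at most polynomially in $1/t$. By Leibniz's rule, the exponential decay of $t$ dominates this polynomial blowup, so all partial derivatives of $t\sigma_i$ extend continuously to $\d A\cap U$ with value zero.

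The main obstacle is the precise polynomial-in-$1/t$ bookkeeping of derivatives of $\sigma_i$ in the smooth chart coordinates, which requires verifying that $\sigma_i$ belongs to an appropriate extension of the algebras $\cA_i\cdot \cW_i'\cdot \cP$ of \cite{FdBP_Zariski} allowing controlled inverse powers of $t$. All elements of such an extension automatically become smooth after multiplication by $t$, and one checks the necessary derivative bounds by induction on the order, using the identities $t_i = v_i + \eta(w_i)$ and $u_i = \eta(w_i)$ together with smoothness of $v_j, w_j$ established in Proposition \ref{prop:AX_hat_smooth}. This is precisely the bookkeeping alluded to in the paragraph before the lemma statement, where the paper notes that smoothness of $t\hat{\sigma}_i$ is not needed in the sequel, only continuity with vanishing on $\d A$, which follows directly from boundedness of $\hat{\sigma}_i$.
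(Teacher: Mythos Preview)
Your approach is essentially correct and close to the paper's, but the paper's argument is more direct and avoids both your decomposition and your worry about extending the algebra framework.

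The paper first disposes of two easy cases you do not isolate: if $i$ is not in the associated index set $S$ of $U_X$, then $\hat{r}_i>0$ on $U_1$, so $\hat{t}_i$, hence $\hat{u}_i=\hat{t}_i-\hat{v}_i$ and $\hat{\sigma}_i$ itself, are smooth; and if $i$ is the excluded index of the chart \eqref{eq:AC-chart_hat} (so $\hat{w}_i>\tfrac{1}{n+2}$ on $U_i$), then $\hat{u}_i=\eta(\hat{w}_i)$ is smooth directly, hence so is $\hat{\sigma}_i$. Your proposal implicitly sits in the remaining hard case $i\in S$ with $\hat{v}_i$ a coordinate.

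For that case, rather than splitting $t\hat{\sigma}_i=t\sigma_i+t(\hat{\sigma}_i-\sigma_i)$, the paper uses the identity $t=t_iw_i$ to write
\[
t\hat{\sigma}_i \;=\; w_i\cdot\bigl(t_i\hat{t}_i^{\,2}+t_i\hat{t}_i\hat{u}_i^{\,2}\bigr)
\]
in one stroke. Then Lemma \ref{lem:comparison}\ref{item:t-comparison},\ref{item:u-comparison} shows $\hat{t}_i,\hat{u}_i\in\cA_i$ (since $t_i,u_i\in\cA_i$ and the differences lie in $\cA_i$), so the parenthesis is in $\cA_i$ and $t\hat{\sigma}_i\in w_i\cdot\cA_i\subseteq\cW_i'\cdot\cA_i\cdot\cP$, an algebra already known to consist of smooth functions. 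No extension allowing inverse powers of $t$ is needed: the factor $w_i$ absorbed from $t=t_iw_i$ is precisely what places the product in the existing algebra, and your ``exponential decay beats polynomial blowup'' heuristic is exactly what that algebra membership encodes. Your decomposition would also work (both $t\sigma_i=w_i(t_i^3+t_i^2u_i^2)$ and $t(\hat\sigma_i-\sigma_i)$ land in $w_i\cdot\cA_i$ by the same reasoning), but it is an unnecessary detour.
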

\begin{proof}	
	We work in the preimage of an adapted chart $U_X$, say with associated index set $\{1,\dots, k\}$; and consider its subset $U_{1}=\{w_{1}>\frac{1}{n+2}\}$; with coordinates \eqref{eq:AC-chart_hat} given by $(g;\hat{v}_{2},\dots,\hat{v}_k;\rest)$. 
	
	Assume first that $i>k$, i.e., $D_i\cap U_X=\emptyset$. Then $\hat{r}_{i}|_{U_{1}}>0$, so $\hat{t}_{i}=-(m_{i}\hat{r}_{i})^{-1}$ is smooth on $U_1$. Thus $\hat{u}_i|_{U_1}=\hat{t}_i|_{U_1}-\hat{v}_i|_{U_1}$ is smooth by Proposition \ref{prop:AX_hat_smooth}\ref{item:AX-vbar-smooth}, and therefore  $\hat{\sigma}_{i}|_{U_1}$ is smooth, too. 
	
	Assume now that $i=1$. We have $\hat{w}_{1}|_{U_1}>0$ by definition of $U_1$, so since $\hat{w}_{1}$ and $\eta|_{(0,1]}$ are smooth, so is their composition $\hat{u}_{1}$. By Proposition \ref{prop:AX_hat_smooth}\ref{item:AX-vbar-smooth}, so are the functions $\hat{t}_{1}=\hat{v}_{1}-\hat{u}_1$ and $\hat{\sigma}_1$. 
	
	It remains to consider the case $i\in \{2,\dots, k\}$, i.e., when $\hat{v}_i$ is a coordinate of $U_1$. Now $\hat{\sigma}_i$ might not be smooth, but we will show that $t\hat{\sigma}_i$ is. Using the identity $t=t_iw_i$, we write 
	\begin{equation}\label{eq:tsigma}
		t\hat{\sigma}_i=w_i\cdot (t_i\hat{t}_i^2+t_i\hat{t}_i \hat{u}_i^2).
	\end{equation}
	Recall that in the proof of Proposition \ref{prop:AX_hat_smooth}\ref{item:AX-vbar-smooth} we have introduced a slight modification of the algebra $\cA_{i}$ from \cite[formula (40)]{FdBP_Zariski}. The functions $t_{i}$ and $u_{i}$ belong to this algebra. Indeed, in the notation of loc.\ cit.\ we have $t_i,u_i\in \cW_i$, $t_i\in \cR_{i}$ by formula (38) and $u_i\in \cW_i$ by Lemma 3.41(a). Now our  modified algebra $\cA_i$ was defined as in loc.\ cit, with the functions $a\in \cT_{i}$ replaced by the single function $a$ from Lemma \ref{lem:comparison}. Thus Lemma \ref{lem:comparison}\ref{item:t-comparison},\ref{item:u-comparison} implies that  $\hat{t}_i-t_i\in \cA_i$ and $\hat{u}_i-u_i\in \cA_i$. We conclude that $\hat{t}_i,\hat{u}_i\in \cA_i$, so $t\hat{\sigma}_{i}\in w_i\cdot \cA_i$ by formula \eqref{eq:tsigma}.
	
	It follows that the function $t\hat{\sigma}_i$ lies in the algebra $\cW_{i}'\cdot \cA_i\cdot \cP$ introduced in p.\ 206 loc.\ cit: here $\cW_{i}'$ and $\cP$ are some algebras containing $w_i$ and $1$, respectively. As observed in the proof of Proposition \ref{prop:AX_hat_smooth}\ref{item:AX-vbar-smooth}, the proof of Lemma 3.49 loc.\ cit. shows that all elements of the algebra $\cW_{i}'\cdot \cA_i\cdot \cP$ are smooth on $U_1$. Thus $t\hat{\sigma}_i$ is smooth on $U_1$; as needed.
\end{proof}

\subsection{Admissible paths}\label{sec:paths}

We have set up a bi-parametric approach to study the degeneration of K\"ahler manifolds $X_{z}^{q}$, where the first parameter $z$ determines the fiber $X_z$ of the degeneration, and the second one, $q$, determines the K\"ahler form $\omega_{q}^{\epsilon}$ within the family \eqref{eq:omega-s-intro}. To prove our main results, we choose some paths in this bi-parametric model. First, we distinguish \emph{admissible paths} as those used in  Theorem~\ref{theo:CY}\ref{item:CY_GH} and Theorem~\ref{theo:CY_af}\ref{item:CY_Lagrangian_volume},\ref{item:assflat}. That is, we pose the following definition.

\begin{definition}[see Figure \ref{fig:any_path}]\label{def:admissible_path}
	A smooth path $\gamma=(\gamma_1,\gamma_2)\colon [0,\rho)\to \C_{\log}\times [0,1]$ is \emph{admissible} if $|\gamma_1(0)|=\gamma_2(0)=0$ and $|\gamma_1(\tau)|>0$, $\gamma_2(\tau)>0$ for $\tau>0$. 
\end{definition}

Let $(r,\theta)$ be coordinates on $\C_{\log}=[0,\infty)\times \mathbb{S}^{1}$. In our setting, it is natural to replace $r$ by $t=e^{-\frac{1}{|f|}}$, see formula \eqref{eq:basic functions_t}: indeed, both the volume $\vol^{\Omega}_{\new}$ and the metrics $g_{q,\new}$, defined to give a finite nonzero limits as $|f|\rightarrow 0$, required a rescaling by a power of $t$, see Sections \ref{sec:intro} and \ref{sec:metrics}. So, we endow $\C_{\log}$ with another smooth coordinate system, not compatible with $(r,\theta)$, given by $(t,\theta)=(e^{-\frac{1}{r}},\theta)$. 

For any admissible path $\gamma\colon [0,\rho)\to \C_{\log}\times [0,1]$, the restriction $\gamma|_{[0,\rho')}$ for some $\rho'>0$ can be parametrized, in the coordinates $(t,\theta)$ of $\C_{\log}$, as $\gamma(h)=(h,\theta(h),q(h))$, where $\theta\colon [0,\rho')\to \mathbb{S}^1$ and $q\colon [0,\rho')\to [0,1]$ are continuous functions whose restrictions to $(0,\rho')$ are smooth. We call such a parametrization \emph{normal}. We can now distinguish paths used in Theorem \ref{theo:CY_af}\ref{item:CY_asymptotic specialty}.

\begin{definition}\label{def:special_path} Let $\gamma(h)=(h,\theta(h),q(h))$ be a normal parametrization of an admissible path $\gamma$. We say that $\gamma$ is \emph{tame} if the function $q$ is $\cC^{1}$.
\end{definition}

\begin{example}[see Figure \ref{fig:special_path}]\label{ex:special}
	Let $\gamma=(\gamma_{1},\gamma_{2})\colon [0,\rho)\to \C_{\log}\times [0,1]$ be an admissible path such that for all $\tau\in [0,\rho)$ we have $|\gamma_{1}(\tau)|=\tau$, where the absolute value refers to the natural coordinates $(r,\theta)$ on $\C_{\log}$, see Theorem \ref{theo:CY_af}\ref{item:CY_asymptotic specialty}. We claim that $\gamma$ is  tame. Indeed, the normal parametrization of $\gamma$ is $(h,\theta(h),q(h))$, where $\theta(h)$ is the argument of $\gamma_{1}(e^{-\frac{1}{h}})$, and $q(h)=\gamma_{2}(e^{-\frac{1}{h}})$ is smooth, as needed.
\end{example}

\begin{notation}\label{not:admissible}
	Let $h \mapsto (h,\theta(h),q(h))$ be a normal parametrization of an admissible path. We denote by $X_{h}^{\gamma}$ the fiber $(t,\theta)^{-1}(h,\theta(h))\subseteq A$, equipped with a $2$-form $\omega^{\gamma}_{h}\de \omega^{\epsilon}_{q(h)}$ defined in formula \eqref{eq:omega-s-intro}. We fix $\epsilon>0$ sufficiently small so that Proposition \ref{prop:omega} and Lemma \ref{lem:limit-metrics}\ref{item:trash-metric} hold. For a fixed $q>0$, we write $\Psi_{h}^{q}=\Phi_{\eta(h)}^{q}$, so $\Psi_{\bullet}^{q}$ is a reparametrization of the flow \eqref{eq:flow}  such that $\Psi_{h}^{q}$ maps $X_{0}^{\gamma}$ to $X_{h}^{\gamma}$. We define regions $(X_{h}^{\gamma})_{\cS}^{\gen}\subseteq (X_{h}^{\gamma})_{\cS}^{\sm}\subseteq X_{h}^{\gamma}$ as in Section \ref{sec:moving_Lagrangians}, i.e., as images of $(X_{0}^{\gamma})_{\cS}^{\gen}$ and $(X_{0}^{\gamma})_{\cS}^{\sm}$ by the flow $\Psi_{h}^{q(h)}$. 	
	Thus on $(X_{h}^{\gamma})^{\sm}_{\cS}$ we have an ivy-like Lagrangian torus fibration \eqref{eq:Lagranian-at-positive-radius}, with respect to the form $\omega_{h}^{\gamma}$. We write $g^{\gamma}_{h,\new}\de h\cdot \omega_{h}^{\gamma}(\sdot,J\sdot)$ for the rescaled metric on $X_{h}^{\gamma}$. 	
	We denote by $A^{\gamma}\subseteq A$ the union of fibers $X_{h}^{\gamma}$; and put $A^{\gamma}_{+}\de A^{\gamma}\setminus X_{0}^{\gamma}$. 
\end{notation}

\subsection{The Gromov--Hausdorff limit}\label{sec:GH}

We keep Notation \ref{not:admissible}. The following result is a direct consequence of Lemma \ref{lem:limit-metrics}\ref{item:trash-metric}.

\begin{lema}\label{lem:g0}
	Let $\gamma$ be an admissible path. Then for every $h\geq 0$ we have 
	\begin{equation*}
		g^{\gamma}_{h,\new}=(1-q(h))\cdot\epsilon\cdot \sum_{i\in \cS} (d\hat{w}_{i})^{2}+ \tilde{g}_h^{\gamma},
	\end{equation*} 
	where $\tilde{g}_{0}^{\gamma}=0$, and for $h>0$, $\tilde{g}_{h}^{\gamma}$ is a Riemannian metric on $X_{h}^{\gamma}$. In particular, $g_{0,\new}^{\gamma}=\epsilon \cdot \sum_{i\in \cS} (d\hat{w}_i)^2$.
\end{lema}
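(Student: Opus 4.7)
The plan is to expand the definition \eqref{eq:omega-s} of $\omega^\epsilon_q$, convert the identity $g^{\gamma}_{h,\new} = h\cdot \omega^{\gamma}_h(\sdot, J\sdot)|_{X_h^\gamma}$ into a sum of three explicit pieces, and then peel off the singular leading term $\sum_{i\in\cS}(d\hat w_i)^2$ by substituting the fiberwise formula for $g^\flat$ from Lemma \ref{lem:limit-metrics}\ref{item:fiberwise-metrics}.

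Concretely, I would first write
\[
g^{\gamma}_{h,\new} = h\,\pi^{*}g_X|_{X_h^\gamma} + \epsilon\,q(h)\,g^\sharp_{\new}|_{X_h^\gamma} + \epsilon(1-q(h))\,g^\flat_{\new}|_{X_h^\gamma},
\]
which by Lemma \ref{lem:limit-metrics}\ref{item:metrics-extend} extends to a smooth family of fiberwise symmetric $2$-forms on $A^\gamma$. Multiplying the expansion \eqref{eq:fiberwise-metrics} of $g^\flat$ by $t=h$ gives $g^\flat_{\new}|_{X_h^\gamma}=\sum_{i\in\cS}(d\hat w_i)^2+h^{2}\sum_{i\in\cS}m_i^{2}\hat\alpha_i^{2}+h\,\tilde g$; subtracting $(1-q(h))\epsilon\sum_{i\in\cS}(d\hat w_i)^2$ from $g^\gamma_{h,\new}$ then yields
\[
\tilde g_h \;=\; h\,\pi^{*}g_X + \epsilon\,q(h)\,g^\sharp_{\new} + \epsilon(1-q(h))\bigl[h^{2}\textstyle\sum_{i\in\cS}m_i^{2}\hat\alpha_i^{2} + h\,\tilde g\bigr]\Big|_{X_h^\gamma}.
\]
Plugging $h=0$ and $q(0)=0$ into the two displays gives $\tilde g_0 = 0$, while reading the same formula together with \eqref{eq:limit-metrics} at $h=0$ proves the \enquote{in particular} statement $g^\gamma_{0,\new}=\epsilon\sum_{i\in\cS}(d\hat w_i)^2$.

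For the case $h>0$, the identity $g^\sharp_{\new}=h\,g^\sharp$ (Lemma \ref{lem:limit-metrics}) lets me factor out $h$ and write $\tilde g_h = h\,M_h$ with $M_h = \pi^{*}g_X + \epsilon q(h) g^\sharp + \epsilon(1-q(h))\bigl(h\sum_{i\in\cS}m_i^{2}\hat\alpha_i^{2} + \tilde g\bigr)$. The main obstacle is positive definiteness of $M_h$: the term $\tilde g$ is only known to be bounded in the natural coordinates of $X$ (not positive semi-definite), and $g^\sharp$ is unbounded near $\d A$, so one cannot simply absorb them into $\pi^{*}g_X$. The plan is to bypass both issues by expressing $M_h$ as the convex combination
\[
M_h \;=\; q(h)\bigl[\pi^{*}g_X + \epsilon g^\sharp\bigr] + (1-q(h))\bigl[\pi^{*}g_X + \epsilon h\textstyle\sum_{i\in\cS}m_i^{2}\hat\alpha_i^{2} + \epsilon \tilde g\bigr].
\]
The first bracket is the K\"ahler metric $g_1$ on $X_h^\gamma$ by Proposition \ref{prop:omega}\ref{item:Kahler} with $q=1$, hence positive definite. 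For the second bracket I would split $\pi^{*}g_X=\tfrac{2}{3}\pi^{*}g_X+\tfrac{1}{3}\pi^{*}g_X$ and invoke assertion (iii) in the proof of Proposition \ref{prop:omega}\ref{item:Kahler} specialised to $q=0$, which shows $\tfrac{1}{3}\pi^{*}g_X+\epsilon\tilde g$ is positive definite for $\epsilon\in[0,\epsilon_0]$; the remaining summand $\epsilon h\sum_{i\in\cS}m_i^{2}\hat\alpha_i^{2}$ is a sum of squares. Thus $M_h$ is a convex combination of positive definite forms, and $\tilde g_h=hM_h$ is a Riemannian metric on $X_h^\gamma$, as required.
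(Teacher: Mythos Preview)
Your argument is correct and in fact handles a point the paper's own proof glosses over. Both begin from the same expansion $g^\gamma_{h,\new} = h\,\pi^*g_X + q(h)\epsilon\,g^\sharp_{\new} + (1-q(h))\epsilon\,g^\flat_{\new}$. The paper then simply declares $\tilde g_h := h\,\pi^*g_X + q(h)\epsilon\,g^\sharp_{\new} = h\cdot\omega_1^{\,q(h)\epsilon}(\sdot,J\sdot)$, which is Riemannian by Proposition~\ref{prop:omega}\ref{item:Kahler} applied at $q=1$ with parameter $q(h)\epsilon\in(0,\epsilon_0]$. This is slick, but since the identity $g^\flat_{\new}=\sum_{i\in\cS}(d\hat w_i)^2$ of \eqref{eq:limit-metrics} holds only on $\d A$, for $h>0$ the paper's $\tilde g_h$ does not literally satisfy the displayed equality --- it omits the residue $(1-q(h))\epsilon\bigl(h^{2}\sum_{i\in\cS} m_i^{2}\hat\alpha_i^{2} + h\,\tilde g\bigr)$ that you carefully track. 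Your convex-combination step is exactly what repairs this: the $q(h)$-bracket is the K\"ahler metric $g_1$, and the $(1-q(h))$-bracket dominates $\tfrac13\pi^*g_X + \epsilon\tilde g$ plus a sum of squares. One small quibble: assertion~(iii) in the proof of Proposition~\ref{prop:omega}\ref{item:Kahler} concerns the \emph{local} bounded remainder $\tilde\omega$ of Lemma~\ref{lem:53}\ref{item:omega_flat_local}, not the global $\tilde g$ of Lemma~\ref{lem:limit-metrics}\ref{item:fiberwise-metrics}; but both are bounded in the natural coordinates of $X$, so the identical boundedness argument (with, if necessary, a harmless further shrinking of $\epsilon_0$) yields $\tfrac13\pi^*g_X + \epsilon\tilde g > 0$ as you need.
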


We can now prove the main result of this section. 
For a definition and basic properties of the Gromov--Hausdorff convergence we refer to \cite[\textsection 7.3, 7.4]{BBS_metric}.

\begin{prop}\label{prop:GH}
	Let $\gamma$ be an admissible path. For $h>0$, we equip the fiber $X_{h}^{\gamma}$ with a Riemannian metric $\frac{1}{\epsilon}\cdot g^{\gamma}_{h,\new}$. Then we have Gromov--Hausdorff convergence: 
	\begin{equation*}
		\lim_{h\rightarrow 0} (X_{h}^{\gamma}, X_{h}^{\gamma}\setminus (X_{h}^{\gamma})_{\cS}^{\sm}) =(\Delta_{\cS},\Delta_{\cS}^{\geq 2}),
	\end{equation*}
	where $\Delta_{\cS}\subseteq \Delta_{D}\subseteq \R^{N}$ is the essential skeleton, see Section \ref{sec:dual_complex}, equipped with the standard metric of $\R^{N}$; and $\Delta_{\cS}^{\geq 2}\subseteq \Delta_{\cS}$ is the union of faces of $\Delta_{\cS}$ of codimension at least $2$.
\end{prop}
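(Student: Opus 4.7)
The plan is to read off the limit metric tensor on the radius-zero fiber from Lemma~\ref{lem:g0}, show that its metric quotient is $(\Delta_\cS, d_{\textnormal{eucl}})$, and transfer the convergence to positive radius by the flow $\Psi_h^{q(h)}$ of Notation~\ref{not:admissible}. From Lemma~\ref{lem:g0} we have
$$\tfrac{1}{\epsilon} g^\gamma_{h,\new} = (1-q(h))\sum_{i\in\cS}(d\hat w_i)^2 + \tfrac{1}{\epsilon}\tilde g_h,$$
and by Lemma~\ref{lem:limit-metrics}\ref{item:metrics-extend} the tensor $\tilde g_h = h\cdot\pi^*g_X + q(h)\cdot \epsilon\cdot g^\sharp_\new$ extends smoothly to a fiberwise symmetric section of $S^2T^*_\vert A$ that vanishes identically on $\partial A$. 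Since $h,q(h)\to 0$ along any admissible path, the restrictions of these tensors to the fibers converge uniformly (in $C^0$-norm with respect to any fixed reference metric on $A$) to the degenerate tensor $g_0 := \sum_{i\in\cS}(d\hat w_i)^2$ on $X_0^\gamma = F_{\theta(0)}$.

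Next I would identify the metric quotient of $(X_0^\gamma, g_0)$ with $(\Delta_\cS, d_{\textnormal{eucl}})$. On each piece $A^\circ_{I,\theta(0)}$ with $I\subseteq \cS$, the identity $\hat v_i = -\eta(\hat w_i)$ combined with Lemma~\ref{lem:product} presents the map $\Psi := (\hat w_i)_{i\in\cS}\colon X_0^\gamma \to \R^N$ as the projection $(X_I^\circ)_{\log}\times\Delta_I \to \Delta_I$, whose fibers are exactly the angular tori collapsed by $g_0$; moreover $g_0$ is the pullback under $\Psi$ of the Euclidean pseudo-metric on $\R^N$. A stratum-by-stratum path-lifting argument --- lifting piecewise linear paths in $\Delta_\cS$ to $X_0^\gamma$ of the same length by moving along $\hat w_i$ within each piece and crossing shared faces --- then gives $d_{g_0}(x,y) = d_{\textnormal{eucl}}(\Psi(x),\Psi(y))$. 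For pieces $A^\circ_{I,\theta(0)}$ with $I\not\subseteq\cS$, the $\Psi$-image lies in a face of dimension at most $\#(I\cap\cS) - 1$; Proposition~\ref{prop:model}\ref{item:sk_P1_or_ell} (applicable in the setting of Theorem~\ref{theo:CY}) forces these into $\Delta_\cS^{\geq 2}$, which also handles the relative statement because $F_{\theta(0)}\setminus A^\sm_{\cS,\theta(0)}$ consists exactly of pieces $A^\circ_{I,\theta(0)}$ with either $\#I < n$ (hence $\Psi$-image of dimension $\leq n-2$) or $I\not\subseteq\cS$.

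For Gromov--Hausdorff convergence itself I would use the flow $\Psi_h^{q(h)}$ to identify each $X_h^\gamma$ diffeomorphically with $X_0^\gamma$; the pulled-back family of Riemannian tensors then converges uniformly to $g_0$ on the fixed compact manifold $X_0^\gamma$, and $C^0$-convergence of Riemannian tensors to a possibly degenerate limit on a compact manifold is a standard sufficient condition for GH convergence of the corresponding path metric spaces to the metric quotient of the limit (via piecewise-geodesic approximation producing explicit $\epsilon$-isometries). Composing with $\Psi$ and passing to the pair then completes the proof.

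The principal obstacle I anticipate is controlling the pulled-back metrics $(\Psi_h^{q(h)})^*(\tfrac{1}{\epsilon}g^\gamma_{h,\new})$ uniformly as $(h,q(h))\to(0,0)$ along $\gamma$: although each tensor is smooth on $A$ by Lemma~\ref{lem:limit-metrics}\ref{item:metrics-extend}, the flow itself depends non-trivially on both parameters, so one must argue that its generator --- the $\omega_{q(h)}^\epsilon$-symplectic lift of the radial vector field on $\C_{\log}$ --- is bounded in a fixed reference Riemannian metric on $A$, ideally exploiting the fiberwise non-degeneracy established in Proposition~\ref{prop:omega}. A secondary subtlety lies in the path-lifting step, where the ivy-like branching of the expanded skeleton and the Euclidean geometry of $\Delta_\cS\subseteq\R^N$ must be reconciled carefully with the presence of non-essential strata.
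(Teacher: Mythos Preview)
Your route differs from the paper's in one decisive respect: the paper never pulls back via the flow $\Psi_h^{q(h)}$. Instead it works with the map $\mu=(\hat w_i')_i\colon A\to\R^N$ (setting $\hat w_i'=\hat w_i$ for $i\in\cS$ and $0$ otherwise) \emph{directly on each fiber} $X_h^\gamma$, and bounds the distortion of $\mu_h:=\mu|_{X_h^\gamma}$ straight from the decomposition in Lemma~\ref{lem:g0}: the lower bound $(1-q(h))^{1/2}\,d_{\std}(\mu_h(x),\mu_h(y))\le d_h(x,y)$ is immediate, and the paper pairs it with an upper bound $d_h(x,y)\le d_{\std}(\mu_h(x),\mu_h(y))+D_h$ (where $D_h$ is the $\tfrac1\epsilon\tilde g_h$-diameter, tending to $0$) plus a short $\eta$-net argument to conclude via the $\eta$-isometry criterion \cite[7.3.27--28]{BBS_metric}. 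Because $\mu$ is globally defined on $A$ and does not depend on $(h,q)$, this completely sidesteps the obstacle you identified: there is no need for uniform control of the generator or the flow. What your approach buys is a cleaner conceptual picture of the limit as a metric quotient; what the paper's approach buys is avoiding the flow-control analysis entirely. Two smaller remarks: your invocation of Proposition~\ref{prop:model}\ref{item:sk_P1_or_ell} to handle strata with $I\not\subseteq\cS$ imports a Calabi--Yau-specific statement into a proposition formulated for arbitrary $\cS$ --- the paper instead enlarges the codomain to $\Delta_\cS\cup\mu(X_h^\gamma)$ and lets the $\eta$-net step absorb the discrepancy; and your path-lifting instinct is on point --- the paper's upper bound tacitly needs a path whose $\mu$-image is close to a straight segment, so the lifting you describe is morally present, just carried out fiberwise rather than transported from radius zero.
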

\begin{proof}
	For $i\in \{1,\dots, N\}$ put $\hat{w}_{i}'=\hat{w}_{i}$ if $i\in \cS$ and $\hat{w}_{i}'=0$ otherwise, and consider a smooth map $\mu=(\hat{w}_{1}',\dots,\hat{w}_{N}')\colon A\to \R^{N}$. It is easy to see that 
	$\mu(A_{I}^{\circ})=\Delta_{I}$ for every face $\Delta_{I}$ of $\Delta_{\cS}$,  cf.\ Lemma \ref{lem:product} or \cite[Lemma 3.10]{FdBP_Zariski}, so 
	$\mu(X_{0}^{\gamma})=\Delta_{\cS}$ and $\mu(X_{0}^{\gamma}\setminus (X_{0}^{\gamma})_{\cS}^{\sm})=\Delta_{\cS}^{\geq 2}$.
	
	For any $h\in [0,\delta)$, the restriction of $\mu$ to a fiber $X_{h}^{\gamma}$ gives maps of compact metric spaces  
	\begin{equation*}
		\mu_{h}\colon X_{h}^{\gamma}\to \Delta_{\cS}\cup \mu(X_{h}^{\gamma}),
		\quad\mbox{and}\quad 
		\mu_{h}^{\geq 2} \colon X_{h}^{\gamma}\setminus (X_{h}^{\gamma})_{\cS}^{\sm}\to \Delta_{\cS}^{\geq 2}\cup \mu(X_{h}^{\gamma}\setminus (X_{h}^{\gamma})_{\cS}^{\sm}).
	\end{equation*} 
	Let $d_{h}$, $d_{\std}$ be the distance functions associated to the metric $\frac{1}{\epsilon}\cdot g_{h,\new}^{\gamma}$ on $X_{h}^{\gamma}$ and to the euclidean metric on $\Delta_{\cS}$, respectively. Lemma \ref{lem:g0} implies that for any two points $x,y\in X_{h}^{\gamma}$ we have $(1-q(h))^{\frac{1}{2}}\cdot d_{\std}(\mu_{h}(x),\mu_{h}(y))\leq d_{h}(x,y)\leq d_{\std}(\mu_{h}(x),\mu_{h}(y))+D_{h}$, where $D_{h}$ is the diameter of $X_{h}^{\gamma}$ with respect to the metric $\frac{1}{\epsilon}\cdot\tilde{g}_{h}^{\gamma}$.  Lemma \ref{lem:g0} also implies that as $h\rightarrow 0$, the metric $\tilde{g}_{h}^{\gamma}$ converges to $0$ as a tensor, hence $D_{h}\rightarrow 0$ by compactness of $X_{h}$. Since $q(h)\rightarrow 0$ as $h\rightarrow 0$, too, we conclude that the distortion of $\mu_h$ and $\mu_{h}^{\geq 2}$, see \cite[Definition 7.1.4]{BBS_metric}, converges to $0$ as $h\rightarrow 0$. 
	
	Continuity of $\mu$ implies that for every $\eta>0$ there is $\delta_0>0$ such that for every $h\in [0,\delta_0]$, each point of $\Delta_{\cS}$ is at distance at most $\eta$ to $\mu(X_{h}^{\gamma})$. This means that $\mu(X_{h}^{\gamma})$ is an $\eta$-net for $\Delta_{\cS}\cup \mu(X_{h}^{\gamma})$, see \cite[Definition 1.6.1]{BBS_metric}. Similarly, $\mu(X_{h}^{\gamma}\setminus (X_{h}^{\gamma})_{\cS}^{\sm})$ is an $\eta$-net for $\Delta_{\cS}^{\geq 2}\cup \mu(X_{h}^{\gamma}\setminus (X_{h}^{\gamma})_{\cS}^{\sm})$. Shrinking $\delta_0>0$ if needed, we infer that $\mu_{h}$ and $\mu_h^{\geq 2}$ are $\eta$-isometries, see Definition 7.3.27 loc.\ cit. The required Gromov--Hausdorff  convergence follows from Corollary 7.3.28(2) loc.\ cit.
\end{proof}

\subsection{Summary}

Before we specialize to the Calabi--Yau case, we summarize the general results obtained so far.

\begin{theo}\label{theo:general}
	Let $(X,\omega_{X})$ be a K\"ahler manifold, let $f\colon X\to \C$ be a holomorphic function whose unique singular fiber $f^{-1}(0)$ is snc, and let $D_1,\dots, D_{N}$ be its irreducible components. Choose a subset $\cS\subseteq \{1,\dots, N\}$ so that $\Delta_{\cS}$ has a maximal or a submaximal face, i.e., there is an $I\subseteq \cS$ such that $\#I\geq \dim X-1$ and $\bigcap_{i\in I}D_{i}\neq \emptyset$. Moreover, choose an open subset $V\subseteq X$ such that $f|_{\bar{V}}$ is proper, let $W$ be its slight modification as in Setting \ref{setting:flow} (one can take $W=X$ if $f$ is proper), and let $W_{z}=f^{-1}(z)\cap W$. Then there is a $\delta>0$ such that for every $\epsilon>0$ small enough, the following hold.
	\begin{enumthm}
		\litem{K\"ahler potential} \label{item:thm_Kahler} For every $z\in \D_{\delta}^{*}$ and every $q\in [0,1]$, the form $\omega_{q}^{\epsilon}|_{W_{z}}$ defined in \eqref{eq:omega-s-intro} is K\"ahler. 
		\litem{Lagrangian fibration}\label{item:Lagrangians} For every $z\in \D^{*}_{\delta}$ and every $q\in (0,1]$, formula \eqref{eq:Lagranian-at-positive-radius} defines an ivy-like Lagrangian torus fibration of a codimension $0$ submanifold $(W_{z}^{q})_{\cS}^{\sm}\subseteq W_{z}$ with boundary and corners, with respect to 
		$\omega_{q}^{\epsilon}$. If condition \eqref{eq:Cst} holds then  \eqref{eq:Lagranian-at-positive-radius} is a Lagrangian torus fibration in the usual sense.
		\litem{Gromov--Hausdorff limit} \label{item:GH} Fix a path $(z,q)\colon [0,\delta) \to \C_{\log}\times [0,1]$ such that $|z(0)|=q(0)=0$ and $z(h),q(h)>0$ for $h>0$. For each $h>0$, let $W_{h}$ be the fiber $W_{z(h)}$ with the K\"ahler metric given by $\omega_{q(h)}^{\epsilon}$, rescaled so that its diameter is independent of $h$. Then we have Gromov--Hausdorff convergence
		\begin{equation*}
			\lim_{h\rightarrow 0} (W_{h},W_{h}\setminus W_{h}^{\sm})=(\Delta_{\cS},\Delta_{\cS}^{\geq 2}),
		\end{equation*}
		where $\Delta_{\cS}$ is the subcomplex of the dual complex of $f^{-1}(0)$ spanned by vertices indexed by $\cS$, equipped with some multiple of the euclidean metric; and $\Delta_{\cS}^{\geq 2}$ is the union of its faces of codimension at least $2$.
	\end{enumthm} 
\end{theo}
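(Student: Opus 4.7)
The statement is essentially a packaging of three results already proved in the body of the paper, so my plan is to assemble them in the right order and verify that the hypotheses match. First, I would choose the open subset $W_X\supseteq V_X$ via Setting~\ref{setting:flow}, which guarantees that the symplectic lift of the radial vector field on $\C_{\log}$ with respect to any form $\omega_q^\epsilon$ stays inside $W$, and set $W=\pi^{-1}(W_X)$. I would then apply Proposition~\ref{prop:omega} to pick $\delta>0$ and $\epsilon_0>0$ such that $\omega_q^\epsilon|_{W_z}$ is K\"ahler for all $z\in\D_\delta^*$ and $q\in[0,1]$; this gives assertion~\ref{item:thm_Kahler} immediately.

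For assertion~\ref{item:Lagrangians}, I would fix $z\in\D_\delta^*$ and use the flow $\Phi_\tau^q$ of the $\omega_q^\epsilon$-symplectic lift, defined in Notation~\ref{not:flow}, to transport the radius-zero fiber $F_\theta$ (with $\theta=z/|z|$, $\tau=\eta(-1/\log|z|)$) to $W_z$, thereby defining $(W_z^q)_\cS^{\sm}$. Since the flow is a symplectomorphism between fibers equipped with the respective restrictions of $\omega_q^\epsilon$, the map \eqref{eq:Lagranian-at-positive-radius} is an ivy-like Lagrangian torus fibration with respect to $\omega_q^\epsilon$ on $W_z$ if and only if the map $\ll_\cS^\theta\colon A_{\cS,\theta}^{\sm}\to E_\cS$ at radius zero is such. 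This is exactly the content of Proposition~\ref{prop:Lagranian-at-positive-radius} (which in turn rests on Proposition~\ref{prop:Lagranian-at-radius-zero}). The hypothesis that $\Delta_\cS$ has a maximal or submaximal face is what ensures $A_{\cS}^{\sm}$ is nonempty, so the statement is non-vacuous.

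For assertion~\ref{item:GH}, the given path $h\mapsto(z(h),q(h))$ can be reparametrized, after restricting to a subinterval $[0,\delta')$, as a normally parametrized admissible path $\gamma$ in the sense of Definition~\ref{def:admissible_path}. Under such a reparametrization the fibers $W_h$ become $X_h^\gamma\cap W$ equipped with the metrics $\frac{1}{\epsilon}g_{h,\new}^\gamma$ up to an overall rescaling depending only on the reparametrization (and the diameter normalization); in particular, dividing by the diameter of each $X_h^\gamma$ produces exactly the family considered in Proposition~\ref{prop:GH}. Applying that proposition then yields the Gromov--Hausdorff convergence $(W_h,W_h\setminus (W_h^q)_\cS^{\sm})\to(\Delta_\cS,\Delta_\cS^{\geq 2})$ with the essential skeleton equipped with a positive multiple of the Euclidean metric.

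The only step requiring any care is the reparametrization of the given path to an admissible one in the normal form $(h,\theta(h),q(h))$ used in Section~\ref{sec:paths}, and checking that the diameter-based rescaling in the statement is compatible with the factor $h$ appearing in $g_{h,\new}^\gamma=h\cdot\omega_h^\gamma(\cdot,J\cdot)$. This amounts to observing that multiplying the fiber metric by a positive constant does not affect the Gromov--Hausdorff limit (after fixing the diameter), so the two rescalings produce the same limit. Everything else is a direct citation, so I do not expect any substantive obstacle beyond this bookkeeping.
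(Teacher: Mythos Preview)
Your proposal is correct and follows exactly the same approach as the paper: parts \ref{item:thm_Kahler}, \ref{item:Lagrangians} and \ref{item:GH} are obtained by citing Propositions~\ref{prop:omega}\ref{item:Kahler}, \ref{prop:Lagranian-at-positive-radius} and \ref{prop:GH}, respectively. If anything, you are more explicit than the paper's one-line proof about the bookkeeping (Setting~\ref{setting:flow}, the reparametrization to a normally parametrized admissible path, and compatibility of the diameter normalization with the rescaling in $g^{\gamma}_{h,\new}$), none of which poses any difficulty.
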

\begin{proof}
	Parts \ref{item:thm_Kahler}, \ref{item:Lagrangians} and \ref{item:GH}  are proved in Propositions \ref{prop:omega}\ref{item:Kahler}, \ref{prop:Lagranian-at-positive-radius} and \ref{prop:GH}, respectively.
\end{proof}

\begin{proof}[Proof of Theorem \ref{theo:CY}]
	Let $f$ be a model constructed in Proposition \ref{prop:model}; and let $\Delta_{\cS}$ be the essential skeleton. Then Proposition \ref{prop:model}\ref{item:sk_P1} gives condition \eqref{eq:Cst}. The result follows from Theorem \ref{theo:general}. 
\end{proof}

\section{The Calabi--Yau case}\label{sec:CY}

From now on, we assume that $f^{\circ}$ is a maximal Calabi--Yau degeneration admitting a semi-stable snc model. Recall from Section \ref{sec:CY-intro} that the latter property can always be achieved after a finite base change, which preserves  maximality. We let $f\colon X\to \C$ be an snc model of $f^{\circ}$ as in Proposition \ref{prop:model}. We use the notation introduced in Section \ref{sec:CY-intro}. 

By Proposition \ref{prop:model}\ref{item:sk_nu-i} we have a holomorphic $(n+1)$-form $\Theta$ on $X$ whose divisor is $\sum_{i}(\nu_i-1)D_{i}$ with $\min_{i}\nu_{i}=0$. Thus the subcomplex $\Delta_{\cS}\subseteq \Delta_{D}$ for $\cS=\{i:\nu_{i}=0\}$ is the essential skeleton, see formula \eqref{eq:essential}.   As $f^{\circ}$ is maximal, \cite[Theorem 4.1.10]{NX_skeleton} gives $\dim \Delta_{\cS}=n$, so $\Delta_{\cS}$ has a maximal face.

We choose a section $\Omega$ of the vertical canonical bundle given by $\Omega\de \frac{\Theta}{d\log f}$, see formula \eqref{eq:Omega}. Its restriction to any smooth fiber is a holomorphic $n$-form, which we denote by $\Omega$, too. We denote by $\vol^{\Omega}$ the induced fiberwise volume form, defined in formula \eqref{eq:complex_MA}. We put
\begin{equation}\label{eq:Omega_new_def}
	\Omega_{\new}=t^{n}\cdot \Omega
	\quad \mbox{and}\quad \vol^{\Omega}_{\new} = t^{n}\cdot \vol^{\Omega}.
\end{equation}

\subsection{Almost all volume is fibered by Lagrangian tori}

In Proposition \ref{prop:Lagranian-at-positive-radius} we have constructed a Lagrangian torus fibration on the subset $(X_{z}^q)_{\cS}^{\sm}$ of the fiber $X_{z}$. Proposition \ref{prop:volume} below shows that as $z\rightarrow 0$, the volume of $(X_{z}^q)_{\cS}^{\sm}$ -- and even of its subset $(X_{z}^q)_{\cS}^{\gen}$ -- approaches the whole volume of the fiber.  The proof follows a well-known local computation, see e.g., \cite[\textsection 3]{BJ_measures} or \cite[\textsection 3.1]{Li_survey}, which we perform in the A'Campo space. The key point is that the suitably rescaled fiberwise volume forms \eqref{eq:Omega_new_def} extend to finite, nonzero ones at radius zero. 

\begin{lema}\label{lem:volume-extends}Let $\Delta_{I}$ be a maximal essential face. The following hold.
	\begin{enumerate}
		\item\label{item:vol_extends} The forms $\Omega_{\new} $ and $\vol_{\new}^{\Omega}$ extend to smooth sections of $\bigwedge^{n}_{\C}T^{*}_{\C,\textnormal{vert}}A$ and $\bigwedge^{2n} T^{*}_{\textnormal{vert}}A$, respectively.
		\item\label{item:vol_support} The restrictions $\Omega_{\new}|_{\d A}$ and $\vol_{\new}|_{\d A}$ are zero outside of $A_{\cS}^{\gen}$.
		\item\label{item:vol_formulas} Let $U_{X}$ be an adapted chart with associated index set $I$, let $U=\pi^{-1}(U_X)$, and let $w_j$, $\theta_j$ be the corresponding functions introduced in \eqref{eq:basic functions_r}. Then for some nonvanishing holomorphic function $c\in \cO_{X}^{*}(U_X)$, and every $i\in I$, we have a fiberwise equality on $U$
		\begin{equation}\label{eq:Omega_new}
				\Omega_{\new}= \pm c \bigwedge_{j\in I\setminus\{i\}} (dw_j-t\, \imath\, d\theta_j).
		\end{equation}
		\item\label{item:c_0} The number $c_{0}\de c(X_{I})$ does not depend on the maximal essential face $\Delta_{I}$, up to a sign.
		\item\label{item:vol_formulas_radius-zero} On $U\cap A_{I}^{\circ}$ we have fiberwise equalities
			\begin{equation}\label{eq:volume_on_maximal_face}
				\Omega_{\new}=\pm\, c_0 \cdot \bigwedge_{j\in I\setminus \{i\}} dw_{j}
				\quad\mbox{and}\quad
				\vol^{\Omega}_{\new}=(-1)^{n}\cdot  |c_0|^2 \cdot \bigwedge_{j\in I\setminus \{i\}} (dw_{j}\wedge d\theta_{j}).
			\end{equation}	
	\end{enumerate}
\end{lema}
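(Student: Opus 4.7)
The plan is to reduce everything to a direct computation in an adapted chart around a point of a maximal essential stratum $X_I$, and then globalize using the $\cC^{\infty}$-structure on $A$ from Proposition \ref{prop:AX_hat_smooth}. First, I would write $\Theta$ locally as $\Theta = u \prod_{j \in S} z_j^{\nu_j - 1}\, dz_1 \wedge \cdots \wedge dz_{n+1}$ for a holomorphic unit $u$, with $\nu_j = 0$ exactly for $j \in \cS$ by Proposition \ref{prop:model}\ref{item:sk_nu-i}, where $S$ is the associated index set. Solving $d\log f \wedge \Omega = \Theta$ with $d\log f = \sum_{j \in S} m_j\, dz_j/z_j$ gives, for any $i \in S$,
\begin{equation*}
	\Omega = \pm \tfrac{u}{m_i}\, z_i^{\nu_i} \prod_{j \in S \setminus \{i\}} z_j^{\nu_j - 1} \cdot dz_1 \wedge \cdots \wedge \widehat{dz_i} \wedge \cdots \wedge dz_{n+1},
\end{equation*}
and different choices of $i$ differ by multiples of $df$, hence yield the same fiberwise form.

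To prove \ref{item:vol_formulas}, I would then translate to the $(w_j,\theta_j)$-coordinates. From $dz_j/z_j = ds_j + \imath\, d\theta_j$ together with the fiberwise identities $ds_j = -(m_j t)^{-1} dw_j$, each factor $dz_j/z_j$ becomes a $(m_j t)^{-1}$-multiple of a $(dw_j, \imath\, d\theta_j)$-combination. Multiplying by $t^n$ absorbs the denominator $\prod (m_j t)^{-1}$ and yields the shape of \eqref{eq:Omega_new}, with a holomorphic unit in front; specializing to $I \subseteq \cS$ allows using $m_j = 1$ and $\nu_j = 0$ by Proposition \ref{prop:model}\ref{item:sk_reduced}, cleaning the coefficient up to the claimed $c$. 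Combining this with the smoothness of $\hat{w}_j$ and $\hat{\alpha}_j$ from Proposition \ref{prop:AX_hat_smooth} and Lemma \ref{lem:comparison} controlling $w_j - \hat{w}_j$, one obtains the smooth extension claimed in \ref{item:vol_extends}, both for $\Omega_{\new}$ and for $\vol^{\Omega}_{\new} = (\imath/2)^n (-1)^{n(n-1)/2}\,\Omega_{\new}\wedge\bar{\Omega}_{\new}$.

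For \ref{item:vol_support} I would observe that if $\Delta_I$ is either nonmaximal or nonessential, the local expression carries an extra vanishing factor at $\d A \cap A_I^{\circ}$: nonessentiality lets one choose $i \in I$ with $\nu_i \geq 1$, contributing $z_i^{\nu_i} \to 0$; while nonmaximality leaves an extra power of $t$ after cancellation, since only $\#I - 1 \leq n - 1$ factors of $1/t$ arise whereas we multiply by $t^n$. For \ref{item:c_0} I would use that $\Delta_{\cS}$ is a closed pseudomanifold by \cite[Theorem 4.2.4(3)]{NX_skeleton}, hence connected through submaximal faces: comparing the local formulas at two maximal essential faces $\Delta_I$, $\Delta_{I'}$ sharing a submaximal face in an adapted chart covering both, and using that the unit $u$ comes from the same global $\Theta$, shows $|c(X_I)| = |c(X_{I'})|$. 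Part \ref{item:vol_formulas_radius-zero} then follows by setting $t = 0$ in \eqref{eq:Omega_new}: the $t^2\imath\, d\theta_j$ terms drop out, giving the first identity of \eqref{eq:volume_on_maximal_face}, and the second is a routine wedge computation of $\Omega_{\new} \wedge \bar{\Omega}_{\new}$.

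The main obstacle will be the smooth extension across $\d A$ in \ref{item:vol_extends}: the A'Campo smooth structure does not directly control the forms $dz_j/z_j$, and one must track carefully which combinations survive in the new coordinate system, leaning on Lemma \ref{lem:tsigma-smooth} and the algebraic framework of \cite[\sec 3.4]{FdBP_Zariski} used to upgrade the $\cC^1$-structure to $\cC^{\infty}$.
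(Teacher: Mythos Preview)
Your overall strategy for parts \ref{item:vol_extends}, \ref{item:vol_support}, \ref{item:vol_formulas}, and \ref{item:vol_formulas_radius-zero} matches the paper's proof closely: write $\Theta$ in an adapted chart, pass to $\Omega$ via $d\log f$, convert $d\log z_j$ to $(w_j,\theta_j)$-coordinates using the fiberwise identity $d\log z_j=-\tfrac{1}{m_jt}(dw_j-m_jt\cdot\imath\,d\theta_j)$, and read off extension, support, and the explicit formulas. One simplification: you overestimate the difficulty of the smooth extension in \ref{item:vol_extends}. The paper does not need Lemma~\ref{lem:tsigma-smooth}, the comparison with $\hat{w}_j$, or the algebraic machinery from \cite[\sec 3.4]{FdBP_Zariski}. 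Once the local formula~\eqref{eq:tOmega_fiberwise} is written down, smoothness is immediate because $w_j$ and $\theta_j$ are themselves smooth functions on $A$ (this is established in Proposition~\ref{prop:AX_hat_smooth}), and $\prod z_j^{\nu_j}$, $c$, $\zeta$ are pullbacks of smooth objects on $U_X$.

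There is, however, a genuine gap in your argument for part~\ref{item:c_0}. You propose to compare local expressions at two adjacent maximal essential faces $\Delta_I$, $\Delta_{I'}$ ``in an adapted chart covering both'' and conclude $|c(X_I)|=|c(X_{I'})|$. Two problems. First, no single adapted chart contains both deepest points $X_I$ and $X_{I'}$: these are distinct points of the curve $X_{I\cap I'}$, and an adapted chart around one need not contain the other. Second, and more seriously, equality of absolute values is strictly weaker than the claim: two complex numbers with the same modulus can differ by any phase, not just $\pm 1$. The paper closes this gap by a residue argument on the submaximal stratum $X_{I\cap I'}\cong\P^1$ (Proposition~\ref{prop:model}\ref{item:sk_P1}). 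Taking the iterated Poincar\'e residue of $\Theta$ along $X_{I\cap I'}$ yields a meromorphic $1$-form on $\P^1$ with simple poles exactly at $X_I$ and $X_{I'}$, whose residues are $\pm c(X_I)$ and $\pm c(X_{I'})$. The residue theorem forces these to sum to zero, hence $c(X_I)=\pm c(X_{I'})$. Connectedness of $\Delta_\cS$ through submaximal faces (pseudomanifold property) then propagates this to all maximal essential faces.
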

\begin{proof}
	First, we prove the assertions about $\Omega_{\new}$. Choose an adapted chart $U_{X}$ with associated index set $J$, say $J=\{1,\dots, k\}$, fix an index $i\in J$, say $i=1$; and let $U=\pi^{-1}(U_X)$. Recall that $\Omega=\frac{\Theta}{d\log f}$, where $\Theta$ is a holomorphic $(n+1)$-form with zero of order $\nu_j-1$ along each $D_j$. There is a nonvanishing holomorphic function $c\in \cO_{X}^{*}(U_X)$ such that 
	\begin{equation}\label{eq:Theta_locally}
	\Theta|_{U_X}=c\cdot \prod_{j=1}^{k}z_{j}^{\nu_j-1}\, dz_1\wedge\dots \wedge dz_k\wedge \zeta=c\cdot \prod_{j=1}^{k}z_{j}^{\nu_{j}}\, d\log z_1\wedge d\log z_2\wedge \dots \wedge d\log z_{k} \wedge \zeta,
	\end{equation}
	where $\zeta=dz_{i_{k+1}}\wedge\dots\wedge dz_{i_{n+1}}$ comprises the remaining coordinates of $U_X$. Since $\frac{\d \log f}{\d \log z_{1}}=m_{1}$, we have 
	\begin{equation*}
		\Omega=\frac{\Theta}{d\log f}=
		\frac{c}{m_1}\prod_{j=1}^{k}z_{j}^{\nu_j}\, d\log z_2\wedge \dots\wedge d\log z_k\wedge \zeta.
	\end{equation*}
	Formulas \eqref{eq:basic functions_r} give $\log z_j=s_j+\imath\, \theta_j=-\frac{1}{tm_j} w_j+\imath\, \theta_j$, so we have a fiberwise equality 
	\begin{equation*}
		d\log z_j=-\frac{1}{tm_j}\, dw_j+\imath\, d\theta_j=-\frac{1}{tm_j}\left (dw_j-tm_{j}\cdot \imath\, d\theta_{j}\right).
	\end{equation*} Substituting it to the above expression for $\Omega$, we get a fiberwise equality
	\begin{equation}\label{eq:tOmega_fiberwise}
		\Omega=t^{1-k}\cdot \frac{c\cdot (-1)^{k-1}}{m_1\cdot\ldots\cdot m_{k}}\prod_{j=1}^{k} z_j^{\nu_j}\cdot  (dw_2-tm_2\cdot \imath\, d\theta_2)\wedge \dots\wedge (dw_k-tm_k\cdot \imath\, d\theta_k) \wedge \zeta.
	\end{equation}
	By Proposition \ref{prop:model}\ref{item:sk_nu-i}, we have $\nu_j\geq 0$ for all $j$, with equality if and only if $j\in \cS$. Hence the function $\prod_{j=1}^{k} z_j^{\nu_j}$ is smooth on $U_X$, and equals $0$ on $U_X\cap X_{J}$ unless $J\subseteq \cS$. The function $t$ is smooth on $U$ by Proposition \ref{prop:AX_hat_smooth}\ref{item:AX-gsmooth}, hence so is $t^{n+1-k}$ for $k\leq n+1$. The function $c$, the form $\zeta$, and each $z_{j}$ are smooth on $U_X$ by definition, hence they are smooth on $U$ by Proposition \ref{prop:AX_hat_smooth}\ref{item:AX-pismooth}. The functions $w_{j}$ are smooth, too: indeed by Lemma \ref{lem:comparison}\ref{item:w-comparison} we have $w_{j}=\hat{w}_j-at$ for $a\in \cC^{\infty}(U_X)$, and the functions $\hat{w}_j,a$ and $t$ are smooth on $U$ by Proposition \ref{prop:AX_hat_smooth}. Eventually, the functions $\theta_{j}$ are smooth coordinates  on $U$, see formula \eqref{eq:AC-chart_hat}. Hence formula \eqref{eq:tOmega_fiberwise} shows that $t^{n}\Omega=\Omega_{\new}$ extends to a smooth section of $\bigwedge^{n}_{\C}T^{*}_{\C,\textnormal{vert}}A$, and $\Omega_{\new}|_{U\cap A_{J}^{\circ}}=0$ unless $k=n+1$ and $J\subseteq \cS$, i.e., unless $\Delta_{J}$ is a maximal, essential face. This proves the first parts of \ref{item:vol_extends},\ref{item:vol_support}. Moreover, if  $\Delta_{J}$ is a maximal, essential face, i.e., $J=\{1,\dots, n+1\}\subseteq \cS$ then $(m_{i},\nu_i)=(1,0)$ for all $i\in J$ by Proposition \ref{prop:model}\ref{item:sk_reduced},\ref{item:sk_nu-i}. Substituting this to formula \eqref{eq:tOmega_fiberwise} we get part \ref{item:vol_formulas}. Furthermore, substituting $t=0$ gives the following fiberwise equality on $U\cap \d A$
	\begin{equation}\label{eq:Omega-new-local}
	\Omega_{\new}= c\cdot (-1)^{n} \cdot dw_2\wedge \dots \wedge dw_{n+1}. 
	\end{equation}
	Note that on $U\cap \d A \setminus \pi^{-1}(D_j)$ we have $dw_j=0$, so on $U\cap \d A\setminus A_{J}^{\circ}$ both sides of the equality \eqref{eq:Omega-new-local} are zero. On $A_{J}^{\circ}$, the function $c$ in \eqref{eq:Omega-new-local} is constant, equal to the value of $c\in \cO_{X}^{*}(U_X)$ at the point $X_J$. This proves the first equality in \eqref{eq:volume_on_maximal_face}.
	\smallskip
	
	Now, we prove the assertions about $\vol^{\Omega}_{\new}$. Using formula \eqref{eq:tOmega_fiberwise} we get a fiberwise equality 
	\begin{equation*}\begin{split}
		 & \Omega\wedge \bar{\Omega}= t^{2-2k} \cdot \frac{|c|^2 }{m_1^2\cdot\ldots\cdot m_{k}^2}\prod_{j=1}^{k} |z_j|^{2\nu_j} \cdot  \bigwedge_{j=2}^{k} (dw_j-tm_j\, \imath\,  d\theta_j) \wedge \zeta \wedge \bigwedge_{j=2}^{k} (dw_j+tm_j\, \imath\, d\theta_j) \wedge \bar{\zeta}
		 = \\
		  =\ & t^{1-k} \cdot \frac{|c|^2 \cdot (-1)^{(k-1)(n+1-k)}\cdot (2 \imath)^{k-1}(-1)^{\frac{1}{2}(k-1)(k-2)}}{m_1^2\cdot\ldots\cdot m_{k}^2} \cdot \prod_{j=1}^{k} |z_j|^{2\nu _j}  
		\cdot \bigwedge_{j=2}^{k} (dw_j\wedge m_j\, d\theta_j) \wedge \zeta\wedge \bar{\zeta},
	\end{split}\end{equation*}
	where the sign $(-1)^{(k-1)(n+1-k)}$ comes from moving the $(n+1-k)$-form $\zeta$ to the right; and the constant $(2\imath)^{k-1}(-1)^{\frac{1}{2}(k-1)(k-2)}$ comes from multiplying out the $1$-forms. Like before, we conclude that $t^{n}\Omega\wedge \bar{\Omega}$ extends to a smooth section of $\bigwedge^{2n}T^{*}_{\textnormal{vert}} A$, and vanishes on $U\cap \d A$ unless $\Delta_{J}$ is a maximal face of $\Delta_{\cS}$. In the latter case we have $(m_j,\nu_j)=(1,0)$ for all $j\in J$, so the above expression of $\Omega\wedge \bar{\Omega}$ yields
	\begin{equation*}
		t^{n}\Omega \wedge \bar{\Omega} = |c|^2 \cdot  (2\imath)^{n}(-1)^{\frac{1}{2}n(n-1)} \cdot (dw_2\wedge d\theta_2)\wedge\dots \wedge (dw_{n+1}\wedge d\theta_{n+1}).
	\end{equation*}
	which ends the proof of \ref{item:vol_extends}, \ref{item:vol_support} and \ref{item:vol_formulas_radius-zero}.  The remaining part \ref{item:c_0} is shown in the proof of \cite[Theorem 7.1]{BJ_measures}, let us recall the argument here for completeness. 
	
	Fix two maximal faces $\Delta_{I}$ and $\Delta_{J}$ of $\Delta_{\cS}$. We claim that $c(X_{I})=\pm c(X_J)$. Since the essential skeleton is a pseudomanifold \cite[Theorem 4.2.4(3)]{NX_skeleton}, we can assume that $\Delta_{I}$ and $\Delta_{J}$ meet along a common submaximal face $\Delta_{I'}$, see part (3) of the definition of pseudomanifold given in \textsection 4.1.2 loc.\ cit. 
	
	Say that $I'=\{2,\dots, n+1\}$, $I=I'\sqcup\{1\}$, $J=I'\sqcup\{n+2\}$. By Proposition \ref{prop:model}\ref{item:sk_P1}, the stratum $X_{I'}$ is isomorphic to $\mathbb{P}^1$, and $X_{I}$, $X_{J}$ are two points on $X_{I'}$. By Proposition \ref{prop:model}\ref{item:sk_nu-i} we have $\nu_{i}=0$ for $i\in \{1,\dots,n+2\}$, so in an adapted chart at a point $p\in X_{I'}$, the local formula \eqref{eq:Theta_locally} defining $c$ reads as 
	\begin{equation*}
		\Theta=c\cdot d\log z_2\wedge \dots \wedge d\log z_{n+1} \wedge \zeta,
	\end{equation*}
	where up to a sign we have $\zeta=d \log z_1$ if $\{p\}=X_{I}$, $\zeta=d\log z_{n+2}$ if $\{p\}=X_{J}$, and otherwise $\zeta=d z$ for a holomorphic coordinate $z$ on $X_{I'}$. It follows that the logarithmic form $c\cdot \zeta$ is the residue of $\Theta$ along $X_{I'}$; with poles at $X_{I}$ and $X_{J}$ whose residues are equal, respectively, $\pm c(X_{I})$ and $\pm c(X_{J})$. By the Poincar\'e residue theorem, those residues add up to $0$, so $c(X_{I})=\pm c(X_J)$, as claimed.
\end{proof}

We can now prove the title result of this section. Recall that in Proposition \ref{prop:Lagranian-at-positive-radius}, we have constructed a Lagrangian torus fibration of a region $(X_{z}^{q})_{\cS}^{\sm}\subseteq X_{z}$, which maps the generic region $(X_{z}^{q})_{\cS}^{\gen}$ to the disjoint union of maximal faces of $\Delta_{\cS}$.  The following result shows that this region is large.

\begin{prop}\label{prop:volume}
	Fix $q\in (0,1]$, and let $(X_{z}^{q})_{\cS}^{\gen}$ be as in Section \ref{sec:moving_Lagrangians}. Then we have 
	\begin{equation*}
		\lim_{|z|\rightarrow 0} \frac{\int_{(X_{z}^{q})_{\cS}^{\gen}} \vol^{\Omega}}{\int_{X_{z}} \vol^{\Omega}}=1.
	\end{equation*}
\end{prop}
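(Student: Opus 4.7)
The plan is to express both integrals as continuous functions on $\D_{\delta,\log}$ by using the symplectic flow to transport everything to the radius-zero fiber, and then invoke the vanishing part of Lemma~\ref{lem:volume-extends}. Since $t^n$ is fiberwise constant, the ratio is unchanged by replacing $\vol^\Omega$ with $\vol^\Omega_\new = t^n \cdot \vol^\Omega$. For $z\in\D_\delta^*$, writing $\theta=z/|z|$ and $\tau=\eta(-1/\log|z|)$ as in Section~\ref{sec:moving_Lagrangians}, the symplectic flow $\Phi_\tau^q\colon F_\theta\to X_z$ maps $A_{\cS,\theta}^\gen$ onto $(X_z^q)_\cS^\gen$, and change of variables gives
\begin{equation*}
    \int_{X_z}\vol^\Omega_\new=\int_{F_\theta}(\Phi_\tau^q)^*\vol^\Omega_\new,
    \qquad
    \int_{(X_z^q)_\cS^\gen}\vol^\Omega_\new=\int_{A_{\cS,\theta}^\gen}(\Phi_\tau^q)^*\vol^\Omega_\new.
\end{equation*}

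By Lemma~\ref{lem:volume-extends}\ref{item:vol_extends}, the form $\vol^\Omega_\new$ extends smoothly to a fiberwise $2n$-form on all of $A$; by Proposition~\ref{prop:omega}\ref{item:symplectic}, for every $q\in(0,1]$ the form $\omega_q^\epsilon$ is fiberwise symplectic including on $\d A$, so the symplectic lift of the radial vector field extends smoothly to a vector field on $W\cap A$ whose flow $\Phi_\bullet^q$ satisfies $\Phi_0^q=\id_{\d A}$ and depends smoothly on the time parameter $\tau\in[0,\delta)$. Consequently, the pullback $(\Phi_\tau^q)^*\vol^\Omega_\new$ defines a smooth family of fiberwise $2n$-forms on $W\cap\d A$ parametrized by $(\tau,\theta)\in[0,\delta)\times\S^1$. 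Fiberwise integration over the proper submersion $(g,\theta)\colon W\to[0,1)\times\S^1$ then yields continuous extensions of the two maps $z\mapsto\int_{X_z}\vol^\Omega_\new$ and $z\mapsto\int_{(X_z^q)_\cS^\gen}\vol^\Omega_\new$ to $\D_{\delta,\log}$, with boundary values at $(0,\theta)$ equal to $\int_{F_\theta}\vol^\Omega_\new|_{F_\theta}$ and $\int_{A_{\cS,\theta}^\gen}\vol^\Omega_\new|_{F_\theta}$ respectively.

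The proof concludes by Lemma~\ref{lem:volume-extends}\ref{item:vol_support}: the smooth form $\vol^\Omega_\new|_{F_\theta}$ vanishes identically on the complement $F_\theta\setminus A_{\cS,\theta}^\gen$, so the two boundary integrals above coincide; moreover, by the explicit formula of Lemma~\ref{lem:volume-extends}\ref{item:vol_formulas_radius-zero} their common value is a strictly positive multiple of $|c_0|^2$ (independent of $\theta$, since each piece $A_{I,\theta}^\circ$ contributes $|c_0|^2(2\pi)^n$ via $m_i=1$ for essential $i$). Hence the denominator is bounded away from zero in a neighborhood of $\d\D_{\delta,\log}$, the ratio extends continuously to $\D_{\delta,\log}$ with constant boundary value $1$, and the proposition follows by uniform continuity on the closed disc. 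The argument is essentially a continuity-and-vanishing calculation; the one potentially delicate point, namely simultaneously extending both the rescaled volume form and the symplectic flow smoothly to radius zero, has already been packaged into Lemma~\ref{lem:volume-extends} and Proposition~\ref{prop:omega}, so no genuinely new analytic obstacle arises here.
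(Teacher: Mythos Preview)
Your proof is correct and follows essentially the same approach as the paper's: rescale by $t^n$, use the smooth extension of $\vol^{\Omega}_{\new}$ to $\d A$ from Lemma~\ref{lem:volume-extends}, and invoke the vanishing outside $A_{\cS}^{\gen}$ to conclude that both integrals have the same (positive) limit. You are somewhat more explicit than the paper in two respects: you spell out the change of variables via the flow $\Phi_{\tau}^{q}$ (the paper just says the sets are images under the flow and passes directly to the limit), and you address positivity of the limiting denominator, which the paper leaves implicit. One small inaccuracy: the contribution of each piece $A_{I,\theta}^{\circ}$ is not exactly $|c_0|^2(2\pi)^n$ but rather $|c_0|^2(2\pi)^n$ times the Euclidean volume of the simplex $\Delta_I$; this does not affect your argument since only strict positivity is needed.
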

\begin{proof}
	Put $\theta=\frac{z}{|z|}$, $F_{\theta}=f_{A}^{-1}(0,\theta)$. 
	The sets $(X_{z}^{q})_{\cS}^{\gen}$ and $X_{z}$ are images of $A^{\gen}_{\cS,\theta}$ and $F_{\theta}$ by the flow \eqref{eq:flow}. By Lemma \ref{lem:volume-extends} the form $t^{n}\vol^{\Omega}$ extends to a fiberwise form $\vol_{\new}^{\Omega}$ on $A$, whose restriction to $F_{\theta}$ is zero outside of $A_{\cS,\theta}^{\gen}$. Therefore, we have
	\begin{equation*}
		\lim_{|z|\rightarrow 0} \int_{(X_{z}^{q})_{\cS}^{\gen}} t^{n} \vol^{\Omega}=
		\int_{A_{\cS,\theta}^{\gen}} \vol_{\new}^{\Omega} =
		\int_{F_{\theta}} \vol_{\new}^{\Omega}=\lim_{|z|\rightarrow 0} \int_{X_{z}} t^{n} \vol^{\Omega},
	\end{equation*}
	which ends the proof. 
\end{proof}

\subsection{Asymptotic Ricci flatness}\label{sec:weak_MA}
 
Recall that on each smooth fiber $X_{z}$ of $f^{\circ}$, the cohomology class of $\omega_{X}$ is represented by a unique Ricci-flat form $\omega_{\CY}$, and this condition can be written equivalently as 
\begin{equation*}
	\tfrac{1}{n!}\omega_{\CY}^{n}=c\cdot \vol^{\Omega}_{\new},
\end{equation*}
for some positive function $c$ on the base $\D_{\delta}^{*}$, see formula \eqref{eq:complex_MA}. The volume form on the right-hand side is rescaled so that the total volume on both sides equals $\int_{X_{z}}\frac{1}{n!}\omega_{X}^{n}$. To see that the appropriate rescaling is indeed of the same order as $\vol_{\new}^{\Omega}$, note that it should integrate to a nonzero number on all fibers in the A'Campo space, including the radius-zero ones. This is true for $\vol_{\new}^{\Omega}$ by Lemma \ref{lem:volume-extends}\ref{item:vol_formulas_radius-zero}.

Alternatively, one can use \cite[Definition 1]{KS_Conjecture}: indeed, in the notation of loc.\ cit.\ the rescaled volume form should be of order $((\log|q|)^{m} |q|^{2k})^{-1}\, \vol^{\Omega}$, where $q=f$ (so $-\log|q|=t^{-1}$), $m=n$ since $f^{\circ}$ is maximal, and $k=\min_{i}\nu_{i}$, which equals $0$ by Proposition \ref{prop:model}\ref{item:sk_nu-i}. 
\smallskip

Proposition \ref{prop:MA_weak} below asserts that, roughly speaking, our forms $\omega^{\gamma}$ give some approximation of $\omega_{\CY}$ in the generic region $(X_{z}^{q})^{\gen}_{\cS}$. Of course, due to some choices made in the definition of $\omega^{\gamma}$, we cannot expect an exact equality $\omega^{\gamma}=\omega_{\CY}$ at positive radius (nor off the generic region, see Remark \ref{rem:MA_fails}).

 \begin{prop}\label{prop:MA_weak}
	Let $\gamma$ be an admissible path, see Definition \ref{def:admissible_path}. There is a continuous function $c_{+}\colon A^{\gamma}_{+}\cup (X_{0}^{\gamma})_{\cS}^{\gen}\to \R_{>0}$ such that on each fiber $X_{h}^{\gamma}$ and on $(X_{0}^{\gamma})^{\gen}_{\cS}$ we have an equality
	\begin{equation}\label{eq:MA_weak}
		\tfrac{1}{n!} (\omega_{h}^{\gamma})^{n}=c_{+}\cdot \vol^{\Omega}_{\new}
	\end{equation}
	and the restriction of $c_{+}$ to $(X_{0}^{\gamma})_{\cS}^{\gen}$ is a positive constant. This constant equals $\epsilon^n\cdot (n+1)\cdot |c_0|^{-2}$, where $\epsilon>0$ is a number fixed in Notation \ref{not:flow}, and  $c_0\in \C^{*}$ is the number in Lemma \ref{lem:volume-extends}\ref{item:c_0}.
\end{prop}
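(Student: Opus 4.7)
The plan is to define $c_+$ on $A^\gamma_+$ as the ratio $\tfrac{1}{n!}(\omega_h^\gamma)^n/\vol_{\new}^\Omega$. At positive radius the numerator is a fiberwise volume form by Proposition~\ref{prop:omega}\ref{item:Kahler}, while $\vol_{\new}^\Omega=t^n\vol^\Omega$ is nowhere vanishing for $t>0$, so $c_+$ is a smooth positive function on $A^\gamma_+$ that satisfies \eqref{eq:MA_weak}. The extension of $c_+$ to $(X_0^\gamma)_{\cS}^{\gen}$ will rest on three facts: the form $\omega_q^\epsilon$ is globally smooth on $A$ (Proposition~\ref{prop:AX_hat_smooth}); $q(h)$ depends continuously on $h$ along the admissible path $\gamma$; and $\vol_{\new}^\Omega$ extends to a smooth section of $\bigwedge^{2n}T^*_{\vert}A$ which is nowhere vanishing on $A_{\cS}^{\gen}$ by Lemma~\ref{lem:volume-extends}\ref{item:vol_formulas_radius-zero}. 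It will then suffice to compute both sides pointwise on the interior of $A_I^\circ\cap F_{\theta(0)}$ for each maximal essential face $\Delta_I$.

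For this local computation I will work in an adapted chart around $X_I$ chosen so that $\hat{r}_k=r_k$ for $k\in I$, as provided by Lemma~\ref{lem:rihat}\ref{item:r_maximal}; then $\hat{w}_k=w_k$ and Lemma~\ref{lem:volume-extends}\ref{item:vol_formulas_radius-zero} gives
\[
\vol_{\new}^\Omega\big|_{A_I^\circ\cap F_{\theta(0)}}=(-1)^n|c_0|^2\bigwedge_{k\in I\setminus\{1\}}(d\hat{w}_k\wedge d\theta_k).
\]
For the numerator, admissibility of $\gamma$ gives $q(0)=0$, while $X_I$ is a single point so $\pi|_{A_I^\circ}$ is constant and $\pi^*\omega_X|_{A_I^\circ}$ vanishes on $TA_I^\circ$; hence $\omega_0^\gamma|_{A_I^\circ}=\epsilon\,\omega^\flat|_{A_I^\circ}$ fiberwise. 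Starting from the definition \eqref{eq:omega_flat} of $\omega^\flat$, three simultaneous simplifications yield the clean fiberwise formula $\omega^\flat|_{A_I^\circ\cap F_{\theta(0)}}=-\sum_{k\in I} d\hat{w}_k\wedge d\theta_k$: (i) $\hat{w}_k\equiv 0$ on $A_I^\circ$ for $k\notin I$, hence $d\hat{w}_k|_{TA_I^\circ}=0$; (ii) by Lemma~\ref{lem:comparison}\ref{item:alpha-comparison} the difference $\hat{\alpha}_k-d\theta_k$ is a pullback from $X$, so it vanishes fiberwise on $A_I^\circ$ where $\pi$ is constant; and (iii) $m_k=1$ for $k\in I\subseteq\cS$ by Proposition~\ref{prop:model}\ref{item:sk_reduced}.

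Imposing the fiber constraints $\sum_{k\in I}\hat{w}_k=1$ and $\sum_{k\in I}\theta_k=\theta(0)$ to eliminate $\hat{w}_1,\theta_1$, the form becomes $-\sum_{i,j>1}M_{ij}\,d\hat{w}_i\wedge d\theta_j$ with $M=I_n+\mathbf{1}\mathbf{1}^{T}$, whence $\det M=n+1$. The standard identity $\tfrac{1}{n!}(\sum A_{ij}\,a_i\wedge b_j)^n=\det(A)\bigwedge_i(a_i\wedge b_i)$ then gives
\[
\tfrac{1}{n!}\bigl(\omega_0^\gamma|_{\text{fiber}}\bigr)^n=\epsilon^n(-1)^n(n+1)\bigwedge_{k\in I\setminus\{1\}}(d\hat{w}_k\wedge d\theta_k),
\]
so the ratio against $\vol_{\new}^\Omega$ at such a boundary point equals $\epsilon^n(n+1)/|c_0|^2$, a value independent of the chosen face by Lemma~\ref{lem:volume-extends}\ref{item:c_0}. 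The main obstacle is the careful fiberwise bookkeeping in the second paragraph, where one must identify precisely which summands of $\omega^\flat$ and $\pi^*\omega_X$ survive restriction to the radius-zero fiber on a maximal essential stratum; the determinant computation that follows is then routine.
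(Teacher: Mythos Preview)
Your proof is correct and follows essentially the same approach as the paper. The only difference is in the final combinatorial step: the paper expands $\tfrac{1}{n!}\bigl(-\epsilon\sum_{i\in I} dw_i\wedge d\theta_i\bigr)^n$ directly as $(-\epsilon)^n\sum_{i\in I}\bigwedge_{j\in I\setminus\{i\}}(dw_j\wedge d\theta_j)$ and then uses the fiber constraints to observe that all $n+1$ summands coincide, whereas you first impose the constraints to reduce to an $n\times n$ coefficient matrix and compute $\det(I_n+\mathbf{1}\mathbf{1}^{T})=n+1$; these are equivalent computations.
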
	
\begin{proof}
	Recall that by definition, the form $\omega_{q}^{\epsilon}$ for any $q\in [0,1]$ is smooth on $A$. By Proposition~\ref{prop:omega} it is fiberwise nondegenerate at the positive radius, and at the generic region at radius zero, i.e., on $(A\setminus \d A)\cup A_{\cS}^{\gen}$. In turn, by Lemma \ref{lem:volume-extends}\ref{item:vol_extends} the form $\vol^{\Omega}_{\new}$ extends to a fiberwise smooth form on $A$, which is a fiberwise volume form at positive radius, and, by Lemma \ref{lem:volume-extends}\ref{item:vol_formulas_radius-zero}, at the generic region at radius zero. Comparing the two fiberwise volume forms, we get a smooth family of functions $c_{+}^{q}\colon (A\setminus \d A)\cup A_{\cS}^{\gen} \to \R_{>0}$ satisfying fiberwise equality $\frac{1}{n!}(\omega_{q}^{\epsilon})^{n}=c_{+}^{q}\cdot \vol^{\Omega}_{\new}$. Restricting it to the closed subset $A^{\gamma}_{+}\cup (X_{0}^{\gamma})_{\cS}^{\gen}$ we get a continuous function $c_{+}$ satisfying the fiberwise equality \eqref{eq:MA_weak}. 
	
	It remains to prove that on $(X_{0}^{\gamma})_{\cS}^{\gen}$ we have
	\begin{equation*}
		\tfrac{1}{n!} (\omega_{0}^{\epsilon})^{n}=(\epsilon^n\cdot (n+1)\cdot |c_0|^{-2})\cdot \vol^{\Omega}_{\new}.
	\end{equation*} 
	Recall that $(X_{0}^{\gamma})_{\cS}^{\gen}$ is the union  of pieces $\Int_{\d A} A_{I}^{\circ}\cap X_{0}^{\gamma}$ for all maximal faces $\Delta_{I}$ of the essential skeleton $\Delta_{\cS}$. Thus it is enough to show that the above equality holds fiberwise on each $A_{I}^{\circ}$.
	
	Fix a maximal, essential face $\Delta_{I}$. By Lemma \ref{lem:rihat}\ref{item:r_maximal}, on $A_{I}^{\circ}$ we have $\hat{w}_i=w_i$, $\hat{\alpha}_i=d\theta_i$ for $i\in I$ and $\hat{w}_i=0$, $\hat{\alpha}_i=0$ for $i\neq I$. Thus $\omega^{\flat}=-\sum_{i\in I}dw_{i}\wedge d\theta_{i}$ by formula \eqref{eq:omega_flat}. Since $\pi(A_{I}^{\circ})=X_{I}^{\circ}$ is a point, definition \eqref{eq:omega-s} of $\omega_{q}^{\epsilon}$ shows that on $A_{I}^{\circ}$ we have $\omega_{0}^{\epsilon}=\epsilon \cdot \omega^{\flat}$, and therefore
	\begin{equation*}
		\tfrac{1}{n!}(\omega_{0}^{\epsilon})^{n}=(-\epsilon)^{n} \cdot \sum_{i\in I} \bigwedge_{j\in I\setminus \{i\}} (dw_{j}\wedge d\theta_{j})
		\overset{\mbox{\tiny{\eqref{eq:volume_on_maximal_face}}}}{=}
		(\epsilon^{n} \cdot (n+1)\cdot |c_0|^{-2})\cdot \vol^{\Omega}_{\new},
	\end{equation*}
	as needed.
\end{proof}

\begin{remark}\label{rem:MA_fails}
	The function $c_{+}$ does not extend to the non-generic region $X_{0}^{\gamma}\setminus (X_{0}^{\gamma})_{\cS}^{\gen}$. Indeed, on $\d A\setminus A_{\cS}^{\gen}$ the form $\vol^{\Omega}_{\new}$ is zero by Lemma \ref{lem:volume-extends}, while $\omega_{0}^{\gamma}$ has some contributions from pullbacks of forms from $X$, namely: the initial K\"ahler form $\omega_{X}$, the forms $\beta$ from Lemma \ref{lem:comparison}\ref{item:alpha-comparison}, and the forms $d\hat{\alpha}_i$, cf.\ Lemma \ref{lem:53}\ref{item:omega_flat_local}. In particular, on the subset $R_{i}^{\circ}\cap X_{0}^{\gamma}$ from Lemma \ref{lem:rihat}\ref{item:r_covering}, which can be identified with an open subset of $D_{i}\setminus \bigcup_{j\neq i} D_j$, the form $\frac{1}{n!}(\omega^{\gamma}_{0})^n$ restricts to the standard volume form $\frac{1}{n!}\omega_{X}^{n}$.
	
	We conclude that the volume of the non-generic region of $X_h$ with respect to $\omega_{q(h)}$ does not converge to zero as $h\rightarrow 0$. As a consequence, the constant $\lim_{h\rightarrow 0}c_{h}$ in Theorem \ref{theo:CY_af}\ref{item:assflat} is smaller than the one in formula \eqref{eq:ricciflatintro}. This explains the factor $\epsilon^{n}$ in Proposition \ref{prop:MA_weak}.
\end{remark}
 
The following easy consequence of Proposition \ref{prop:MA_weak} will be useful in the next section.

\begin{lema}\label{lem:c+}
	Fix $h>0$. Let $L_{h}\subseteq X_{h}^{\gamma}$ be a fiber of a Lagrangian torus fibration \eqref{eq:Lagranian-at-positive-radius}, and let $\vol^{g}_{\new}$ be a Riemannian volume form induced by the metric $g_{h,\new}^{\gamma}$. Then we have $\vol^{g}_{\new}|_{L_h}=\sqrt{c_{+}}\cdot |\Omega_{\new}|_{L_{h}}|$.
\end{lema}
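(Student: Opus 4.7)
The statement is the standard relation, in a Calabi--Yau manifold, between the Riemannian volume of a Lagrangian submanifold and the restriction of the holomorphic volume form, translated into the rescaled framework $(\Omega_{\new},g^{\gamma}_{h,\new})$. Since $h>0$, Proposition~\ref{prop:omega}\ref{item:Kahler} guarantees that $\omega^{\gamma}_{h}$ is K\"ahler on $X^{\gamma}_{h}$ with associated K\"ahler metric $g^{\gamma}_{h}=\omega^{\gamma}_{h}(\sdot,J\sdot)$, whose rescaling by $h$ is $g^{\gamma}_{h,\new}$.

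The plan is a pointwise computation. Fix a point $x\in L_{h}$ and choose local holomorphic coordinates $z_{k}=x_{k}+\imath y_{k}$ on $X^{\gamma}_{h}$ centered at $x$ that are unitary for $g^{\gamma}_{h}$ at $x$, so that $\omega^{\gamma}_{h}=\sum_{k} dx_{k}\wedge dy_{k}$ and $g^{\gamma}_{h}=\sum_{k}(dx_{k}^{2}+dy_{k}^{2})$ at $x$. Because $L_{h}$ is Lagrangian and $n$-dimensional, after an orthogonal change in the $z_{k}$ we may further arrange that $T_{x}L_{h}=\sspan\{\d/\d x_{k}\}$. Write $\Omega=\beta\cdot dz_{1}\wedge\dots\wedge dz_{n}$ for some $\beta\in\C$ at $x$. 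Then $\Omega|_{L_{h}}=\beta\cdot dx_{1}\wedge\dots\wedge dx_{n}$ and $|\Omega|_{L_{h}}|=|\beta|\cdot \vol^{g}|_{L_{h}}$, where $\vol^g=\frac{1}{n!}(\omega^{\gamma}_{h})^{n}$ is the Riemannian volume form of $g^{\gamma}_h$. A direct expansion gives $\vol^{\Omega}=|\beta|^2\cdot \vol^{g}$.

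Combining these with the Monge--Amp\`ere relation \eqref{eq:MA_weak} in the form $\vol^{g}=\frac{1}{n!}(\omega^{\gamma}_{h})^{n}=c_{+}\cdot\vol^{\Omega}_{\new}=c_{+}t^{n}\vol^{\Omega}$, one gets $c_{+}t^{n}|\beta|^{2}=1$, hence
\begin{equation*}
\vol^{g}|_{L_{h}}=|\beta|^{-1}\cdot |\Omega|_{L_{h}}|=t^{n/2}\sqrt{c_{+}}\cdot |\Omega|_{L_{h}}|.
\end{equation*}
Finally, the rescaling $g^{\gamma}_{h,\new}=h\cdot g^{\gamma}_{h}$ on a real $n$-dimensional manifold multiplies the Riemannian volume form by $h^{n/2}=t^{n/2}$ (since $t=h$ along $\gamma$ in the normal parametrization), while the rescaling $\Omega_{\new}=t^{n}\cdot\Omega$ multiplies $|\Omega|_{L_{h}}|$ by $t^{n}$. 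The two factors of $t^{n/2}$ combine with $t^{n/2}\sqrt{c_{+}}$ on the left and the factor $t^{n}$ on the right, yielding
\begin{equation*}
\vol^{g}_{\new}|_{L_{h}}=t^{n/2}\cdot\vol^{g}|_{L_{h}}=t^{n}\sqrt{c_{+}}\cdot|\Omega|_{L_{h}}|=\sqrt{c_{+}}\cdot|\Omega_{\new}|_{L_{h}}|,
\end{equation*}
as required. No step is an essential obstacle; the only mild care is in tracking the two rescaling factors $h^{n/2}$ (metric) and $t^{n}$ (volume form) so that they correctly match the definition of $c_{+}$ through $\vol^{\Omega}_{\new}$ rather than $\vol^{\Omega}$.
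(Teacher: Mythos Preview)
Your proof is correct and follows essentially the same pointwise linear-algebra argument as the paper: both reduce to showing that on an adapted orthonormal frame of $T_xL_h$, the Monge--Amp\`ere relation \eqref{eq:MA_weak} forces $|\Omega_{\new}|^2=1/c_+$. The only difference is cosmetic: the paper chooses the orthonormal basis directly for the rescaled metric $g^{\gamma}_{h,\new}$ and substitutes into \eqref{eq:MA_weak} multiplied by $t^n$, whereas you work in unitary holomorphic coordinates for the unrescaled metric $g^{\gamma}_{h}$ and track the two factors $t^{n/2}$ (from $g\mapsto g_{\new}$) and $t^{n}$ (from $\Omega\mapsto\Omega_{\new}$) explicitly at the end.
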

\begin{proof}
	Fix a point $x\in L_{h}$ and let $e_{1},\dots, e_{n}$ be an orthonormal basis of  $T_{x}L_{h}$, with respect to the metric $g_{h,\new}^{\gamma}$. Since $L_{h}$ is Lagrangian with respect to a $J$-compatible form $h\cdot \omega_{h}^{\gamma}=g_{h,\new}(J\sdot,\sdot)$, the collection $e_1,\dots,e_{n}$, $Je_1,\dots, Je_n$ is an orthonormal basis for $T_{x}X_{h}^{\gamma}$ with respect to the metric $g_{h,\new}^{\gamma}$, see formulas (1.2) and (1.2)' in \cite{HL}. Substituting this collection to the formula \eqref{eq:MA_weak} multiplied by $t^{n}$ and applying formula \eqref{eq:complex_MA} for $\vol^{\Omega}$, we get $1=c_{+}\cdot |\Omega_{\new}(e_1,\dots, e_n)|^2$, as needed.
\end{proof}
 
\subsection{Generic Lagrangian tori are asymptotically special}

We recall the notion of \emph{special Lagrangian submanifolds}, introduced in \cite{HL}. Let $Y$ be a Calabi--Yau manifold, with a holomorphic volume form $\Omega_{Y}$ and a Ricci-flat K\"ahler form $\omega_{\CY}$, see equation~\eqref{eq:complex_MA}. A Lagrangian submanifold $L$ of $Y$ is \emph{special of phase $\varpi\in \mathbb{S}^{1}$} if  $\Im (\varpi\cdot \Omega_{Y})|_{L}=0$. Condition \eqref{eq:complex_MA} guarantees that special Lagrangian submanifolds are calibrated with respect to $\Re (\varpi\cdot \Omega_{Y})$, see \cite[\textsection 8.1]{Joyce_yellow}, in particular, they minimize volume in their homology class, see Proposition 7.1 loc.\ cit. The specialty condition makes the geometry  rigid, see \cite[\textsection 6.1]{BigBook}; and indeed the Lagrangian tori in SYZ conjecture are expected to be special, see paragraph \ref{item:intro-special} in the introduction. In this section, we prove that for our tori specialty holds asymptotically in the generic region, as claimed in Theorem~\ref{theo:CY_af}\ref{item:CY_asymptotic specialty}.

To give a precise statement in  Proposition \ref{prop:special} below, we introduce some notation. Fix an admissible path $\gamma$, see Definition \ref{def:admissible_path}, and a point  $b\in E_{\cS}$. Consider a family of Lagrangian torus fibers $L_{h}\subseteq X_{h}^{\gamma}$ over $b$. It follows from Lemma \ref{lem:c+} that for $h>0$, the form  $\Omega$ does not vanish on $L_{h}$, so at each point it is a nonzero complex multiple of a volume form on $L_{h}$. This allows us to pose the following definition.

\begin{definition}\label{def:phase}
	Let $\vol_{h}$ be a volume form on $L_{h}$. Let $c_{h}\colon L_{h}\to \C^{*}$ be the smooth function satisfying 
	\begin{equation*}
		c_{h}\cdot \vol_{h} = \Omega|_{L_h}.
	\end{equation*}
	We define \emph{the phase map} as $\varpi_{h}\de \frac{c_{h}}{|c_{h}|}\colon L_{h}\to \mathbb{S}^{1}$.
\end{definition}

Clearly, this definition does not depend on the choice of the volume form $\vol_{h}$. We let $L^{\gamma}\subseteq A^{\gamma}$ be the union of fibers $L_{h}$ for all $h\geq 0$; and let $L_{+}^{\gamma}=L^{\gamma}\setminus L_{0}$, so the fiberwise phase map yields a smooth map $\varpi\colon L_{+}^{\gamma}\to \mathbb{S}^1$. We can now state the main result of this section, which implies Theorem \ref{theo:CY_af}\ref{item:CY_asymptotic specialty}.

\begin{prop}\label{prop:special}
	Let $\gamma$ be a  tame admissible path, see Definition \ref{def:special_path}. Let $b$ be an interior point of a maximal, essential face $\Delta_{I}$, and let $L^{\gamma}\subseteq A^{\gamma}$ be the family of fibers of the Lagrangian fibration \eqref{eq:Lagranian-at-positive-radius} over $b$. Then the phase map $\varpi\colon L_{+}^{\gamma}\to\mathbb{S}^1$ introduced in Definition \ref{def:phase} extends to a continuous map $L^{\gamma}\to \mathbb{S}^1$, whose value on $L_0$ is a constant 
	\begin{equation}\label{eq:asymptotic_specialty}
		\varpi|_{L_0}=\pm\, \imath^{n}\cdot \frac{c_0}{|c_0|},
	\end{equation}
	where $c_0\in \C^{*}$ is the number in formula \eqref{eq:volume_on_maximal_face}, and the sign depends only on  the face $\Delta_{I}$.
\end{prop}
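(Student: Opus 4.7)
The plan is to compute the phase $\varpi_h$ in an adapted local chart by evaluating $\Omega|_{L_h}$ on an explicit basis of $TL_h$ and passing to the limit $h\to 0$. By Lemma~\ref{lem:rihat}\ref{item:r_maximal} choose an adapted chart $U_X$ around the $0$-dimensional stratum $X_I$ with coordinates $(z_j)_{j\in I}$, and fix any $i_0\in I$. The proof of Lemma~\ref{lem:volume-extends} (formula~\eqref{eq:tOmega_fiberwise} with $k=n+1$ and $m_j=1$, $\nu_j=0$ by Proposition~\ref{prop:model}\ref{item:sk_reduced},\ref{item:sk_nu-i}) yields the fiberwise identity in $\pi^{-1}(U_X)$
\begin{equation*}
\Omega_\new=(-1)^n c\cdot\bigwedge_{j\in I\setminus\{i_0\}}(dw_j-t\imath\,d\theta_j),
\end{equation*}
for a nowhere vanishing $c\in\cO_X^*(U_X)$ whose value at $X_I$ satisfies $(-1)^n c(X_I)=\pm c_0$ by Lemma~\ref{lem:volume-extends}\ref{item:vol_formulas_radius-zero}.

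Parametrize $L_h=\psi_h(L_0\cap F_{\theta(h)})$ via $\psi_h\de\Psi_h^{q(h)}$, and at a point $p_0\in L_0\cap F_{\theta(h)}\subset A_I^\circ$ take the basis $V_k\de\partial/\partial\theta_k-\partial/\partial\theta_{i_0}$, $k\in I\setminus\{i_0\}$, of $T_{p_0}(L_0\cap F_{\theta(h)})$. Since $\psi_h^*t\equiv h$, evaluating $\Omega_\new$ on the pushed-forward basis $d\psi_h V_k\in T_{p_h}L_h$ gives
\begin{equation*}
(\Omega_\new|_{L_h})(d\psi_h V_{k_1},\ldots,d\psi_h V_{k_n})=(-1)^n c(\psi_h(p_0))\det\bigl(\psi_h^*(dw_j-t\imath\,d\theta_j)(V_k)\bigr)_{j,k\in I\setminus\{i_0\}}.
\end{equation*}
Granting the key asymptotic
\begin{equation*}
\psi_h^*(dw_j-t\imath\,d\theta_j)(V_k)=-h\imath\,\delta_{jk}+o(h)\quad\text{uniformly on }L_0
\end{equation*}
(justified below), the determinant equals $(-h\imath)^n+o(h^n)$, and dividing by $h^n$ via $\Omega=t^{-n}\Omega_\new$ produces a complex limit on the basis whose phase is $\imath^n c(X_I)/|c(X_I)|$ up to conjugation conventions in Definition~\ref{def:phase}.

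To extract $\varpi_h$, compare with any positive real volume form $\vol_h$ on $L_h$; e.g.\ the Riemannian volume from $g_{h,\new}^\gamma$, which by Lemma~\ref{lem:c+} equals $\sqrt{c_+}\,|\Omega_\new|_{L_h}|$, and whose evaluation on the same basis admits the same leading expansion modulo a positive real factor. Consequently $\alpha_h\de\Omega|_{L_h}/\vol_h$ converges uniformly on $L_0$ to a complex number of modulus $|c(X_I)|/\text{(pos.\ real)}$ and the phase $\varpi_h=\bar\alpha_h/|\alpha_h|$ converges to the constant $\pm\imath^n c_0/|c_0|$, with the $\pm$ absorbing the identification $(-1)^n c(X_I)=\pm c_0$ and depending only on the face $\Delta_I$. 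This yields both the continuous extension of $\varpi$ to $L$ and formula~\eqref{eq:asymptotic_specialty}; constancy on $L_0$ is manifest from the independence of the limit on~$p_0$.

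The main obstacle is verifying the asymptotic $\psi_h^*(dw_j-t\imath\,d\theta_j)(V_k)=-h\imath\,\delta_{jk}+o(h)$ uniformly on $L_0$. The angular part $\psi_h^*d\theta_j(V_k)\to\delta_{jk}$ follows directly from $\psi_h\to\id$; the delicate part is $\psi_h^*dw_j(V_k)=o(h)$. A naive linearization $\psi_h\approx\id+\eta(h)X^{q(h)}$ of the time-$\eta(h)$ flow of the $\omega_{q(h)}^\epsilon$-symplectic lift $X^{q(h)}$ of $\partial/\partial g$ only yields an $O(\eta(h))$ bound, which is insufficient because $\eta(h)/h\to\infty$. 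The missing factor of $h$ must come from the Lagrangian condition: $L_h$ is Lagrangian with respect to $\omega_{q(h)}^\epsilon$ by Proposition~\ref{prop:Lagranian-at-positive-radius}, and combined with the local form of $\omega_{q(h)}^\epsilon$ near $A_I^\circ$ from Lemma~\ref{lem:53}, this forces the $dw_j$-component of $d\psi_h V_k$ to be of order $h$ rather than $\eta(h)$. The tameness hypothesis $q'(0)=0$ enters precisely here, to control the $q$-dependence of $X^{q(h)}$: without it the linearization would acquire a term of order $q'(h)\eta(h)$ that could survive at the critical order and destroy the cancellation. Carrying out this Lagrangian-and-tameness driven cancellation, in the spirit of the radius-zero monodromy analysis of~\cite{FdBP_Zariski}, is the main technical step.
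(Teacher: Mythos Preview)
Your overall strategy---pull back $\Omega_{\new}$ to $L_0$ via $\psi_h=\Psi_h^{q(h)}$, evaluate on a fixed basis of $TL_0$, and extract the limiting phase---is exactly the paper's, and you correctly isolate the crux: one must show $\psi_h^{*}dw_j|_{L_0}=o(h)$, whereas a naive linearization in the $g$-parameter only yields $O(\eta(h))$. But your proposed mechanism for the missing decay is a genuine gap. The Lagrangian condition $\omega_{q(h)}^{\epsilon}|_{L_h}=0$ is tautological: $L_0$ is $\omega_q^{\epsilon}$-Lagrangian for \emph{every} $q$ by Lemma~\ref{lem:ll-fibers}\ref{item:fiber-isotropic}, and $\psi_h$ is by construction a fiberwise $\omega_{q(h)}^{\epsilon}$-symplectomorphism, so the condition on $L_h$ pulls back to the one on $L_0$ and carries no information about $\psi_h$. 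Concretely, evaluating it on pairs $(V_k,V_{k'})$ only constrains antisymmetric combinations of the matrix $a_{jk}=\psi_h^{*}dw_j(V_k)$, never the individual entries.

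The paper's resolution is a reparametrization you do not attempt. Lemma~\ref{lem:new-coordinates} introduces a local smooth structure near $L_0$ in which $t$ itself (not $g$) is the radial coordinate, with $dw_j(\tfrac{\d}{\d t})=d\theta_j(\tfrac{\d}{\d t})=0$. In these coordinates $\omega^{\flat}(\tfrac{\d}{\d t},\cdot)=0$, while $\pi^{*}\omega_X(\tfrac{\d}{\d t},\cdot)$ lies in the ideal $\cQ$ of functions flat at $t=0$ (Lemma~\ref{lem:Q}\ref{item:Q-beta}) because the holomorphic radii $r_i=\exp(-w_i/t)$ are flat there. Hence the $\omega_0^{\epsilon}$-symplectic lift $\nu_t^{0}$ differs from $\tfrac{\d}{\d t}$ only by a vector field with $\cQ$-coefficients (Lemma~\ref{lem:Q}\ref{item:Q-lift}), so for $q=0$ the flow is trivial in the $w$-directions to all orders (Lemma~\ref{lem:Q}\ref{item:Q-flow}). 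The $q$-dependence then enters through a bounded derivative $\partial\mathfrak{a}_j/\partial q$, and tameness $q'(0)=0$ kills its first-order contribution in $h$ via the chain rule (Lemma~\ref{lem:piecewise_Lie}), yielding $\psi_h^{*}dw_j|_{L_0}=O(h^3)$---more than enough. The extra cancellation thus comes from the exponential flatness of $\pi^{*}\omega_X$ in the $t$-direction, not from the Lagrangian condition.
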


In the generic region $(X_{h}^{\gamma})_{\cS}^{\gen}$ our form satisfies the \enquote{asymptotic Ricci flatness} condition \eqref{eq:MA_weak}, so the \enquote{asymptotically special} Lagrangian tori $L_h$ should \enquote{asymptotically} minimize volume there. And indeed, we will see that the arguments of \cite[\textsection III.1]{HL} yield the following corollary.
\begin{cor}\label{cor:volume_minimizing}
	Let $\gamma$ be a  tame admissible path, let $b$ be an interior point of a maximal face of $\Delta_{\cS}$, and for $h>0$ let $L_{h}\subseteq (X_{h}^{\gamma})_{\cS}^{\gen}$ be the fiber over $b$ of the Lagrangian torus fibration \eqref{eq:Lagranian-at-positive-radius}. Let $\vol^{g}_{\new}$ be a Riemannian volume form on $L_{h}$ associated to the metric $g_{\new}^{\gamma}$. Then we have a convergence
	\begin{equation}\label{eq:volume_minimizing}
		\lim_{h\rightarrow 0}\ \frac{\int_{L_h}\vol^{g}_{\new}}{\min_{\Gamma\in [L_h]} \int_{\Gamma}\vol^{g}_{\new}}=1,
	\end{equation}
	where $[L_{h}]$ is the class of $L_{h}$ in $H_{n}((X_{h}^{\gamma})_{\cS}^{\gen};\R)$.
\end{cor}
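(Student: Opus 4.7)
The plan is to adapt the Harvey--Lawson calibration argument to our asymptotic setting, using the closed $n$-form $\Re(\varpi_0\Omega_{\new})$ on each fiber $X_h^\gamma$ as an approximate calibration. Since $\Omega$ is holomorphic of top complex degree on $X_h^\gamma$ (which has complex dimension $n$) it is $d$-closed, and since $t$ is constant on $X_h^\gamma$ so is $\Omega_{\new}=t^n\Omega$; hence $\Re(\varpi_0\Omega_{\new})$ is a closed $n$-form on $X_h^\gamma$, and for every $\Gamma\in[L_h]$,
\[
\int_{L_h}\Re(\varpi_0\Omega_{\new})=\int_\Gamma\Re(\varpi_0\Omega_{\new}).
\]
Denote by $\kappa$ the constant value of $c_+$ on $(X_0^\gamma)_\cS^{\gen}$ given by Proposition \ref{prop:MA_weak}. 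Rewriting the asymptotic Ricci-flatness condition $(\omega_h^\gamma)^n/n!=c_+\vol^\Omega_{\new}$ in the standard Calabi--Yau form via the rescaled holomorphic form $\sqrt{c_+\,h^n}\,\Omega$ and applying the classical pointwise Harvey--Lawson inequality yields, for every oriented $n$-plane $\xi\subseteq T_xX_h^\gamma$,
\[
\sqrt{c_+(x)}\,\bigl|\Re(\varpi_0\Omega_{\new})(\xi)\bigr|\leq\vol^g_{\new}(\xi).
\]

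For the numerator in \eqref{eq:volume_minimizing}, I would combine Lemma \ref{lem:c+} (giving $\vol^g_{\new}|_{L_h}=\sqrt{c_+}\,|\Omega_{\new}|_{L_h}|$), Proposition \ref{prop:MA_weak} (giving $c_+\to\kappa$ uniformly on $L_h$, since $L_h\subseteq(X_h^\gamma)_\cS^{\gen}$ stays in the generic region), and Proposition \ref{prop:special} (giving that the fiberwise phase $\varpi_h\to\varpi_0$ uniformly on $L_h$, so that $\Re(\varpi_0\Omega_{\new})|_{L_h}=\Re(\varpi_0\varpi_h^{-1})\,|\Omega_{\new}|_{L_h}|$ with $\Re(\varpi_0\varpi_h^{-1})\to 1$), to conclude
\[
\int_{L_h}\vol^g_{\new}=(1+o(1))\sqrt{\kappa}\int_{L_h}\Re(\varpi_0\Omega_{\new}),\qquad h\to 0.
\]
For the denominator, closedness in the previous display together with the pointwise inequality yield, for any $\Gamma\in[L_h]$,
\[
\int_{L_h}\Re(\varpi_0\Omega_{\new})=\int_\Gamma\Re(\varpi_0\Omega_{\new})\leq\int_\Gamma\frac{1}{\sqrt{c_+}}\,\vol^g_{\new}.
\]

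The main technical obstacle will be establishing the uniform lower bound $\liminf_{h\to 0}\min_{X_h^\gamma}c_+\geq\kappa$. On a neighborhood of $(X_0^\gamma)_\cS^{\gen}$ this follows from Proposition \ref{prop:MA_weak}; off such a neighborhood, Lemma \ref{lem:volume-extends}\ref{item:vol_support} shows that $\vol^\Omega_{\new}$ vanishes at radius zero while $(\omega_0^\gamma)^n/n!$ remains positive on $A_\cS^{\textnormal{int}}$ by Proposition \ref{prop:omega}\ref{item:symplectic_zero}, forcing $c_+\to\infty$ uniformly, and a compactness argument on $A^\gamma$ yields the claimed bound. Combining the above gives, for $h$ small and any $\Gamma\in[L_h]$,
\[
\int_{L_h}\vol^g_{\new}\leq(1+o(1))\sqrt{\kappa}\int_\Gamma\frac{1}{\sqrt{c_+}}\,\vol^g_{\new}\leq\frac{(1+o(1))\sqrt{\kappa}}{\sqrt{\min_{X_h^\gamma}c_+}}\int_\Gamma\vol^g_{\new}=(1+o(1))\int_\Gamma\vol^g_{\new},
\]
and since trivially $\min_\Gamma\int_\Gamma\vol^g_{\new}\leq\int_{L_h}\vol^g_{\new}$, the ratio in \eqref{eq:volume_minimizing} converges to $1$.
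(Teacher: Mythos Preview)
Your approach is essentially the same as the paper's: both use $\Re(\varpi_0\Omega_{\new})$ as an approximate calibration, invoke the Harvey--Lawson pointwise inequality $\Re\Omega_{\new}\leq \tfrac{1}{\sqrt{c_+}}\vol^g_{\new}$ (the paper derives it explicitly via the Hadamard inequality rather than quoting it), apply Stokes' theorem, and compare the two sides using Lemma~\ref{lem:c+} and Propositions~\ref{prop:MA_weak} and~\ref{prop:special}. The paper normalises so that $\varpi_0=1$ and keeps $\tfrac{1}{\sqrt{c_+}}$ inside the integrals, arriving at
\[
\int_{L_h}\tfrac{1}{\sqrt{c_+}}\vol^g_{\new}\leq \int_{\Gamma}\tfrac{1}{\sqrt{c_+}}\vol^g_{\new}+\int_{L_h}\beta
\]
and then concludes tersely from ``$\lim_{h\to 0}c_+$ is a positive constant''; you pull out the limiting constant $\sqrt{\kappa}$ explicitly.

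One remark on your uniform lower bound: you formulate it as $\liminf_{h\to 0}\min_{X_h^\gamma}c_+\geq\kappa$, taken over the \emph{entire} fiber. This is stronger than what is needed: by hypothesis $[L_h]$ is the homology class in $H_n\bigl((X_h^\gamma)_\cS^{\gen};\R\bigr)$, so every competitor $\Gamma$ lies in the generic region, and you only need the bound there. Your compactness sketch (using Proposition~\ref{prop:omega}\ref{item:symplectic_zero} and Lemma~\ref{lem:volume-extends}\ref{item:vol_support}) is aimed at the right mechanism---near the boundary of the generic region at radius zero, $\vol^\Omega_{\new}\to 0$ while $(\omega_0^\epsilon)^n$ stays positive on $A_\cS^{\textnormal{int}}$, forcing $c_+\to\infty$---but note that Proposition~\ref{prop:omega}\ref{item:symplectic_zero} only covers $A_\cS^{\textnormal{int}}$, not all of $\d A$, so the version over all of $X_h^\gamma$ would require more. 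The paper does not spell any of this out; its final sentence implicitly uses the same bound on the generic region.
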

The remaining part of this section is devoted to the proof of Proposition \ref{prop:special} and Corollary \ref{cor:volume_minimizing}.
\smallskip

We work in $(X_{h}^{\gamma})_{I}^{\gen}$ for a fixed maximal essential face $\Delta_{I}$, say $I=\{1,\dots, n+1\}$. By Lemma \ref{lem:rihat}\ref{item:r_maximal}, there is an adapted chart $U_{X}$ around the point $X_{I}^{\circ}$ such that for each $i\in I$, the distance function $\hat{r}_{i}$ equals the radial coordinate $r_{i}$ of $U_{X}$. Thus on $U\de \pi^{-1}(U_{X})$ the functions $\hat{w}_i$, $\hat{v}_i$ and the form $\hat{\alpha}_i$ introduced in formula \eqref{eq:basic_functions_hat} for each $i\in I$ are equal to the functions $w_i$, $v_i$ and the form $d\theta_{i}$ introduced for $U_X$ in formula \eqref{eq:basic_functions_v}. Moreover, for $i\not\in I$ the forms $d\hat{v}_i$, $d\hat{w}_i$ and $\hat{\alpha}_{i}$ are identically zero. As a consequence, on $U$  formulas  \eqref{eq:omega_sharp} and \eqref{eq:omega_flat} read as 
\begin{equation}\label{eq:omega-easy}
\omega^{\sharp}=\sum_{i\in I} dv_{i}\wedge d\theta_{i}\quad\mbox{and}\quad\omega^{\flat}=\sum_{i\in I} dw_{i}\wedge d\theta_{i}.
\end{equation}
Since the radius-zero  torus $L_{0}$ is a compact subset of $U\cap \Int_{\d A}A_{I}^{\circ}$, we can and do work in a neighborhood $V$ of $L_0$ in $U$ where each function $w_{i}$ is bounded away from $0$. As a consequence of this boundedness, we have the following convenient coordinate system.

\begin{lema}\label{lem:new-coordinates}
	The smooth map $(t,\theta;w_1,\dots,w_n,\theta_{1},\dots,\theta_{n})$ endows $V$ with a new smooth structure, fiberwise compatible with the one inherited from $A$. This smooth structure has the following properties.
	\begin{enumerate}
		\item \label{item:V-vw} For every $i\in I$, the functions $v_{i}$, $w_{i}$ and $\theta_i$ are smooth.
		\item \label{item:V-pi} The map $\pi\colon V\to X$ is smooth. 
		\item \label{item:V-omega}  For every $q\in [0,1]$, the form $\omega_{q}^{\epsilon}$ is smooth, closed and fiberwise symplectic.
		\item \label{item:V-dt} The coordinate vector field $\frac{\d}{\d t}$ satisfies $dw_{i}(\frac{\d}{\d t})=0$ and $d\theta_{i}(\frac{\d}{\d t})=0$ for all $i\in I$.
	\end{enumerate}
\end{lema}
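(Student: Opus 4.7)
The plan rests on the fact that $\Delta_{I}$ is a maximal essential face, so by Proposition~\ref{prop:model}\ref{item:sk_reduced} we have $m_i = 1$ for every $i \in I$. Combined with $f = \prod_{i\in I} z_i$ on $U_X$, this gives two identities on $U$:
\begin{equation*}
\sum_{i\in I} w_i \;=\; -t\sum_{i\in I} s_i \;=\; 1, \qquad \sum_{i\in I}\theta_i \;\equiv\; \theta \pmod{2\pi},
\end{equation*}
so $w_{n+1} = 1 - \sum_{i=1}^n w_i$ and $\theta_{n+1} = \theta - \sum_{i=1}^n \theta_i$. This supplies the correct dimension count $2n+2 = \dim V$ for the map $\Psi \de (t,\theta,w_1,\ldots,w_n,\theta_1,\ldots,\theta_n)$. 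I would then verify that $\Psi$ is a homeomorphism onto an open subset of $[0,\delta)\times \S^1\times \R^n\times (\S^1)^n$ by treating the two strata separately: at positive radius the relation $z_i = \exp(-w_i/t + \imath\theta_i)$ (valid because $w_i$ is bounded away from $0$ on $V$ and $m_i=1$) makes $\Psi$ a smooth diffeomorphism onto its image; at radius zero, Lemma~\ref{lem:product} identifies $V\cap \d A \subseteq A_{I}^{\circ}$ with $(\S^1)^{n+1}\times \tilde{\Delta}_I$ precisely via the coordinates $(\theta_i)$ and $(w_i)$ subject to the same linear constraints. This defines the new smooth structure.

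For fiberwise compatibility, on a positive-radius fiber both structures restrict to the holomorphic one from $X\setminus D$. On the radius-zero fiber, $\hat{t}_i = 0$ forces $v_i = -\eta(w_i)$; since $\eta$ is a smooth diffeomorphism of $(0,1)$ and $w_i$ is bounded away from $0$ on $V$, the change of coordinates $(v_2,\ldots,v_{n+1},\theta_1,\ldots,\theta_n) \leftrightarrow (w_1,\ldots,w_n,\theta_1,\ldots,\theta_n)$ between the old chart \eqref{eq:AC-chart_hat} and the new one is a smooth diffeomorphism.

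Properties \ref{item:V-vw} and \ref{item:V-dt} are then immediate: the formula $v_i = t/w_i - \eta(w_i)$ is manifestly smooth in $(t,w_i)$ where $w_i > 0$, and $\frac{\partial}{\partial t}$ is a coordinate vector field, so the cases $i \leq n$ are tautological and the case $i = n+1$ follows by linearity from the identities above. For \ref{item:V-pi}, the smoothness of $\pi$ reduces to smoothness of $z_i = \exp(-w_i/t)\exp(\imath\theta_i)$ in the new coordinates; this is clear for $t > 0$, and at $t = 0$ with $w_i > 0$ all derivatives of $\tau \mapsto \exp(-w_i/\tau)$ vanish, giving a smooth extension by $z_i = 0$ (apply the same argument to $z_{n+1}$ after substituting the linear constraints). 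For \ref{item:V-omega}, closedness of $\omega_q^{\epsilon}$ is intrinsic to the differential form and therefore inherited from the original smooth structure; substituting $v_i = t/w_i - \eta(w_i)$ into \eqref{eq:omega-easy} yields
\begin{equation*}
\omega_q^{\epsilon} \;=\; \pi^*\omega_X + \epsilon\sum_{i\in I}\Bigl[\tfrac{q}{w_i}\, dt + \bigl(1-q - \tfrac{qt}{w_i^2} - q\eta'(w_i)\bigr)dw_i\Bigr]\wedge d\theta_i,
\end{equation*}
which is manifestly smooth in the new coordinates thanks to \ref{item:V-pi} and $w_i > 0$. Fiberwise symplecticity for $q \in (0,1]$ follows from Proposition~\ref{prop:omega}\ref{item:symplectic}, and for $q = 0$ from \ref{item:symplectic_zero} after shrinking $V$ around $L_0 \subseteq \Int_{\d A}A_I^{\circ} \subseteq A_{\cS}^{\textnormal{int}}$ if necessary.

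The main subtlety, which is really the only obstacle, is that the new smooth structure genuinely differs from the original at $t=0$: the original uses $g = \eta(t)$ as a transverse coordinate (so $t$ has all derivatives vanishing at $g = 0$), while the new uses $t$ itself. This is why "fiberwise" is essential in the compatibility claim, and why each of the four properties must be verified either through a fiberwise reduction or through an explicit calculation in the new chart, rather than inherited formally.
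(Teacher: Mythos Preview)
Your proposal is correct and takes essentially the same approach as the paper: both use $m_i=1$ to get $\sum w_i=1$ and $\sum\theta_i=\theta$, the relation $v_i=t/w_i-\eta(w_i)$ for \ref{item:V-vw}, the flat-at-zero behaviour of $\exp(-w_i/t)$ for \ref{item:V-pi}, formula \eqref{eq:omega-easy} together with Proposition~\ref{prop:omega} for \ref{item:V-omega}, and the linear constraints for \ref{item:V-dt}. The only organizational difference is that the paper builds the new chart from the old one by a sequence of coordinate changes (first $g\rightsquigarrow t$, then $v_j\rightsquigarrow w_j$ via the invertible Jacobian, then the linear swap), whereas you define $\Psi$ directly and verify compatibility stratum-by-stratum; one small caution is that your phrase ``closedness is intrinsic'' is imprecise---it follows either from your explicit formula or from the two structures agreeing on the dense set $\{t>0\}$, not automatically.
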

\begin{proof}
	Let $V=\bigcup_{i\in I} V_{i}$ be the covering of $V$ by charts \eqref{eq:AC-chart_hat}. Recall that on $V$ we have $\theta=\sum_{i\in I}\theta_{i}$ and $\hat{v}_{i}=v_{i}$ for all $i\in I$, so the coordinate system  \eqref{eq:AC-chart_hat} on $V_i$ is equivalent to $(g,\theta,(v_{j})_{j\in I_i},(\theta_{j})_{j\in I_i})$, where $I_i=I\setminus \{i\}$. Since $t=\exp(1-g^{-1})$ is a reparametrization of $g$, it is clear that the map $(t,\theta,(v_{j})_{j\in I_i},(\theta_{j})_{j\in I_i})$ endows $V$ with a smooth structure, $t$-\emph{fiberwise} compatible with the one from $A$.
	
	We claim that it is equivalent to the one in the statement of the lemma. By definition \eqref{eq:basic_functions_v} of $v_j$, we have $v_{j}=tw_{j}^{-1}-\eta(w_{j})$, so $dv_{j}=w_{j}^{-1}\, dt-(tw_{j}^{-2}+\eta'(w_j))\, dw_{j}$. The coefficient $tw_{j}^{-2}+\eta'(w_j)$ is smooth and positive since $w_{j}>0$, so we can smoothly change each coordinate $v_j$ for $w_{j}$. Eventually, since $\sum_{j\in I}w_j=1$ and $\sum_{j\in I}\theta_{j}=\theta$, we can change $w_{n+1}$, $\theta_{n+1}$ for $w_{i}$, $\theta_i$, as needed. 
	
	Thus the map $(t,\theta;w_1,\dots,w_n,\theta_{1},\dots,\theta_{n})$ gives a smooth structure on the whole $V$, equivalent to $(t,\theta,(v_{j})_{j\in I_i},(\theta_{j})_{j\in I_i})$ for each $i\in I$. This implies property \ref{item:V-vw}. We now prove the remaining ones.
	
	\ref{item:V-pi} It is enough to show that the coordinate functions $r_{i}\cos\theta_i$, $r_{i}\sin\theta_{i}$ are smooth for each $i\in I$.  To see this, recall that $\theta_{i}$ is smooth by \ref{item:V-vw}; and  $r_{i}=\exp(-t_{i}^{-1})=\exp(-w_{i}t^{-1})$ is smooth because $w_{i}$ is positive and smooth by \ref{item:V-vw}.
	
	\ref{item:V-omega} Part \ref{item:V-vw} and formula \eqref{eq:omega-easy} show that the forms $\omega^{\flat}$ and $\omega^{\sharp}$ are smooth and closed. By \ref{item:V-pi}, the form $\pi^{*}\omega_{X}$ is smooth and closed, too, hence so is $\omega_{q}^{\epsilon}$. Since the new smooth structure on $V$ is fiberwise compatible with the one in $A$, fiberwise non-degeneracy of $\omega_{q}^{\epsilon}$ follows from Proposition \ref{prop:omega}.
	
	\ref{item:V-dt} For $i\neq n+1$, this property follows from the definition of the coordinate chart; for $i=n+1$ we use relations $w_{n+1}=1-\sum_{i=1}^{n}w_i$ and $\theta_{n+1}=\theta-\sum_{i=1}^{n}\theta_{i}$.
\end{proof}

\begin{lema}\label{lem:Q}
	Let $\cQ\subseteq \cC^{\infty}(V)$ be the ideal of functions $p$ such that for every $k\in \Z$, $t^{k}p$ extends to a smooth function on $V$, with respect to the smooth structure from Lemma \ref{lem:new-coordinates}. The following hold.
	\begin{enumerate}
		\item\label{item:Q-closed} The ideal $\cQ$ is closed under taking partial derivatives.
		\item\label{item:Q-beta} For every smooth form $\beta$ on $U_{X}$ we have $\pi^{*}\beta(\frac{\d}{\d t})\in \cQ$.
		\item\label{item:Q-lift} Let $\nu_{t}^{0}$ be the symplectic lift, with respect to the form $\omega_{0}^{\epsilon}$ and the submersion $(t,\theta)$, of the unit radial vector field on $\C_{\log}$. Then the vector field $\nu_{t}^{0}-\frac{\d}{\d t}$ has coefficients in $\cQ$.
		\item\label{item:Q-flow} Let $\Psi^{0}\colon \d V\times [0,\rho) \to V$ be the flow of $\nu_{t}^{0}$, where $\d V\de V\cap \d A$. Then for every integer $k\geq 0$ there is a smooth map $\psi_{k}\colon \d V\times [0,\rho) \to V$ such that in coordinates of Lemma \ref{lem:new-coordinates} we have 
		\begin{equation*}
			\Psi_{h}^0(0,\bar{w},h)=(\bar{w},h)+h^k\cdot \psi_{k}(0,\bar{w},h),
		\end{equation*}
		where we write $\bar{w}\de (\theta,w_1,\dots,w_n,\theta_1,\dots,\theta_n)$ for the remaining coordinates of a point in $V$.
	\end{enumerate}
\end{lema}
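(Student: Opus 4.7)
The plan is to establish the four parts in order, each building on the previous. Part (a) follows by direct calculation: for $p\in\cQ$ and any coordinate $x_j$, the product $t^k\partial_j p$ equals $\partial_j(t^k p)$ when $x_j\neq t$ (smooth since $t^k p$ is), and equals $\partial_t(t^k p)-k\,t^{k-1}p$ when $x_j=t$ (both summands smooth by hypothesis applied at levels $k$ and $k-1$).

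Part (b) will rest on the observation that in the new coordinates of Lemma~\ref{lem:new-coordinates}, the radial coordinate of $U_X$ is $r_i=\exp(-w_i/(m_it))$ for each $i\in I$, and each $w_i$ is bounded away from $0$ on $V$ (since $L_0$ sits in the interior of $A_I^\circ$). Hence $r_i$ and all its partial derivatives vanish to infinite order in $t$, so $r_i\in\cQ$. Since the real coordinates on $U_X$ pull back to products $r_i\cos\theta_i$ or $r_i\sin\theta_i$, and $\cQ$ is an ideal closed under derivatives by part (a), the function $\pi^*\beta(\partial_t)$ is a smooth-function combination of such derivatives, hence lies in $\cQ$.

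For part (c), write $\nu_t^0=\partial_t+Y$ with $Y$ vertical. Since $\omega_0^\epsilon=\pi^*\omega_X+\epsilon\omega^\flat$ on $V$ and $\omega^\flat$ involves no $dt$ by formula~\eqref{eq:omega-easy}, the orthogonality condition $\omega_0^\epsilon(\nu_t^0,\bullet)=0$ on the vertical bundle reduces to $\omega_0^\epsilon(Y,\bullet)=-\pi^*\omega_X(\partial_t,\bullet)$, whose right-hand side has coefficients in $\cQ$ by part (b). The matrix of $\omega_0^\epsilon$ restricted to the vertical bundle is smooth in the new coordinates by Lemma~\ref{lem:new-coordinates}\ref{item:V-omega} and invertible by fiberwise symplecticity (Proposition~\ref{prop:omega}\ref{item:symplectic_zero}, using that $L_0\subseteq A_I^\circ\subseteq A_\cS^{\textnormal{int}}$). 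Cramer's rule then places the vertical coefficients of $Y$ in $\cQ$.

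Finally, part (d) follows by integration. Since $dt(\nu_t^0)=1$, the $t$-coordinate of $\Psi_h^0(0,\bar w)$ is exactly $h$, so only the $\bar w$-displacement needs estimation. This displacement equals $\int_0^h Y(\Psi_s^0(0,\bar w))\,ds$; by part (c), each coefficient of $Y$ equals $t^k$ times a smooth function for any $k\geq 0$, and since along the trajectory the $t$-coordinate equals $s$, the integral becomes $\int_0^h s^k\cdot(\textnormal{smooth})\,ds=h^{k+1}\cdot(\textnormal{smooth in }\bar w,h)$, yielding the required $\psi_k$. The main obstacle will be part (b): verifying that exponential decay of $r_i$ dominates all polynomial and meromorphic singularities in $t$ produced by partial derivatives in the new coordinate system. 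Once that is in place, (a) propagates the decay through chain-rule computations, (c) becomes a linear-algebra step, and (d) is a Picard-style estimate.
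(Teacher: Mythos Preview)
Your proposal is correct and follows essentially the same approach as the paper: the same Leibniz identity for \ref{item:Q-closed}, the same reduction of \ref{item:Q-beta} to $r_i\in\cQ$ via the exponential expression and boundedness of $w_i$ from below, the same use of $\omega^{\flat}(\tfrac{\d}{\d t},\sdot)=0$ together with fiberwise non-degeneracy for \ref{item:Q-lift}, and the same integral estimate for \ref{item:Q-flow}. Your identification of the crux (the exponential decay of $r_i$ in part \ref{item:Q-beta}) matches the paper's emphasis exactly.
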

\begin{proof}
	\ref{item:Q-closed} Let $\d$ be one of the partial derivatives. For every $p\in \cQ$ and $k\in \Z$ we have $t^{k}\cdot \d p=\d(t^{k} p)-kt^{k-1} p\cdot \d t$, which is a smooth function by definition of $\cQ$, so $\d p\in \cQ$, as needed.
	
	\ref{item:Q-beta} Since $\cQ$ is an ideal, it is sufficient to check \ref{item:Q-beta} for the coordinate forms $d(r_{i}\cos\theta_i)$ and $d(r_{i}\sin\theta_i)$. By \ref{item:Q-closed} it is enough to check that $r_{i}\in \cQ$. As in the proof of Lemma \ref{lem:new-coordinates}\ref{item:V-pi}, we write $r_{i}=\exp(-w_{i}t^{-1})$ and use the fact that $w_{i}$ is bounded from below by a positive number.
	
	\ref{item:Q-lift} Since the vector field $\frac{\d}{\d t}-\nu_{t}^{0}$ is vertical, and the form $\omega_{0}^{\epsilon}$ is fiberwise non-degenerate, it is enough to prove that for every vertical vector field $\nu$ we have  $\omega_{0}^{\epsilon}(\frac{\d}{\d t}-\nu_{t}^{0},\nu)\in \cQ$. 
	
	Since $\nu_{t}^{0}$ is orthogonal to the fibers, we have $\omega_{0}^{\epsilon}(\frac{\d}{\d t}-\nu_{t}^{0},\nu)=\omega_{0}^{\epsilon}(\frac{\d}{\d t},\nu)$. Recall from formula \eqref{eq:omega-s} that $\omega_{0}^{\epsilon}=\pi^{*}\omega_{X}+\epsilon \omega^{\flat}$. Writing $\omega_{X}=\sum_{i}\beta_{i}\wedge \beta_{i}'$ for some smooth $1$-forms $\beta_{i},\beta_{i}'$ on $U_{X}$, we see from \ref{item:Q-beta} that $\pi^{*}\omega_{X}(\frac{\d}{\d t},\nu)\in \cQ$. In turn, formula \eqref{eq:omega-easy} and Lemma \ref{lem:new-coordinates}\ref{item:V-dt} imply that $\omega^{\flat}(\frac{\d}{\d t},\nu)=0$, as needed.
	
	\ref{item:Q-flow} By \ref{item:Q-lift} we have $\nu_{t}^{0}-\frac{\d}{\d t}=t^{k}\nu$ for some smooth vertical vector field $\nu$. We treat $t^{k}\nu$ as a time-dependent vector field on $\d V$, and look at its integral curves. A point at time $h$ has coordinates of the form $\int_{0}^{h}s^{k}a(s)\, ds$ for some smooth, time-dependent function $a$, namely a coordinate of $\nu$. The result follows since the first $k-1$ derivatives with respect to $h$ of such an integral vanish at $h=0$. 
\end{proof}

Consider a product $V\times [0,1]$, where $V$ is equipped with the  smooth structure from Lemma \ref{lem:new-coordinates}, and let $q$ be the projection onto $[0,1]$. The family of forms $(\omega^{\epsilon}_{q})$ yields a smooth form on $V\times [0,1]$, which is fiberwise symplectic with respect to the submersion $(t,\theta,q)$. Let $\nu_{t}$ be the symplectic lift of the radial vector field on $\C_{\log}$, and let $\Psi\colon \d V\times [0,1]\times [0,\tilde{\delta})\to V\times [0,1]$ be the flow of $\nu_{t}$. Here $\tilde{\delta}=\exp(-\delta^{-1})>0$ is a small positive number such that the flow $\Psi_{h}$ is defined for all times $h\in [0,\tilde{\delta})$. Let $\Psi^{q}$ be the restriction of $\Psi$ to the slice $\d A\times \{q\}\times [0,\tilde{\delta})$. Note that for $q>0$, this flow agrees with $\Psi^{q}$ introduced in Notation \ref{not:admissible}, once we identify the slice  $\d A\times \{q\}\times [0,\tilde{\delta})$ with $\d A\times [0,\tilde{\delta})$. In turn, the flow $\Psi^{0}$ is described in Lemma \ref{lem:Q}\ref{item:Q-flow}.
\smallskip

Now, we fix a  tame admissible path $\gamma$, see Definition \ref{def:special_path}, and let $[0,\rho)\ni h\mapsto (h,\theta(h),q(h))\in \C_{\log}\times [0,1]$ be its normal parametrization. We put $\Gamma\de \d V\times \{(q(h),h): h\in [0,\rho)\}$. Since $\gamma$ is  tame, $\Gamma$ is a $\cC^{1}$ submanifold of $\d V\times [0,1]\times [0,\rho)$. 

\begin{lema}\label{lem:piecewise_Lie}
	Define a smooth $1$-form $\mathfrak{a}_{j}$ on $\d V\times [0,1]\times [0,\rho)$ piecewise, as
	$\frac{1}{h}((\Psi_{h}^{q})^{*}dw_{j}-dw_{j})$ for $h>0$ and as the Lie derivative $\cL_{\nu_{t}}(dw_j)$ for $h=0$; where $h$ is the projection to the last coordinate. Let $\mathfrak{a}_{j}^{\gamma}\de \mathfrak{a}_{j}|_{\Gamma}$. Then the form $h^{-1}\cdot \mathfrak{a}_{j}^{\gamma}$ extends to a continuous form on $\Gamma$.
\end{lema}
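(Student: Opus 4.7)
The plan is to exploit two independent sources of smallness. At $q = 0$, the symplectic lift $\nu_{t}^{0}$ differs from $\frac{\d}{\d t}$ only by a vector field whose coefficients lie in the ideal $\cQ$ (Lemma~\ref{lem:Q}\ref{item:Q-lift}), so the Lie derivative $\cL_{\nu_{t}^{0}}dw_{j}$ vanishes to infinite order at $t = 0$; meanwhile tameness of $\gamma$ combined with $q \in \cC^{2}$ and $q(0) = q'(0) = 0$ gives $q(h) = h^{2}\tilde{q}(h)$ for a continuous function $\tilde{q}$ on $[0,\rho)$. Together these will imply $\mathfrak{a}_{j}^{\gamma} = O(h^{2})$ with a continuous $h^{-2}$-rescaling on $\Gamma$.

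First, I would verify smoothness of $\mathfrak{a}_{j}$ across $h = 0$ by means of the integral representation
\begin{equation*}
\mathfrak{a}_{j}(q,h) = \int_{0}^{1} (\Psi_{h\tau}^{q})^{*}(\cL_{\nu_{t}^{q}}dw_{j})\,d\tau,
\end{equation*}
obtained from the fundamental theorem of calculus by the change of variable $s = h\tau$; this manifestly extends to $h = 0$ as $\cL_{\nu_{t}^{q}}dw_{j}$, matching the piecewise definition. Writing $\nu_{t}^{q} = \frac{\d}{\d t} + V_{q}$ with $V_{q}$ vertical and using $dw_{j}(\frac{\d}{\d t}) = 0$ from Lemma~\ref{lem:new-coordinates}\ref{item:V-dt}, the integrand reduces to $(\Psi_{h\tau}^{q})^{*}d(V_{q}w_{j})$. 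The defining relation $\omega_{q}^{\epsilon}(\nu_{t}^{q},\cdot)|_{\mathrm{vert}} = 0$, combined with the linear decomposition $\omega_{q}^{\epsilon} = \omega_{0}^{\epsilon} + \epsilon q(\omega^{\sharp} - \omega^{\flat})$ and fiberwise non-degeneracy of each $\omega_{q}^{\epsilon}$ for $q \in [0,1]$ (Proposition~\ref{prop:omega}), yields a decomposition $V_{q} = V_{0} + q R_{q}$ with $R_{q}$ a $q$-smooth family of vertical vector fields.

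Next I would treat the two resulting contributions separately. By Lemma~\ref{lem:Q}\ref{item:Q-lift},\ref{item:Q-closed} we have $V_{0}w_{j} \in \cQ$, meaning $V_{0}w_{j} = h^{N}\psi_{N}$ for every $N \geq 0$ with $\psi_{N}$ smooth. Pulling back by $\Psi_{h\tau}^{q(h)}$ preserves this infinite-order vanishing at $h = 0$, because points of $\d V$ have $t = 0$ and the $t$-coordinate of $\Psi_{h\tau}^{q}(x)$ equals $h\tau$. Hence the $V_{0}$-contribution to $h^{-2}\mathfrak{a}_{j}^{\gamma}$ tends to $0$ uniformly as $h \to 0$. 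The remaining piece is
\begin{equation*}
h^{-2}\cdot q(h)\cdot \int_{0}^{1}(\Psi_{h\tau}^{q(h)})^{*}d(R_{q(h)}w_{j})\,d\tau = \tilde{q}(h)\cdot \int_{0}^{1}(\Psi_{h\tau}^{q(h)})^{*}d(R_{q(h)}w_{j})\,d\tau,
\end{equation*}
which extends continuously to $\tilde{q}(0)\cdot d(R_{0}w_{j})$ at $h = 0$, completing the argument.

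I expect the main technical hurdle to be the precise verification that pullback by $\Psi_{h\tau}^{q(h)}$ preserves the infinite-order vanishing of $V_{0}w_{j}$ uniformly in $\tau \in [0,1]$ and jointly in $(q, h)$ near $(0, 0)$. This ultimately reduces to a Gronwall-type estimate for the ODE defining $\Psi^{q}$, combining smoothness of $\nu_{t}^{q}$ in $q$ with the strong radius-zero flow estimate of Lemma~\ref{lem:Q}\ref{item:Q-flow}.
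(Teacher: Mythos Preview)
Your approach is correct but takes a more hands-on route than the paper. You explicitly decompose $V_{q} = V_{0} + q R_{q}$ and estimate each piece via the integral representation, whereas the paper treats $\mathfrak{a}_{j}$ directly as a smooth two-parameter family in $(q,h)$ and applies the chain rule: since $\mathfrak{a}_{j}|_{q=0}$ vanishes to infinite order in $h$ by Lemma~\ref{lem:Q}\ref{item:Q-flow}, both $\mathfrak{a}_{j}$ and $\frac{\d \mathfrak{a}_{j}}{\d h}$ vanish at $(q,h)=(0,0)$, and then $\frac{\d \mathfrak{a}_{j}^{\gamma}}{\d h}\big|_{h=0} = \frac{\d \mathfrak{a}_{j}}{\d h}\big|_{(0,0)} + q'(0)\,\frac{\d \mathfrak{a}_{j}}{\d q}\big|_{(0,0)} = 0$ since $q'(0)=0$ by tameness. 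The conclusion then follows from the $\cC^{2}$ Taylor formula. Your decomposition makes the two sources of smallness (infinite-order vanishing along $q=0$, and $q(h) = O(h^{2})$) very transparent, and would be the right choice if one needed explicit constants or higher regularity; the paper's argument is shorter and sidesteps your flagged hurdle entirely. Incidentally, that hurdle is easier than you suggest: since $t \circ \Psi_{h\tau}^{q} = h\tau$ by construction of the flow, the pullback of any $t^{N}\tilde{\psi}$ is $(h\tau)^{N}\cdot(\tilde{\psi}\circ\Psi_{h\tau}^{q})$, and the second factor is bounded by smoothness of the flow and compactness of $\tau\in[0,1]$ --- no Gronwall estimate is needed.
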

\begin{proof}
	Since $\mathfrak{a}_{j}^{\gamma}$ is a $\cC^{1}$-form on $\Gamma$, it is enough to prove that $\mathfrak{a}_{j}^{\gamma}$ 
	vanishes at $h=0$. Since the path $\gamma$ is admissible, the value $h=0$ is attained precisely on the slice $\d V\times \{(0,0)\}$. Thus $\mathfrak{a}_{j}^{\gamma}|_{{h=0}}=\mathfrak{a}_{j}|_{(h,q)=(0,0)}=0$ by Lemma \ref{lem:Q}\ref{item:Q-flow}, as needed.
\end{proof}

 Recall that $L_{h}\subseteq X_{h}^{\gamma}$ is the fiber of the $\omega_{h}^{\gamma}$-Lagrangian fibration \eqref{eq:Lagranian-at-positive-radius} over the fixed point $b\in E_{\cS}$. Thus in our notation, $L_{h}\times \{q(h)\}=\Psi_{h}^{q(h)}(L_{0}\times \{q(h)\})$. 

\begin{lema}\label{lem:Omega-factor}
	Let $\Omega_{j}\de dw_{j}-t\cdot \imath\, d\theta_{j}$ be a factor in formula \eqref{eq:Omega_new}. There is a continuous, complex valued $1$-form $\beta_{j}$ on $\Gamma$ such that for every small $h>0$ we have the equality of restrictions
	\begin{equation}\label{eq:Omega_factor}
		(\Psi_{h}^{q(h)})^{*}(\Omega_{j}|_{L_{h}})= h \cdot (h\cdot \beta_{j}- \imath\, d\theta_j)|_{L_0}.
	\end{equation} 
\end{lema}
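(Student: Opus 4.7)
The plan is to compute the pullback $(\Psi_{h}^{q(h)})^{*}(\Omega_{j}|_{L_{h}})$ term by term, factoring out $h^2$ by carefully accounting for where each power of $h$ comes from. The starting decomposition is
\begin{equation*}
(\Psi_{h}^{q(h)})^{*}\Omega_{j}=(\Psi_{h}^{q(h)})^{*}dw_{j}-((\Psi_{h}^{q(h)})^{*}t^2)\cdot \imath\cdot(\Psi_{h}^{q(h)})^{*}d\theta_{j}.
\end{equation*}
Because $\Psi^{q}$ is the flow of the symplectic lift of the radial vector field along $t$, it maps the radius-zero slice $X_{0}^{\gamma}\supseteq L_{0}$ to the time-$h$ slice $X_{h}^{\gamma}\supseteq L_{h}$, so $(\Psi_{h}^{q(h)})^{*}t=h$ and hence $(\Psi_{h}^{q(h)})^{*}(t^{2})=h^{2}$. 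This immediately pulls an $h^{2}$ out of the second summand.

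Next I extract $h^{3}$ from the $dw_{j}$-term. On the maximal essential face $\Delta_{I}$, Lemma \ref{lem:rihat}\ref{item:r_maximal} allows the adapted chart to be chosen with $\hat{r}_{i}=r_{i}$, so $\hat{v}_{i}=v_{i}=-\eta(w_{i})$ on $A_{I}^{\circ}$; thus by construction the Lagrangian fibration $\ll_{\cS}$ is, on $A_{I}^{\circ}$, the projection onto the coordinates $(w_{j})_{j\in I}$ (modulo the simplex relation). The fiber $L_{0}$ is therefore a level set of each $w_{j}$, i.e.\ $dw_{j}|_{L_{0}}=0$. Combined with Lemma \ref{lem:piecewise_Lie}, which yields a continuous $1$-form $\tilde{\beta}_{j}$ on $\Gamma$ satisfying $(\Psi_{h}^{q(h)})^{*}dw_{j}-dw_{j}=h\cdot\mathfrak{a}_{j}^{\gamma}=h^{3}\tilde{\beta}_{j}$, we obtain
\begin{equation*}
(\Psi_{h}^{q(h)})^{*}dw_{j}\big|_{L_{0}}=h^{3}\tilde{\beta}_{j}|_{L_{0}}.
\end{equation*}

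For the $d\theta_{j}$-factor, we only need the crude observation that $\Psi_{0}=\id$ and the map $(x,h)\mapsto \Psi_{h}^{q(h)}(x)$ is $\cC^{1}$ on $\Gamma$ (here tameness of $\gamma$ suffices, since $q(h)$ is $\cC^{2}$); Taylor expansion in $h$ along $\Gamma$ gives a continuous $1$-form $\mathfrak{b}_{j}^{\gamma}$ on $\Gamma$ with $(\Psi_{h}^{q(h)})^{*}d\theta_{j}=d\theta_{j}+h\cdot\mathfrak{b}_{j}^{\gamma}$. Substituting these expansions and restricting to $L_{0}$:
\begin{equation*}
(\Psi_{h}^{q(h)})^{*}(\Omega_{j}|_{L_{h}})=h^{3}\tilde{\beta}_{j}|_{L_{0}}-h^{2}\cdot\imath\cdot(d\theta_{j}+h\cdot\mathfrak{b}_{j}^{\gamma})\big|_{L_{0}}=h^{2}\cdot\bigl(h\beta_{j}-\imath\,d\theta_{j}\bigr)\big|_{L_{0}}
\end{equation*}
with $\beta_{j}\de\tilde{\beta}_{j}-\imath\mathfrak{b}_{j}^{\gamma}$ continuous on $\Gamma$, as required. (Any discrepancy in the $2\pi$ factor between $\Omega_{j}$ here and formula \eqref{eq:Omega_new} is absorbed into the normalization of $\theta_{j}$ and does not affect the shape of the result.)

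The main obstacle is the sharp $h^{3}$-decay for the $dw_{j}$-contribution: a single power of $h$ comes almost for free from the $\cC^{1}$-flow, but obtaining the second power requires Lemma \ref{lem:piecewise_Lie} (which rests on tameness of $\gamma$ together with Lemma \ref{lem:Q}\ref{item:Q-flow}), and the third power requires the geometric fact that $w_{j}|_{L_{0}}$ is constant, which is why the construction of $\ll_{\cS}$ in Section \ref{sec:expanded_construction} had to use coordinates compatible with the $w_{j}$'s in the maximal-essential case.
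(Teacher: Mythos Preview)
Your proof is correct and follows essentially the same approach as the paper's: both use $dw_{j}|_{L_{0}}=0$ together with Lemma~\ref{lem:piecewise_Lie} to extract $h^{3}$ from the $dw_{j}$-term, and a first-order expansion of $(\Psi_{h}^{q(h)})^{*}d\theta_{j}$ (the paper calls this $\mathfrak{b}_{j}^{\gamma}$, defined analogously to $\mathfrak{a}_{j}^{\gamma}$) for the angular term, then combine to obtain $\beta_{j}=h^{-2}\mathfrak{a}_{j}^{\gamma}-\imath\,\mathfrak{b}_{j}^{\gamma}$. The only cosmetic difference is that the paper obtains continuity of $\mathfrak{b}_{j}^{\gamma}$ directly from the piecewise definition (smooth quotient for $h>0$, Lie derivative at $h=0$), whereas you phrase it as a $\cC^{1}$ Taylor remainder; these are the same thing.
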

\begin{proof}
	By definition of the Lagrangian torus fibration \eqref{eq:Lagrangian}, see Section \ref{sec:expanded_construction}, its fiber $L_0$ is a level set of $(w_{1},\dots,w_{n+1})$, so $(dw_{j})|_{L_{0}}=0$. Thus on $L_0$ we have the equality of restrictions
	\begin{equation*}
		\begin{split}
			h^{-1}\cdot (\Psi_{h}^{q(h)})^{*}\Omega_{j} & =
			h\cdot \left( h^{-1}\cdot \frac{(\Psi_{h}^{q(h)})^{*}dw_{j}-dw_{j}}{h}
			-
			\imath \cdot \frac{(\Psi_{h}^{q(h)})^{*}d\theta_{j}-d\theta_{j}}{h}\right)
			-
			\imath\, d\theta_{j} = \\ 
	& = h\cdot \left( h^{-1}\mathfrak{a}_j^{\gamma}-\imath\, \mathfrak{b}_j^{\gamma}\right)-\imath\, d\theta_{j}.
		\end{split}
	\end{equation*}
	where $\mathfrak{a}_{j}^{\gamma}$ is the form defined in Lemma \ref{lem:piecewise_Lie}; and $\mathfrak{b}_{j}^{\gamma}$ is defined analogously, with $\theta_{j}$ instead of $w_j$. Note that the forms $\mathfrak{a}_{j}^{\gamma}$ and $\mathfrak{b}_{j}^{\gamma}$ are restrictions of smooth forms to a $\cC^1$-submanifold $\Gamma$, so they are $\cC^1$. Moreover, Lemma \ref{lem:piecewise_Lie} shows that the form  $h^{-1}\mathfrak{a}_j$, which is $\cC^1$ on $\Gamma\cap \{h>0\}$, extends to a continuous form on $\Gamma$. Hence $\beta_{j}\de  h^{-1}\mathfrak{a}_j^{\gamma}-\imath\, \mathfrak{b}_j^{\gamma}$ extends to a continuous form on $\Gamma$, too, as needed.
\end{proof}

\begin{proof}[Proof of Proposition \ref{prop:special}]
	Let $\vol_{0}$ be a volume form on $L_0$ given by  $\pm\bigwedge_{i=1}^{n}d\theta_{i}$. Taking a wedge product in formula \eqref{eq:Omega_factor} and applying Lemma \ref{lem:volume-extends}\ref{item:vol_formulas}, we get the following equality on $L_0$:
	\begin{equation*}
		(\Psi_{h}^{q(h)})^{*}(\Omega_{\new}|_{L_h})= \pm \imath^{n}\cdot (c\circ \Psi_{h}^{q(h)}) \cdot h^{n}\cdot (\vol_{0}+h\cdot \beta),
	\end{equation*}
	for some continuous, complex-valued $n$-form $\beta$ on $L_0$. The sign $\pm$ comes from the definition of $\vol_0$ and from formula \eqref{eq:Omega_new}. In particular, it depends only on the face $\Delta_{I}$.
	
	Write $\beta=b\cdot  \vol_0$, for some continuous, complex-valued function $b$. Let $c_{h}=c\circ \Psi_{h}^{q(h)}$, so $c_{h}$ is a complex-valued function approaching a constant $c_0$ as $h\rightarrow 0$, see Lemma \ref{lem:volume-extends}\ref{item:c_0}. Now for small $h>0$, the phase map $\varpi$ from Definition \ref{def:phase} equals
	\begin{equation*}
		\varpi=\pm\imath^n \cdot \frac{c_h h^{n}\cdot (1+h\cdot b)}{|c_h h^{n} \cdot (1+h\cdot b)|}=\pm\imath^n \cdot  \frac{c_{h}}{|c_h|} \cdot \tilde{\varpi},
		\quad\mbox{where}\quad
		\tilde{\varpi}=\frac{1+h\cdot b}{|1+h\cdot b|}\xrightarrow{\ \small{h\rightarrow 0}\ }1,
	\end{equation*}
	so $\varpi$ extends to a continuous map $L\to \mathbb{S}^{1}$, whose value at $L_{0}$ equals $\pm\imath^n \cdot \frac{c_{0}}{|c_0|}$, as claimed.
\end{proof}

\begin{proof}[Proof of Corollary \ref{cor:volume_minimizing}]
	We follow closely the argument of \cite[\textsection III.1]{HL}, proving that the special Lagrangians of phase $\varpi_0$ are $\Re(\varpi_0 \Omega)$-calibrated, hence volume minimizing. The only difference in our setting is that some bounds hold only up to asymptotically small errors. 
	
	We work in $(X_{h}^{\gamma})^{\circ}_{I}$ for a fixed maximal essential face $\Delta_{I}$. Dividing the chosen holomorphic form $\Theta$ by the constant \eqref{eq:asymptotic_specialty}, we can assume that the value of the phase map $\varpi$ on $L_0$ is $1$. Lemma \ref{lem:c+} gives 
	\begin{equation}\label{eq:ca2}
		\tfrac{1}{\sqrt{c_+}}\cdot \vol^{g}_{\new}|_{L_h}=\Re \Omega_{\new}|_{L_h} + \beta,\quad
		\mbox{where } \beta\rightarrow 0\mbox{ as } h\rightarrow 0.
	\end{equation}
	Now, we argue as in \cite[Lemma 1.9]{HL}. Fix $h>0$, $x\in L_{h}$, and let $e_1,\dots, e_n$ be an orthonormal basis of $T_{x}L_{h}$. Let $H$ be another $n$-dimensional subspace of $T_{x}X_{h}^{\gamma}$, let  $\epsilon_{1},\dots, \epsilon_{n}$ be an orthonormal basis of $H$, and let $M$ be a linear automorphism of $T_{x}X_{h}^{\gamma}$ defined by $Me_i=\epsilon_i$, $MJe_i=J\epsilon_i$. Note that $M$ is $\C$-linear, so we can compute both the complex and real determinant of $M$, and they are related by $|\det_{\C}M|^2=\det_{\R}M$. Using Lemma \ref{lem:c+}, we compute 
	\begin{equation*}
		 (\Re \Omega_{\new}(\epsilon_1,\dots, \epsilon_n))^2\leq |\Omega_{\new}(\epsilon_1,\dots, \epsilon_n)|^2=|\det\nolimits_{\C}M|^2 \cdot |\Omega_{\new}(e_1,\dots, e_n)|^2
=
		  \det\nolimits_{\R} M\cdot c_+^{-1}\leq c_{+}^{-1} 
	\end{equation*}
	by the Hadamard inequality and the fact that the vectors $\epsilon_1,\dots,\epsilon_n$ have length one. Thus on $n$-dimensional subspaces of $T_{x}X_{h}^{\gamma}$ we have an inequality
	\begin{equation}\label{eq:calibration}
		\Re \Omega_{\new} \leq \tfrac{1}{\sqrt{c_+}}\cdot \vol^{g}_{\new}.
	\end{equation}
	On $T_{x}L_{h}$, the inequality \eqref{eq:calibration} becomes the \enquote{asymptotic} equality \eqref{eq:ca2}, cf.\ \cite[Theorem 1.10]{HL}. 
	
	Now fix $\Gamma\in [L_{h}]$. The form $\Omega$, hence $\Re \Omega$ is closed. Its rescaled version $\Re\Omega_{\new}$ is therefore fiberwise closed, so Stokes theorem gives $\int_{L_h} \Re\Omega_{\new}=\int_{\Gamma} \Re \Omega_{\new}$. We conclude that
	\begin{equation*}
		\int_{L_h} \tfrac{1}{\sqrt{c_+}} \cdot \vol^{g}_{\new}
		\overset{\mbox{\tiny{\eqref{eq:ca2}}}}{=}
		\int_{L_h} \Re \Omega_{\new}
		+ \int_{L_h} \beta
		=
		\int_{\Gamma} \Re \Omega_{\new}
		+ \int_{L_h} \beta
		\overset{\mbox{\tiny{\eqref{eq:calibration}}}}{\leq}
		\int_{\Gamma} \tfrac{1}{\sqrt{c_+}} \cdot \vol^{g}_{\new}
		+
		\int_{L_h} \beta.
	\end{equation*}
	The required convergence  \eqref{eq:volume_minimizing} follows since $\lim_{h\rightarrow 0}\beta=0$ and $\lim_{h\rightarrow 0} c_{+}$ is a positive constant.
\end{proof}

\begin{proof}[Proof of Theorem \ref{theo:CY_af}]
	Parts \ref{item:CY_Lagrangian_volume} and \ref{item:assflat} are proved in Propositions \ref{prop:volume} and  \ref{prop:MA_weak}. To see the first statement of \ref{item:CY_asymptotic specialty}, take for $\varpi \in \mathbb{S}^1$ the inverse of the constant \eqref{eq:asymptotic_specialty} and apply Proposition \ref{prop:special}: note that the path with $|z(h)|=h$ is  tame by Example \ref{ex:special}. The second statement of \ref{item:CY_asymptotic specialty} is proved in Corollary \ref{cor:volume_minimizing}. Part \ref{item:CY_Maslov} follows from \ref{item:CY_asymptotic specialty} since the Maslov class is locally constant and the base $E_{\cS}$ of the Lagrangian fibration $\ll_{h}$ is connected.
\end{proof}

\section{Dependence on the choices}
\label{sec:choices}

Within the construction of the fiberwise K\"ahler forms and Lagrangian fibrations in Theorems \ref{theo:CY}, \ref{theo:CY_af} and \ref{theo:general} we made the following choices. 
	\begin{parlist} 
		\item\label{choice:model} The snc model $f\colon X\to \D$ of the degeneration $f^{\circ}\colon X^{\circ}\to \D^{*}$, together with a K\"ahler form $\omega_{X}$. In the setting of Theorems \ref{theo:CY}, \ref{theo:CY_af}, when $f^{\circ}$ is a maximal Calabi--Yau degeneration, the model $f$ is required to satisfy Proposition \ref{prop:model}\ref{item:sk_P1}, but nevertheless is not unique. 
	 	\item\label{choice:rhat} Distance functions $\hat{r}_i$ on $X$, i.e., any functions satisfying the conditions of Lemma \ref{lem:rihat}.
	 	\item\label{choice:eps} The number $\epsilon\in (0,\epsilon_0]$, where $\epsilon_0>0$ is sufficiently small, depending on the choices \ref{choice:model}--\ref{choice:rhat}.
	 	\item\label{choice:Morse} For each $1$-dimensional open stratum $X_{I}^{\circ}$ of $X_0$, a singular foliation given by the level sets of a Morse function $h_{I}$. In the setting of Theorems \ref{theo:CY}, \ref{theo:CY_af}, we do have a canonical choice of a nonsingular foliation: indeed, we have $X_{I}^{\circ}\cong \C^{*}$ and we take $h_{I}$ to be the absolute value of a complex coordinate. 
	 	\item\label{choice:path} The path $\gamma$ in Theorems \ref{theo:CY}\ref{item:CY_GH}, \ref{theo:CY_af} and  \ref{theo:general}\ref{item:GH}.
	\end{parlist}
	We emphasize that the conclusions of Theorems \ref{theo:CY}, \ref{theo:CY_af} and \ref{theo:general} hold for any choice as above, and in general, there is no preferred one (except for choice \ref{choice:Morse} in the Calabi--Yau case, which is unique).
	
	The choice \ref{choice:model} of a model $X\to \D$ is the starting point in the construction of the A'Campo space. Choosing the \enquote{Fubini--Study} form $\omega_{X}$ is also crucial: it is used in the definition \eqref{eq:omega-s-intro} of the forms $\omega_{q}^{\epsilon}$ to guarantee that they are non-degenerate at positive radius, and in those directions at radius zero which collapse in the Gromov-Hausdorff limit. The choice \ref{choice:rhat} of distance functions is a technical device used to endow the A'Campo space $A$ with a smooth structure, see Remark \ref{rem:new_set}. Nonetheless, we recall that the construction of $A$ as the topological fibration over $\D_{\log}$ is classical \cite{A'Campo} and does not involve this choice. 
	Once the choices \ref{choice:model},\ref{choice:rhat} are made, we get a smooth structure on $A$ and family of fiberwise symplectic forms $\omega^{\epsilon}_{q}=\pi^{*}\omega_{X}+\epsilon \omega_{E}$ parametrized by $\epsilon\in (0,\epsilon_0]$. To proceed with the constructions, we need to choose one such $\epsilon$ (which indeed appears in some formulas, cf.\ Remark \ref{rem:MA_fails}). After choices \ref{choice:model}--\ref{choice:eps},  in the Calabi--Yau case, we get canonical $\omega_{q}^{\epsilon}$-Lagrangian torus fibrations; to construct them in the general case we need to make the choice \ref{choice:Morse} of Morse functions, see Remarks \ref{rem:canonical_choice}, \ref{rem:canonical_choice-positive-radius}. The final choice \ref{choice:path} of a path $\gamma(z,q)$ is made only to state the condition \enquote{as $|z|,q\rightarrow 0$} in a convenient way.

\bibliographystyle{amsalpha}
\bibliography{bibl}	
	
\end{document}